\documentclass[a4paper,10pt, oneside]{article}
\usepackage{amssymb,amsthm,bm,bbm,mathrsfs,mathtools}
\usepackage{authblk}
\usepackage[font=small,labelfont=md,textfont=it]{caption}
\usepackage{enumitem}
\usepackage[colorlinks,linkcolor=black,citecolor=black]{hyperref}
\usepackage{etoolbox}
\usepackage{longtable}
\usepackage{diagbox}
\usepackage{booktabs,makecell,multirow}
\usepackage{cases}          
\usepackage[most]{tcolorbox}

\usepackage[capitalise]{cleveref}
\usepackage{graphicx, tikz, pgfplots}
\pgfplotsset{
	my_log_axis/.style={
		xmode=log, ymode=log,
		log ticks with fixed point,
		xticklabel style={font=\small},
		yticklabel style={font=\small},
		legend style={font=\small, draw=none, fill=none, inner sep=2pt},
		grid=both,
		minor grid style={dashed, gray!20},
		major grid style={dashed, gray!40}
	}
}

\usepackage{subcaption} 

\crefname{assumption}{Assumption}{Assumptions}
\crefname{hypothesis}{Hypothesis}{Hypotheses}
\crefname{lemma}{Lemma}{Lemmas}
\crefname{theorem}{Theorem}{Theorems}

\crefformat{equation}{\textup{#2(#1)#3}}
\crefrangeformat{equation}{\textup{#3(#1)#4--#5(#2)#6}}
\crefmultiformat{equation}{\textup{#2(#1)#3}}{ and \textup{#2(#1)#3}}
{, \textup{#2(#1)#3}}{, and \textup{#2(#1)#3}}
\crefrangemultiformat{equation}{\textup{#3(#1)#4--#5(#2)#6}}%
{ and \textup{#3(#1)#4--#5(#2)#6}}{, \textup{#3(#1)#4--#5(#2)#6}}{, and \textup{#3(#1)#4--#5(#2)#6}}

\Crefformat{equation}{#2Equation~\textup{(#1)}#3}
\Crefrangeformat{equation}{Equations~\textup{#3(#1)#4--#5(#2)#6}}
\Crefmultiformat{equation}{Equations~\textup{#2(#1)#3}}{ and \textup{#2(#1)#3}}
{, \textup{#2(#1)#3}}{, and \textup{#2(#1)#3}}
\Crefrangemultiformat{equation}{Equations~\textup{#3(#1)#4--#5(#2)#6}}%
{ and \textup{#3(#1)#4--#5(#2)#6}}{, \textup{#3(#1)#4--#5(#2)#6}}{, and \textup{#3(#1)#4--#5(#2)#6}}

\crefdefaultlabelformat{#2\textup{#1}#3}

\apptocmd{\sloppy}{\hbadness 10000\relax}{}{}

\newcommand{\triplenorm}[1]{
  \vert\kern-0.25ex\vert\kern-0.25ex\vert #1
  \vert\kern-0.25ex\vert\kern-0.25ex\vert
}

\makeatletter
\def\spher@harm#1{%
  \vbox{\offinterlineskip
    \halign{&\hb@xt@2\p@{\hss$##$\hss}\cr
      #1\crcr
    }%
    \vskip-0.36ex
  }%
}
\def\gshone{\spher@harm{.}}
\def\gshtwo{\spher@harm{.&.}}
\def\gshthree{\spher@harm{.&.&.}}
\let\gsh\spher@harm
\makeatother

\newtheorem{proposition}{Proposition}[section]

\newtheorem{definition}{Definition}[section]

\newtheorem{lemma}{Lemma}[section]
\newtheorem{remark}{Remark}[section]
\newtheorem{theorem}{Theorem}[section]

\numberwithin{equation}{section}

\begin{document}
\title{Convergence of finite element approximations for the one-dimensional stochastic Burgers equation with additive trace-class noise}


\author[1]{Binjie Li\thanks{libinjie@scu.edu.cn}}
\author[1]{Xiaoping Xie\thanks{xpxie@scu.edu.cn}}
\author[2]{Qin Zhou\thanks{zqmath@cwnu.edu.cn}}
\affil[1]{School of Mathematics, Sichuan University, Chengdu 610064, China}
\affil[2]{School of Mathematics, China West Normal University, Nanchong 637002, China}

\date{}
\maketitle

\begin{abstract}
 This paper investigates finite element approximations of the one-dimensional viscous
  stochastic Burgers equation with additive trace-class noise.
 For the $P_2$ finite element spatial semi-discretization,
 we derive strong error estimates that are optimal with respect to regularity in
 \(L^p([0,T] \times \Omega;H_{D}^{\alpha,q})\) for \(p,q\in[2,\infty)\) and \(\alpha\in[-1,0]\),
 as well as an almost regularity-optimal estimate in \(L^p(\Omega;C([0,T];L^\infty(\mathcal{O})))\).
 Furthermore, we derive weak error estimates for moments of both terminal $L^q$-norms and
 space-time $L^p(0,T;L^q(\mathcal{O}))$-norms, with weak convergence rates (nearly) twice the corresponding strong ones.
 For the fully discrete scheme,
 which combines the \(P_2\) finite element method in space with a drift-implicit Euler--Maruyama scheme in time,
 we establish a strong temporal convergence rate of order \(\tau^{1/2-\varepsilon}\) in a discrete analogue of
 \(L^p(\Omega;C([0,T];L^\infty(\mathcal{O})))\), under the condition \(\tau \leqslant h^2\).
 Numerical experiments are presented to illustrate the theoretical convergence rates.
\end{abstract}

\medskip\noindent{\bf Keywords:} stochastic Burgers equation, finite element method, discrete stochastic maximal $L^p$-regularity,
strong and weak convergence

\section{Introduction}
Let $\mathcal{O} := (0,1)$ and fix a time horizon $T > 0$. We consider the one-dimensional viscous stochastic
Burgers equation driven by additive trace-class noise, taking values in $L^2(\mathcal{O})$, given by
\begin{equation} \label{eq:burgers}
  \mathrm{d}u(t) = \Bigl( \partial_x^2 u(t) - \frac{1}{2} \partial_x (u^2(t)) \Bigr)\, \mathrm{d}t + Q\,\mathrm{d}W(t), \quad t \in [0,T],
\end{equation}
subject to homogeneous Dirichlet boundary conditions $u(t,0) = u(t,1) = 0$ for $t \in (0,T]$, and the initial condition $u(0) = u_0$. 
Here, $u_0$ is a given initial datum, $(W(t))_{t \geqslant 0}$ is an $\ell^2$-cylindrical Wiener process,
and $Q$ is a $\gamma$-radonifying operator from $\ell^2$ to $L^2(\mathcal{O})$ to be specified in \cref{sec:regularity}.

Originally introduced to model turbulence and nonequilibrium phenomena \cite{bec2007burgers,bertini1994stochastic},
the stochastic Burgers equation has become a central example in stochastic analysis.
Its study has driven progress in regularity theory for nonlinear SPDEs
\cite{daPratoDebusscheTemam1994,daprato1995stochastic,gyongy1999stochasticburgers},
invariant measures \cite{e2000invariant}, ergodicity \cite{goldys2005exponential},
and the modern frameworks of regularity structures \cite{Hairer2014theory} and
paracontrolled distributions \cite{Gubinelli2017kpz}.

Numerical approximations of the stochastic Burgers equation have been widely studied.
Hairer and Voss \cite{Hairer2011approximations} noted that different finite-difference schemes converge
to distinct limiting processes.
Bl\"omker and Jentzen \cite{Blomker2013} established pathwise convergence of semi-discrete spectral Galerkin
approximations in the space-time maximum norm at a rate of almost $1/2$ in the case of additive space-time white noise.
This result was subsequently extended to a fully discrete scheme in \cite{Blomker2013full}, wherein a pathwise temporal
convergence rate of order $1/4$ was derived.
Regarding the semi-discrete spectral Galerkin approximations in the case of
the multiplicative trace-class noise, Hutzenthaler and Jentzen
\cite{Hutzenthaler2020perturbation} provided strong convergence rates.
For the fully discrete spectral Galerkin approximations, Jentzen et al.~\cite{JentzenSalimovaWelti2019} proved the
strong convergence of an explicit scheme under additive space-time white noise, albeit without providing explicit rates.
Hutzenthaler et al.~\cite{Hutzenthaler2020strong,HutzenthalerLink2022strong} achieved a significant
advancement by establishing the first strong convergence rates for fully discrete approximations under
additive and multiplicative trace-class noise,
utilizing a spectral Galerkin method in space coupled with a tamed exponential Euler method in time.
More recently, Br\'ehier et al.~\cite{BrehierCoxMillet2026} derived weak error estimates for the spectral
Galerkin discretization of \cref{eq:burgers} in the $L^2$-framework.
Additionally, Wang and Cao \cite{Wang2025strong} established the strong convergence of a fully
discrete scheme for the stochastic Burgers equation driven by an additive cylindrical fractional Brownian motion,
although no explicit strong convergence rate was obtained. We also refer to \cite{Li2027strong,Wang2025numerical} for related studies on the stochastic Burgers--Huxley equation.

Despite these advances, a systematic treatment of both the strong and weak errors of finite element
approximations for the stochastic Burgers equation is still lacking.
Moreover, existing numerical analyses that provide explicit strong convergence rates are essentially
restricted to the \(L^2\)-framework;
to the best of our knowledge, no strong convergence rates have been derived in the space-time maximum norm,
more precisely, in \(L^p(\Omega; C([0,T]; L^\infty(\mathcal{O})))\) or its discrete analogue.
In this paper, we partially fill these gaps with the following contributions:

\begin{itemize}
\item For the $P_2$ finite element semi-discretization of \cref{eq:burgers},
we establish a regularity-optimal strong convergence rate of order $h^{1-\alpha}$ in
$L^p([0,T] \times \Omega;H_{D}^{\alpha,q})$, and an almost regularity-optimal strong
convergence rate of order $h^{1-\alpha-\varepsilon}$ in $L^p(\Omega;C([0,T];H_{D}^{\alpha,q}))$,
for all $p,q\in[2,\infty)$, $\alpha\in[-1,0]$, and $\varepsilon>0$;
see Subsection~\ref{subsec:functional-analytic-framework} for the definition of $H_{D}^{\alpha,q}$.
Moreover, we prove strong convergence in  
\[
L^{p}\bigl(\Omega;C([0,T];L^{\infty}(\mathcal{O}))\bigr),\qquad p\in[2,\infty),
\]
of order \(h^{1-\varepsilon}\) for arbitrary \(\varepsilon>0\); see \cref{thm:uh-strong} for precise statements.
These appear to be the first strong error estimates in the space-time maximum norm
for spatial semi-discretizations of the stochastic Burgers equation.
They extend previous results on pathwise convergence in  
\(C([0,T];L^{\infty}(\mathcal{O}))\)~\cite{Blomker2013} and
on strong error estimates in  
\(L^{\infty}(0,T;L^{p}(\Omega;L^{2}(\mathcal{O})))\)~\cite{Hutzenthaler2020perturbation},
both obtained for semi-discrete spectral Galerkin approximations.

\item For this semi-discretization, we establish the following weak error
  estimates, for all $2 \leqslant q \leqslant p < \infty$:
  \begin{align*}
    \Bigl| \mathbb{E}\bigl[\|u(T)\|_{L^q(\mathcal{O})}^{p}\bigr]
           - \mathbb{E}\bigl[\|u_h(T)\|_{L^q(\mathcal{O})}^{p}\bigr] \Bigr|
      & \leqslant C h^{2-\varepsilon}, \\
    \Bigl| \mathbb{E}\bigl[\|u\|_{L^p(0,T;L^q(\mathcal{O}))}^{p}\bigr]
           - \mathbb{E}\bigl[\|u_h\|_{L^p(0,T;L^q(\mathcal{O}))}^{p}\bigr] \Bigr|
      & \leqslant C h^{2},
  \end{align*}
  where $\varepsilon > 0$ can be chosen arbitrarily small.
  In both cases, the weak convergence order is (nearly) twice the
  corresponding strong convergence order; see \cref{thm:uh-weak} for precise statements.
  Our analysis employs a standard duality argument
 (cf.~\cite{Geissert2009rate}, \cite[Chapter~5]{kruse2014book},
 and \cite{AnderssonKruseLarsson2016Duality}),
 which crucially exploits the strong error estimates in negative-order Sobolev spaces established above.
  To the best of our knowledge, these are the first weak error estimates for finite element approximations of
  the stochastic Burgers equation and, more generally,
  the first weak error analysis of spatial semi-discretizations in the $L^q$-setting.
  In contrast, the weak error analysis of semi-discrete spectral Galerkin approximations
  in \cite{BrehierCoxMillet2026} is based on Kolmogorov equations and carried out in the $L^2$-setting.
  Within the broader numerical analysis literature for SPDEs, related results include
  \cite{Hefter2016weak}, where essentially sharp weak convergence rates for
  exponential Euler approximations of semilinear SPDEs are established in the UMD Banach space setting,
  and \cite{Brehier2018weak}, where weak error estimates for path-dependent functionals are
  studied in the $L^2$-setting.

\item For the fully discrete approximation of \cref{eq:burgers} based on the $P_2$
finite element method in space and a drift-implicit Euler--Maruyama scheme in time, we establish strong error estimates.
Under the condition $\tau \leqslant h^{2}$, we prove, for all $p\in[2,\infty)$,
\[
  \Bigl(\mathbb{E}\Bigl[\max_{1 \leqslant j \leqslant J}
    \|u_h(t_j) - U_j\|_{L^{\infty}(\mathcal{O})}^{p}\Bigr]\Bigr)^{1/p}
    \leqslant C\,\tau^{1/2-\varepsilon},
\]
where $U_j$ denotes the fully discrete approximation at $t_j=j\tau$, $\varepsilon>0$ may be chosen arbitrarily small,
and $C$ is independent of $h$ and $\tau$; see \cref{thm:uh-U} for the precise statement.
These results strengthen the convergence topology in
\cite{Hutzenthaler2020strong,HutzenthalerLink2022strong}, 
where strong convergence rates were obtained in the $L^{\infty}(0,T;L^{p}(\Omega;L^{2}(\mathcal{O})))$
norm for spectral Galerkin approximations in space coupled with tamed exponential Euler schemes in time.
To the best of our knowledge, the present estimates are the first strong error estimates in the
discrete space-time maximum norm for fully discrete approximations of the stochastic Burgers equation.
For comparison, even for one-dimensional stochastic parabolic equations with Lipschitz
nonlinearities \cite{Anton2020fully,Gyongy1998latticeI,Gyongy1999latticeII},
fully discrete strong error estimates have been established only in weaker norms such as
$\sup_{x,t}\|\cdot\|_{L^{p}(\Omega)}$.
\end{itemize}

From a methodological standpoint in numerical analysis, one main novelty of this work, compared with the existing
literature, is that we combine the classical techniques of the \(L^2\)-framework for the numerical analysis of
SPDEs with the discrete stochastic maximal \(L^p\)-regularity estimates \cite{li2025stability, LiZhouLp2026} to
present, for the first time, an \(L^q\)-framework for the numerical analysis of finite element approximations of
the stochastic Burgers equation.
Beyond advancing the numerical analysis of the stochastic Burgers equation,
the analytical framework developed herein extends naturally to other stochastic 
parabolic equations, such as the stochastic Allen-Cahn equation.


Another main novelty of this work is the derivation of strong convergence rates for a fully discrete finite element method
in the discrete space-time maximum norm. In the numerical analysis of the stochastic Burgers equation,
as well as the stochastic Navier-Stokes equations, the convection term's dependence on the solution gradient and quadratic
growth in the solution itself render the standard discrete Gronwall argument inapplicable.
To circumvent this for the two-dimensional stochastic Navier-Stokes equations with periodic boundary conditions,
Bessaih and Millet \cite{Bessaih2022spacetime} combined exponential moment estimates with a localization technique to
extend the discrete Gronwall argument, establishing strong convergence rates for fully discrete finite element approximations.
A key ingredient in their proof is the identity
\[
\int_{D} \bigl((u\cdot\nabla)u\bigr)\cdot (Au)\,\mathrm{d}x = 0 \qquad \text{for all } u\in\operatorname{Dom}(A),
\]
where $A$ represents the $L^2(D;\mathbb{R}^2)$-realization of the Stokes operator subject to periodic boundary
conditions on the torus $D$. This identity ensures that, under appropriate assumptions,
\[
\mathbb{E} \biggl[ \exp\!\Big(\kappa\sup_{t \in [0,T]} \|\nabla u(t)\|_{L^2(D;\mathbb{R}^{2\times2})}^2\Big) \biggr] < \infty
\]
holds for some $\kappa > 0$. However, this exponential moment bound 
is not available from the known results for the stochastic Burgers equation.
Consequently, the methodology of Bessaih and Millet \cite{Bessaih2022spacetime} cannot be directly applied to the stochastic Burgers equation.

To the best of our knowledge, explicit strong convergence rates for fully discrete approximations of the stochastic Burgers equation have been established only in the aforementioned studies \cite{Hutzenthaler2020strong,HutzenthalerLink2022strong}.
Specifically, \cite{Hutzenthaler2020strong} integrates the classical Alekseev--Gr\"obner formula with uniform exponential moment bounds under additive trace-class noise, whereas \cite{HutzenthalerLink2022strong} couples the perturbation theory of \cite{Hutzenthaler2020perturbation} with analogous estimates for multiplicative trace-class noise.
Both works utilize a spectral Galerkin spatial discretization combined with a tamed exponential Euler scheme.
In contrast, the present paper develops a direct approach to deriving strong convergence rates for a fully discrete finite element method based on the implicit Euler--Maruyama scheme.
Our strategy hinges on lifting the discrete numerical solution to a continuous-time process and applying a Gronwall argument
to the error between this lifted process and the spatially semi-discrete solution.
The analysis further exploits exponential moment bounds for the numerical approximations;
crucially, the sharp stability estimates furnished by the discrete stochastic maximal $L^p$-regularity
form the cornerstone of the proof.

The remainder of the paper is organized as follows.
Section~\ref{sec:pre} establishes the functional framework (Sobolev and interpolation spaces, Dirichlet Laplacian and its analytic semigroup)
and the probabilistic setting ($\ell^2$-cylindrical Wiener process, $\gamma$-radonifying operators and stochastic integral estimates).
Section~\ref{sec:regularity} investigates the regularity of the mild solution to the stochastic Burgers equation.
Section~\ref{sec:spatial-semi-discretization} analyzes the strong and weak convergence of the spatial semi-discretization,
while Section~\ref{sec:full-discr} derives the strong convergence rate for the fully discrete scheme.
Section~\ref{sec:numerical} presents numerical experiments that verifies the theoretical results,
and Section~\ref{sec:concluding} offers concluding remarks.

\section{Preliminaries}
\label{sec:pre}

\subsection{Functional analytic framework}
\label{subsec:functional-analytic-framework}

Unless otherwise specified, all Banach spaces in this paper are assumed to be real.
We denote by $\mathcal{L}(E_1, E_2)$ the Banach space of bounded linear operators from $E_1$ to $E_2$, endowed with the operator norm.
The symbol $I$ stands for the identity operator. For Banach spaces $E_1$ and $E_2$,
the notation $E_1 \hookrightarrow E_2$ indicates that $E_1$ is continuously embedded into $E_2$.
For $\theta \in (0,1)$ and $p \in (1,\infty)$, let $(\cdot, \cdot)_{\theta,p}$ denote the real interpolation space defined by the $K$-method (see \cite[Chapter~1]{Lunardi2018}).
Moreover, given a couple $(X_1,X_2)$ of real Banach spaces with embedding $X_2 \hookrightarrow X_1$,
for each $\theta\in(0,1)$, the complex interpolation space $[X_1,X_2]_\theta$
is defined as the space of real parts of elements in the standard complex interpolation space
$[X_1^{\mathbb{C}},X_2^{\mathbb{C}}]_\theta$ (cf.~\cite[Chapter~2]{Lunardi2018}),
endowed with the norm inherited from $[X_1^{\mathbb{C}},X_2^{\mathbb{C}}]_\theta$.
Here, $X_i^{\mathbb{C}}$ denotes the complexification of $X_i$ for $i=1,2$ (see \cite[Section~B.4]{HytonenWeis2016}).

Fix $T > 0$. Given a Banach space $E$ and $p \in [1,\infty]$, we denote by $L^p(0,T;E)$ the Bochner space
(see \cite[Definition~1.2.15]{HytonenWeis2016}), and by $C([0,T];E)$ the Banach space of continuous functions
$v \colon [0,T] \to E$ endowed with the norm $\|v\|_{C([0,T];E)} := \sup_{t \in [0,T]} \|v(t)\|_E$.

Let $\mathcal{O} := (0,1)$. For $q \in [1,\infty]$, let $L^q(\mathcal{O})$ denote the standard Lebesgue space,
and let $W^{1,q}(\mathcal{O})$, $W_0^{1,q}(\mathcal{O})$,
and $W^{2,q}(\mathcal{O})$ denote the standard Sobolev spaces (see, e.g., \cite[Chapter~7]{Gilbarg2001}).
For $q \in (1,\infty)$, we define $A_q$ as the realization of the operator $\partial_x^2$ in $L^q(\mathcal{O})$ subject to homogeneous Dirichlet boundary conditions,
with domain $D(A_q) = W_0^{1,q}(\mathcal{O}) \cap W^{2,q}(\mathcal{O})$. It is standard that $-A_q$ is a
densely defined sectorial operator with a bounded inverse \cite[Theorem~2.12]{Yagi2010} and admits a
bounded $H^\infty$-calculus \cite[Theorem~2]{Duong1996functional}.

For $\theta \geqslant 0$ and $q \in (1,\infty)$, we define $H_{D}^{\theta,q}$ as the domain of
the fractional power $(-A_q)^{\theta/2}$,
equipped with the norm $\|v\|_{H_{D}^{\theta,q}} := \| (-A_q)^{\theta/2}v \|_{L^q(\mathcal{O})}$,
where $(-A_q)^{\theta/2} \colon H_{D}^{\theta,q} \to L^q(\mathcal{O})$ is an isometric isomorphism;
see \cite[Theorem~15.2.5]{HytonenNeervenVeraarWeis2023Vol3}
for the related theory of fractional powers of sectorial operators.
In particular, we have $H_{D}^{2,q} = W_0^{1,q}(\mathcal{O}) \cap W^{2,q}(\mathcal{O})$
and $H_{D}^{1,q} = W_0^{1,q}(\mathcal{O})$, with equivalent norms (cf.~\cite[Theorem~16.15]{Yagi2010}).

For $\theta \in [-2,0)$ and $q \in (1,\infty)$, let $H_{D}^{\theta,q}$ denote the dual space of $H_{D}^{-\theta,q'}$,
where $q' := q/(q-1)$ denotes the conjugate exponent of $q$.
Each $f \in L^q(\mathcal{O})$ is identified as an element of $H_{D}^{\theta,q}$ by
\[
  \langle f, v \rangle_{H_{D}^{\theta,q},H_{D}^{-\theta,q'}} :=
  \int_{\mathcal{O}} f v \, \mathrm{d}x,
  \quad \forall v \in H_{D}^{-\theta,q'}.
\]
With this identification, $L^q(\mathcal{O})$ is dense in $H_{D}^{\theta,q}$.
For all $-2 \leqslant \theta_1 < \theta_2 \leqslant 2$, $ s \in (0,1)$, and $q \in (1,\infty)$,
the interpolation identity
\begin{equation}
  \label{eq:interpolation-identity}
  [H_{D}^{\theta_1,q}, H_{D}^{\theta_2,q}]_{s} = H_{D}^{(1-s)\theta_1 + s \theta_2,q}
\end{equation}
holds with equivalent norms (see \cite[Theorem~1.5.4 of Chapter V]{Amann1995linear}).

Fix $\theta \in [-2,0)$ and $q \in (1,\infty)$.
Define $ A_{q,\theta} \colon H_{D}^{\theta+2,q} \to H_{D}^{\theta,q}$ by
\[
\langle A_{q,\theta}v, w \rangle_{H_{D}^{\theta,q}, H_{D}^{-\theta,q'}}
:= -\int_{\mathcal{O}} \bigl((-A_q)^{\theta/2+1}v\bigr) \bigl((-A_{q'})^{-\theta/2}w\bigr) \, \mathrm{d}x
\]
for $v \in H_{D}^{\theta+2,q}$ and $w \in H_{D}^{-\theta,q'}$.
Since $(-A_q)^{\theta/2+1}\colon H_{D}^{\theta+2,q}\to L^q(\mathcal{O})$ and
$(-A_{q'})^{-\theta/2}\colon H_{D}^{-\theta,q'}\to L^{q'}(\mathcal{O})$ are isometric
isomorphisms (see \cite[Theorem~15.2.5]{HytonenNeervenVeraarWeis2023Vol3}),
the operator $A_{q,\theta} \colon  H_{D}^{\theta+2,q}\to H_{D}^{\theta,q}$ is
well defined and an isometric isomorphism.
The operator $A_{q,\theta}$ is consistent with $A_q$ in the sense that
\[
  A_{q,\theta} v = A_q v
  \quad \text{in } H_{D}^{\theta,q}
  \qquad \text{for all } v \in H_{D}^{2,q}.
\]
Furthermore, by \cite[Theorem~2.1.3 of Chapter~V]{Amann1995linear},
the analytic semigroups $S_q(t)$ and $S_{q,\theta}(t)$ generated by $A_q$ and $A_{q,\theta}$,
respectively, satisfy the consistency relation $S_{q,\theta}(t)v=S_q(t)v$ in $H_D^{\theta,q}$
for all $v\in L^q(\mathcal{O})$ and $t\geqslant0$.
Hereafter, whenever no confusion can arise,
we write $A$ for either $A_q$ or $A_{q,\theta}$ and $S(t)$ for either $S_q(t)$ or $S_{q,\theta}(t)$.

For $g \in L^{1}(0,T;H_{D}^{\theta,q})$ with $\theta \in [-2,0]$ and $q \in (1,\infty)$, we define the convolution
\[
  (S \ast g)(t) := \int_{0}^{t} S(t-s)\, g(s)\, \mathrm{d}s,
  \qquad t \in [0,T].
\]
The following lemma collects some basic properties of the semigroup $S$ and of the
convolution operator $g \mapsto S \ast g$ that will be used in the sequel.

\begin{lemma}
  \label{lem:parabolic-regu}
  The following assertions hold:
  \begin{itemize}
    \item[\textup{(i)}] For any $v \in H_{D}^{\theta,q}$ with
    $\theta \in [-2,2]$ and $ q \in (1,\infty)$,
      \[
        \|S(\cdot)v\|_{C([0,T];H_{D}^{\theta,q})}
        \leqslant C \|v\|_{H_{D}^{\theta,q}},
      \]
      where $ C = C(q,\mathcal{O})>0$.
    \item[\textup{(ii)}] Let $\theta_1 \in [-2,0]$,
    $\theta_2 \in [\theta_1, \theta_1+2)$, and $q \in (1,\infty)$.
      For any $g \in L^\infty(0,T;H_{D}^{\theta_1,q})$,
      \[
        \|S \ast g\|_{C([0,T];H_{D}^{\theta_2,q})}
        \leqslant C \|g\|_{L^\infty(0,T;H_{D}^{\theta_1,q})},
      \]
      where $C = C(\theta_2-\theta_1,q,T,\mathcal{O})>0$.
    \item[\textup{(iii)}]
    For any $v \in (H_{D}^{\theta,2}, H_{D}^{\theta+2,2})_{1-1/p,p}$
    and $g \in L^p(0,T;H_{D}^{\theta,2})$ with $\theta \in [-2,0]$ and $p \in (1,\infty)$,
    \[
      \|S(\cdot)v + S \ast g\|_{L^p(0,T;H_{D}^{\theta+2,2})} \leqslant
      C \big(
        \|v\|_{(H_{D}^{\theta,2}, H_{D}^{\theta+2,2})_{1-1/p,p}}
        + \|g\|_{L^p(0,T;H_{D}^{\theta,2})}
      \big),
    \]
    where $C = C(p,\mathcal{O})>0$.
  \end{itemize}
\end{lemma}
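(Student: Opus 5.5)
The three assertions rest on standard semigroup theory for $A=A_{q,\theta}$ on the scale $H_D^{\theta,q}$. We use the facts recalled above: $-A_q$ is sectorial with bounded inverse and bounded $H^\infty$-calculus, and $A_{q,\theta}$ is similar to $A_q$ through the isometries $(-A_q)^{\theta/2}$.

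\textbf{Step 1: transfer to $L^q$.} For $\theta\in[-2,2]$, the map $\Lambda_\theta:=(-A)^{\theta/2}$ is an isometric isomorphism $H_D^{\theta,q}\to L^q(\mathcal{O})$. For $\theta<0$ it is understood via duality and the consistency of $A_{q,\theta}$ with $A_q$. This map intertwines $S_{q,\theta}(t)$ (respectively $S_q(t)$) with $S_q(t)$. Hence $\|S(t)\|_{\mathcal{L}(H_D^{\theta,q})}=\|S_q(t)\|_{\mathcal{L}(L^q)}\leqslant C$, because $S_q$ is a bounded analytic semigroup; it is even exponentially stable since $0\in\rho(A_q)$. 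Strong continuity of $S_q$ on $L^q$ transfers in the same way, so $t\mapsto S(t)v$ is continuous in $H_D^{\theta,q}$. This gives (i).

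\textbf{Step 2: smoothing estimate for (ii).} Analyticity and the bounded inverse give, for $\beta\in[0,2)$,
\[
\|(-A)^{\beta/2}S(t)\|_{\mathcal{L}(L^q)}\leqslant C t^{-\beta/2}e^{-ct}.
\]
Set $\beta=\theta_2-\theta_1\in[0,2)$ and transfer this bound to the $H_D^{\theta_1,q}$ level as in Step 1. For $g\in L^\infty(0,T;H_D^{\theta_1,q})$ this yields
\[
\|(S*g)(t)\|_{H_D^{\theta_2,q}}\leqslant C\int_0^t (t-s)^{-\beta/2}\,\mathrm{d}s\,\|g\|_{L^\infty(0,T;H_D^{\theta_1,q})}\leqslant CT^{1-\beta/2}\|g\|_{L^\infty(0,T;H_D^{\theta_1,q})}.
\]
The integral converges because $\beta<2$.

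Continuity in $t$ with values in $H_D^{\theta_2,q}$ is the main, though mild, obstacle. We plan to handle it in two ways.
\begin{itemize}
\item Split $(S*g)(t+\delta)-(S*g)(t)=\int_t^{t+\delta}S(t+\delta-s)g(s)\,\mathrm{d}s+(S(\delta)-I)(S*g)(t)$. The first term is $O(\delta^{1-\beta/2})$ by the same bound. For the second, note that $(S*g)(t)\in H_D^{\theta_2',q}$ for some $\theta_2'\in(\theta_2,\theta_1+2)$. We then use $\|(S(\delta)-I)(-A)^{-\epsilon}\|\leqslant C\delta^{\epsilon}$ with $\epsilon=(\theta_2'-\theta_2)/2$.
\item Alternatively, approximate $g$ by continuous functions; the uniform bound above then passes continuity to the limit.
\end{itemize}
Either route gives (ii) with $C=C(\beta,q,T,\mathcal{O})$.

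\textbf{Step 3: maximal regularity for (iii).} This is maximal $L^p$-regularity on the Hilbert space $H_D^{\theta,2}$.
\begin{itemize}
\item \emph{Convolution term.} $-A_{q,\theta}$ with $q=2$ is a positive self-adjoint operator on the Hilbert space $H_D^{\theta,2}$; it is unitarily equivalent via $\Lambda_\theta$ to $-A_2$ on $L^2$. By de Simon's theorem, for $p\in(1,\infty)$,
\[
\|A(S*g)\|_{L^p(0,T;H_D^{\theta,2})}\leqslant C\|g\|_{L^p(0,T;H_D^{\theta,2})}.
\]
Since $A$ is an isomorphism $H_D^{\theta+2,2}\to H_D^{\theta,2}$, this bounds $S*g$ in $L^p(0,T;H_D^{\theta+2,2})$. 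The constant can be taken independent of $T$ thanks to exponential stability.
\item \emph{Initial-value term.} We use the standard characterization of the real interpolation space $(X,D(A))_{1-1/p,p}$, with $X=H_D^{\theta,2}$ and $D(A)=H_D^{\theta+2,2}$. It is precisely the space of $v$ for which $t\mapsto AS(t)v$ belongs to $L^p(0,\infty;X)$, with equivalent norms (see \cite[Chapter~1]{Lunardi2018}). Hence $\|S(\cdot)v\|_{L^p(0,T;H_D^{\theta+2,2})}\leqslant C\|v\|_{(H_D^{\theta,2},H_D^{\theta+2,2})_{1-1/p,p}}$.
\end{itemize}
Adding the two estimates gives (iii).
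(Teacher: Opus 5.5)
Your proposal is correct and follows the same route as the paper, which simply cites standard analytic-semigroup smoothing estimates for (i)--(ii) and Hilbert-space maximal $L^p$-regularity together with the trace-space characterization of $(H_D^{\theta,2},H_D^{\theta+2,2})_{1-1/p,p}$ for (iii); you have unpacked those references, and your splitting argument for time-continuity in (ii) works. The only flaw is your alternative continuity route: continuous functions are not dense in $L^\infty(0,T;H_D^{\theta_1,q})$, so you would need to approximate in $L^r(0,T;H_D^{\theta_1,q})$ with $r<\infty$ chosen so that $\theta_2-\theta_1<2-2/r$, use the corresponding $L^r$ bound, and separately check continuity for the approximants.
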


\begin{proof}
  Assertions \textup{(i)} and \textup{(ii)} are classical results in the theory of
  analytic semigroups (see, e.g., \cite[Proposition~4.2.1 and Corollary~4.2.4]{Lunardi1995analytic}), while
  assertion \textup{(iii)} is a standard result of the maximal $L^{p}$-regularity theory
  (see \cite[p.~114]{Lunardi2018} and \cite[Theorem~3.3]{Neerven2012b}).
\end{proof}

\subsection{Stochastic framework}
Let $(\Omega, \mathcal{F}, \mathbb{F} = (\mathcal{F}_t)_{t \geqslant 0}, \mathbb{P})$ be a
fixed filtered probability space satisfying the usual conditions.
Let $(W_n)_{n \geqslant 1}$ be a sequence of mutually independent real-valued $\mathbb{F}$-Brownian motions,
and let $\ell^2$ denote the Hilbert space of square-summable real sequences. The associated cylindrical Wiener process on $\ell^2$,
denoted by $(W(t))_{t \geqslant 0}$, is the family of bounded linear operators $W(t) \colon \ell^2 \to L^2(\Omega, \mathcal{F}_t, \mathbb{P})$ defined, for each $t \geqslant 0$, by
\[
  W(t)l := \sum_{n=1}^\infty l_n W_n(t), \quad l = (l_n)_{n \geqslant 1} \in \ell^2,
\]
where the series converges in $L^2(\Omega, \mathcal{F}_t, \mathbb{P})$.
Throughout this paper, the stochastic integration with respect to $W$ is understood in the It\^o sense 
as defined in \cite{Neerven2007}.

For any separable Banach space $E$, we denote by $C_{\mathbb{F}}([0,T];E)$
the space of $\mathbb{F}$-adapted $E$-valued processes with $\mathbb{P}$-a.s.~continuous
paths, where indistinguishable processes are identified.
As usual, each $f \in C_{\mathbb{F}}([0,T];E)$ is regarded as a
strongly $\mathcal{F}_T$-measurable random variable with values in
$C([0,T];E)$ (see \cite[Proposition~3.18]{DaPrato2014}).

For any Banach space $E$ and $p \in [1,\infty)$, 
the Bochner space $L^p(\Omega; E)$ consists of all (equivalence classes of)
strongly $\mathcal{F}$-measurable functions $v \colon \Omega \to E$ satisfying $\mathbb{E}[\|v\|_E^p] < \infty$,
endowed with the norm $\|v\|_{L^p(\Omega;E)} := \big(\mathbb{E}[\|v\|_E^p]\big)^{1/p}$. 
Analogously, $L^p([0,T] \times \Omega; E)$ consists of (equivalence classes of) strongly
$\mathcal{B}([0,T]) \otimes \mathcal{F}$-measurable functions $v \colon [0,T] \times \Omega \to E$ such that 
\[
\mathbb{E}\left[ \int_0^T \|v(t,\cdot)\|_E^p \, \mathrm{d}t \right] < \infty,
\]
endowed with the corresponding canonical norm. 
We denote by $L^p_{\mathbb{F}}([0,T] \times \Omega; E)$ the subspace of $L^p([0,T] \times \Omega; E)$
consisting of all elements that admit an $\mathbb{F}$-progressively measurable representative.

For a Banach space $E$, we denote by $\gamma(\ell^2,E)$ the space of $\gamma$-radonifying operators
from $\ell^2$ to $E$ \cite[Definition~9.1.4]{HytonenWeis2017}.
This space satisfies the ideal property \cite[Proposition~9.1.10]{HytonenWeis2017}: for any Banach space $F$, any $U \in \gamma(\ell^2,E)$, and any $B \in \mathcal{L}(E,F)$,
\begin{equation}
  \label{eq:ideal}
  \|BU\|_{\gamma(\ell^2,F)} \leqslant \|B\|_{\mathcal{L}(E,F)} \|U\|_{\gamma(\ell^2,E)}.
\end{equation}
Furthermore, if $E$ is a Hilbert space, $\gamma(\ell^2,E)$ is isometrically isomorphic to the space of Hilbert--Schmidt operators from $\ell^2$ to $E$ \cite[Proposition~9.1.9]{HytonenWeis2017}.

We recall the following fundamental estimate for stochastic integrals with respect to the
$\ell^2$-cylindrical Wiener process $W$ (see \cite[Equation~(2.1)]{Neerven2012}).
\begin{proposition}
  \label{prop:stoch-int}
  For any $p \in (1,\infty)$, $q \in [2,\infty)$, $t \in (0,\infty)$, and any
  $\mathbb{F}$-progressively measurable process $g\colon [0,t] \times \Omega \to \gamma(\ell^2,L^q(\mathcal{O}))$,
  there exists a constant $C = C(p,q,\mathcal{O}) > 0$, such that
  \[
    \mathbb{E} \left[ \Big\| \int_0^t g(s) \, \mathrm{d}W(s) \Big\|_{L^q(\mathcal{O})}^p \right]
    \leqslant C \, \mathbb{E} \left[ \Big( \int_0^t \|g(s)\|_{\gamma(\ell^2,L^q(\mathcal{O}))}^2 \, \mathrm{d}s \Big)^{p/2} \right],
  \]
  provided the right-hand side is finite.
\end{proposition}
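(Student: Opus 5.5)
This estimate is the Burkholder--Davis--Gundy inequality in the UMD space $L^q(\mathcal{O})$, and I would reduce it to the scalar BDG inequality. First, since $L^q(\mathcal{O})$ with $q\in[2,\infty)$ is a UMD Banach function space, the It\^o integral $\int_0^t g\,\mathrm{d}W$ is well defined for progressively measurable $g$ with $\int_0^t\|g(s)\|_{\gamma(\ell^2,L^q)}^2\,\mathrm{d}s<\infty$ a.s. The first step is to identify the $\gamma$-norm with a square function. For $E=L^q(\mathcal{O})$ and $g(s)$ with columns $g_n(s):=g(s)e_n$, the square-function characterization of $\gamma$-radonifying operators gives, uniformly in $s$,
\[
\|g(s)\|_{\gamma(\ell^2,L^q(\mathcal{O}))}\eqsim_q \Bigl\|\Bigl(\sum_{n}|g_n(s)|^2\Bigr)^{1/2}\Bigr\|_{L^q(\mathcal{O})}.
\]
This also yields $\|g\|_{\gamma(L^2(0,t;\ell^2),L^q)}\eqsim\|(\int_0^t\sum_n|g_n(s)|^2\,\mathrm{d}s)^{1/2}\|_{L^q(\mathcal{O})}$.

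Second, apply the scalar BDG inequality pointwise in $x$. For a.e.\ $x\in\mathcal{O}$, $M(x):=\int_0^t\sum_n g_n(s,x)\,\mathrm{d}W_n(s)$ is a real martingale, so
\[
\mathbb{E}|M(x)|^q\leqslant C_q\,\mathbb{E}\Bigl(\int_0^t\sum_n|g_n(s,x)|^2\,\mathrm{d}s\Bigr)^{q/2}.
\]
Integrating over $x$ and using Fubini gives the case $p=q$ in terms of the norm $\|(\int_0^t\sum_n|g_n|^2\,\mathrm{d}s)^{1/2}\|_{L^q(\mathcal{O})}$.

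Third, handle general $p$ and pass to the stated right-hand side. Here I would rely on the full UMD two-sided estimate of van Neerven--Veraar--Weis, $\mathbb{E}\|\int_0^t g\,\mathrm{d}W\|^p\eqsim\mathbb{E}\|g\|_{\gamma(L^2(0,t;\ell^2),L^q)}^p$, which holds for all $p\in(1,\infty)$. The $p$-independence comes from Lenglart's inequality or a good-$\lambda$ extrapolation, applied to the scalar-valued stopped processes.

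Finally, I need $\|g\|_{\gamma(L^2(0,t;\ell^2),L^q)}\leqslant C\|g\|_{L^2(0,t;\gamma(\ell^2,L^q))}$. Because $L^q$ has type $2$ for $q\geqslant2$, this follows from Minkowski's integral inequality in $L^{q/2}$ applied to the square function:
\[
\Bigl\|\int_0^t\sum_n|g_n|^2\,\mathrm{d}s\Bigr\|_{L^{q/2}}\leqslant\int_0^t\Bigl\|\sum_n|g_n|^2\Bigr\|_{L^{q/2}}\mathrm{d}s .
\]
The $\omega$-wise version of this inequality, combined with the previous step, gives the claimed estimate.

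The main obstacle is the passage from $p=q$ to arbitrary $p\in(1,\infty)$. I would not reprove it but cite the stochastic-integration theory in UMD spaces (\cite{Neerven2007}, \cite[Equation~(2.1)]{Neerven2012}), which is how the paper itself treats the statement.
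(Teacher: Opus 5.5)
Your proposal is correct and matches the paper, which gives no proof of this estimate and simply recalls it from \cite[Equation~(2.1)]{Neerven2012}; like you, it relies on the UMD stochastic integration theory for general $p\in(1,\infty)$. Your additional steps are sound and explain where the cited estimate comes from: the square-function description of $\gamma(\ell^2,L^q(\mathcal{O}))$, the pointwise scalar BDG argument for $p=q$, and Minkowski's inequality in $L^{q/2}(\mathcal{O})$, which gives $\|g\|_{\gamma(L^2(0,t;\ell^2),L^q(\mathcal{O}))}\lesssim\|g\|_{L^2(0,t;\gamma(\ell^2,L^q(\mathcal{O})))}$.
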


\section{Regularity}
\label{sec:regularity}
Throughout this paper, $Q$ denotes the $\gamma$-radonifying operator from \(\ell^2\) to \(L^2(\mathcal{O})\) defined by
\begin{equation}
  \label{eq:Q-def}
  Ql := \sum_{n=1}^\infty \sqrt{\lambda_n} \, l_n e_n, \quad l = (l_n)_{n \geqslant 1} \in \ell^2,
\end{equation}
where $(\lambda_n)_{n \geqslant 1}$ is a sequence of nonnegative coefficients satisfying
$\sum_{n=1}^\infty \lambda_n < \infty$,
and $(e_n)_{n \geqslant 1}$ is the orthonormal basis of $L^2(\mathcal{O})$
given by $e_n(x) = \sqrt{2}\sin(n\pi x)$ for \(x \in \mathcal{O}\).
Since $\sup_{x \in \mathcal{O}} |e_n(x)| = \sqrt{2}$ for all $n \geqslant 1$,
it follows from the summability of $(\lambda_n)_{n \geqslant 1}$ and \cite[Lemma~2.1]{Neerven2008} that
\begin{equation}
  \label{eq:Q-property}
  Q \in \gamma(\ell^2,L^q(\mathcal{O})) \quad\text{for all } q \in [1,\infty).
\end{equation}

We introduce the stochastic convolution
\begin{equation}\label{eq:G-def}
  G(t) := \int_0^t S(t-s)\, Q \, \mathrm{d}W(s), \qquad t \in [0,T],
\end{equation}
where $S(\cdot)$ denotes the analytic semigroup generated by the Dirichlet
Laplacian $A$ (see Subsection~\ref{subsec:functional-analytic-framework}).
In view of \cref{eq:Q-property},
\cite[Theorem~1.2]{Neerven2012} guarantees that $G$ admits a continuous modification
satisfying
\begin{equation}\label{eq:G-max}
  G \in L^{p}\Bigl(\Omega;\, C\bigl([0,T];\, (L^{q}(\mathcal{O}), H_{D}^{2,q})_{\frac{1}{2}-\frac{1}{p},\,p}\bigr)\Bigr),
\end{equation}
for all $p \in (2,\infty)$ and $q \in [2,\infty)$.
Here and in the sequel, $G$ is always identified with this continuous modification.

Now fix any $p,q\in[2,\infty)$ and $\theta\in[0,1)$, and choose $p_*>\max\{p,\,2/(1-\theta)\}$.
Applying \eqref{eq:G-max} with $p_*$ in place of $p$ and using the embedding
$(L^q(\mathcal{O}),H_{D}^{2,q})_{1/2-1/p_*,p_*}\hookrightarrow H_{D}^{\theta,q}$ (cf.~\cite[Theorem~C.4.1]{HytonenWeis2016} and \cite[Proposition~1.4]{Lunardi2018}), we obtain $G\in L^{p_*}(\Omega;C([0,T];H_{D}^{\theta,q}))$. Since $L^{p_*}(\Omega)\hookrightarrow L^p(\Omega)$ (the probability space has finite measure), we conclude that
 \begin{equation}\label{eq:G-LpCHq}
 G\in L^p(\Omega;C([0,T];H_{D}^{\theta,q}))\quad\text{for all }p,q\in[2,\infty)\text{ and }\theta\in[0,1).
 \end{equation}
Furthermore, the stochastic maximal $L^p$-regularity estimate in \cite[Theorem~1.1]{Neerven2012},
in conjunction with \cref{eq:Q-property}, implies $G\in L^p([0,T] \times \Omega;H_{D}^{1,q})$ for
all $p\in(2,\infty)$ and $q\in[2,\infty)$.
The inclusion $L^p([0,T] \times \Omega;H_{D}^{1,q}) \subset L^2([0,T] \times \Omega;H_{D}^{1,q})$ for $p>2$
allows us to extend the result to the endpoint $p=2$, and consequently,
\begin{equation}\label{eq:G-LpH1q}
   G\in L^p([0,T] \times \Omega;H_{D}^{1,q})\quad\text{for all }p,q\in[2,\infty).
\end{equation}

\begin{definition}
  Let $u_0$ be a strongly $\mathcal{F}_0$-measurable random variable with values in $L^2(\mathcal{O})$.
  A process $u \in C_{\mathbb{F}}([0,T];L^2(\mathcal{O}))$
  is called a \emph{mild solution} to equation
  \eqref{eq:burgers} if there exists a $\mathbb{P}$-null set $N$ such that,
  for all $\omega \in \Omega \setminus N$ and $t \in [0,T]$,
  \begin{equation}
    \label{eq:mild-def}
    u(t) = S(t)u_0 - \frac{1}{2}  \big( S \ast (\partial_x (u^2)) \big)(t) + G(t)
    \quad \text{ in } L^2(\mathcal{O}).
  \end{equation}
  Here, $G$ denotes the continuous modification of the stochastic convolution defined in \eqref{eq:G-def},
  and $\partial_x$ represents the distributional derivative.
\end{definition}

The following proposition establishes the regularity of the mild solution that will be needed in the sequel.

\begin{proposition}
  \label{prop:u-regu}
  Let \(u_0 \in L^2(\mathcal{O})\) be deterministic. Then \cref{eq:burgers} admits a unique mild solution~\(u\), and there exists \(\kappa > 0\) such that
  \begin{equation}
    \label{eq:u-exp-moment}
    \mathbb{E}\Bigl[ \exp\bigl( \kappa \|u\|_{L^2(0,T;H_{D}^{1,2})}^2 \bigr) \Bigr] < \infty .
  \end{equation}
  Furthermore, the following additional regularity properties hold:
  \begin{enumerate}
    \item[\textup{(i)}] 
    If \(u_0 \in W_0^{1,\infty}(\mathcal{O})\), then for all \(p,q \in [2,\infty)\) and \(\theta \in (0,1)\),
    \begin{align}
      u   &\in L^p([0,T] \times \Omega; H_{D}^{1,q}), \label{eq:u-LpH1q} \\
      u   &\in L^p(\Omega; C([0,T]; H_{D}^{\theta,q})), \label{eq:u-LpCHq} \\
      \xi &\in L^p(\Omega; C([0,T]; H_{D}^{1,q})), \label{eq:xi-LpCH1q}
    \end{align}
    where \(\xi := u - G\) with \(G\) defined in \eqref{eq:G-def}.
    
    \item[\textup{(ii)}] 
    If \(u_0 \in W_0^{1,\infty}(\mathcal{O}) \cap (L^2(\mathcal{O}),H_{D}^{2,2})_{1-1/p^*,p^*}\)
    for some \(p^* \in (4,\infty)\), then
    \begin{equation}
      \label{eq:xi-LpLp*H2}
      \xi \in L^p(\Omega; L^{p^*}(0,T; H_{D}^{2,2}))
      \qquad\text{for all } p \in [2,\infty).
    \end{equation}
  \end{enumerate}
\end{proposition}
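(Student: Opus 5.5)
The plan is to work pathwise with the shifted variable $\xi := u - G$. For each fixed $\omega$, $\xi$ solves the random PDE
\[
\partial_t \xi = \partial_x^2 \xi - \tfrac12 \partial_x\bigl((\xi+G)^2\bigr), \qquad \xi(0)=u_0 ,
\]
with $G$ continuous in $H_D^{\theta,q}$ by \eqref{eq:G-LpCHq}.

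\textbf{Existence and uniqueness.} I would follow the classical deterministic argument of da Prato--Debussche--Temam.

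\begin{itemize}
\item Obtain local existence by a contraction in $C([0,t_0];L^2)$ using \cref{lem:parabolic-regu}(ii). The nonlinearity $\partial_x(v^2)$ maps $L^2$ into $H_D^{-3/2,2}$, in fact into $H_D^{-1,1}$. To avoid working with $q=1$, I would instead run the contraction in $C([0,t_0];L^4)$ and use the smoothing factor $(t-s)^{-1/2-\epsilon}$.
\item Globalize with the a priori $L^2$ energy estimate. Testing with $\xi$, the term $\int \xi^2\partial_x\xi$ vanishes by the Dirichlet conditions, and the cross terms with $G$ are controlled via $\|G\|_{L^\infty}$. This gives
\[
\tfrac{d}{dt}\|\xi\|^2+\|\partial_x\xi\|^2 \leqslant C\bigl(1+\|G\|_{W^{1,\infty}}^{2}\bigr)\|\xi\|^2 + C\|G\|_{L^4}^4 .
\]
In this bound I would replace $W^{1,\infty}$ by a norm available from \eqref{eq:G-LpCHq}, for instance $\|G\|_{L^\infty}^2$ after integrating by parts differently.
\item Uniqueness follows from a pathwise Gronwall argument on the difference of two solutions.
\end{itemize}

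\textbf{Exponential moment \eqref{eq:u-exp-moment}.} Here I would not use the shift. Instead I would apply It\^o's formula to $\|u\|_{L^2}^2$ directly. Since $\int u^2\partial_x u=0$, this gives
\[
\|u(t)\|^2+2\int_0^t\|\partial_x u\|^2 = \|u_0\|^2 + t\|Q\|_{\gamma(\ell^2,L^2)}^2 + M_t ,
\]
where $M_t = 2\int_0^t \langle u, Q\,dW\rangle$ is a martingale with quadratic variation at most $4\|Q\|^2\int_0^t\|u\|^2 \leqslant C\int_0^t\|\partial_x u\|^2$ by Poincaré. The exponential martingale inequality $\mathbb{P}(M_T-\tfrac{\beta}{2}\langle M\rangle_T>r)\leqslant e^{-\beta r}$ with $\beta$ small then yields a Gaussian-type tail for $\int_0^T\|\partial_x u\|^2$, hence \eqref{eq:u-exp-moment} for small $\kappa$.

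\textbf{Part (i).} I would bootstrap in the mild formulation
\[
\xi = S(\cdot)u_0 - \tfrac12 S\ast \partial_x\bigl(u^2\bigr).
\]
\begin{itemize}
\item The energy estimate gives $u\in L^\infty(0,T;L^2)\cap L^2(0,T;H^1)\subset L^4(0,T;L^\infty)$. Hence $\partial_x(u^2)\in L^{r}(0,T;H_D^{-1,q})$, and \cref{lem:parabolic-regu}(ii) (or its $L^r$ variant) gives $\xi\in C([0,T];H_D^{\theta,q})$.
\item Iterating, $u$ becomes bounded, so $u^2\in L^\infty(0,T;L^q)$, and then $\xi\in C([0,T];H_D^{1-\epsilon,q})$.
\item To reach exactly $H_D^{1,q}$, I would write $\partial_x(u^2)=2u\,\partial_x u$, which lies in $L^\infty(0,T;L^q)$ once $u\in C(H^{\theta,q})$ with $\theta$ close to $1$ and $\partial_x G$ has appropriate integrability. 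Alternatively I would use $\partial_x(u^2)\in L^\infty(0,T;H_D^{-1+\delta,q})$. Since $u_0\in W_0^{1,\infty}$, \cref{lem:parabolic-regu}(i) handles the initial term.
\item \eqref{eq:u-LpCHq} then follows by adding \eqref{eq:G-LpCHq}, and \eqref{eq:u-LpH1q} by adding \eqref{eq:G-LpH1q}.
\end{itemize}

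The main obstacle is moments. Each pathwise bound is polynomial in $\|G\|$ norms but may be exponential in $\int\|\partial_x u\|^2$, through Gronwall. To control $\mathbb{E}\exp(C\int_0^T\|u\|_{H^1}^2)$ with a large constant $C$, I would localize the Gronwall in time, splitting $[0,T]$ into short intervals, so that only a small multiple appears in the exponent. I would then combine Hölder's inequality with \eqref{eq:u-exp-moment} and with Gaussian (Fernique) moments of $G$.

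\textbf{Part (ii).} I would apply maximal regularity, \cref{lem:parabolic-regu}(iii) with $\theta=0$ and $p=p^*$:
\[
\|\xi\|_{L^{p^*}(H^{2,2})}\leqslant C\bigl(\|u_0\|_{(L^2,H_D^{2,2})_{1-1/p^*,p^*}}+\|u\,\partial_x u\|_{L^{p^*}(L^2)}\bigr).
\]
By (i), $\|u\,\partial_x u\|_{L^2}\leqslant \|u\|_{L^\infty}\|u\|_{H^{1,2}}$, and both factors lie in $L^{p}(\Omega;C)$ for $\xi$ and in $L^p(\Omega;L^{p^*})$ for $G$ by \eqref{eq:G-LpH1q}. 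Hölder's inequality in $\omega$ then gives \eqref{eq:xi-LpLp*H2}.
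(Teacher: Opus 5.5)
Your existence and uniqueness sketch and your exponential-martingale proof of \eqref{eq:u-exp-moment} are a valid self-contained replacement for the citations the paper uses. (The contraction already works in $C([0,t_0];L^2(\mathcal{O}))$ with $\partial_x(v^2)\in H_D^{-3/2-\varepsilon,2}$, so switching to $L^4$, which would require $u_0\in L^4$, is unnecessary.) Your part (ii) and the structure of your bootstrap also match the paper.

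The gap is the moment control in part (i). You propose to absorb pathwise bounds that are exponential in $\int_0^T\|\partial_x u\|_{L^2}^2\,\mathrm{d}t$ by bounding $\mathbb{E}\exp\bigl(C\int_0^T\|u\|_{H_D^{1,2}}^2\,\mathrm{d}t\bigr)$ for a \emph{large} $C$. Such a bound is not available. Already $\int_0^T\|G\|_{H_D^{1,2}}^2\,\mathrm{d}t$ is a quadratic functional of a Gaussian in $L^2(0,T;H_D^{1,2})$. Its exponential moments are infinite beyond a threshold fixed by the top eigenvalue of the covariance as soon as $Q\neq0$. The It\^o argument for $u$ gives such bounds only for small $\kappa$, which is exactly \eqref{eq:u-exp-moment}.

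Splitting $[0,T]$ into short intervals does not help. On $[a,b]$ the energy identity gives $2\int_a^b\|\partial_x u\|_{L^2}^2\,\mathrm{d}t\leqslant\|u(a)\|_{L^2}^2+(b-a)\|Q\|_{\gamma(\ell^2,L^2(\mathcal{O}))}^2+M_b-M_a$. Controlling $\exp(C\int_a^b\|\partial_x u\|_{L^2}^2\,\mathrm{d}t)$ this way therefore needs $\mathbb{E}\exp(\tfrac{C}{2}\|u(a)\|_{L^2}^2)$ with the same large constant, however short the interval. Recombining the pieces by H\"older only enlarges the constants.

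The same problem arises if your starting bound $u\in L^\infty(0,T;L^2)\cap L^2(0,T;H^1)$ is taken from your pathwise energy estimate for $\xi$. Its Gronwall factor $\exp(C\int_0^T\|G\|_{L^\infty}^2\,\mathrm{d}t)$ is $p$-integrable only for $p$ below a threshold. That route yields finitely many moments, not all $p\in[2,\infty)$.

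The missing observation is that part (i) needs neither Gronwall nor exponential moments. In the It\^o identity for $\|u\|_{L^2}^2$ the convective term vanishes exactly. Dropping the dissipation and using stopping times and the Burkholder--Davis--Gundy inequality (the paper's Step~1) gives $\mathbb{E}\sup_t\|u(t)\|_{L^2}^p<\infty$ for \emph{every} $p$.

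After that, each bootstrap step is a \emph{linear} deterministic estimate $\|S\ast f\|_{C([0,T];H_D^{\theta_2,q})}\lesssim\|f\|_{L^\infty(0,T;H_D^{\theta_1,q})}$ applied to $f=\partial_x(u^2)$. The norm of $f$ is bounded by the square of a norm of $u=\xi+G$ that is already controlled, using the previous stage for $\xi$ and \eqref{eq:G-LpCHq} for $G$. So an $L^{2p}(\Omega)$ bound at one stage gives an $L^p(\Omega)$ bound at the next by Cauchy--Schwarz. Since every $p$ is available, finitely many stages reach \eqref{eq:xi-LpCH1q}; in the paper these are $L^2\to H_D^{1/2-\varepsilon,2}\to L^q\to H_D^{4/5,q}\to H_D^{1,q}$.

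One smaller point: your first option for the last step fails, because $\partial_x G\notin L^\infty(0,T;L^q(\mathcal{O}))$ ($G$ is not continuous in $H_D^{1,q}$). Either use an $L^r$-in-time smoothing estimate with $r>2$, or your alternative $\partial_x(u^2)\in L^\infty(0,T;H_D^{-1+\delta,q})$ from $u^2\in C([0,T];H_D^{\delta,q})$. The latter is the paper's route, with $\delta=3/5$.
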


\begin{proof}
Existence and uniqueness of a mild solution to \cref{eq:burgers} are classical;
see, e.g., \cite[Theorem~5.10]{Jentzen2021spatial}. The exponential moment bound
\cref{eq:u-exp-moment} is a direct consequence of \cite[Proposition~2.3]{Daprato1998differentiability}.
Set $\xi := u - G$. Then, $\mathbb{P}$-a.s., $\xi$ satisfies
\begin{equation}
  \label{eq:xi-def}
  \xi(t) = S(t)u_0 - \frac{1}{2} S \ast \big( \partial_x(u^2) \big)(t)
  \quad \text{for all } t \in [0,T].
\end{equation}
We first observe that assertion \textup{(ii)} follows from \textup{(i)}.
Indeed, assume that $u_0 \in (L^2(\mathcal{O}), H_D^{2,2})_{1-1/p^*,p^*}$ for some $p^* \in (4,\infty)$.
Applying \cref{lem:parabolic-regu}\textup{(iii)}, we obtain, $\mathbb{P}$-a.s.,
\[
\|\xi\|_{L^{p^*}(0,T;H_{D}^{2,2})} \leqslant C \Big( \|u_0\|_{(L^2(\mathcal{O}),H_{D}^{2,2})_{1-1/p^*,p^*}}
+ \|\partial_x(u^2)\|_{L^{p^*}(0,T;L^2(\mathcal{O}))} \Big),
\]
where $C = C(p^*,\mathcal{O}) > 0$. Consequently, \cref{eq:xi-LpLp*H2} holds for all $p \in [2,\infty)$ provided that 
\[
\partial_x(u^2) \in L^p(\Omega;L^{p^*}(0,T;L^2(\mathcal{O}))) \quad \text{for all } p \in [2,\infty).
\]
The latter condition is a direct consequence of \cref{eq:u-LpH1q}, which is valid for all $p,q \in [2,\infty)$.
It therefore suffices to establish assertion \textup{(i)}.

The proof of assertion \textup{(i)} is divided into three steps. Throughout
the remainder of the proof, for brevity, we write
$\mathcal{X}_p^{\theta,q} := L^p \big( \Omega; C([0,T];H_{D}^{\theta,q}) \big)$.

\textbf{Step 1: Regularity of $u$ in $\mathcal{X}_p^{0,2}$.}
We prove that
\begin{equation} \label{eq:u-LpCL2}
  u \in \mathcal{X}_p^{0,2} \quad \text{for all } p \in [2,\infty).
\end{equation}
The argument is classical; we include a concise proof for completeness.

For any $\varepsilon \in (0,1/2)$, by the Sobolev embedding $H_{D}^{3/2+\varepsilon,2} \hookrightarrow W_0^{1,\infty}(\mathcal{O})$,
the distributional derivative $\partial_x$ induces a bounded linear
operator from $L^1(\mathcal{O})$ to $H_{D}^{-3/2-\varepsilon,2}$.
Since the Nemytskii operator $v \mapsto v^2$ maps $L^2(\mathcal{O})$ continuously into $L^1(\mathcal{O})$,
we deduce that
\begin{equation}
  \label{eq:partial-x-L1}
 \text{the map $v \mapsto \partial_x(v^2)$
is continuous from $L^2(\mathcal{O})$ to $H_{D}^{-3/2-\varepsilon,2}$
for all $\varepsilon \in (0,1/2)$}.
\end{equation}
Hence, from $u \in C_{\mathbb{F}}([0,T];L^2(\mathcal{O}))$ we
deduce that
\[
  \partial_x(u^2) \in C_{\mathbb{F}}([0,T];H_D^{-8/5,2}).
\]
Combined with the initial regularity $u_0 \in W_0^{1,\infty}(\mathcal{O}) \hookrightarrow H_{D}^{1/4,2}$, this
allows us to apply \cref{lem:parabolic-regu}(i,ii) and conclude that
\[
  S(\cdot)u_0 - \frac{1}{2} S \ast \bigl( \partial_x(u^2) \bigr)
  \in C_{\mathbb{F}}([0,T];H_D^{1/4,2}).
\]
Together with the mild formulation \cref{eq:G-LpCHq,eq:mild-def}, this yields
\[
  u \in C_{\mathbb{F}}([0,T];H_D^{1/4,2}).
\]

Next, the Sobolev embedding $H_{D}^{1/4,2} \hookrightarrow L^4(\mathcal{O})$ yields
$u \in C_{\mathbb{F}}([0,T];L^4(\mathcal{O}))$.
Since the Nemytskii operator $v \mapsto v^2$ maps $L^4(\mathcal{O})$ continuously into $L^2(\mathcal{O})$,
we infer that $u^2 \in C_{\mathbb{F}}([0,T];L^2(\mathcal{O}))$.
Hence $\partial_x(u^2) \in C_{\mathbb{F}}([0,T];H_{D}^{-1,2})$.
This, together with the initial condition $u_0 \in W_0^{1,\infty}(\mathcal{O}) \hookrightarrow (H_{D}^{-1,2}, H_{D}^{1,2})_{1/2,2}$,
enables us to use \cref{lem:parabolic-regu}(iii) to infer that 
$S(\cdot)u_0 - \frac{1}{2} S \ast \big( \partial_x(u^2) \big)
\in L^2(0,T;H_{D}^{1,2})$ $\mathbb{P}$-a.s.
Combining this with \cref{eq:G-LpH1q,eq:mild-def}, we obtain that
$u \in L^2(0,T;H_{D}^{1,2})$ $\mathbb{P}$-a.s.
Consequently, since $u \in C_{\mathbb{F}}([0,T];L^2(\mathcal{O}))$, we conclude that
\[
u \in C([0,T];L^2(\mathcal{O})) \cap L^2(0,T;H_{D}^{1,2}) \quad \mathbb{P} \text{-a.s.}
\]

With this regularity property in hand,
it is standard that (see, e.g., \cite[Proposition~4.4]{Neerven2012b}),
$\mathbb{P}$-a.s., for all \(t \in [0,T]\),
\[
  u(t) = u_0 + \int_0^t \Bigl( Au(s) - \frac{1}{2} \partial_x(u^2(s)) \Bigr) \, \mathrm{d}s
  + \int_0^t  Q \, \mathrm{d}W(s) \quad \text{in } H_{D}^{-1,2}.
\]
Applying Itô's formula (\cite[Lemma~A.5]{Agresti2025nonlinear})
and using the identity
\[
  \Bigl\langle Av - \frac{1}{2} \partial_x(v^2), \, v \Bigr\rangle_{H_{D}^{-1,2},H_{D}^{1,2}}
  = -\|v\|_{H_{D}^{1,2}}^2,
  \qquad v \in H_{D}^{1,2},
\]
where \(\langle \cdot, \cdot \rangle_{H_{D}^{-1,2},H_{D}^{1,2}}\) denotes the duality pairing
between \(H_{D}^{-1,2}\) and \(H_{D}^{1,2}\), we deduce that \(\mathbb{P}\)-a.s.~for all \(t \in [0,T]\),
\begin{align*}
  \|u(t)\|_{L^2(\mathcal{O})}^2
  &= \|u_0\|_{L^2(\mathcal{O})}^2
  - 2 \int_0^t \|u(s)\|_{H_{D}^{1,2}}^2 \, \mathrm{d}s \\
  &\quad + 2 \int_0^t \langle u(s), Q \rangle \, \mathrm{d}W(s)
  + t \|Q\|_{\gamma(\ell^2,L^2(\mathcal{O}))}^2.
\end{align*}
Here, $\langle u, Q\rangle$ is interpreted as an $\mathbb{F}$-adapted
$\gamma(\ell^2,\mathbb{R})$-valued continuous process with the estimate (see \cref{rem:uQ})  
\begin{equation}
  \label{eq:uQ-bound}
  \|\langle u, Q \rangle\|_{\gamma(\ell^2,\mathbb{R})} \leqslant
  \|u\|_{L^2(\mathcal{O})} \|Q\|_{\gamma(\ell^2,L^2(\mathcal{O}))}
  \quad \mathbb{P} \otimes \mathrm{d}t \text{-a.e.}
\end{equation}
Hence, dropping the non-positive drift term, we obtain, \(\mathbb{P}\)-a.s., for all \(t \in [0,T]\),
\[
  \|u(t)\|_{L^2(\mathcal{O})}^2
  \leqslant \|u_0\|_{L^2(\mathcal{O})}^2
  + 2 \int_0^t \langle u(s), Q \rangle \, \mathrm{d}W(s)
  + T\|Q\|_{\gamma(\ell^2,L^2(\mathcal{O}))}^2.
\]
For each \(R\in(1,\infty)\), define the stopping time
\[
t_R:=\inf\{t\in[0,T]:\|u(t)\|_{L^2(\mathcal{O})}>R\},
\]
with the convention \(\inf\varnothing=T\). Since \(u\in C_{\mathbb{F}}([0,T];L^2(\mathcal{O}))\), \(t_R\) is an \(\mathbb{F}\)-stopping time and \(\lim_{R\to\infty}\mathbb{P}(t_R=T)=1\).
For every \(p\in[2,\infty)\), a standard argument based on the Burkholder--Davis--Gundy inequality 
(see, e.g., the proof of \cite[Lemma~2.2]{LiuRoeckner2010SPDE}) yields
\[
\mathbb{E}\Bigl[\sup_{t\in[0,T]}\|u(t\wedge t_R)\|_{L^2(\mathcal{O})}^p\Bigr]
\leqslant C\Bigl(\|u_0\|_{L^2(\mathcal{O})}^p+\|Q\|_{\gamma(\ell^2,L^2(\mathcal{O}))}^p\Bigr),
\]
where \(C=C(p,T)>0\). Since \(t_R\uparrow T\) \(\mathbb{P}\)-a.s.,
letting $R \to \infty$ and using the monotone convergence theorem, we obtain
\[
\mathbb{E}\Bigl[\sup_{t\in[0,T]}\|u(t)\|_{L^2(\mathcal{O})}^p\Bigr]
=\mathbb{E}\Bigl[\lim_{R\to\infty}\sup_{t\in[0,T]}\|u(t\wedge t_R)\|_{L^2(\mathcal{O})}^p\Bigr]
\leqslant C\Bigl(\|u_0\|_{L^2(\mathcal{O})}^p+\|Q\|_{\gamma(\ell^2,L^2(\mathcal{O}))}^p\Bigr).
\]
The right-hand side is finite since
$u_0 \in W_0^{1,\infty}(\mathcal{O}) \hookrightarrow L^2(\mathcal{O})$ is deterministic
and $Q \in \gamma(\ell^2,L^2(\mathcal{O}))$ by \cref{eq:Q-property}.
Since $u$ has $\mathbb{P}$-a.s.~continuous paths in $L^2(\mathcal{O})$,
this establishes \cref{eq:u-LpCL2}.

\textbf{Step 2: Proof of \cref{eq:xi-LpCH1q}.}
We proceed by a bootstrap argument that uses the smoothing properties of the
analytic semigroup \(S\) to successively improve the spatial regularity of \(\xi\).

First, since $u_0 \in W_0^{1,\infty}(\mathcal{O}) \hookrightarrow H_{D}^{1,q}$ for every
$q \in (1,\infty)$, \cref{lem:parabolic-regu}(i) with $\theta=1$ yields
\begin{equation}
  \label{eq:Su0}
  S(\cdot)u_0 \in C([0,T];H_{D}^{1,q}) \quad\text{for all } q \in (1,\infty).
\end{equation}
By \cref{eq:u-LpCL2} (applied with $p$ replaced by $2p$), we infer that
$u \in \mathcal{X}_{2p}^{0,2}$ for all $p \in [2,\infty)$.
For any $\varepsilon \in (0,1/2)$, since
the nonlinear operator $v \mapsto \partial_x(v^2)$ maps $L^2(\mathcal{O})$ 
continuously into $H_{D}^{-3/2-\varepsilon/2,2}$ (see \cref{eq:partial-x-L1})
and is quadratically bounded, it follows that
\[
  \partial_x(u^2) \in \mathcal{X}_p^{-3/2-\varepsilon/2,2}
  \quad\text{for all } p \in [2,\infty) \text{ and } \varepsilon \in (0,1/2).
\]
An application of \cref{lem:parabolic-regu}(ii)
with $(\theta_1,\theta_2,q) = (-3/2-\varepsilon/2,1/2-\varepsilon,2)$ for $\varepsilon \in (0,1/2)$
yields
\[
  S \ast \bigl(\partial_x(u^2)\bigr)
  \in \mathcal{X}_p^{1/2-\varepsilon,2}
  \quad\text{for all } p \in [2,\infty) \text{ and } \varepsilon \in (0,1/2).
\]
In view of \cref{eq:xi-def} and \cref{eq:Su0}, we conclude that
\[
  \xi \in \mathcal{X}_p^{1/2-\varepsilon,2}
  \quad\text{for all } p \in [2,\infty) \text{ and } \varepsilon \in (0,1/2).
\]
By the Sobolev embedding theorem, it follows that
\begin{equation}
  \label{eq:xi-LpCH-pre}
  \xi \in \mathcal{X}_p^{0,q}
  \quad\text{for all } p,q \in [2,\infty).
\end{equation}

The spatial regularity of $u$ can now be improved.
Combining \cref{eq:G-LpCHq} (with $\theta = 0$),
\cref{eq:xi-LpCH-pre},
and the decomposition $u = \xi + G$,
we conclude that
\[
  u \in \mathcal{X}_p^{0,q} \quad\text{for all } p,q \in [2,\infty).
\]
Replacing $p$ by $2p$ and $q$ by $2q$ in the previous estimate, and
since the Nemytskii operator $v \mapsto v^2$ maps $L^{2q}(\mathcal{O})$
continuously into $L^{q}(\mathcal{O})$ and is quadratically bounded,
it follows that $u^2 \in \mathcal{X}_p^{0,q}$ for all $p,q \in [2,\infty)$.
The fact that $\partial_x \in \mathcal{L}(L^q(\mathcal{O}),H_{D}^{-1,q})$ then yields
\begin{equation}
  \label{eq:u2-bound}
  \partial_x(u^2) \in \mathcal{X}_p^{-1,q} \quad \text{for all } p,q \in [2,\infty).
\end{equation}
A further application of \cref{lem:parabolic-regu}(ii) with $(\theta_1,\theta_2) = (-1,4/5)$ gives
\[
  S \ast \big(\partial_x(u^2)\big) \in \mathcal{X}_p^{4/5,q}
  \quad \text{for all } p,q \in [2,\infty),
\]
which, combined with \cref{eq:Su0,eq:xi-def}, yields
\begin{equation}
  \label{eq:xi-LpCH-pre2}
    \xi \in \mathcal{X}_p^{4/5,q} \quad \text{for all } p,q\in [2,\infty).
\end{equation}

We now perform the final refinement.  
Fix $p,q \in [2,\infty)$. Combining \cref{eq:G-LpCHq} (with $p$ replaced by $2p$, and $\theta$ replaced by $4/5$),
\cref{eq:xi-LpCH-pre2} (with $p$ replaced by $2p$),
and the decomposition $u = \xi + G$, we deduce that
$u \in \mathcal{X}_{2p}^{4/5,q}$.
For any $v, w \in H_{D}^{4/5,q}$, the following estimates hold
(see \cite[Chapter~4]{Runst1996sobolev} and
the proof of \cref{lem:composition-embeddings} in Subsection~\ref{ssec:proof-uh-weak} for related techniques):
\[
\|v^2\|_{H_{D}^{3/5,q}} \leqslant C\|v\|_{H_{D}^{4/5,q}}^2,
\qquad
\|v^2 - w^2\|_{H_{D}^{3/5,q}} \leqslant C
\|v+w\|_{H_{D}^{4/5,q}} \|v-w\|_{H_{D}^{4/5,q}},
\]
where $C = C(q,\mathcal{O}) > 0$.
Consequently, the Nemytskii operator \(v\mapsto v^2\) is continuous from \(H_D^{4/5,q}\) to \(H_D^{3/5,q}\)
and quadratically bounded. Since \(u\in\mathcal{X}_{2p}^{4/5,q}\), it follows that
\[
u^2\in\mathcal{X}_p^{3/5,q}.
\]
Since \(\partial_x\) is bounded from \(H_D^{3/5,q}\) to \(H_D^{-2/5,q}\), we obtain
\begin{equation}
  \label{eq:partialxu2}
  \partial_x(u^2)\in\mathcal{X}_p^{-2/5,q}.
\end{equation}
Applying \cref{lem:parabolic-regu}(ii) with \((\theta_1,\theta_2)=(-2/5,1)\), we get
\[
S\ast\bigl(\partial_x(u^2)\bigr)\in\mathcal{X}_p^{1,q}.
\]
Since \(p,q\in[2,\infty)\) were arbitrary, this inclusion holds for all \(p,q\in[2,\infty)\). Combining this with \cref{eq:xi-def} and \cref{eq:Su0}, we infer that \cref{eq:xi-LpCH1q} holds for all \(p,q\in[2,\infty)\).

\textbf{Step 3: Proof of \cref{eq:u-LpH1q,eq:u-LpCHq}.}
Recalling the decomposition $u = \xi + G$, the desired regularity \cref{eq:u-LpH1q} for all $p,q \in [2,\infty)$
follows immediately from \cref{eq:G-LpH1q} and \cref{eq:xi-LpCH1q},
in conjunction with the embedding $ \mathcal{X}_p^{1,q} \hookrightarrow L^p([0,T] \times \Omega;H_{D}^{1,q})$.
Finally, the regularity property \cref{eq:u-LpCHq} for all $p,q \in [2,\infty)$ and $\theta \in (0,1)$ is deduced from \cref{eq:G-LpCHq} and \cref{eq:xi-LpCH1q} by utilizing the embedding $H_{D}^{1,q} \hookrightarrow H_{D}^{\theta,q}$.
This completes the proof of (i), and hence that of \cref{prop:u-regu}.
\end{proof}

\begin{remark} \label{rem:uQ}
  For any $v \in L^2(\mathcal{O})$, the mapping $w \mapsto \langle v, w \rangle$ defines
  a bounded linear functional on $L^2(\mathcal{O})$, where $\langle \cdot, \cdot \rangle$
  denotes the inner product in $L^2(\mathcal{O})$. Consequently, by the ideal property \cref{eq:ideal},
  we have $\langle v, Q \rangle \in \gamma(\ell^2,\mathbb{R})$ and
  \[
    \| \langle v, Q \rangle \|_{\gamma(\ell^2,\mathbb{R})} \leqslant \|v\|_{L^2(\mathcal{O})} \|Q\|_{\gamma(\ell^2,L^2(\mathcal{O}))}.
  \]
\end{remark}

\section{Spatial semi-discretization}
\label{sec:spatial-semi-discretization}
Let $\mathcal{K}_h$ be a uniform partition of the domain $\mathcal{O}$ with mesh size $h$, and define the finite element space
\[
X_h := \Bigl\{ v_h \in C(\overline{\mathcal{O}}): \, v_h|_K \in P_2(K) \ \forall\, K \in \mathcal{K}_h,\; v_h(0) = v_h(1) = 0 \Bigr\},
\]
where $P_2(K)$ denotes the space of polynomials on $K$ of total degree at most two.
The discrete Laplacian \(A_h \colon X_h \to X_h\) is defined through the variational formulation
\[
\int_{\mathcal{O}} (A_h u_h) v_h \, \mathrm{d}x
= -\int_{\mathcal{O}} \partial_x u_h \, \partial_x v_h \, \mathrm{d}x
\qquad \text{for all } u_h, v_h \in X_h.
\]
For \(\theta \in \mathbb{R}\) and \(q \in (1,\infty)\), we denote by \(H_{D,h}^{\theta,q}\) the space \(X_h\) equipped
with the discrete norm \(\|v_h\|_{H_{D,h}^{\theta,q}} := \|(-A_h)^{\theta/2}v_h\|_{L^q(\mathcal{O})}\) (\(v_h \in X_h\)).

Let $P_h$ be the $L^2(\mathcal{O})$-orthogonal projection onto $X_h$.
For any $q \in (1,\infty)$ with H\"older conjugate exponent $q'$, the operator $P_h$ extends by duality to a
linear operator from $H_{D}^{\theta,q}$, $\theta \in [-1,0]$, onto $X_h$, characterized by the relation
\begin{equation}
  \label{eq:Ph-extend}
  \int_{\mathcal{O}} (P_h f)\, v_h \,\mathrm{d}x
  \;=\; \langle f,\, v_h \rangle
  \qquad \text{for all } v_h \in X_h,
\end{equation}
where $\langle \cdot,\cdot \rangle$ denotes the duality pairing between $H_{D}^{\theta,q}$ and $H_{D}^{-\theta,q'}$.
A classical result by Douglas, Dupont, and Wahlbin~\cite{Douglas1975} establishes the uniform $L^q$-stability of this projection:
\begin{equation}
  \label{eq:Ph-stab0}
  \sup_{0<h\leqslant 1} \|P_h\|_{\mathcal{L}(L^q(\mathcal{O}), L^q(\mathcal{O}))} < \infty, \qquad q \in [1,\infty].
\end{equation}
In view of this uniform boundedness and the ideal property of $\gamma$-radonifying operators (cf.~\cref{eq:ideal}), property \cref{eq:Q-property} implies the uniform bound
\begin{equation} \label{eq:PhQ-bound}
\sup_{0 < h \leqslant 1} \|P_h Q\|_{\gamma(\ell^2, H_{D,h}^{0,q})} < \infty \qquad \text{for all } q \in [1,\infty).
\end{equation}

With these preliminaries, the spatial semi-discretization of \cref{eq:burgers} reads
\begin{equation}
  \label{eq:uh}
  \begin{cases}
    \mathrm{d}u_h(t) = \bigl(A_h u_h(t) - \frac{1}{2} P_h \partial_x(u_h^2(t))\bigr) \, \mathrm{d}t
    + P_h Q \, \mathrm{d}W(t), \quad t \in [0,T], \\[4pt]
    u_h(0) = P_h u_0.
  \end{cases}
\end{equation}

\begin{definition}
  Let $u_0$ be a strongly $\mathcal{F}_0$-measurable $L^2(\mathcal{O})$-valued random variable.
  A process $u_h \in C_{\mathbb{F}}([0,T];X_h)$
  is called a \emph{solution} to \cref{eq:uh} if
  there exists a $\mathbb{P}$-null set $N$ such that,
  for all $\omega \in \Omega \setminus N$ and $t \in [0,T]$,
  \begin{equation} \label{eq:uh-def}
    u_h(t) = P_h u_0 + \int_0^t \left( A_h u_h(s) - \frac{1}{2} P_h\partial_x\bigl( u_h^2(s) \bigr) \right) \, \mathrm{d}s
    + \int_0^t P_h Q \, \mathrm{d}W(s).
  \end{equation}
\end{definition}

\begin{proposition} \label{prop:uh-regu}
  Let the initial datum $u_0 \in L^2(\mathcal{O})$ be deterministic.
  Then, \cref{eq:uh} admits a unique solution $u_h$, and there exists $\kappa > 0$ such that
  \begin{equation}
    \label{eq:uh-exp-moment}
    \sup_{0 < h \leqslant 1} \mathbb{E}\left[ \exp\left( \kappa \|u_h\|_{L^2(0,T;H_{D}^{1,2})}^2 \right) \right] < \infty.
  \end{equation}
  Moreover, if $u_0 \in W_0^{1,\infty}(\mathcal{O})$, then for all $p, q \in [2,\infty)$ and $\theta \in (0,1)$,
  \begin{align}
    \sup_{0 < h \leqslant 1} \|u_h\|_{L^p([0,T] \times \Omega; H_{D}^{1,q})} &< \infty,
    \label{eq:uh-LpH1q} \\
    \sup_{0 < h \leqslant 1} \|u_h\|_{L^p(\Omega; C([0,T]; H_{D}^{\theta,q}))} &< \infty.
    \label{eq:uh-LpCHq}
  \end{align}
\end{proposition}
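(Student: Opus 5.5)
The proof mirrors that of \cref{prop:u-regu}, with the continuous tools replaced by their discrete, $h$-uniform counterparts.

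\textbf{Existence, uniqueness and the exponential moment.} Equation \cref{eq:uh} is a finite-dimensional SDE on $X_h$ with locally Lipschitz drift, so a unique local solution exists up to explosion. Global existence and the exponential bound \cref{eq:uh-exp-moment} both come from Itô's formula for $\|u_h\|_{L^2}^2$. The key structural fact is
\[
\int_{\mathcal{O}} P_h\partial_x(u_h^2)\,u_h\,\mathrm{d}x=\int_{\mathcal{O}}\partial_x(u_h^2)\,u_h\,\mathrm{d}x=\tfrac23\bigl[u_h^3\bigr]_0^1=0 .
\]
This gives
\[
\|u_h(t)\|_{L^2}^2+2\int_0^t\|\partial_x u_h\|_{L^2}^2\,\mathrm{d}s=\|P_hu_0\|_{L^2}^2+2\int_0^t\langle u_h,P_hQ\,\mathrm{d}W\rangle+t\|P_hQ\|_{\gamma(\ell^2,L^2)}^2 .
\]
Global existence follows, and the quadratic variation of the martingale is bounded by $\|P_hQ\|^2_{\gamma}\int_0^t\|u_h\|_{L^2}^2\leqslant C\int_0^t\|\partial_xu_h\|^2$ by Poincaré. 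The exponential-martingale argument of \cite[Proposition~2.3]{Daprato1998differentiability} then yields \cref{eq:uh-exp-moment} for small $\kappa$. All constants depend only on $\|u_0\|_{L^2}$ and $\sup_h\|P_hQ\|_{\gamma(\ell^2,L^2)}$, which are uniform by \cref{eq:Ph-stab0,eq:PhQ-bound}. The same identity, together with BDG and stopping times, gives $\sup_h\mathbb{E}\sup_t\|u_h\|_{L^2}^p<\infty$.

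\textbf{Higher regularity via a discrete bootstrap.} Split $u_h=\xi_h+G_h$ with
\[
G_h(t)=\int_0^tS_h(t-s)P_hQ\,\mathrm{d}W(s),\qquad S_h(t)=e^{tA_h},
\]
so that $\xi_h=S_h(\cdot)P_hu_0-\frac12S_h\ast P_h\partial_x(u_h^2)$ pathwise. The analogues of \cref{eq:G-LpCHq,eq:G-LpH1q} for $G_h$, uniform in $h$ and in the discrete norms $H^{\theta,q}_{D,h}$, should follow from discrete stochastic maximal $L^p$-regularity \cite{li2025stability,LiZhouLp2026} together with \cref{eq:PhQ-bound}. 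The deterministic analogue of \cref{lem:parabolic-regu}(i),(ii) for $S_h$ rests on uniform analyticity of $S_h$ in $L^q$, and I will take it as known.

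I then repeat Step~2 of the proof of \cref{prop:u-regu}:
\[
L^2\ \to\ H^{1/2-\varepsilon,2}_{D,h}\subset L^q\ \to\ \partial_x(u_h^2)\in H^{-1,q}\ \to\ H^{4/5,q}_{D,h}\ \to\ H^{1,q}_{D,h}.
\]
Each step uses the uniform bounds $\|P_h\|_{\mathcal{L}(H^{\theta,q}_D,\,H^{\theta,q}_{D,h})}\leqslant C$ for $\theta\in[-1,0]$, and the product estimates in $H^{4/5,q}$ used in the continuous proof.

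\textbf{Main obstacle.} The hardest point is transferring between discrete norms $\|(-A_h)^{\theta/2}v_h\|_{L^q}$ and continuous norms $\|v_h\|_{H^{\theta,q}_D}$ uniformly in $h$, both for $\theta\in[0,1]$ and for the negative-order data $P_h\partial_x(\cdot)$. I would establish norm equivalence at $\theta=1$ (via $W^{1,q}$-stability of the Ritz projection) and at $\theta=0$, use duality for $\theta=-1$, and interpolate to cover the rest. The nonlinear bootstrap estimates are then carried out in continuous norms, the semigroup smoothing in discrete norms, and the equivalence converts between them. This gives \cref{eq:uh-LpCHq}. Finally, \cref{eq:uh-LpH1q} follows from $\xi_h\in L^p(\Omega;C([0,T];H^{1,q}_{D,h}))$ combined with the $L^p([0,T]\times\Omega;H^{1,q}_{D,h})$ bound on $G_h$ from stochastic maximal regularity.
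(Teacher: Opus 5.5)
Your overall architecture matches the paper's proof, and your exponential-martingale argument for \cref{eq:uh-exp-moment} is correct and $h$-uniform. You share with the paper the energy identity coming from $\int_{\mathcal{O}}\partial_x(v_h^2)v_h\,\mathrm{d}x=0$, the mild form with $G_h$ controlled by discrete stochastic maximal regularity, and the bootstrap through $H^{-1,q}$, $H^{4/5,q}$, $H^{-2/5,q}$ up to $H^{1,q}$. That bootstrap rests on $P_h$-stability in negative norms and norm equivalence on $[-1,1]$.

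The gap is the first arrow of your chain, $L^2\to H_{D,h}^{1/2-\varepsilon,2}$. If you repeat Step~2 of the proof of \cref{prop:u-regu}, this arrow requires
\[
\|P_h\partial_x(v_h^2)\|_{H_{D,h}^{-3/2-\delta,2}}\lesssim\|v_h\|_{L^2(\mathcal{O})}^2 .
\]
This is a negative order $-3/2-\delta$, outside the range $[-1,0]$ on which you say every step operates. By duality it amounts to the $h$-uniform bound $\|\partial_x w_h\|_{L^\infty(\mathcal{O})}\lesssim\|w_h\|_{H_{D,h}^{3/2+\delta,2}}$. Your strategy of doing the nonlinear estimates in continuous norms and then converting cannot produce it. A continuous piecewise quadratic with kinks does not lie in $H^{3/2+\delta}$, so $\|w_h\|_{H_D^{3/2+\delta,2}}=\infty$ in general. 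Hence the norm equivalence breaks down at order $3/2+\delta$, and $P_h$ does not extend boundedly to $H_D^{-3/2-\delta,2}$. Neither fact can be reached from your endpoints by interpolation or duality.

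You need a genuinely discrete ingredient here, and there are two ways to supply it.

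\emph{The paper's route.} It uses the $h$-uniform embedding $H_{D,h}^{\theta,2}\hookrightarrow W^{1,\infty}(\mathcal{O})$ for $\theta\in(3/2,2)$ (\cref{lem:dotHh}(i)). The proof splits $w_h=(I-P_hA^{-1}A_h)w_h+P_hA^{-1}A_hw_h$. On the first piece it applies the inverse inequality $\|\cdot\|_{W^{1,\infty}}\lesssim h^{-1/2}\|\cdot\|_{H_D^{1,2}}$ together with \cref{eq:I-PhAinvAh}, giving a factor $h^{\theta-3/2}$. On the second piece it uses the $W^{1,\infty}$-stability of $P_h$ on $H_D^{\theta,2}$, obtained via Lagrange interpolation (\cref{rem:Ph-stab}).

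\emph{Within your own toolkit.} You can avoid order $-3/2$ altogether by starting the bootstrap at order $-1$.
\begin{itemize}
\item In one dimension $\|v\|_{L^4}^8\leqslant 4\|v\|_{L^2}^6\|v\|_{H_D^{1,2}}^2$.
\item Combined with the moments of $\sup_t\|u_h\|_{L^2}$ and of $\|u_h\|_{L^2(0,T;H_D^{1,2})}$ (the latter from \cref{eq:uh-exp-moment}), this puts $u_h^2$ in $L^4(0,T;L^2(\mathcal{O}))$ with $h$-uniform moments.
\item Then $\|P_h\partial_x f\|_{H_{D,h}^{-1,2}}\leqslant\|f\|_{L^2(\mathcal{O})}$, and \cref{lem:Sh-ast-bound}(i) with $p=4$ and $(\theta_1,\theta_2)=(-1,1/2-\varepsilon)$ yields the required $C([0,T];H_{D,h}^{1/2-\varepsilon,2})$ bound.
\end{itemize}

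With either fix the rest goes through as you describe. Two smaller points remain: you also need $P_h$-stability in $H^{1,q}$ to handle $S_h(\cdot)P_hu_0$, and to make ``interpolate'' rigorous you need the $h$-uniform identification of $H_{D,h}^{\theta,q}$ with complex interpolation spaces (cf.\ \cref{eq:dotHh-complex-interp}).
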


The main results of this section are the following strong and weak error estimates for the spatial semi-discretization \cref{eq:uh}.

\begin{theorem}[Strong error estimates]
  \label{thm:uh-strong}
  Let the initial datum $u_0$ be deterministic and satisfy
  \[
    u_0 \in W_0^{1,\infty}(\mathcal{O}) \cap (L^2(\mathcal{O}), H_{D}^{2,2})_{1-1/p^*,p^*}
  \]
  for some $p^* \in (4,\infty)$.
  Let $u$ and $u_h$ denote the mild solution to \cref{eq:burgers} and the solution to \cref{eq:uh}, respectively.
  Then, for all $\alpha \in [-1,0]$, $p, q \in [2,\infty)$ and $\varepsilon \in (0,1)$, the following estimates hold:
  \begin{align}
    \|u-u_h\|_{L^p([0,T] \times \Omega; \, H_{D}^{\alpha,q})} & \leqslant C_1 h^{1-\alpha}, \label{eq:u-uh-LpLq} \\
    \|u-u_h\|_{L^p(\Omega; \, C([0,T];H_{D}^{\alpha,q}))} & \leqslant C_2 h^{1-\alpha-\varepsilon}, \label{eq:u-uh-LpCLq} \\
    \|u-u_h\|_{L^p(\Omega; \, C([0,T];L^\infty(\mathcal{O})))} & \leqslant C_3 h^{1-\varepsilon}. \label{eq:u-uh-LpCLinf}
  \end{align}
  The constants $C_1, C_2, C_3 > 0$ are independent of the mesh size $h$.
  Specifically, $C_1$ depends on the domain $\mathcal{O}$, the terminal time $T$,
  the operator $Q$, the initial datum $u_0$, and the parameters $\alpha, p, q$,
  while $C_2$ and $C_3$ additionally depend on $\varepsilon$.
\end{theorem}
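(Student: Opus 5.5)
We will work with the error splitting $u-u_h = (u - \tilde u_h) + (\tilde u_h - u_h)$, where $\tilde u_h$ is the solution of the \emph{linearized} discrete problem driven by the exact nonlinearity:
\[
\mathrm{d}\tilde u_h = \Bigl(A_h\tilde u_h - \tfrac12 P_h\partial_x(u^2)\Bigr)\mathrm{d}t + P_hQ\,\mathrm{d}W,\qquad \tilde u_h(0)=P_hu_0 .
\]
Equivalently, $\tilde u_h = S_h(\cdot)P_hu_0 - \tfrac12 S_h\ast P_h\partial_x(u^2) + G_h$, with $S_h(t)=e^{tA_h}$ and $G_h$ the discrete stochastic convolution.

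\textbf{Step 1: the linear error $u-\tilde u_h$.} This error splits into three parts: the deterministic initial-value error, the deterministic-convolution error with forcing $f=-\frac12\partial_x(u^2)$, and the stochastic convolution error $G-G_h$. For the first two we will use the standard $P_2$ finite element parabolic error estimates in $L^p(0,T;H_D^{\alpha,q})$. These follow from the discrete maximal $L^p$-regularity of $A_h$ (uniform in $h$) combined with Ritz-projection duality estimates of the form $\|(A^{-1}-A_h^{-1}P_h)g\|_{H_D^{\alpha,q}}\le Ch^{1-\alpha}\|g\|_{H_D^{-1,q}}$ for $\alpha\in[-1,0]$; for $P_2$ elements the duality argument reaches $\alpha=-1$. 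The forcing enters only through $\|\partial_x(u^2)\|_{L^p(\Omega;L^p(0,T;H_D^{-1,q}))}$, which is finite by \cref{eq:u-LpCHq}. The initial data $u_0\in W_0^{1,\infty}$ gives order $h^{1-\alpha}$ as well. For $G-G_h$ we will use the stochastic maximal $L^p$-regularity together with its discrete analogue, treating $G_h-P_hG$ as the solution of a discrete equation with a consistency residual. This gives order $h^{1-\alpha}$ in $L^p([0,T]\times\Omega;H_D^{\alpha,q})$, because $Q\in\gamma(\ell^2,L^q)$ supplies exactly one order of smoothing for $G$ (cf. \cref{eq:G-LpH1q}).

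\textbf{Step 2: the nonlinear error $e_h:=\tilde u_h-u_h\in X_h$.} This error satisfies the random PDE, with no noise,
\[
e_h' = A_h e_h - \tfrac12 P_h\partial_x\bigl(u^2-u_h^2\bigr) = A_he_h - \tfrac12 P_h\partial_x\bigl((u+u_h)(u-\tilde u_h+e_h)\bigr),\qquad e_h(0)=0 .
\]
We first derive a pathwise $L^2$ energy estimate. Testing with $e_h$ gives
\[
\tfrac12\frac{\mathrm{d}}{\mathrm{d}t}\|e_h\|^2+\|\partial_xe_h\|^2 = \tfrac12\int(u+u_h)(u-\tilde u_h+e_h)\partial_xe_h .
\]
We bound the right-hand side with Gagliardo--Nirenberg, $\|e_h\|_{L^\infty}\le C\|e_h\|^{1/2}\|\partial_xe_h\|^{1/2}$, and Young's inequality. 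This yields $\frac{\mathrm{d}}{\mathrm{d}t}\|e_h\|^2+\|\partial_xe_h\|^2\le C(\|u\|_{H^1}^2+\|u_h\|_{H^1}^2)\|e_h\|^2 + C\|u+u_h\|_{L^\infty}^2\|u-\tilde u_h\|^2$. By Gronwall,
\[
\sup_t\|e_h\|^2+\|e_h\|_{L^2H^1}^2\le \exp\Bigl(C\|u\|_{L^2H^1}^2+C\|u_h\|_{L^2H^1}^2\Bigr)\,\|u+u_h\|_{L^\infty L^\infty}^2\,\|u-\tilde u_h\|^2_{L^2L^2}.
\]

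\textbf{Step 3: controlling the exponential factor (the main obstacle).} The exponential factor is integrable only for a small constant, $\kappa$ in \cref{eq:u-exp-moment,eq:uh-exp-moment}. We will handle this by Hölder's inequality together with a standard trick: replace $C$ by a small constant, using Gronwall on short subintervals or absorbing via Young's inequality with a tunable weight. If that fails, we will use a stopping-time/splitting argument $\Omega=\{\|u\|_{L^2H^1}\le R\}\cup\cdots$ with polynomial moment bounds and Chebyshev. This is the delicate step. Once it is done, Step 1 and Hölder's inequality give $\|e_h\|_{L^p(\Omega;C(L^2)\cap L^2(H^1))}\le Ch$.

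\textbf{Step 4: upgrading to the stated norms.} We next upgrade to $L^p(0,T;H_D^{\alpha,q})$ at order $h^{1-\alpha}$ through the mild form $e_h=-\frac12 S_h\ast P_h\partial_x(\cdots)$ and discrete maximal regularity in $H_{D,h}^{\alpha-?,q}$. For negative $\alpha$ we write $u^2-u_h^2=(u+u_h)(u-u_h)$ and estimate it in $H_D^{\alpha,q}$ by duality, using $u,u_h\in H^{1,q}$ via \cref{eq:uh-LpH1q}. This gives $\|P_h\partial_x(u^2-u_h^2)\|_{H^{\alpha-1}}\lesssim \|u+u_h\|_{W^{1,r}}\|u-u_h\|_{H^{\alpha,q}}$, together with a bootstrap or Gronwall argument in $L^p$-in-time. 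Combining this with Step 1 gives \cref{eq:u-uh-LpLq}.

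For \cref{eq:u-uh-LpCLq} we use the analogous sup-in-time estimates. The semigroup bounds of \cref{lem:parabolic-regu}(ii) and their discrete versions cost an $\varepsilon$, and $G-G_h$ is handled by the factorization method, also at a cost of $h^{-\varepsilon}$. Finally, \cref{eq:u-uh-LpCLinf} follows from \cref{eq:u-uh-LpCLq} with $\alpha=\varepsilon/2$ adapted. Concretely, we interpolate between the $C(H^{0,q})$ error of order $h^{1-\varepsilon}$ and the uniform bound in $C(H^{\theta,q})$ (\cref{eq:u-LpCHq,eq:uh-LpCHq}), and then use the Sobolev embedding $H_D^{\delta,q}\hookrightarrow L^\infty$ for $\delta q>1$ with $q$ large.
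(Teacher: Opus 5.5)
\paragraph{Where the proposal falls short.}
For $\alpha=0$ and for \eqref{eq:u-uh-LpCLinf} your plan is essentially workable. The tunable Young inequality in Step~3 is exactly the mechanism of \cref{lem:core-stability}. There, the cancellation $\langle\partial_x(gz),z\rangle=\tfrac12\langle\partial_x g,z^2\rangle$, Gagliardo--Nirenberg and a three-factor Young inequality put an arbitrarily small constant in front of $\|g\|_{H_D^{1,2}}^2$ in the Gronwall exponent, at the price of a deterministic constant. The stopping-time fallback is unnecessary and would cost a power of $h$. Your bound $e_h=O(h)$ in $C([0,T];L^2(\mathcal{O}))$ can then be upgraded to $C([0,T];L^\infty(\mathcal{O}))$ via \cref{lem:Sh-ast-bound}(iii).

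The gap is Step~4 for $\alpha\in[-1,0)$, i.e.\ the negative-order rates, which are the core of the theorem. In the energy estimate the forcing can only be bounded by $\|u+u_h\|_{L^\infty}\|u-\tilde u_h\|_{L^2}\|\partial_x e_h\|_{L^2}$. Exploiting the $H_D^{-1,q}$-smallness of $u-\tilde u_h$ would require second derivatives of $e_h$. So you get $e_h=O(h)$ and no better, whereas for $\alpha=-1$ you need $e_h=O(h^2)$.

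Neither a bootstrap nor a Gronwall argument in negative norms closes this.
\begin{itemize}
\item In the mild form, the term $-\tfrac12 S_h\ast P_h\partial_x[(u+u_h)e_h]$ returns the unknown with an $O(1)$ random coefficient, so iterating never improves on the $O(h)$ you start from.
\item A pathwise Gronwall (or Volterra-resolvent) bound in $H_D^{\alpha,q}$ produces an exponential of a norm of $u+u_h$ with a constant that cannot be made small. No such exponential moments are available; the small-constant structure exists only in the $L^2$ energy identity.
\end{itemize}
Also, your product bound is not correct as stated for $P_h$ in negative norms. What holds is \cref{lem:Ph-partialx-vw-H2}, which carries an extra term $h\|w\|_{L^q(\mathcal{O})}$, and that term cannot be dropped.

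\paragraph{The missing splitting.}
You need one further linear/nonlinear splitting, so that the energy method only acts on a remainder whose forcing is already $O(h^2)$ in $L^2$. Put $\sigma_h:=-\tfrac12 S_h\ast P_h\partial_x[(u+u_h)(u-\tilde u_h)]$ and $z_h:=e_h-\sigma_h$.
\begin{itemize}
\item By your Step~1, $u-\tilde u_h$ is $O(h^2)$ in $L^p([0,T]\times\Omega;H_D^{-1,q})$ and $O(h)$ in $L^p([0,T]\times\Omega;L^q(\mathcal{O}))$. Hence \cref{lem:Ph-partialx-vw-H2} makes the forcing $O(h^2)$ in $H_{D,h}^{-2,q}$.
\item \cref{lem:S0h} then gives $\sigma_h=O(h^2)$ in $L^p([0,T]\times\Omega;L^q(\mathcal{O}))$. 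An inverse estimate together with \cref{lem:Sh-ast-bound}(ii) gives $O(h^{2-\varepsilon})$ in $L^p(\Omega;C([0,T];L^\infty(\mathcal{O})))$.
\item The remainder satisfies $z_h'=A_hz_h-\tfrac12 P_h\partial_x[(u+u_h)(z_h+\sigma_h)]$, $z_h(0)=0$. So \cref{lem:core-stability} yields $z_h=O(h^2)$ in $L^p(\Omega;C([0,T];L^\infty(\mathcal{O})))$.
\end{itemize}
Since $h^2\leqslant h^{1-\alpha}$, all three estimates then follow from Step~1. This is the structure of the paper's proof: its $\chi_h^{(2)},\chi_h^{(3)}$ and $z_h$.

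One difference is worth noting, since your proposal never uses the hypothesis $u_0\in(L^2(\mathcal{O}),H_D^{2,2})_{1-1/p^*,p^*}$. The paper places the consistency term $\chi_h^{(1)}=(\tilde u_h-G_h)-P_h\xi$ inside $\sigma_h$. It therefore needs $\chi_h^{(1)}=O(h^2)$ in $L^{p^*}(0,T;L^2(\mathcal{O}))$, and this is the only place that hypothesis is used. In your grouping, $\chi_h^{(1)}$ sits inside $u-\tilde u_h$ and enters only through its $H_D^{-1,q}$-norm. So, as far as I can see, the repaired argument would not need that hypothesis.
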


\begin{remark}
  \label{rem:uh-strong}
  Assume that the initial datum \(u_0\in W_0^{1,\infty}(\mathcal{O})\) is deterministic.
  Retracing the proof of \cref{thm:uh-strong} presented in Subsection~\ref{ssec:proof-uh-strong},
  we conclude that the error estimates \cref{eq:u-uh-LpLq,eq:u-uh-LpCLq} with \(\alpha = 0\)
  and \cref{eq:u-uh-LpCLinf} remain valid.
  Moreover, although the present article focuses on the \(P_2\) finite element method, 
  the strong error estimates \cref{eq:u-uh-LpLq,eq:u-uh-LpCLq} for \(\alpha=0\) and \cref{eq:u-uh-LpCLinf}
  extend directly to the \(P_1\) finite element method.
\end{remark}

\begin{remark}
  The additional factor $h^{-\varepsilon}$ in \cref{eq:u-uh-LpCLq} stems from the additive noise, which limits the spatial regularity of the stochastic convolution $G$.
  In view of \cref{eq:G-LpCHq}, for any $ p \in (2,\infty) $ and $ q \in [2,\infty) $, the inclusion $ u \in L_{\mathbb{F}}^p(\Omega; C([0,T]; H_{D}^{1,q})) $ does not hold; rather, one obtains $ u \in L_{\mathbb{F}}^p(\Omega; C([0,T]; H_{D}^{1-\varepsilon,q})) $ for arbitrarily small $\varepsilon>0$.
  This intrinsic regularity gap directly leads to the $\varepsilon$-loss in the strong error estimate.
\end{remark}

\begin{remark}
  To place our spatial error estimates in context, we draw a comparison with the spectral Galerkin method.
  Let $n \geqslant 1$ be an integer and define $\Lambda_n := \operatorname{span}\{\sin(k\pi x) : k = 1, \dots, n\}$.
  Denote by $\mathcal{P}_n$ the $L^2(\mathcal{O})$-orthogonal projection onto $\Lambda_n$, and let $u^{(n)}$ be the
  semi-discrete spectral Galerkin approximation of \cref{eq:burgers}, defined $\mathbb{P}$-a.s.\ by
  \[
    u^{(n)}(t) = \mathcal{P}_nS(t) u_0 -\frac{1}{2}\mathcal{P}_n\int_0^t S(t-s)\partial_x\bigl((u^{(n)}(s))^2\bigr)\,\mathrm{d}s
    + \mathcal{P}_n\int_0^t S(t-s) Q\,\mathrm{d}W(s)
    \quad\text{for all } t\in[0,T].
  \]
  For $u_0 \in H_{D}^{1,2}$, it follows from \cite[Corollary~3.11]{Hutzenthaler2020perturbation} that
  for any $ p \in (0,\infty) $ and $\varepsilon \in (0,1)$,
  \[
    \sup_{t\in[0,T]}\bigl\|u(t)-u^{(n)}(t)\bigr\|_{L^p(\Omega;L^2(\mathcal{O}))} \leqslant C n^{\varepsilon-1},
  \]
  where $C$ is a positive constant independent of $n$.
  By identifying the truncation level $n$ with the inverse mesh size $h^{-1}$—as both parametrize the spatial degrees
  of freedom—we observe that the rate $n^{\varepsilon-1} \sim h^{1-\varepsilon}$ precisely mirrors the $h^{1-\varepsilon}$
  factor in \cref{eq:u-uh-LpCLq} (taking $\alpha=0$ and $q=2$).
\end{remark}

\begin{theorem}[Weak error estimates]
  \label{thm:uh-weak}
  Under the hypotheses of \cref{thm:uh-strong},
  for any $2 \leqslant q \leqslant p < \infty$
  and $\varepsilon\in(0,1)$, we have
  \begin{align}
    \Bigl| \mathbb{E} \|u(T)\|_{L^q(\mathcal{O})}^p - \mathbb{E}\|u_h(T)\|_{L^q(\mathcal{O})}^p \Bigr|
    & \leqslant C_1 h^{2-\varepsilon}, \label{eq:weak-error-1} \\[4pt]
    \Bigl| \mathbb{E} \|u\|_{L^p(0,T;L^q(\mathcal{O}))}^p - \mathbb{E}\|u_h\|_{L^p(0,T;L^q(\mathcal{O}))}^p \Bigr|
    & \leqslant C_2 h^{2}, \label{eq:weak-error-2}
  \end{align}
  where the constants $C_1, C_2 > 0$ are independent of $h$ and depend on $\mathcal{O}, T, Q, u_0, p$, and $q$, with $C_1$ additionally depending on $\varepsilon$.
\end{theorem}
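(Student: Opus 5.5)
We follow the duality argument announced in the introduction. The key input is the strong estimate \cref{eq:u-uh-LpLq} with $\alpha=-1$, which gives order $h^{2}$ in a negative norm, together with the uniform moment bounds \cref{eq:u-LpH1q,eq:u-LpCHq,eq:uh-LpH1q,eq:uh-LpCHq}. Set $\Phi(v):=\|v\|_{L^q(\mathcal{O})}^p$. Since $p\geqslant q\geqslant 2$, $\Phi$ is $C^1$ on $L^q(\mathcal{O})$ with $\Phi'(v)=p\|v\|_{L^q}^{p-q}|v|^{q-2}v$. By the mean value theorem,
\[
\Phi(u)-\Phi(u_h)=\int_0^1\bigl\langle \Phi'(u_h+s(u-u_h)),\,u-u_h\bigr\rangle\,\mathrm{d}s .
\]
For \cref{eq:weak-error-2} we apply this pointwise in $t$ and integrate over $[0,T]$. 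For \cref{eq:weak-error-1} we apply it at $t=T$. The naive bound $\|\Phi'\|_{L^{q'}}\|u-u_h\|_{L^q}$ only gives order $h$. Instead we pair in a negative-order space:
\[
\bigl|\langle \Phi'(w),u-u_h\rangle\bigr|\leqslant \|\Phi'(w)\|_{H_D^{1,q'}}\,\|u-u_h\|_{H_D^{-1,q}} .
\]
This needs $\Phi'(w)\in W_0^{1,q'}$, which requires $w\in H^{1}$-type regularity of $u$ and $u_h$ and the vanishing trace of $|w|^{q-2}w$. A composition lemma in the spirit of \cref{lem:composition-embeddings} should give
\[
\|\,|w|^{q-2}w\,\|_{H_D^{\beta,q'}}\leqslant C\|w\|_{L^{\infty}}^{q-2}\|w\|_{H_D^{\beta,q}},\qquad \beta\in[0,1].
\]
Here we use that $r\mapsto|r|^{q-2}r$ is $C^1$ for $q\geqslant2$, and that $H_D^{1,q}\hookrightarrow L^\infty$.

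\textbf{Space-time functional \cref{eq:weak-error-2}.} Take $w(t)=u_h(t)+s(u(t)-u_h(t))$. Hölder in $(t,\omega)$ gives
\[
\Bigl|\mathbb{E}\int_0^T\bigl(\Phi(u)-\Phi(u_h)\bigr)\mathrm{d}t\Bigr|\leqslant C\,\|u-u_h\|_{L^{p_1}([0,T]\times\Omega;H_D^{-1,q})}\,\bigl\|\,\|w\|_{L^q}^{p-q}\,\|w\|_{L^\infty}^{q-2}\,\|w\|_{H_D^{1,q}}\bigr\|_{L^{p_1'}([0,T]\times\Omega)} .
\]
The second factor is bounded uniformly in $h$ by \cref{eq:u-LpH1q,eq:uh-LpH1q} together with $H_D^{1,q}\hookrightarrow L^\infty$. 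The first factor is $O(h^2)$ by \cref{eq:u-uh-LpLq} with $\alpha=-1$.

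\textbf{Terminal functional \cref{eq:weak-error-1}.} At time $T$, neither $u(T)$ nor $u_h(T)$ lies in $H^{1}$ in the $L^p(\Omega)$ sense, because $G(T)$ is only in $H_D^{1-\varepsilon,q}$. So we pair with $\alpha=-1+\varepsilon$:
\[
\bigl|\langle \Phi'(w),e(T)\rangle\bigr|\leqslant\|\Phi'(w)\|_{H_D^{1-\varepsilon,q'}}\,\|e(T)\|_{H_D^{-1+\varepsilon,q}},\qquad e:=u-u_h .
\]
The composition bound is used with $\beta=1-\varepsilon$, and $w$ is controlled by \cref{eq:u-LpCHq,eq:uh-LpCHq} with $\theta=1-\varepsilon$. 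The error factor is controlled by \cref{eq:u-uh-LpCLq} with $\alpha=-1+\varepsilon$ and $\varepsilon$ replaced by $\varepsilon/2$, which gives order $h^{2-\varepsilon-\varepsilon/2}$. Relabelling $\varepsilon$ then yields $h^{2-\varepsilon}$.

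\textbf{Main obstacle.} The delicate step is the duality pairing with the extended projection. We must check that $\Phi'(w)$ lies in $H_D^{\beta,q'}=(H_D^{-\beta,q})'$ with the stated bound. This requires the zero boundary values of $w$ (both $u$ and $u_h$ vanish at $0,1$), so that $|w|^{q-2}w\in W_0^{1,q'}$, and a fractional chain rule for $\beta=1-\varepsilon$. For $\beta<1/q'$ no boundary condition is needed. For $\beta\in(1/q',1)$, complex interpolation \cref{eq:interpolation-identity} between $L^{q'}$ and $W_0^{1,q'}$ applies, using that the nonlinearity maps $W_0^{1,q}\cap L^\infty$ into $W_0^{1,q'}$. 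If a direct fractional chain rule is awkward, a fallback is to split $e(T)$ with the projection $P_h$ and handle the part $u-P_hu$ by orthogonality against the $X_h$-component of $\Phi'(w)$. The moment bookkeeping in Hölder's inequality is routine, since all required moments are finite for every finite exponent.
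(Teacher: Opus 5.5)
Your route is the paper's route. It uses the mean value theorem for $v\mapsto\|v\|_{L^q(\mathcal{O})}^p$, duality between $H_{D}^{\theta,q'}$ and $H_{D}^{-\theta,q}$, a composition estimate for $v\mapsto|v|^{q-2}v$, H\"older's inequality, and the negative-order strong estimates. These are \eqref{eq:u-uh-LpLq} with $\alpha=-1$ for the space--time functional and \eqref{eq:u-uh-LpCLq} with $\alpha$ close to $-1$ for the terminal one. The space--time part is correct as written: the $\beta=1$ composition bound is the chain rule plus $H_{D}^{1,q}\hookrightarrow L^\infty(\mathcal{O})$ and $q'\leqslant q$, exactly as in \cref{lem:composition-embeddings}(ii).

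The step that fails as you justify it is the fractional composition bound with $\beta=1-\varepsilon$, which you rightly call the delicate point. Complex interpolation is a linear theory. Bounds for the nonlinear map $w\mapsto|w|^{q-2}w$ from $L^q\cap L^\infty$ to $L^{q'}$ and from $W_0^{1,q}\cap L^\infty$ to $W_0^{1,q'}$ do not transfer to $[L^{q'}(\mathcal{O}),W_0^{1,q'}(\mathcal{O})]_\beta$, so that argument yields nothing.

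The paper interpolates the spaces rather than the operator. For $1/2<\theta_1<\theta_2<1$, using that $L^q$ and $H_D^{1,q}$ have Fourier type $q'$, it proves the chain $H_{D}^{\theta_2,q}\hookrightarrow(L^q(\mathcal{O}),H_{D}^{1,q})_{\theta_1,q'}\hookrightarrow H_{D}^{\theta_1,q'}$. It then estimates the nonlinearity in the middle space, which has an equivalent difference-quotient norm of the zero extension. There the pointwise bound $\bigl||a|^{q-2}a-|b|^{q-2}b\bigr|\lesssim(|a|^{q-2}+|b|^{q-2})|a-b|$ applies directly. Boundary values cause no trouble because the nonlinearity vanishes at $0$, so it commutes with zero extension.

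The price is the loss $\theta_1<\theta_2$. This is harmless because \eqref{eq:u-LpCHq} and \eqref{eq:uh-LpCHq} hold for every $\theta<1$. Concretely:
\begin{itemize}
\item pair $u(T)-u_h(T)$ in $H_{D}^{-\theta,q}$ with $\theta=1-\varepsilon/2$;
\item control $w$ in $H_{D}^{\vartheta,q}$ for some $\vartheta\in(\theta,1)$;
\item apply \eqref{eq:u-uh-LpCLq} with $\alpha=-\theta$ and small parameter $\varepsilon/2$, which gives $h^{2-\varepsilon}$.
\end{itemize}
Your loss-free version can also be proved, but only through a difference characterization of $H^{\beta,q}$ for $0<\beta<1$, not by interpolating endpoint bounds, and it is not needed. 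With this replacement your $P_h$-splitting fallback is unnecessary.

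One slip in your motivation: at a fixed time, $G(T)\in L^p(\Omega;H_{D}^{1,q})$. Indeed $\sum_n\lambda_n\int_0^T|(-A)^{1/2}S(T-s)e_n|^2\,\mathrm{d}s\leqslant\sum_n\lambda_n$ pointwise on $\mathcal{O}$, since $|e_n|\leqslant\sqrt{2}$. The $\varepsilon$-loss in the regularity of $G$ concerns the supremum in time. The $\varepsilon$ in \eqref{eq:weak-error-1} enters because the only fixed-time error bound available is the $C([0,T];\cdot)$ estimate \eqref{eq:u-uh-LpCLq}. This does not affect your argument.
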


\begin{remark}
  The proof of the weak error estimates in \cref{thm:uh-weak} employs a standard duality argument 
  (cf.\ \cite{Geissert2009rate}, \cite[Chapter 5]{kruse2014book}, and \cite{AnderssonKruseLarsson2016Duality}
  for analogous arguments in Hilbert space settings), 
  using the strong error estimates in negative-order Sobolev spaces (specifically, \cref{eq:u-uh-LpLq,eq:u-uh-LpCLq})
  and the regularity properties of $u$ and $u_h$. 
  To the best of our knowledge, these are the first weak error estimates for spatial semi-discretizations of stochastic partial differential equations within the $L^q(\mathcal{O})$-framework. 
  For the stochastic Burgers equation, the only prior contribution appears to be \cite{BrehierCoxMillet2026}, which established a nearly second-order weak convergence rate for a semi-discrete spectral Galerkin approximation in the $L^2(\mathcal{O})$-framework. 
  Furthermore, we emphasize that the weak error estimate \cref{eq:weak-error-2} achieves the full second-order rate, which is exactly twice the corresponding strong error rate in the $L^p([0,T] \times \Omega;L^q(\mathcal{O}))$ norm (see \cref{eq:u-uh-LpLq} with $\alpha = 0$).
\end{remark}

The remainder of this section is devoted to proving \cref{prop:uh-regu,thm:uh-strong,thm:uh-weak}.
For notational convenience, we write \(a \lesssim b\) to denote \(a \leqslant C b\),
where the generic constant \(C > 0\) is independent of the mesh size \(h\).
This constant may depend on the domain \(\mathcal{O}\), the terminal time \(T\),
the operator $Q$, the initial datum \(u_0\), the parameters \(p, q, \varepsilon\),
and the regularity index \(\theta\) associated with
the spaces \(H_{D}^{\theta,q}\) and \(H_{D,h}^{\theta,q}\).
We also write \(a \sim b\) if both \(a \lesssim b\) and \(b \lesssim a\) hold.
The section is organized as follows. Subsection~\ref{ssec:prelim} presents preliminary results;
Subsection~\ref{ssec:proof-uh-well-posed} proves \cref{prop:uh-regu};
Subsection~\ref{ssec:proof-uh-strong} establishes \cref{thm:uh-strong};
and Subsection~\ref{ssec:proof-uh-weak} proves \cref{thm:uh-weak}.

\subsection{Preliminary results}
\label{ssec:prelim}
It is well known (see \cite{Bakaev2002}) that for any $\omega_0 \in (\pi/2, \pi)$ and $q \in (1, \infty)$,
there exists a constant $C = C(\omega_0, q, \mathcal{O}) > 0$ such that
\begin{equation}
  \label{eq:resolvent-bound}
  \|(re^{i\vartheta}-A_h)^{-1}\|_{\mathcal{L}(H_{D,h}^{0,q}, H_{D,h}^{0,q})}
  \leqslant \frac{C}{1+r}, \quad r \geqslant 0, \quad \vartheta \in [-\omega_0, \omega_0].
\end{equation}
For fixed parameters $\gamma_{0} \in (0,\pi/2)$ and $q \in (1,\infty)$,
there exists a constant $C>0$, independent of the mesh size $h$ and $r$, such that
\[
  \|e^{irA_{h}}\|_{\mathcal{L}(H_{D,h}^{0,q}, H_{D,h}^{0,q})} \leqslant C e^{\gamma_{0} |r|}
  \quad\text{for all } r \in \mathbb{R}.
\]
We refer to \cite[Theorem~3.1]{LiZhouLp2026} for the proof in the three-dimensional case.
These estimates imply that $-A_h$ satisfies the hypotheses of \cite[Theorem~16.5]{Yagi2010};
consequently, we have the norm equivalence
\begin{equation}
  \label{eq:dotHh-complex-interp}
  \|v_{h}\|_{H_{D,h}^{\theta,q}} \sim \|v_{h}\|_{[H_{D,h}^{0,q},H_{D,h}^{2,q}]_{\theta/2}},
  \quad \theta \in (0,2), \quad q \in (1,\infty), \quad v_{h} \in X_{h}.
\end{equation}
Combining this equivalence with the standard inverse estimate $\|v_{h}\|_{H_{D,h}^{2,q}} \lesssim h^{-2} \|v_{h}\|_{H_{D,h}^{0,q}}$, complex interpolation directly yields the generalized inverse estimate
\begin{equation}
  \label{eq:inverse}
  \|v_{h}\|_{H_{D,h}^{\theta_2,q}} \lesssim h^{\theta_1-\theta_2}\, \|v_{h}\|_{H_{D,h}^{\theta_1,q}},
  \quad \theta_1 \leqslant \theta_2 \leqslant \theta_1 + 2, \quad q \in (1,\infty), \quad v_h \in X_h.
\end{equation}

\subsubsection{Fundamental properties of \texorpdfstring{\(A_{h}\) and \(P_h\)}{Ah and Ph}}
\label{subsubsec:Ah-Ph}
We recall the key approximation properties of the discrete Laplacian \(A_{h}\) required for the subsequent analysis; for proofs of these classical results, see~\cite[Chapters~5 and~8]{Brenner2008}.
\begin{lemma}
  \label{lem:Ah}
  Let \(q \in (1,\infty)\). The following operator norm estimates hold:
  \begin{enumerate}
    \item[\textup{(i)}]
      For every \(\theta \in [0,1]\),
      \( \bigl\|A_{h}^{-1} P_{h} - A^{-1}\bigr\|_{\mathcal{L}(L^q(\mathcal{O}),\,H_{D}^{\theta,q})}
        \;\lesssim\; h^{2-\theta} \).
    \item[\textup{(ii)}]
      For every \(\theta_1 \in [1,3]\) and \(\theta_2 \in [-1,1]\),
      \( \bigl\|I - A_{h}^{-1} P_{h}\, A\bigr\|_{\mathcal{L}(H_{D}^{\theta_1,q},\,H_{D}^{\theta_2,q})}
        \;\lesssim\; h^{\theta_1 - \theta_2}  \).
  \end{enumerate}
\end{lemma}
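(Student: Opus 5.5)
The plan rests on identifying $A_h^{-1}P_hA$ with the Ritz projection $R_h$ onto $X_h$. For $v\in H_D^{1,q}$ with $Av$ understood in $H_D^{-1,q}$, the defining relation of $A_h$ together with \cref{eq:Ph-extend} gives, for all $w_h\in X_h$,
\[
\int_{\mathcal{O}}\partial_x(A_h^{-1}P_hAv)\,\partial_xw_h\,\mathrm{d}x=\int_{\mathcal{O}}\partial_xv\,\partial_xw_h\,\mathrm{d}x .
\]
Hence $A_h^{-1}P_hA=R_h$. Moreover, for $f\in L^q(\mathcal{O})$ we have $A_h^{-1}P_hf-A^{-1}f=-(I-R_h)A^{-1}f$. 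Since $A^{-1}$ is an isomorphism $L^q(\mathcal{O})\to H_D^{2,q}$, assertion (i) follows from (ii) with $\theta_1=2$ and $\theta_2=\theta\in[0,1]$. So everything reduces to Ritz projection error bounds in $L^q$-based spaces.

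\textbf{Step 1: $\theta_2=1$.} In one dimension, $\{\partial_xw_h:w_h\in X_h\}$ is exactly the space $V_h$ of discontinuous piecewise linear functions with zero mean on $\mathcal{O}$. Let $\Pi_h$ denote the elementwise $L^2$-projection onto $P_1(K)$, $K\in\mathcal{K}_h$. Since $\partial_xv$ has zero mean (because $v\in W_0^{1,q}$) and $\Pi_h$ preserves elementwise means, $\Pi_h\partial_xv\in V_h$, and the Galerkin orthogonality then forces
\[
\partial_x R_hv=\Pi_h\partial_xv .
\]
The local projection $\Pi_h$ is uniformly $L^q$-stable and satisfies $\|g-\Pi_hg\|_{L^q}\lesssim h^{s}\|g\|_{W^{s,q}}$ for $s\in[0,2]$. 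This yields
\[
\|v-R_hv\|_{H_D^{1,q}}\lesssim h^{\theta_1-1}\|v\|_{H_D^{\theta_1,q}}
\]
at the integer endpoints $\theta_1=1,2,3$, and hence for all $\theta_1\in[1,3]$ by complex interpolation using \cref{eq:interpolation-identity}. The only extra input is the norm equivalence of $H_D^{\theta,q}$ with Sobolev norms, i.e.\ $H_D^{2,q}=W^{2,q}\cap W_0^{1,q}$ and the analogous statement for $H_D^{3,q}$, which I take as standard.

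\textbf{Step 2: $\theta_2=-1$, by duality.} For $\varphi\in H_D^{1,q'}$, set $\psi:=A^{-1}\varphi\in H_D^{3,q'}$, so that $\|\psi\|_{H_D^{3,q'}}\sim\|\varphi\|_{H_D^{1,q'}}$. Integrating by parts and using Galerkin orthogonality,
\[
\langle v-R_hv,\varphi\rangle=-\int_{\mathcal{O}}\partial_x(v-R_hv)\,\partial_x(\psi-R_h\psi)\,\mathrm{d}x .
\]
Applying H\"older's inequality and Step 1 (with $q'$ and $\theta_1=3$ for $\psi$) gives a bound of order $h^{\theta_1-1}\cdot h^{2}$. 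This is exactly $h^{\theta_1+1}$, the required rate for $\theta_2=-1$.

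\textbf{Step 3: intermediate $\theta_2$.} The same duality argument with $\varphi\in L^{q'}$, $\psi\in H_D^{2,q'}$, treats $\theta_2=0$; alternatively, interpolate between $\theta_2=-1$ and $\theta_2=1$ via \cref{eq:interpolation-identity}. Since the rate $h^{\theta_1-\theta_2}$ is linear in $\theta_2$, interpolation reproduces it for all $\theta_2\in[-1,1]$. Combining Steps 1--3 proves (ii), and (i) then follows as explained at the start.

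\textbf{Main obstacle.} The only delicate point is the uniform-in-$h$ $W^{1,q}$ estimate for $R_h$ when $q\neq2$. The one-dimensional identity $\partial_xR_h=\Pi_h\partial_x$ removes this difficulty, so I would verify that identity carefully first. In particular, one must check that $V_h$ has the claimed description; this holds because the boundary conditions of $X_h$ amount precisely to the zero-mean constraint on derivatives.
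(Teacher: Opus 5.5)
Your proof is correct, and it follows a different route from the paper. The paper gives no argument: it refers to Brenner--Scott (Chapters~5 and~8), i.e.\ the general-dimensional $W^{1,q}$ stability and error theory for the Ritz projection (the max-norm and $L^p$ machinery on quasi-uniform meshes), combined with Aubin--Nitsche duality for the negative-order norms. You replace the hard part of that theory, the $h$-uniform $W^{1,q}$ estimate for $R_h=A_h^{-1}P_hA$ when $q\neq 2$, by the exact one-dimensional identity $\partial_x R_h v=\Pi_h\partial_x v$. This identity holds because $\partial_x X_h$ is precisely the space of zero-mean discontinuous piecewise linears, $\partial_x v$ has zero mean, and $\Pi_h$ preserves elementwise means. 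After that, your duality step (with $\psi=A^{-1}\varphi\in H_D^{3,q'}$ and Galerkin orthogonality) and the interpolation in $\theta_2$ are the standard arguments. Your route buys a short, fully self-contained proof with transparent constants, and it does not even need quasi-uniformity of the mesh, since every estimate for $\Pi_h$ is elementwise. It also shows exactly where the $P_2$ structure enters, namely second-order approximation of $\partial_x v$ by piecewise linears. The cited route buys generality: it is the argument that survives in higher dimensions, where no such identity is available. Two small points to tidy up. First, \eqref{eq:interpolation-identity} is only stated for orders in $[-2,2]$, so to get $[H_D^{1,q},H_D^{3,q}]_s=H_D^{1+2s,q}$ you should transport the identity for the couple $(L^q(\mathcal{O}),H_D^{2,q})$ by the isometric isomorphism $(-A)^{-1/2}$ onto the couple $(H_D^{1,q},H_D^{3,q})$. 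Second, the embedding $H_D^{3,q}\hookrightarrow W^{3,q}(\mathcal{O})$ that you take as standard is immediate here: $v\in H_D^{3,q}$ means $v\in H_D^{2,q}$ with $\partial_x^2 v\in W_0^{1,q}(\mathcal{O})$, and $\|v\|_{H_D^{3,q}}=\|(-A)^{1/2}\partial_x^2 v\|_{L^q(\mathcal{O})}\sim\|\partial_x^3 v\|_{L^q(\mathcal{O})}$.
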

By combining the stability property \cref{eq:Ph-stab0} with the approximation properties of the finite element space \(X_h\)
and complex interpolation theory, we deduce the following error estimate for Sobolev spaces of positive order:
\[
  \|I - P_h\|_{\mathcal{L}(H_{D}^{\theta_1,q},\, H_{D}^{\theta_2,q})}
  \;\lesssim\; h^{\theta_1-\theta_2},
  \qquad \theta_1 \in [0,3], \quad \theta_2 \in [0,\min\{1,\theta_1\}], \quad q \in (1,\infty).
\]
A standard duality argument extends this estimate to spaces involving negative order:
\begin{equation}
  \label{eq:Ph-conv}
  \|I - P_h\|_{\mathcal{L}(H_{D}^{\theta_1,q},\, H_{D}^{\theta_2,q})}
  \;\lesssim\; h^{\theta_1-\theta_2},
  \qquad \theta_1 \in [0,3], \quad \theta_2 \in [-3,\min\{1,\theta_1\}], \quad q \in (1,\infty).
\end{equation}
Furthermore, the following \(L^\infty(\mathcal{O})\)-error estimate holds:
\begin{equation}
  \label{eq:Ph-conv-Linfty}
  \|I - P_h\|_{\mathcal{L}(H_{D}^{\theta,q}, \, L^\infty(\mathcal{O}))}
  \lesssim h^{\theta - 1/q}, \qquad q \in (1,\infty), \quad \theta \in \bigl(\tfrac{1}{q},3\bigr].
\end{equation}
Combining \cref{eq:Ph-conv} and \cref{lem:Ah}~\textup{(ii)} via the triangle inequality yields
\begin{equation}
  \label{eq:Ph-AhinvPhA}
  \bigl\|P_h - A_h^{-1} P_h A\bigr\|_{\mathcal{L}(H_{D}^{\theta_1,q},\,H_{D}^{\theta_2,q})}
  \;\lesssim\; h^{\theta_1 - \theta_2},
  \quad \theta_1 \in [1,3], \quad \theta_2 \in [-1,1], \quad q \in (1,\infty).
\end{equation}

In addition, we have the following variant of the estimate in \cref{lem:Ah}(i).
\begin{lemma}
  Let \(\theta_1 \in [1,2]\), \(\theta_2 \in [0,1]\), and \(q \in (1,\infty)\).
  Then
  \begin{equation}
    \label{eq:I-PhAinvAh}
    \bigl\|I - P_hA^{-1}A_h\bigr\|_{\mathcal{L}(H_{D,h}^{\theta_1,q},\,H_{D}^{\theta_2,q})}
    \;\lesssim\; h^{\theta_1 - \theta_2}.
  \end{equation}
\end{lemma}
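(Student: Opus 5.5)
The plan is to reduce \cref{eq:I-PhAinvAh} to \cref{lem:Ah}(i) through an algebraic identity, and then handle the mismatch between the discrete norm on the right and the continuous norm on the left. Fix $v_h \in X_h$ and set $f := -A_h v_h \in X_h$, so that $\|f\|_{L^q(\mathcal{O})} = \|v_h\|_{H_{D,h}^{2,q}}$. Since $P_h f = f$, we can write $v_h = -A_h^{-1}P_h f$. Because $v_h \in X_h$, also $P_h v_h = v_h$. Hence
\[
  (I - P_hA^{-1}A_h)v_h = P_h\bigl(v_h + A^{-1} f\bigr) = -P_h\bigl(A_h^{-1}P_h - A^{-1}\bigr) f .
\]
The first reduction is therefore to bound $P_h$ on $H_D^{\theta_2,q}$ uniformly in $h$ for $\theta_2\in[0,1]$. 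This follows from \cref{eq:Ph-conv} with $\theta_1=\theta_2$, which gives $\|I-P_h\|_{\mathcal{L}(H_D^{\theta_2,q})}\lesssim 1$.

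\textbf{Case $\theta_1 = 2$.} Combining the identity with \cref{lem:Ah}(i) gives
\[
  \|(I - P_hA^{-1}A_h)v_h\|_{H_D^{\theta_2,q}} \lesssim h^{2-\theta_2}\,\|v_h\|_{H_{D,h}^{2,q}} .
\]

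\textbf{Case $\theta_1 = 1$.} Here I need the bound $\lesssim h^{1-\theta_2}\|v_h\|_{H_{D,h}^{1,q}}$. I would write the operator as $(I-P_hA^{-1}A_h)v_h = v_h - P_h A^{-1}A_h v_h$ and bound the two pieces separately in $H_D^{1,q}$.
\begin{itemize}
\item For the first piece, I need $\|v_h\|_{H_D^{1,q}}\lesssim \|v_h\|_{H_{D,h}^{1,q}}$, i.e.\ $\|\partial_x v_h\|_{L^q}\lesssim \|(-A_h)^{1/2}v_h\|_{L^q}$. This is the discrete analogue of the Riesz transform bound. I expect it to follow from the square-root estimates implicit in \cref{eq:dotHh-complex-interp} (bounded imaginary powers/Yagi's theorem) together with a duality argument using the $W^{1,q}$-stability of the Ritz projection.
\item For the second piece, $A^{-1}A_h v_h$ is the continuous solution $w$ with $-\int \partial_x w\,\partial_x\phi = \int (A_h v_h)\phi$. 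In 1D one has explicitly $\partial_x w = \Pi(\partial_x v_h)$ up to constants, where $\Pi$ is an $L^2$-type projection onto discontinuous $P_1$ composed with integration. So $\|w\|_{H_D^{1,q}}\lesssim\|v_h\|_{H_D^{1,q}}$ by $L^q$-stability of local projections.
\end{itemize}
This gives the bound at $\theta_2=1$. For $\theta_2=0$, I would use duality as in \cref{lem:Ah}(ii): for $g\in L^{q'}$, pair against $A^{-1}g$ and exploit Galerkin orthogonality, $\int(I-P_hA^{-1}A_h)v_h\, g = \int \partial_x(\cdots)\partial_x(A^{-1}g - R_h A^{-1}g)$, to gain one power of $h$. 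Intermediate $\theta_2$ then follow by complex interpolation via \cref{eq:interpolation-identity}.

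\textbf{General $\theta_1\in(1,2)$.} For fixed $\theta_2$, the operator $I-P_hA^{-1}A_h$ is bounded from $H_{D,h}^{1,q}$ with constant $h^{1-\theta_2}$ and from $H_{D,h}^{2,q}$ with constant $h^{2-\theta_2}$. Interpolating with \cref{eq:dotHh-complex-interp}, which identifies $H_{D,h}^{\theta_1,q}$ as $[H_{D,h}^{1,q},H_{D,h}^{2,q}]_{\theta_1-1}$ uniformly in $h$ (by reiteration), yields $h^{\theta_1-\theta_2}$.

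The main obstacle is the $\theta_1=1$ endpoint, specifically the uniform equivalence $\|v_h\|_{H_D^{1,q}}\lesssim\|v_h\|_{H_{D,h}^{1,q}}$ for $q\neq 2$. If this proves troublesome, I would instead use only the $\theta_1=2$ bound together with the trivial $\theta_1=\theta_2$-type stability, and interpolate over a range that avoids this equivalence where possible.
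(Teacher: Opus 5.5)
\textbf{Gap: the $\theta_1=1$ endpoint is unproven, and it is unnecessary.} Your case $\theta_1=2$ is correct. Your factorization $-P_h(A_h^{-1}P_h-A^{-1})f$ coincides on $X_h$ with the paper's splitting $(I-P_h)A^{-1}A_h+(A_h^{-1}P_h-A^{-1})A_h$. The paper's form merely avoids needing $H_D^{\theta_2,q}$-stability of $P_h$.

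What you miss is that neither an endpoint at $\theta_1=1$ nor any interpolation is needed. The inverse estimate \eqref{eq:inverse}, stated just before the lemma, gives $\|A_hv_h\|_{L^q(\mathcal{O})}=\|v_h\|_{H_{D,h}^{2,q}}\lesssim h^{\theta_1-2}\|v_h\|_{H_{D,h}^{\theta_1,q}}$ for $\theta_1\in[1,2]$. Your $\theta_1=2$ bound then immediately yields
\[
\|(I-P_hA^{-1}A_h)v_h\|_{H_D^{\theta_2,q}}\lesssim h^{2-\theta_2}\|v_h\|_{H_{D,h}^{2,q}}\lesssim h^{2-\theta_2}\,h^{\theta_1-2}\|v_h\|_{H_{D,h}^{\theta_1,q}}=h^{\theta_1-\theta_2}\|v_h\|_{H_{D,h}^{\theta_1,q}} .
\]
This is the paper's entire proof.

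As written, your $\theta_1=1$ argument does not go through, so the proposal does not establish the lemma for $\theta_1<2$.
\begin{itemize}
\item The equivalence $\|v_h\|_{H_D^{1,q}}\lesssim\|v_h\|_{H_{D,h}^{1,q}}$ is only asserted as ``expected''.
\item Your 1D description of $w:=A^{-1}A_hv_h$ is reversed. From $\int\partial_xw\,\partial_x\phi_h=\int\partial_xv_h\,\partial_x\phi_h$ for all $\phi_h\in X_h$, $v_h$ is the Ritz projection of $w$. So $\partial_xv_h$ is the $L^2$-projection of $\partial_xw$ onto mean-zero discontinuous $P_1$ functions, while $\partial_xw$ itself is a $C^1$ piecewise cubic.
\item The bound $\|\partial_xw\|_{L^q}\lesssim\|\partial_xv_h\|_{L^q}$ you need is therefore an $h$-uniform stability of the \emph{inverse} of that projection on a specific subspace. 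Equivalently, it is $\|P_h\partial_x^2v_h\|_{H_D^{-1,q}}\lesssim\|v_h\|_{H_D^{1,q}}$ with $\partial_x^2v_h$ understood distributionally, i.e.\ an $H_D^{-1,q}$-stability of the global projection $P_h$. This does not follow from $L^q$-stability of local projections.
\end{itemize}
These facts are true; compare \cref{lem:dotHh-equiv} and \cref{lem:Ph-stab-negative}, whose proofs use the same inverse-estimate trick. Your sketch, however, does not prove them.

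Your $\theta_2=0$ duality step inherits this gap. The interpolation in $\theta_1$ also requires an $h$-uniform reiteration argument that you have not checked. Replace the whole $\theta_1=1$ and interpolation part with the one-line inverse estimate above.
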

\begin{proof}
  We employ the algebraic identity valid on $H_{D,h}^{\theta_1,q}$:
  \[
    I - P_hA^{-1}A_h = (I - P_h)A^{-1}A_h + (A_h^{-1}P_h - A^{-1})A_h,
  \]
  where we have used that $I = A_h^{-1}P_h A_h$ on $H_{D,h}^{\theta_1,q}$.
  Regarding the first term, the submultiplicativity of the operator norm,
  the projection error estimate $\|I-P_h\|_{\mathcal{L}(H_{D}^{2,q},H_{D}^{\theta_2,q})} \lesssim h^{2-\theta_2}$
  (a special case of \eqref{eq:Ph-conv}),
  and the isometric isomorphism $A^{-1} \colon L^q(\mathcal{O}) \to H_{D}^{2,q}$ imply
  \begin{align*}
    \|(I - P_h)A^{-1}A_h\|_{\mathcal{L}(H_{D,h}^{\theta_1,q}, H_{D}^{\theta_2,q})}
    &\leqslant \|I - P_h\|_{\mathcal{L}(H_{D}^{2,q}, H_{D}^{\theta_2,q})}
    \|A^{-1}\|_{\mathcal{L}(L^q(\mathcal{O}), H_{D}^{2,q})}
    \|A_h\|_{\mathcal{L}(H_{D,h}^{\theta_1,q}, H_{D,h}^{0,q})} \\
    &\lesssim h^{2-\theta_2} \|A_h\|_{\mathcal{L}(H_{D,h}^{\theta_1,q}, H_{D,h}^{0,q})}.
  \end{align*}
  Observing that the discrete inverse estimate \eqref{eq:inverse} implies
  $\|A_h\|_{\mathcal{L}(H_{D,h}^{\theta_1,q}, H_{D,h}^{0,q})} \lesssim h^{\theta_1-2}$,
  we then obtain
  \begin{align*}
    \|(I - P_h)A^{-1}A_h\|_{\mathcal{L}(H_{D,h}^{\theta_1,q}, H_{D}^{\theta_2,q})}
    \lesssim h^{\theta_1-\theta_2}.
  \end{align*}
  For the second term, \cref{lem:Ah}(i) and the estimate for $\|A_h\|_{\mathcal{L}(H_{D,h}^{\theta_1,q},H_{D,h}^{0,q})}$
  derived above lead to
  \begin{align*}
    \|(A_h^{-1}P_h - A^{-1})A_h\|_{\mathcal{L}(H_{D,h}^{\theta_1,q}, H_{D}^{\theta_2,q})}
    &\leqslant \|A_h^{-1}P_h - A^{-1}\|_{\mathcal{L}(L^q(\mathcal{O}), H_{D}^{\theta_2,q})}
    \|A_h\|_{\mathcal{L}(H_{D,h}^{\theta_1,q}, H_{D,h}^{0,q})} \\
    &\lesssim h^{2-\theta_2} \cdot h^{\theta_1-2}
    = h^{\theta_1-\theta_2}.
  \end{align*}
  Combining these two estimates via the triangle inequality establishes \eqref{eq:I-PhAinvAh}.
\end{proof}

The stability estimate \cref{eq:Ph-stab0} admits the following extension, which is well known;
we include a proof for completeness.
\begin{lemma}
  \label{lem:Ph-stab}
  For all \(\theta \in [0,2]\) and \(q \in (1,\infty)\), the operator norm \(\|P_h\|_{\mathcal{L}(H_{D}^{\theta,q},\, H_{D,h}^{\theta,q})}\) is uniformly bounded with respect to \(h \in (0,1]\).
\end{lemma}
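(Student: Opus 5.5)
The plan is to prove the uniform bound at the two endpoints $\theta=0$ and $\theta=2$ and then interpolate with the complex method.

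\textbf{Endpoint $\theta=0$.} Here $H_{D}^{0,q}=L^q(\mathcal{O})$ and $H_{D,h}^{0,q}$ is $X_h$ with the $L^q$-norm. The bound is therefore exactly the Douglas--Dupont--Wahlbin stability \cref{eq:Ph-stab0}.

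\textbf{Endpoint $\theta=2$.} For $v\in H_{D}^{2,q}$ I would write $A_hP_hv$ as a perturbation of $P_hAv$. Note that $A_h^{-1}P_hA v$ is the Ritz projection of $v$, and
\[
P_hv = A_h^{-1}P_hAv + \bigl(P_h - A_h^{-1}P_hA\bigr)v .
\]
Applying $A_h$ gives
\[
A_hP_hv = P_hAv + A_h\bigl(P_h - A_h^{-1}P_hA\bigr)v .
\]
\begin{itemize}
\item The first term is bounded via \cref{eq:Ph-stab0}: $\|P_hAv\|_{L^q}\lesssim\|Av\|_{L^q}=\|v\|_{H_{D}^{2,q}}$.
\item For the second term, the discrete inverse estimate \cref{eq:inverse} gives $\|A_hw_h\|_{L^q}=\|w_h\|_{H_{D,h}^{2,q}}\lesssim h^{-2}\|w_h\|_{L^q}$. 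Together with \cref{eq:Ph-AhinvPhA} at $(\theta_1,\theta_2)=(2,0)$, this yields $\|A_h(P_h-A_h^{-1}P_hA)v\|_{L^q}\lesssim h^{-2}h^{2}\|v\|_{H_{D}^{2,q}}$.
\end{itemize}
Hence $\|P_hv\|_{H_{D,h}^{2,q}}\lesssim\|v\|_{H_{D}^{2,q}}$ uniformly in $h$.

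\textbf{Intermediate $\theta\in(0,2)$.} I would apply complex interpolation. By \cref{eq:interpolation-identity}, $[L^q,H_{D}^{2,q}]_{\theta/2}=H_{D}^{\theta,q}$ with equivalent norms. By \cref{eq:dotHh-complex-interp}, $[H_{D,h}^{0,q},H_{D,h}^{2,q}]_{\theta/2}$ has norm equivalent to $\|\cdot\|_{H_{D,h}^{\theta,q}}$ with $h$-independent constants. Since $P_h$ is a single linear operator bounded uniformly at both endpoints, the interpolation property gives $\|P_h\|_{\mathcal{L}(H_{D}^{\theta,q},H_{D,h}^{\theta,q})}\lesssim 1$.

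The main point requiring care is that all equivalence constants are independent of $h$. For \cref{eq:interpolation-identity} this is automatic, since it does not involve $h$. For the discrete side, \cref{eq:dotHh-complex-interp} holds uniformly because the resolvent and imaginary-power bounds behind it are uniform in $h$. The $\theta=2$ step is the only genuinely non-trivial estimate, and it reduces to the already established \cref{eq:Ph-AhinvPhA} together with the inverse inequality.
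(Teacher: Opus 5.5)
Your proposal is correct and follows essentially the same route as the paper. The $\theta=0$ endpoint comes from \cref{eq:Ph-stab0}, and your identity $A_hP_hv=P_hAv+A_h(P_h-A_h^{-1}P_hA)v$ is the paper's splitting $P_h=(P_h-A_h^{-1}P_hA)+A_h^{-1}P_hA$ with $A_h$ applied, bounded the same way (inverse estimate plus \cref{eq:Ph-AhinvPhA} at $(2,0)$, and $L^q$-stability); the intermediate $\theta$ then follow by complex interpolation using \cref{eq:interpolation-identity} and the $h$-uniform equivalence \cref{eq:dotHh-complex-interp}.
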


\begin{proof}
  We first verify the stability estimates at the endpoints $\theta = 0$ and $\theta = 2$. The case $\theta = 0$ follows directly from the $L^q$-stability of $P_h$ stated in \cref{eq:Ph-stab0}.
  For $\theta = 2$, we decompose $P_h$ as $(P_h - A_h^{-1}P_hA) + A_h^{-1}P_hA$. By the triangle inequality,
  \[
    \|P_h\|_{\mathcal{L}(H_{D}^{2,q},\,H_{D,h}^{2,q})}
    \leqslant \|P_h - A_h^{-1}P_hA\|_{\mathcal{L}(H_{D}^{2,q},\,H_{D,h}^{2,q})}
    + \|A_h^{-1}P_hA\|_{\mathcal{L}(H_{D}^{2,q},\,H_{D,h}^{2,q})}.
  \]
  To estimate the first term, we note that the operator $P_h - A_h^{-1}P_hA$ maps into the finite element space $X_h$.
  Thus, applying the inverse estimate \cref{eq:inverse} with $(\theta_1, \theta_2) = (0,2)$ in conjunction with the approximation property \cref{eq:Ph-AhinvPhA} for $(\theta_1, \theta_2) = (2,0)$ yields
  \[
    \|P_h - A_h^{-1}P_hA\|_{\mathcal{L}(H_{D}^{2,q},\,H_{D,h}^{2,q})}
    \lesssim h^{-2}\, \|P_h - A_h^{-1}P_hA\|_{\mathcal{L}(H_{D}^{2,q},\,L^q(\mathcal{O}))} \lesssim 1.
  \]
  For the second term, we observe that $A \colon H_{D}^{2,q} \to H_{D}^{0,q}$ and
  $A_h^{-1} \colon H_{D,h}^{0,q} \to H_{D,h}^{2,q}$ are isometric isomorphisms.
  Therefore, by the submultiplicativity of the operator norm and the stability for $\theta = 0$, we obtain
  \[
    \|A_h^{-1}P_hA\|_{\mathcal{L}(H_{D}^{2,q},\,H_{D,h}^{2,q})} \lesssim
    \|P_h\|_{\mathcal{L}(H_{D}^{0,q}, H_{D,h}^{0,q})} \lesssim 1.
  \]
  Combining these bounds establishes the uniform estimate for $\theta = 2$.
  Finally, the stability for intermediate values $\theta \in (0,2)$
  follows from complex interpolation between the endpoint estimates,
  utilizing the norm equivalence in \cref{eq:dotHh-complex-interp}.
\end{proof}

\subsubsection{Embedding properties of the discrete spaces}
\label{subsubsec:dotHh-embed}

The discrete Sobolev spaces $H_{D,h}^{\theta,q}$ satisfy some embedding relations analogous to their
continuous counterparts. Although these properties can be established using standard finite element techniques (see, e.g.,~\cite[Chapter 8]{Brenner2008}), we provide complete proofs below for the reader's convenience.

We begin with a key norm equivalence for the discrete space $ H_{D,h}^{\theta,q} $.
\begin{lemma}
  \label{lem:dotHh-equiv}
  Let $\theta \in [-1,1]$ and $q \in (1,\infty)$. Then, for all $v_h \in X_h$,
  $ \|v_h\|_{H_{D,h}^{\theta,q}} \sim \|v_h\|_{H_{D}^{\theta,q}} $.
\end{lemma}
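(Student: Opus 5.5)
I would split the proof into three cases: $\theta=0$, the positive range $\theta\in(0,1]$, and the negative range $\theta\in[-1,0)$. Interpolation and duality then connect the cases.

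\textbf{Case $\theta=0$.} Both norms equal $\|v_h\|_{L^q(\mathcal{O})}$, so there is nothing to prove.

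\textbf{Case $\theta=1$.} Since $H_{D}^{1,q}=W_0^{1,q}(\mathcal{O})$ with equivalent norms, I have to show $\|(-A_h)^{1/2}v_h\|_{L^q}\sim\|\partial_x v_h\|_{L^q}$. I would argue as follows.
\begin{itemize}
\item \emph{Upper bound for the discrete norm.} Use the identity $(-A_h)^{1/2}v_h = (-A_h)^{-1/2}(-A_h)v_h$ together with a duality argument. For $w_h\in X_h$,
\[
\int_{\mathcal{O}} ((-A_h)^{1/2}v_h) w_h\,\mathrm{d}x=\int_{\mathcal{O}}\partial_x v_h\,\partial_x((-A_h)^{-1/2}w_h)\,\mathrm{d}x,
\]
so the bound reduces to the dual statement $\|\partial_x (-A_h)^{-1/2}w_h\|_{L^{q'}}\lesssim\|w_h\|_{L^{q'}}$.
\item \emph{The dual statement itself.} This amounts to $\|(-A_h)^{-1/2}P_h\|_{\mathcal{L}(L^{q'},W^{1,q'})}\lesssim1$, a discrete Riesz-transform bound. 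I would derive it by writing
\[
(-A_h)^{-1/2}P_h=\tfrac{1}{\pi}\int_0^\infty s^{-1/2}(s-A_h)^{-1}P_h\,\mathrm{d}s.
\]
Splitting at $s=h^{-2}$, I would compare with the continuous resolvent using \cref{lem:Ah} and the resolvent bound \cref{eq:resolvent-bound}. On the high-frequency part I would use the inverse estimate \cref{eq:inverse}.
\item \emph{Reverse bound.} Test $\|\partial_x v_h\|_{L^q}$ against $g\in L^{q'}$. Here I would use the $W^{1,q'}$-stability of the elliptic Ritz projection, or equivalently the $W^{1,q}$-stability of $A_h^{-1}P_h\partial_x$, which is classical in one dimension, since $A_h^{-1}P_hA$ is the Ritz projection and is $W^{1,q}$-stable by \cref{lem:Ah}(ii) with $\theta_1=\theta_2=1$.
\end{itemize}

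\textbf{Case $\theta\in(0,1)$.} Interpolate between $\theta=0$ and $\theta=1$. The continuous side uses \cref{eq:interpolation-identity}; the discrete side uses \cref{eq:dotHh-complex-interp} together with reiteration to get $H_{D,h}^{\theta,q}=[H_{D,h}^{0,q},H_{D,h}^{1,q}]_\theta$ uniformly in $h$. The subtlety is that interpolation of subspaces is not automatic: $X_h$ with the continuous norms need not interpolate to the continuous interpolation norm. I would resolve this with the uniformly bounded projection $P_h$ from \cref{lem:Ph-stab}, which is bounded on both endpoint scales. Because it is a common retraction, the interpolation norms coincide with those of the ambient spaces, giving $\|v_h\|_{H_D^{\theta,q}}\sim\|v_h\|_{[L^q,H_D^{1,q}]_\theta}$ restricted to $X_h$.

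\textbf{Case $\theta\in[-1,0)$.} Use duality. By definition,
\[
\|v_h\|_{H_{D}^{\theta,q}}=\sup_{w}\frac{\int v_h w}{\|w\|_{H_D^{-\theta,q'}}}.
\]
Since $\int v_h w=\int v_h P_hw$ and $\|P_hw\|_{H_{D}^{-\theta,q'}}\lesssim\|w\|_{H_D^{-\theta,q'}}$ (from \cref{lem:Ph-stab} combined with the positive case already proved), the supremum can be restricted to $w_h\in X_h$ up to constants. On the discrete side, the self-adjointness of $A_h$ gives
\[
\|v_h\|_{H_{D,h}^{\theta,q}}=\sup_{w_h}\frac{\int v_hw_h}{\|w_h\|_{H_{D,h}^{-\theta,q'}}},
\]
with $L^q$–$L^{q'}$ duality on $X_h$ holding uniformly thanks to the $L^q$-stability of $P_h$ in \cref{eq:Ph-stab0}. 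The positive case then finishes the argument.

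\textbf{Main obstacle.} I expect the hard step to be the case $\theta=1$, specifically the bound of the discrete Riesz transform $\partial_x(-A_h)^{-1/2}$ in $L^q$ uniformly in $h$. If the resolvent-integral approach becomes messy, I would instead interpolate $\|(-A_h)^{1/2}\cdot\|$ via \cref{eq:dotHh-complex-interp} as $[H_{D,h}^{0,q},H_{D,h}^{2,q}]_{1/2}$. I would then compare with $[L^q,H_D^{2,q}]_{1/2}=W_0^{1,q}$, using the common retraction $P_h$, whose $H^{2}$-type stability is supplied by \cref{lem:Ph-stab}. The catch is that $X_h\not\subset H_D^{2,q}$. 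To get around this, I would pair the retraction $P_h$ with the coretraction $A^{-1}A_h$ from $H_{D,h}^{2,q}$ into $H_D^{2,q}$, whose error is controlled by \cref{eq:I-PhAinvAh}.
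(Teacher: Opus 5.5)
Your plan is sound. The fallback you sketch for $\theta=1$ (the retraction $P_h$ on the $0$/$2$ scales, paired with $A^{-1}A_h$ and an error controlled by \cref{eq:I-PhAinvAh}) is essentially what the paper does. What differs is the organization and your primary route.

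The paper never isolates a discrete Riesz transform and never treats $\theta=1$ separately. It relies on \cref{lem:Ph-stab}, which bounds $P_h\colon H_D^{\theta,q}\to H_{D,h}^{\theta,q}$ uniformly for all $\theta\in[0,2]$ by interpolating between the elementary endpoints $\theta=0$ and $\theta=2$. The uniform functional calculus of $A_h$ enters only through \cref{eq:dotHh-complex-interp}. This gives $\|v_h\|_{H_{D,h}^{\theta,q}}=\|P_hv_h\|_{H_{D,h}^{\theta,q}}\lesssim\|v_h\|_{H_D^{\theta,q}}$ for every $\theta\in[0,1]$ at once. For the converse, the paper writes $v_h=(A_h^{-1}P_h-A^{-1})A_hv_h+A^{-1}A_hv_h$.
\begin{itemize}
\item The first term is bounded by \cref{lem:Ah}(i) and \cref{eq:inverse}, used only between discrete norms.
\item The second term, $\|A^{-1}A_hv_h\|_{H_D^{\theta,q}}=\|(-A)^{\theta/2-1}A_hv_h\|_{L^q(\mathcal{O})}$, is bounded by duality: test against $P_h(-A)^{\theta/2-1}\varphi$ and apply \cref{lem:Ph-stab} at level $2-\theta\in[1,2]$ with exponent $q'$.
\end{itemize}
So the intermediate range needs no subspace-interpolation argument. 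Yours, with $P_h$ as a common retraction, is nonetheless correct, and your negative-order step coincides with the paper's Part~(b). The paper's route is shorter because all the analysis sits in \cref{lem:Ph-stab} and \cref{eq:dotHh-complex-interp}. Yours yields an explicit uniform bound for $\partial_x(-A_h)^{-1/2}P_h$, at the price of more machinery.

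If you keep the resolvent-integral route, three points need care.
\begin{itemize}
\item The low-frequency continuous piece $\frac1\pi\int_0^{h^{-2}}s^{-1/2}\partial_x(s-A)^{-1}\,\mathrm{d}s$ cannot be bounded uniformly in $h$ by the triangle inequality. Since $\|\partial_x(s-A)^{-1}\|_{\mathcal{L}(L^r(\mathcal{O}))}\sim(1+s)^{-1/2}$, that only gives $O(\log(1/h))$. You need the identity $\frac1\pi\int_0^{N}s^{-1/2}(s-A)^{-1}\,\mathrm{d}s=(-A)^{-1/2}\,\tfrac{2}{\pi}\arctan\bigl(N^{1/2}(-A)^{-1/2}\bigr)$, together with the bounded $H^\infty$-calculus of $-A$ and $H_D^{1,r}=W_0^{1,r}(\mathcal{O})$.
\item \cref{lem:Ah} only covers $s=0$. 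A uniform-in-$s$ comparison follows from the identity $(s-A_h)^{-1}P_h-P_h(s-A)^{-1}=(s-A_h)^{-1}A_h(P_h-A_h^{-1}P_hA)(s-A)^{-1}$ and \cref{eq:Ph-AhinvPhA}, giving $O(h)$ in $\mathcal{L}(L^r(\mathcal{O}),W^{1,r}(\mathcal{O}))$.
\item Wherever you need $\|\partial_x w_h\|_{L^r(\mathcal{O})}\lesssim h^{-1}\|w_h\|_{L^r(\mathcal{O})}$ (the high-frequency part and the comparison just mentioned), use the classical elementwise inverse inequality, not \cref{eq:inverse}. The latter controls $\|(-A_h)^{1/2}w_h\|_{L^r(\mathcal{O})}$, and converting that into $\|\partial_x w_h\|_{L^r(\mathcal{O})}$ is exactly the statement being proved.
\end{itemize}
Two smaller points. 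Your reverse bound at exponent $q$ tacitly uses the upper bound at exponent $q'$. In the fallback, $A^{-1}A_h$ must also be bounded at the $L^q$ endpoint; this follows from $A^{-1}A_hv_h=v_h-(A_h^{-1}P_h-A^{-1})A_hv_h$, \cref{lem:Ah}(i) and \cref{eq:inverse}.
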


\begin{proof}
  Let \(\langle \cdot, \cdot \rangle\) denote the \(L^q(\mathcal{O})\)--\(L^{q'}(\mathcal{O})\) duality pairing,
  where \(q'\) is the Hölder conjugate exponent of \(q\). 
  We split the proof into two parts.

\textbf{Part (a).}
We first establish the equivalence for \(\theta \in [0,1]\).
Since \(A_h v_h \in X_h\), we have \(v_h = A_h^{-1}P_h A_h v_h\). By the triangle inequality,
\[
  \|v_h\|_{H_{D}^{\theta,q}}
  \leqslant \|(A_h^{-1}P_h - A^{-1})A_h v_h\|_{H_{D}^{\theta,q}}
  + \|A^{-1}A_h v_h\|_{H_{D}^{\theta,q}}.
\]
The first term is bounded by \cref{lem:Ah}(i) and the inverse estimate~\cref{eq:inverse}:
\[
  \|(A_h^{-1}P_h - A^{-1})A_h v_h\|_{H_{D}^{\theta,q}}
  \lesssim h^{2-\theta} \|A_h v_h\|_{L^q(\mathcal{O})}
  = h^{2-\theta} \|v_h\|_{H_{D,h}^{2,q}}
  \lesssim \|v_h\|_{H_{D,h}^{\theta,q}}.
\]
For the second term, we use \(\|A^{-1}A_h v_h\|_{H_{D}^{\theta,q}} = \|(-A)^{\theta/2-1}A_h v_h\|_{L^q(\mathcal{O})}\) and estimate by duality.
For any \(\varphi \in L^{q'}(\mathcal{O})\) with \(\|\varphi\|_{L^{q'}(\mathcal{O})} \leqslant 1\), we have
\[
  \begin{aligned}
    | \bigl\langle (-A)^{\theta/2-1}A_h v_h, \, \varphi \bigr\rangle |
    &= | \bigl\langle A_h v_h, \, (-A)^{\theta/2-1}\varphi \bigr\rangle | \\
    &= | \bigl\langle A_h v_h, \, P_h(-A)^{\theta/2-1}\varphi \bigr\rangle | \\
    &= | -\bigl\langle (-A_h)^{\theta/2}v_h, \, (-A_h)^{1-\theta/2}P_h(-A)^{\theta/2-1}\varphi \bigr\rangle | \\
    &\leqslant \|v_h\|_{H_{D,h}^{\theta,q}}\,
    \|P_h(-A)^{\theta/2-1}\varphi\|_{H_{D,h}^{2-\theta,q'}}.
  \end{aligned}
\]
By \cref{lem:Ph-stab} and the fact that \((-A)^{\theta/2-1}\colon L^{q'}(\mathcal{O}) \to H_{D}^{2-\theta,q'}\)
is an isometry, we deduce
\[
  \|P_h(-A)^{\theta/2-1}\varphi\|_{H_{D,h}^{2-\theta,q'}} \lesssim \|(-A)^{\theta/2-1}\varphi\|_{H_{D}^{2-\theta,q'}} = \|\varphi\|_{L^{q'}(\mathcal{O})}.
\]
Taking the supremum over \(\varphi\) yields that the second term satisfies \(\|A^{-1}A_h v_h\|_{H_{D}^{\theta,q}} \lesssim \|v_h\|_{H_{D,h}^{\theta,q}}\).
Hence, combining the estimates for the two terms gives \(\|v_h\|_{H_{D}^{\theta,q}} \lesssim \|v_h\|_{H_{D,h}^{\theta,q}}\).
  The reverse inequality for \(\theta \in [0,1]\) is an immediate consequence of \(v_h = P_h v_h\) and \cref{lem:Ph-stab}:
  \[
    \|v_h\|_{H_{D,h}^{\theta,q}} = \|P_h v_h\|_{H_{D,h}^{\theta,q}} \lesssim \|v_h\|_{H_{D}^{\theta,q}}.
  \]

  \textbf{Part (b).}
  We now consider the case \(\theta \in [-1,0)\).
  By the dual characterization of the continuous norm,
  \[
    \|v_h\|_{H_{D}^{\theta,q}} = \sup_{0 \neq \psi \in H_{D}^{-\theta,q'}} \frac{|\langle v_h, \psi \rangle|}{\|\psi\|_{H_{D}^{-\theta,q'}}}.
  \]
  For any \(\psi \in H_{D}^{-\theta,q'}\), the inclusion \(v_h \in X_h\) and the definition of \(P_h\) give \(\langle v_h, \psi \rangle = \langle v_h, P_h \psi \rangle\).
  Using the discrete duality estimate together with \cref{lem:Ph-stab}, we obtain
  \[
    |\langle v_h, \psi \rangle| \leqslant
    \|v_h\|_{H_{D,h}^{\theta,q}} \|P_h \psi\|_{H_{D,h}^{-\theta,q'}}
    \lesssim \|v_h\|_{H_{D,h}^{\theta,q}} \|\psi\|_{H_{D}^{-\theta,q'}}.
  \]
  Taking the supremum over all such \(\psi\) yields \(\|v_h\|_{H_{D}^{\theta,q}} \lesssim \|v_h\|_{H_{D,h}^{\theta,q}}\).

  For the reverse inequality, by the dual characterization of the discrete norm and the continuous duality inequality,
  \[
    \|v_h\|_{H_{D,h}^{\theta,q}} = \sup_{w_h \in X_h \setminus \{0\}} \frac{|\langle v_h, w_h \rangle|}{\|w_h\|_{H_{D,h}^{-\theta,q'}}} \leqslant \sup_{w_h \in X_h \setminus \{0\}} \frac{\|v_h\|_{H_{D}^{\theta,q}} \|w_h\|_{H_{D}^{-\theta,q'}}}{\|w_h\|_{H_{D,h}^{-\theta,q'}}}.
  \]
  Since \(-\theta \in (0,1]\) and \(q' \in (1,\infty)\), the norm equivalence established in Part (a) for the exponent
  pair \((-\theta,q')\) yields \(\|w_h\|_{H_{D}^{-\theta,q'}} \sim \|w_h\|_{H_{D,h}^{-\theta,q'}}\)
  for every \(w_h \in X_h\). Inserting this equivalence into the right--hand side gives
  \(\|v_h\|_{H_{D,h}^{\theta,q}} \lesssim \|v_h\|_{H_{D}^{\theta,q}}\).
  This completes the proof.
\end{proof}

By the norm equivalence from \cref{lem:dotHh-equiv}, we now extend the stability estimate of \cref{lem:Ph-stab} to the range \(\theta \in [-1,0)\).

\begin{lemma}
  \label{lem:Ph-stab-negative}
  For all \(\theta \in [-1,0)\) and \(q \in (1,\infty)\), the operator norm
  \(\|P_h\|_{\mathcal{L}(H_{D}^{\theta,q},\, H_{D,h}^{\theta,q})}\)
  is uniformly bounded with respect to \(h \in (0,1]\).
\end{lemma}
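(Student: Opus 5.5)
The plan is a duality argument that combines the definition of the extended projection \cref{eq:Ph-extend} with the norm equivalence of \cref{lem:dotHh-equiv} and the positive-order stability of \cref{lem:Ph-stab}. Fix \(\theta \in [-1,0)\), \(q \in (1,\infty)\) and \(f \in H_{D}^{\theta,q}\), and let \(q'\) be the conjugate exponent. The first step is to replace the discrete norm by the continuous one: \(P_h f \in X_h\), so \cref{lem:dotHh-equiv} gives
\[
\|P_h f\|_{H_{D,h}^{\theta,q}} \sim \|P_h f\|_{H_{D}^{\theta,q}},
\]
with constants independent of \(h\). It therefore suffices to bound \(\|P_h f\|_{H_{D}^{\theta,q}}\) by a constant times \(\|f\|_{H_{D}^{\theta,q}}\).

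Next I would use the dual characterization. Since \(P_h f \in X_h \subset L^q(\mathcal{O})\), we have
\[
\|P_h f\|_{H_{D}^{\theta,q}} = \sup_{\psi} \frac{\bigl|\int_{\mathcal{O}} (P_h f)\,\psi\,\mathrm{d}x\bigr|}{\|\psi\|_{H_{D}^{-\theta,q'}}},
\]
where the supremum runs over \(0 \neq \psi \in H_{D}^{-\theta,q'}\). For such \(\psi\), the function \(P_h\psi\) is defined by the ordinary \(L^2\)-projection, because \(-\theta > 0\) gives \(\psi \in L^{q'}(\mathcal{O}) \subset L^1(\mathcal{O})\). Self-adjointness of \(P_h\) on \(X_h\) together with \cref{eq:Ph-extend} then yields
\[
\int_{\mathcal{O}} (P_h f)\,\psi \,\mathrm{d}x = \int_{\mathcal{O}} (P_h f)\,P_h\psi \,\mathrm{d}x = \langle f, P_h\psi\rangle .
\]
Here the last pairing is between \(H_{D}^{\theta,q}\) and \(H_{D}^{-\theta,q'}\), which makes sense because \(P_h\psi \in X_h \subset H_{D}^{1,q'} \hookrightarrow H_{D}^{-\theta,q'}\) for \(-\theta \le 1\).

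Consequently,
\[
\Bigl|\int_{\mathcal{O}} (P_h f)\,\psi\,\mathrm{d}x\Bigr| \le \|f\|_{H_{D}^{\theta,q}}\,\|P_h\psi\|_{H_{D}^{-\theta,q'}}.
\]
To finish, I would bound \(\|P_h\psi\|_{H_{D}^{-\theta,q'}}\) in two moves. First, \cref{lem:dotHh-equiv}, applied with exponent pair \((-\theta,q')\) where \(-\theta \in (0,1]\), gives \(\|P_h\psi\|_{H_{D}^{-\theta,q'}} \lesssim \|P_h\psi\|_{H_{D,h}^{-\theta,q'}}\). Second, \cref{lem:Ph-stab} gives \(\|P_h\psi\|_{H_{D,h}^{-\theta,q'}} \lesssim \|\psi\|_{H_{D}^{-\theta,q'}}\). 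Taking the supremum over \(\psi\) proves the uniform bound.

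The only delicate point is justifying the identity \(\int_{\mathcal{O}} (P_h f)\psi\,\mathrm{d}x = \langle f, P_h\psi\rangle\) for a general \(f\) in the negative-order space. I would first verify it for \(f \in L^q(\mathcal{O})\), where it is just symmetry of the \(L^2\)-projection. Because \cref{eq:Ph-extend} defines \(P_h f\) for all \(f \in H_{D}^{\theta,q}\), the identity in fact holds directly with \(v_h = P_h\psi\). Alternatively, one can extend it from the dense subspace \(L^q(\mathcal{O})\) by continuity, since \(X_h\) is finite dimensional.
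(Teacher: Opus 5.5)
Your proof is correct and is essentially the paper's duality argument. The paper dualizes directly in the discrete norm, writing $\|P_h f\|_{H_{D,h}^{\theta,q}}$ as a supremum of $|\langle f, v_h\rangle|/\|v_h\|_{H_{D,h}^{-\theta,q'}}$ over $v_h \in X_h$, so that \eqref{eq:Ph-extend} applies at once and only the positive-order case of \cref{lem:dotHh-equiv} is needed. You instead first pass to the continuous norm via the negative-order case of \cref{lem:dotHh-equiv} and dualize there, which costs an extra appeal to \cref{lem:Ph-stab} but gives the continuous-norm bound $\|P_h f\|_{H_{D}^{\theta,q}} \lesssim \|f\|_{H_{D}^{\theta,q}}$ along the way.
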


\begin{proof}
  Fix \(\theta \in [-1,0)\), \(q \in (1,\infty)\), and \(f \in H_{D}^{\theta,q}\).  
  By \cref{eq:Ph-extend} and the norm equivalence in \cref{lem:dotHh-equiv}, we obtain
  \begin{align*}
    \|P_h f\|_{H_{D,h}^{\theta,q}}
    &=
    \sup_{v_h \in X_h\setminus\{0\}} \frac{|\langle f, v_h \rangle|}{\|v_h\|_{H_{D,h}^{-\theta,q'}}}
    \\
    &\leqslant
    \sup_{v_h \in X_h\setminus\{0\}} \frac{\|f\|_{H_{D}^{\theta,q}} \|v_h\|_{H_{D}^{-\theta,q'}}}{\|v_h\|_{H_{D,h}^{-\theta,q'}}}
    \lesssim \|f\|_{H_{D}^{\theta,q}},
  \end{align*}
  where \(\langle \cdot, \cdot \rangle\) denotes the duality pairing between \(L^q(\mathcal{O})\) and \(L^{q'}(\mathcal{O})\).
  This proves the claimed uniform boundedness.
\end{proof}

Finally, we present the following $h$-uniform embeddings.
\begin{lemma}
  \label{lem:dotHh}
  The following embeddings hold uniformly with respect to $h$:
  \begin{enumerate}
    \item[\textup{(i)}] $H_{D,h}^{\theta,2} \hookrightarrow W^{1,\infty}(\mathcal{O})$ for every $\theta \in (3/2, 2)$;
    \item[\textup{(ii)}] $H_{D,h}^{2,q} \hookrightarrow H_{D,h}^{\theta,q}$ for every
    $q \in (1, \infty)$ and $0 \leqslant \theta < 2$;
    \item[\textup{(iii)}] $(H_{D,h}^{0,q},\,H_{D,h}^{2,q})_{\theta,p} \hookrightarrow H_{D,h}^{\alpha,q}$ for every $p, q \in (1, \infty)$, $\theta \in (0, 1)$, and $0 < \alpha < 2\theta$.
  \end{enumerate}
\end{lemma}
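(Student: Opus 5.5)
We prove the three embeddings separately. Throughout we use the norm equivalence of \cref{lem:dotHh-equiv}, the $h$-uniform resolvent bound \cref{eq:resolvent-bound}, the complex interpolation characterization \cref{eq:dotHh-complex-interp}, and the inverse estimate \cref{eq:inverse}.

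\textbf{Embedding (i).} Take $v_h\in X_h$ and $\theta\in(3/2,2)$, and set $w := A^{-1}A_h v_h \in H_{D}^{2,2}$. Then $v_h - w = (A_h^{-1}P_h - A^{-1})A_h v_h$, using $v_h=A_h^{-1}P_hA_hv_h$.
\begin{itemize}
\item[\textup{(a)}] \emph{The function $w$.} We have $\|w\|_{H_D^{\theta,2}} = \|(-A)^{\theta/2-1}A_hv_h\|_{L^2}$. The duality argument of Part (a) of \cref{lem:dotHh-equiv} applies for any $\theta\in[0,2]$, since \cref{lem:Ph-stab} covers $2-\theta\in[0,2]$. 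It gives $\|w\|_{H_D^{\theta,2}}\lesssim \|v_h\|_{H_{D,h}^{\theta,2}}$. The classical Sobolev embedding $H_D^{\theta,2}\hookrightarrow W^{1,\infty}(\mathcal O)$ for $\theta>3/2$ then controls $w$.
\item[\textup{(b)}] \emph{The difference $v_h-w$.} This is the main obstacle, because \cref{lem:Ah}(i) only gives $H_D^{1,q}$-control. For large $q$ and $\theta'\in(0,1)$, we first get
\[
\|(A_h^{-1}P_h-A^{-1})f\|_{H_D^{1,q}}\lesssim h\|f\|_{L^q}.
\]
We apply this with $f=A_hv_h$. The piecewise quadratic part we then handle via a finite element inverse estimate $\|z_h\|_{W^{1,\infty}}\lesssim h^{-1/q}\|z_h\|_{W^{1,q}}$, after splitting $v_h-w = (v_h-I_hw)+(I_hw-w)$ with a quasi-interpolant $I_h$.
\end{itemize}
A cleaner alternative for (b) is to bound $\|v_h-w\|_{W^{1,\infty}}$ directly with the $L^\infty$-type estimate \cref{eq:Ph-conv-Linfty}, applied to derivatives. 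Since $\theta>3/2$ leaves a margin of $h^{\theta-3/2}$, any loss of $h^{-1/2}$ from an $L^2\to L^\infty$ inverse inequality is absorbed. This uses $\|A_hv_h\|_{L^2}\lesssim h^{\theta-2}\|v_h\|_{H^{\theta,2}_{D,h}}$ from \cref{eq:inverse}, together with the $W^{1,2}$ error $h\|A_hv_h\|_{L^2}$ and the inverse inequality $\|z_h\|_{W^{1,\infty}}\lesssim h^{-1/2}\|z_h\|_{W^{1,2}}$ applied to $v_h-P_hw$. The remaining piece $P_hw-w$ is handled by the $W^{1,\infty}$ stability of $P_h$ on $H^{\theta,2}_D$.

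\textbf{Embedding (ii).} Write $(-A_h)^{\theta/2}v_h = (-A_h)^{\theta/2-1}(-A_h)v_h$. It then suffices to show that $(-A_h)^{-s}$ with $s=1-\theta/2\in(0,1]$ is $h$-uniformly bounded on $L^q$. We do this through the Balakrishnan integral
\[
(-A_h)^{-s}=\frac{\sin(\pi s)}{\pi}\int_0^\infty r^{-s}(r-A_h)^{-1}\,\mathrm dr
\]
combined with \cref{eq:resolvent-bound}, which gives integrability $r^{-s}(1+r)^{-1}$. The case $s=1$ follows from \cref{eq:resolvent-bound} at $r=0$.

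\textbf{Embedding (iii).} Since $(\cdot,\cdot)_{\theta,p}\hookrightarrow[\cdot,\cdot]_{\beta}$ for $\beta<\theta$, with constants depending only on the interpolation couple's structure (the reiteration and standard inclusions in \cite{Lunardi2018}), we obtain $h$-uniformly
\[
(H^{0,q}_{D,h},H^{2,q}_{D,h})_{\theta,p}\hookrightarrow[H^{0,q}_{D,h},H^{2,q}_{D,h}]_{\alpha/2}.
\]
Here we pick $\alpha/2<\theta$. The right-hand space is $H^{\alpha,q}_{D,h}$ by \cref{eq:dotHh-complex-interp}. The constant in that equivalence is $h$-uniform because \cite[Theorem~16.5]{Yagi2010} depends only on the uniform resolvent and imaginary-power bounds. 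We also need the inclusion constant to be independent of the couple, which holds since both norms are defined via $K$-functionals with universal constants.
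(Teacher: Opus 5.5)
Your proposal is correct. Part (i) follows the paper; parts (ii) and (iii) take genuinely different and somewhat simpler routes. Part (iii) needs one repair in how you justify $h$-uniformity.

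\textbf{Part (i).} Your ``cleaner alternative'' is essentially the paper's argument. The paper writes $v_h=(I-P_hA^{-1}A_h)v_h+P_hA^{-1}A_hv_h$ and bounds the two pieces as follows.
\begin{itemize}
\item The first piece is bounded by the inverse inequality with factor $h^{-1/2}$ together with \eqref{eq:I-PhAinvAh}. That estimate is exactly your $W^{1,2}$ bound $h\|A_hv_h\|_{L^2}\lesssim h^{\theta-1}\|v_h\|_{H^{\theta,2}_{D,h}}$.
\item The second piece is bounded by the $W^{1,\infty}$-stability of $P_h$ from \cref{rem:Ph-stab}, together with $\|A^{-1}A_hv_h\|_{H^{\theta,2}_D}\lesssim\|v_h\|_{H^{\theta,2}_{D,h}}$.
\end{itemize}
Your detour through $w$ and $P_hw-w$ collapses to this. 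You can drop the first variant (large $q$, quasi-interpolant). You can also drop the appeal to \eqref{eq:Ph-conv-Linfty} ``applied to derivatives'': it makes no sense, since $P_h$ does not commute with $\partial_x$.

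\textbf{Part (ii).} The paper proves the case $\theta=0$, namely $\|v_h\|_{L^q}\lesssim\|A_hv_h\|_{L^q}$, from finite element approximation. It uses the $L^q$-stability of $P_h$ and \eqref{eq:I-PhAinvAh} with $(\theta_1,\theta_2)=(2,0)$, then reaches $0<\theta<2$ via the interpolation inequality and \eqref{eq:dotHh-complex-interp}.

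Your Balakrishnan argument is shorter and purely operator-theoretic. Using \eqref{eq:resolvent-bound} with $\vartheta=0$,
\[
\|(-A_h)^{-s}\|\le C\,\tfrac{\sin\pi s}{\pi}\int_0^\infty r^{-s}(1+r)^{-1}\,\mathrm{d}r=C,
\]
and the case $s=1$ is literally the resolvent bound at $r=0$. The trade-off: your route leans on the $r=0$ endpoint of \eqref{eq:resolvent-bound}, i.e.\ uniform invertibility of $A_h$ in $L^q$. The paper's route does not need that endpoint and derives the bound from approximation properties instead.

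\textbf{Part (iii).} The paper lowers the index to $(\cdot,\cdot)_{\alpha/2,r}$ with $r=\min\{q,q'\}$ and then invokes Fourier type of $L^q$. You lower to $(\cdot,\cdot)_{\alpha/2,1}$ and use $(X,Y)_{\beta,1}\hookrightarrow[X,Y]_\beta$, which holds for every Banach couple with a constant depending only on $\beta$. Since $\alpha<2\theta$ is strict, this avoids Fourier type entirely, which is a genuine simplification.

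Your justification of $h$-uniformity is wrong as written, however.
\begin{itemize}
\item The complex norm is not defined by a $K$-functional.
\item The index-lowering step $(X,Y)_{\theta,p}\hookrightarrow(X,Y)_{\alpha/2,1}$ is not couple-independent; its constant depends on $\|Y\hookrightarrow X\|$. For example, take $X=\mathbb{R}$ with $\|x\|_X=|x|$ and $\|x\|_Y=\lambda|x|$. The ratio of the two norms behaves like $\lambda^{\alpha/2-\theta}$, which tends to $\infty$ as $\lambda\to0$.
\end{itemize}
What makes the step uniform here is the $h$-uniform embedding $H^{2,q}_{D,h}\hookrightarrow H^{0,q}_{D,h}$, which is your own part (ii) with $\theta=0$. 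You should invoke it explicitly; this is exactly how the paper uses (ii) at the start of its proof of (iii). Only the second inclusion, $(X,Y)_{\beta,1}\hookrightarrow[X,Y]_\beta$, carries a universal constant.
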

\begin{proof}
\textup{(i)} Fix $\theta \in (3/2, 2)$. For an arbitrary $v_h \in X_h$,
we decompose $v_h = (I - P_hA^{-1}A_h)v_h + P_h A^{-1} A_h v_h$ and estimate each term separately.
The inverse estimate \cite[Theorem~4.5.11]{Brenner2008} implies
\[
  \|(I - P_hA^{-1}A_h)v_h\|_{W^{1,\infty}(\mathcal{O})}
  \lesssim h^{-1/2} \|(I - P_hA^{-1}A_h)v_h\|_{H_{D}^{1,2}}
  \lesssim h^{\theta-3/2} \|v_h\|_{H_{D,h}^{\theta,2}}
  \lesssim \|v_h\|_{H_{D,h}^{\theta,2}},
\]
where the second inequality uses \eqref{eq:I-PhAinvAh} with $(\theta_1, \theta_2,q) = (\theta, 1, 2)$
and the third inequality uses the fact $h \leqslant 1$.
Since $\|P_h\|_{\mathcal{L}(H_{D}^{\theta,2},W^{1,\infty}(\mathcal{O}))} \lesssim 1$ (see \cref{rem:Ph-stab} below),
we have
\begin{align*}
  \|P_h A^{-1} A_h v_h\|_{W^{1,\infty}(\mathcal{O})}
  &\lesssim \|A^{-1} A_h v_h\|_{H_{D}^{\theta,2}} 
  = \| A_h v_h\|_{H_{D}^{\theta-2,2}} 
  \sim \|A_h v_h\|_{H_{D,h}^{\theta-2,2}}
  = \|v_h\|_{H_{D,h}^{\theta,2}},
\end{align*}
where we have also used the isometry $A^{-1} \colon H_{D}^{\theta-2,2} \to H_{D}^{\theta,2}$
and the norm equivalence $\|\cdot\|_{H_{D,h}^{\theta-2,2}} \sim \|\cdot\|_{H_{D}^{\theta-2,2}}$
on $X_h$ (see \cref{lem:dotHh-equiv}).
The claim follows by combining these two estimates via the triangle inequality.

\textup{(ii)} Let $q \in (1, \infty)$. The $L^q(\mathcal{O})$-stability of $P_h$ \eqref{eq:Ph-stab0} and
the boundedness of $A^{-1}$ on $L^q(\mathcal{O})$ imply
$ \|P_hA^{-1}A_h\|_{\mathcal{L}(H_{D,h}^{2,q}, L^q(\mathcal{O}))} \lesssim 1 $.
Combining this with estimate \eqref{eq:I-PhAinvAh} for $(\theta_1,\theta_2) = (2,0)$ via the triangle inequality yields
\[
  \|I\|_{\mathcal{L}(H_{D,h}^{2,q},L^q(\mathcal{O}))}
  \leqslant \|P_hA^{-1}A_h\|_{\mathcal{L}(H_{D,h}^{2,q},L^q(\mathcal{O}))}
  + \|I - P_hA^{-1}A_h\|_{\mathcal{L}(H_{D,h}^{2,q},L^q(\mathcal{O}))}
  \lesssim 1 + h^2 \lesssim 1.
\]
Thus, the embedding $H_{D,h}^{2,q} \hookrightarrow H_{D,h}^{0,q}$ is $h$-uniformly bounded.
By \cite[Corollary~2.8]{Lunardi2018} and the norm equivalence \eqref{eq:dotHh-complex-interp},
we obtain the desired $h$-uniform embedding $H_{D,h}^{2,q} \hookrightarrow H_{D,h}^{\theta,q}$
for all $0 \leqslant \theta < 2$.

\textup{(iii)} Let $p, q \in (1, \infty)$ and $\theta \in (0, 1)$.
By part \textup{(ii)}, the embedding $H_{D,h}^{2,q} \hookrightarrow H_{D,h}^{0,q}$ is bounded uniformly in $h$.
Define $r := \min\{q, q'\}$, where $q'$ denotes the H\"older conjugate exponent of $q$.
For any $0 < \alpha < 2\theta$, \cite[Proposition~1.4]{Lunardi2018} yields the $h$-uniform embedding
\[
  (H_{D,h}^{0,q},H_{D,h}^{2,q})_{\theta,p} \hookrightarrow (H_{D,h}^{0,q},H_{D,h}^{2,q})_{\alpha/2,r}.
\]
Furthermore, as $L^q(\mathcal{O})$ is of Fourier type $r$ (\cite[Example~2.4.14]{HytonenWeis2016}),
the definition of Fourier type (\cite[Definition~2.4.12]{HytonenWeis2016}) implies that
$H_{D,h}^{0,q}$ and $H_{D,h}^{2,q}$ are also of Fourier type $r$ and related constants
are independent of $h$.
Consequently, \cite[Theorem~C.4.1]{HytonenWeis2016} yields the $h$-uniform embedding
\[
  (H_{D,h}^{0,q},H_{D,h}^{2,q})_{\alpha/2,r} \hookrightarrow [H_{D,h}^{0,q},H_{D,h}^{2,q}]_{\alpha/2}.
\]
Combining this with the norm equivalence
$\|\cdot\|_{H_{D,h}^{\alpha,q}} \sim \|\cdot\|_{[H_{D,h}^{0,q},H_{D,h}^{2,q}]_{\alpha/2}}$
on $X_h$ (see \eqref{eq:dotHh-complex-interp}),
we obtain the desired $h$-uniform embedding $(H_{D,h}^{0,q},H_{D,h}^{2,q})_{\theta,p} \hookrightarrow H_{D,h}^{\alpha,q}$ for all $0 < \alpha < 2\theta$.
\end{proof}

\begin{remark}
  \label{rem:Ph-stab}
  Fix \(\theta \in (3/2,2)\) and let \(\mathcal{I}^h\) denote the standard Lagrange interpolation operator onto the
  finite element space \(X_h\). We observe the chain of estimates, for any $v \in H_{D}^{\theta,2}$,
  \begin{align*}
    \|(\mathcal{I}^h - P_h)v\|_{W^{1,\infty}(\mathcal{O})}
    &\lesssim h^{-1/2} \|(\mathcal{I}^h - P_h)v\|_{H_{D}^{1,2}} \\
    & \lesssim h^{-1/2} \|v - \mathcal{I}^hv\|_{H_{D}^{1,2}} 
    + h^{-1/2} \|v - P_hv\|_{H_{D}^{1,2}} \\
    & \lesssim h^{\theta-3/2}\|v\|_{H_{D}^{\theta,2}},
  \end{align*}
  where the first inequality follows from \cite[Theorem~4.5.11]{Brenner2008},
  the second is a consequence of the triangle inequality, and the third relies on
  \cite[Theorem~4.4.20]{Brenner2008} (via complex interpolation) together with
  \cref{eq:Ph-conv} applied with \((\theta_1,\theta_2,q) = (\theta,1,2)\). 
  Since \(\theta > 3/2\) and the mesh size \(h\) satisfies \(h \leqslant 1\),
  it follows that
  \[
    \|\mathcal{I}^h - P_h\|_{\mathcal{L}(H_{D}^{\theta,2},W^{1,\infty}(\mathcal{O}))}
    \lesssim 1.
  \]
  Moreover, \cite[Theorem~4.4.20]{Brenner2008} implies that
  \(\|\mathcal{I}^h\|_{\mathcal{L}(W^{1,\infty}(\mathcal{O}),W^{1,\infty}(\mathcal{O}))} \lesssim 1\).
  Together with the embedding \(H_{D}^{\theta,2} \hookrightarrow W^{1,\infty}(\mathcal{O})\), this yields
  $\|\mathcal{I}^h\|_{\mathcal{L}(H_{D}^{\theta,2},W^{1,\infty}(\mathcal{O}))} \lesssim 1$.
  Applying the triangle inequality to \(P_h = \mathcal{I}^h - (\mathcal{I}^h - P_h)\) yields the \(h\)-uniform operator norm estimate 
  \(\|P_h\|_{\mathcal{L}(H_{D}^{\theta,2},W^{1,\infty}(\mathcal{O}))} \lesssim 1\).
\end{remark}

\subsubsection{Discrete analytic semigroup: smoothing, convolutions, and maximal \texorpdfstring{$L^p$}{Lp}-regularity}
\label{subsubsec:Sh}
Let \(S_h(t):=e^{tA_h}\), \(t\geqslant0\), be the analytic semigroup generated by the discrete Laplacian \(A_h\).
Combining the sectorial resolvent estimate \eqref{eq:resolvent-bound} with standard semigroup theory (see, e.g., \cite[Equation~(6.25), Chapter~2]{Pazy1983}), we obtain the smoothing estimate
\begin{equation}
  \label{eq:Sh-smoothing}
  \|S_h(t)\|_{\mathcal{L}(H_{D,h}^{\theta_1,q},H_{D,h}^{\theta_2,q})}
  \lesssim t^{-(\theta_2-\theta_1)/2}
\end{equation}
for all \(\theta_1,\theta_2\in\mathbb R\) with \(\theta_1\leqslant\theta_2\), \(q\in(1,\infty)\), and \(t>0\).

We define the spatially semi-discrete deterministic convolution by
\begin{equation}
  \label{eq:S0h-def}
  (S_h \ast g_h)(t) := \int_0^t S_h(t-s) g_h(s) \, \mathrm{d}s,
  \quad t \in [0,T], \quad g_h \in L^1(0,T;H_{D,h}^{0,2}).
\end{equation}

\begin{lemma} \label{lem:Sh-ast-bound}
  The semi-discrete convolution operator satisfies the following stability estimates:
  \begin{enumerate}
    \item[\textup{(i)}] Let $p \in (1,\infty]$, $q \in (1,\infty)$,
      $\theta_1 \in \mathbb{R}$, and $ \theta_2 \in [\theta_1, \theta_1 + 2-2/p)$.
      For any $g_h \in L^p(0,T;H_{D,h}^{\theta_1,q})$, the following estimate holds:
      \[
        \|S_h \ast g_h\|_{C([0,T];H_{D,h}^{\theta_2,q})}
        \lesssim \|g_h\|_{L^p(0,T;H_{D,h}^{\theta_1,q})}.
      \]
    \item[\textup{(ii)}] Let $p \in (1,\infty]$, $q \in (1,\infty)$,
      and $\theta \in (-2,0]$ such that $(\theta + 2 - 2/p) q > 1$.
      Then, for any $g_h \in L^p(0,T;H_{D,h}^{\theta,q})$,
      \[
        \|S_h \ast g_h\|_{C([0,T];L^\infty(\mathcal{O}))} \lesssim \|g_h\|_{L^p(0,T;H_{D,h}^{\theta,q})}.
      \]
    \item[\textup{(iii)}] Let $p \in (4,\infty]$, $g \in L^\infty(0,T;L^\infty(\mathcal{O}))$,
      and $z \in L^p(0,T;L^2(\mathcal{O}))$. It follows that
      \[
        \|S_h \ast (P_h \partial_x (gz))\|_{C([0,T];L^\infty(\mathcal{O}))}
        \lesssim \|g\|_{L^\infty(0,T;L^\infty(\mathcal{O}))}
        \|z\|_{L^p(0,T;L^2(\mathcal{O}))}.
      \]
  \end{enumerate}
\end{lemma}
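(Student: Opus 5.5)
For \textup{(i)} we work pointwise in time with the smoothing estimate \cref{eq:Sh-smoothing}. For $t\in[0,T]$ we write
\[
\|(S_h\ast g_h)(t)\|_{H_{D,h}^{\theta_2,q}}
\leqslant \int_0^t \|S_h(t-s)\|_{\mathcal{L}(H_{D,h}^{\theta_1,q},H_{D,h}^{\theta_2,q})}\|g_h(s)\|_{H_{D,h}^{\theta_1,q}}\,\mathrm{d}s
\lesssim \int_0^t (t-s)^{-(\theta_2-\theta_1)/2}\|g_h(s)\|_{H_{D,h}^{\theta_1,q}}\,\mathrm{d}s .
\]
H\"older's inequality with exponents $(p',p)$ then bounds this by $\bigl(\int_0^T s^{-p'(\theta_2-\theta_1)/2}\,\mathrm{d}s\bigr)^{1/p'}\|g_h\|_{L^p(0,T;H_{D,h}^{\theta_1,q})}$. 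The integral is finite exactly when $p'(\theta_2-\theta_1)/2<1$, that is, $\theta_2-\theta_1<2-2/p$, which is the stated hypothesis. Continuity in time is automatic: $X_h$ is finite dimensional and $S_h$ is a uniformly continuous semigroup, so $S_h\ast g_h$ is continuous for $g_h\in L^1$. The constants are $h$-uniform because \cref{eq:Sh-smoothing} is.

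For \textup{(ii)} the plan is to reduce to \textup{(i)} and then embed a discrete Sobolev space into $L^\infty$. Choose $\theta_2$ with $1/q<\theta_2<\min\{1,\theta+2-2/p\}$; this is possible because $(\theta+2-2/p)q>1$ and $q>1$. Then \textup{(i)} bounds $S_h\ast g_h$ in $C([0,T];H_{D,h}^{\theta_2,q})$. \cref{lem:dotHh-equiv} gives $\|v_h\|_{H_{D,h}^{\theta_2,q}}\sim\|v_h\|_{H_{D}^{\theta_2,q}}$ since $\theta_2\in[0,1]$, and the continuous Sobolev embedding $H_D^{\theta_2,q}\hookrightarrow L^\infty(\mathcal{O})$ holds for $\theta_2 q>1$ (obtained from \cref{eq:interpolation-identity} and the one-dimensional Bessel potential embedding). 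This gives the claim uniformly in $h$.

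For \textup{(iii)} the difficulty is that $gz$ is only in $L^2$, so $\partial_x(gz)\in H_D^{-1,2}$, and with $q=2$, $\theta=-1$ the condition in \textup{(ii)} becomes $(1-2/p)\cdot2>1$, i.e.\ $p>4$. This matches the hypothesis, which is why I expect the obstacle to be this borderline bookkeeping and not a new idea. The steps are as follows. By H\"older, $\|gz\|_{L^p(0,T;L^2)}\leqslant\|g\|_{L^\infty(0,T;L^\infty)}\|z\|_{L^p(0,T;L^2)}$. Since $\partial_x\in\mathcal{L}(L^2(\mathcal{O}),H_D^{-1,2})$ (dual to $\partial_x\colon H_D^{1,2}=W^{1,2}_0\to L^2$), we get $\partial_x(gz)\in L^p(0,T;H_D^{-1,2})$. \cref{lem:Ph-stab-negative} with $\theta=-1$ then gives $\|P_h\partial_x(gz)\|_{L^p(0,T;H_{D,h}^{-1,2})}\lesssim\|gz\|_{L^p(0,T;L^2)}$ uniformly in $h$. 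Finally we apply \textup{(ii)} with $(\theta,q)=(-1,2)$ and $p>4$, and combine the three bounds.

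The one point needing care is the measurability and integrability of $s\mapsto P_h\partial_x(gz)(s)$ as an $X_h$-valued function. This follows from the bounded linearity of the maps used, so no real difficulty arises there.
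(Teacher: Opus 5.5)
Your proposal is correct and follows the paper's proof. Part (i) is the smoothing-estimate-plus-H\"older argument that the paper delegates to standard analytic semigroup theory, part (ii) is identical (choose $1/q<\theta_2<\min\{1,\theta+2-2/p\}$, apply (i), then use \cref{lem:dotHh-equiv} and the Sobolev embedding), and part (iii) reduces to (ii) with $(\theta,q)=(-1,2)$ exactly as the paper does, the only cosmetic difference being that you obtain the $h$-uniform bound on $\|P_h\partial_x(gz)\|_{H_{D,h}^{-1,2}}$ via \cref{lem:Ph-stab-negative}, whereas the paper uses the direct duality identity $\langle P_h\partial_x(gz),v_h\rangle=-\langle gz,\partial_x v_h\rangle$, which gives the bound with constant one.
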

\begin{proof}
  \textbf{Proof of (i).}
  Since $S_h(t)$, $t \geqslant 0$, is the analytic semigroup generated by $A_h$ and
  the estimate \cref{eq:Sh-smoothing} holds uniformly in $h$,
  standard analytic semigroup theory (see, e.g., \cite[Theorem~3.1, Chapter~4]{Pazy1983}
  and \cite[Proposition~4.2.1]{Lunardi1995analytic} for the continuous analogue) yields
  the desired estimate.


\textbf{Proof of (ii).}
Select $\alpha$ such that $1/q < \alpha < \min\{1,\, \theta+2-2/p\}$; such a choice is admissible because $(\theta + 2 - 2/p)q > 1$.
Applying (i) with $(\theta_1,\theta_2) = (\theta,\alpha)$
and employing the norm equivalence $\|\cdot\|_{H_{D,h}^{\alpha,q}} \sim \|\cdot\|_{H_{D}^{\alpha,q}}$
on $X_h$ (see \cref{lem:dotHh-equiv}) yields
\[
  \|S_h \ast g_h\|_{C([0,T];H_{D}^{\alpha,q})}
  \lesssim \|g_h\|_{L^p(0,T;H_{D,h}^{\theta,q})}.
\]
Since $\alpha > 1/q$, the Sobolev embedding $H_{D}^{\alpha,q} \hookrightarrow L^\infty(\mathcal{O})$ implies the desired bound.

\textbf{Proof of (iii).}
Since $p>4$, the choice $\theta=-1$ and $q=2$ yields
\[
  \Bigl(\theta+2-\frac{2}{p}\Bigr)q = 2\Bigl(1-\frac{2}{p}\Bigr) > 1,
\]
which verifies the hypotheses of part~(ii).
For every $v_h\in X_h$, the identity $\|v_h\|_{H^{1,2}_{D,h}} = \|\partial_x v_h\|_{L^2(\mathcal{O})}$ combined with a standard duality argument gives
\[
  \|P_h\partial_x(gz)\|_{H^{-1,2}_{D,h}} \leqslant \|gz\|_{L^2(\mathcal{O})}
  \leqslant \|g\|_{L^\infty(\mathcal{O})}\|z\|_{L^2(\mathcal{O})}.
\]
Consequently,
\[
  \|P_h\partial_x(gz)\|_{L^p(0,T;H^{-1,2}_{D,h})}
  \lesssim \|g\|_{L^\infty(0,T;L^\infty(\mathcal{O}))}\|z\|_{L^p(0,T;L^2(\mathcal{O}))}.
\]
The desired estimate now follows from part~(ii) applied with $(\theta,q)=(-1,2)$.
\end{proof}

We also have the following discrete maximal $L^p$-regularity estimate \cite[Theorem~3.2]{Geissert2006}.
\begin{lemma}[Discrete maximal $L^p$-regularity]
  \label{lem:S0h}
  Let $p,q \in (1,\infty)$ and $ \theta \in \mathbb{R} $. For $g_h \in L^p(0,T;H_{D,h}^{\theta,q})$,
  \[
    \|S_h \ast g_h\|_{L^p(0,T;H_{D,h}^{\theta+2,q})} \lesssim \|g_h\|_{L^p(0,T;H_{D,h}^{\theta,q})}.
  \]
\end{lemma}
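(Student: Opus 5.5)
The estimate is discrete maximal $L^p$-regularity for the finite-dimensional generator $A_h$ on $X_h$ equipped with the norm of $H_{D,h}^{\theta,q}$. The only real content is that the constant is independent of $h$. First we reduce to $\theta=0$. The operator $(-A_h)^{\theta/2}$ commutes with $S_h(t)$, and by definition it is an isometry from $H_{D,h}^{\theta+2,q}$ onto $H_{D,h}^{2,q}$ and from $H_{D,h}^{\theta,q}$ onto $H_{D,h}^{0,q}$. Replacing $g_h$ by $(-A_h)^{\theta/2}g_h$, it therefore suffices to prove
\[
\|A_h (S_h\ast g_h)\|_{L^p(0,T;L^q(\mathcal{O}))}\lesssim \|g_h\|_{L^p(0,T;L^q(\mathcal{O}))},
\]
with the space $X_h$ normed by $L^q(\mathcal{O})$.

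For this case we use Weis' characterization. The space $L^q(\mathcal{O})$ is UMD, and $X_h$ with the $L^q$-norm is a subspace of it. Because $P_h$ is uniformly bounded on $L^q$ by \cref{eq:Ph-stab0}, $X_h$ is also uniformly complemented. Hence $X_h$ is UMD with constants independent of $h$, and it has Pisier's property $(\alpha)$ uniformly. Maximal $L^p$-regularity on $[0,T]$ then follows from $\mathcal{R}$-sectoriality of $-A_h$ of angle less than $\pi/2$, with $\mathcal{R}$-bounds uniform in $h$. A convenient route to this is a bounded $H^\infty$-calculus of angle below $\pi/2$ with $h$-uniform constants: by Kalton--Weis, on spaces with property $(\alpha)$ this yields $\mathcal{R}$-sectoriality of the same angle, with constants controlled by the $H^\infty$-bound and the geometric constants of the space. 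The maximal regularity constant then depends only on these uniform quantities and on $p$, $q$, $T$.

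The main obstacle is the uniform $H^\infty$-calculus, or directly the uniform $\mathcal{R}$-boundedness, of the resolvents of $A_h$. The resolvent bound \cref{eq:resolvent-bound} and the estimate $\|e^{irA_h}\|\leqslant Ce^{\gamma_0|r|}$ recalled above are only uniform operator bounds. I would first try to build the calculus from the latter. The group bound with type $\gamma_0<\pi/2$ gives, through the Hieber--Prüss/Monniaux transference argument on UMD spaces, a bounded $H^\infty$-calculus of angle $\gamma_0$ for $-A_h$. That argument only uses the group bound and the UMD constant, so the resulting constants are uniform in $h$. If this fails, the alternative is to show $\mathcal{R}$-boundedness of $\{\lambda(\lambda-A_h)^{-1}\}$ directly, using pointwise Gaussian-type kernel bounds for the discrete resolvent on a uniform 1D mesh together with square-function estimates in $L^q$.

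Alternatively, one can simply invoke \cite[Theorem~3.2]{Geissert2006}, which proves exactly this $h$-uniform maximal regularity for finite element discretizations of elliptic operators. The reduction in the first paragraph then extends it from $\theta=0$ to all $\theta\in\mathbb{R}$.
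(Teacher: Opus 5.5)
Your last paragraph is the paper's proof: the lemma is simply quoted from \cite[Theorem~3.2]{Geissert2006}. Your reduction to $\theta=0$ is correct and supplies a step the paper leaves implicit. Put $f_h:=(-A_h)^{\theta/2}g_h$, which commutes with $S_h\ast$. The isometries $(-A_h)^{\theta/2}\colon H_{D,h}^{\theta+2,q}\to H_{D,h}^{2,q}$ and $H_{D,h}^{\theta,q}\to H_{D,h}^{0,q}$ then turn the claim into $\|A_h(S_h\ast f_h)\|_{L^p(0,T;L^q(\mathcal{O}))}\lesssim\|f_h\|_{L^p(0,T;L^q(\mathcal{O}))}$. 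The Weis/Kalton--Weis framework of your second paragraph is also sound. UMD and property $(\alpha)$ pass to the closed subspace $X_h\subset L^q(\mathcal{O})$ with the same constants, so complementation is not even needed.

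The self-contained route has a genuine gap, however, at the step where you obtain the $h$-uniform $H^\infty$-calculus by transference. First, the bound recalled before \eqref{eq:dotHh-complex-interp} can only serve as a bound on imaginary powers, $\|(-A_h)^{ir}\|\leqslant Ce^{\gamma_0|r|}$, since that is what the complex-interpolation identity needs. Read literally, as a bound on the group generated by $iA_h$, its type $\gamma_0$ refers to a strip around the spectrum of $A_h$ and gives no sector angle for $-A_h$ at all. Second, and more importantly, bounded imaginary powers of type $\gamma_0$ do not yield a bounded $H^\infty$-calculus on a UMD space. Transference controls only functions whose symbols are vector-valued Fourier multipliers, not all of $H^\infty$. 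Already on $L^q(\mathbb{R})$ with $q\neq 2$, the Fourier multiplier with symbol $e^{\xi}$ has translations as its imaginary powers. Yet a bounded $H^\infty$-calculus for it would make every bounded holomorphic function on a strip an $L^q$-multiplier, which is false. So your claim that the argument ``only uses the group bound and the UMD constant'' proves too much.

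The repair is to use transference for what it actually delivers. On UMD spaces, bounded imaginary powers of angle $\gamma_0<\pi/2$ imply $\mathcal{R}$-sectoriality of angle at most $\gamma_0$ (Cl\'ement--Pr\"uss), with constants depending only on $C$, $\gamma_0$ and the UMD constant of $L^q$; Weis' theorem then finishes. Alternatively, apply the Dore--Venni theorem directly to $\frac{\mathrm{d}}{\mathrm{d}t}$ (zero initial value) and $-A_h$ on $L^p(0,T;X_h)$. A third option is to take the $h$-uniform $H^\infty$-calculus from \cite[Theorem~3.1]{LiZhouLp2026}, as the paper does for \cref{lem:S1h}; your Kalton--Weis step then goes through. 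In every case the $h$-uniform input is imported. Your kernel-estimate fallback is only sketched, and carrying it out would amount to reproving Geissert's theorem.
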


For $g_h \in L_{\mathbb{F}}^2([0,T] \times \Omega; \gamma(\ell^2, H_{D,h}^{0,2}))$,
the spatially semi-discrete stochastic convolution is defined by
\begin{equation}
  \label{eq:S1h-def}
  (S_h \diamond g_h)(t) := \int_0^t S_h(t-s)\, g_h(s)\,\mathrm{d}W(s),\quad t\in[0,T].
\end{equation}
This process possesses an $\mathbb{F}$-adapted continuous modification;
henceforth, we always identify it with this continuous modification.

\begin{lemma}[Discrete stochastic maximal $L^p$-regularity]
  \label{lem:S1h}
  Let $p \in (2,\infty)$ and $q \in [2,\infty)$.
  For any $g_h \in L_\mathbb{F}^p([0,T] \times \Omega;\gamma(\ell^2,H_{D,h}^{0,q}))$,
  \begin{align*}
    & \|S_h \diamond g_h\|_{L^p(\Omega;C([0,T];(H_{D,h}^{0,q}, H_{D,h}^{2,q})_{1/2-1/p,p}))}
    + \|S_h \diamond g_h\|_{L^p([0,T] \times \Omega;H_{D,h}^{1,q})} \\
    \lesssim{} & \|g_h\|_{L^p([0,T] \times \Omega;\gamma(\ell^2,H_{D,h}^{0,q}))}.
  \end{align*}
\end{lemma}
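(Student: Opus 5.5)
The plan is to obtain the estimate as a direct application of the stochastic maximal $L^p$-regularity theory of van Neerven, Veraar and Weis \cite[Theorems~1.1 and~1.2]{Neerven2012}. We apply it on the Banach space $E_h := H_{D,h}^{0,q}$, that is, $X_h$ with the $L^q(\mathcal{O})$-norm, with $-A_h$ as the operator. The continuous stochastic convolution property \cref{eq:G-max} and the regularity \cref{eq:G-LpH1q} were derived from exactly these results. The whole point is therefore not the estimate itself, which is classical for fixed $h$, but the \emph{$h$-uniformity} of the implied constant. Accordingly, we will track carefully which structural constants the constants in \cite{Neerven2012} depend on. We expect this dependence to be only on $p$, on the type-$2$ and UMD constants of $E_h$ (equivalently, the constants in the Burkholder-type inequality \cref{prop:stoch-int} and the square function estimates), and on the sectoriality and $H^\infty$-calculus constants of $-A_h$.

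\textbf{Step 1 (Banach space geometry).} $E_h$ is a closed subspace of $L^q(\mathcal{O})$ with the induced norm. Hence it is UMD with constant at most that of $L^q(\mathcal{O})$, and of type $2$ with constant at most that of $L^q(\mathcal{O})$ for $q\in[2,\infty)$. Moreover, $\gamma(\ell^2,E_h)$ carries the norm inherited from $\gamma(\ell^2,L^q(\mathcal{O}))$, so \cref{prop:stoch-int} applies on $E_h$ with an $h$-independent constant. Nothing here depends on $h$.

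\textbf{Step 2 (uniform $H^\infty$-calculus).} This is the main obstacle. We need that $-A_h$ admits a bounded $H^\infty(\Sigma_\sigma)$-calculus on $E_h$ for some fixed angle $\sigma<\pi/2$, with a bound independent of $h$. Two uniform ingredients are available: the resolvent bound \cref{eq:resolvent-bound}, and the bound $\|e^{irA_h}\|\lesssim e^{\gamma_0|r|}$ with $\gamma_0<\pi/2$ recalled before \cref{eq:dotHh-complex-interp}. The group bound says that $\mathrm{i}A_h$ generates a group of exponential type $\gamma_0$ on the UMD space $E_h$. We would first try a transference argument in the spirit of Hieber--Pr\"uss and Haase. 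This gives a bounded $H^\infty$-calculus for $A_h$ on horizontal strips of width exceeding $\gamma_0$. The calculus is then transported to the sector by passing to $\log(-A_h)$, which is legitimate since $-A_h$ is invertible and sectorial uniformly in $h$. The resulting sectorial angle is $\gamma_0+\delta<\pi/2$. All constants depend only on the UMD constant of $E_h$, $\gamma_0$, and the uniform constants in the two bounds. Should this route become technical, we would instead quote the corresponding result of \cite{LiZhouLp2026}, where this kind of uniform calculus for $A_h$ is established in the three-dimensional case; the one-dimensional argument is identical.

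\textbf{Step 3 (conclusion).} With Steps 1--2, \cite[Theorem~1.1]{Neerven2012} applied with the fractional power $(-A_h)^{1/2}$ gives
\[
\|(-A_h)^{1/2} S_h\diamond g_h\|_{L^p([0,T]\times\Omega;E_h)}\lesssim \|g_h\|_{L^p([0,T]\times\Omega;\gamma(\ell^2,E_h))} .
\]
The left-hand side is exactly the $L^p([0,T]\times\Omega;H_{D,h}^{1,q})$-norm. Likewise, \cite[Theorem~1.2]{Neerven2012} (for $p>2$) yields the maximal estimate in $C([0,T];(E_h,D(A_h))_{1/2-1/p,p})$. Here $D(A_h)$ is normed by $\|A_h\cdot\|_{L^q(\mathcal{O})}$, i.e.\ it is $H_{D,h}^{2,q}$. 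Since both interpolation spaces are defined with these discrete norms, the identification is isometric, and no additional $h$-dependence enters. The paths are continuous by the same theorem, which also justifies the choice of the continuous modification in \cref{eq:S1h-def}.
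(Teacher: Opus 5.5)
Your overall architecture is the paper's. The paper's justification (the remark following the lemma) consists of two steps. First, an $h$-uniform bounded $H^\infty$-calculus of angle $<\pi/2$ for $-A_h$ on $H_{D,h}^{0,q}$, obtained by rerunning the argument of \cite[Theorem~3.1]{LiZhouLp2026}. Second, the van Neerven--Veraar--Weis theorem \cite[Theorem~3.5]{Neerven2012b}, whose constants depend only on $p$, $q$, the embedding of the state space into $L^q$, and the angle and bound of the calculus. Your Steps~1 and~3, together with the fallback in Step~2, reproduce this. However, your \emph{preferred} route in Step~2 does not work, so the fallback is not optional.

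\textbf{First}, the chain ``strip calculus for $A_h$, then pass to $\log(-A_h)$'' mixes two operators. A sectorial calculus for $-A_h$ of angle $\gamma_0+\delta$ would come from a strip calculus for $\log(-A_h)$, i.e.\ from the group $(-A_h)^{\mathrm{i}r}=e^{\mathrm{i}r\log(-A_h)}$. It does not come from a calculus of $A_h$ itself on strips. This is also how the bound recalled before \eqref{eq:dotHh-complex-interp} should be understood, since it is fed into Yagi's theorem on complex interpolation for operators with bounded imaginary powers.

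\textbf{Second, and decisively}, even with the right group, transference on a UMD space $E$ only bounds $\|f(B)\|$ by the norm, on $L^p(\mathbb{R};E)$, of the Fourier multiplier given by the boundary values of $f$. Bounded holomorphic functions on a strip need not be such multipliers. For example, $e^{\mathrm{i}(1+\xi^2)^{1/4}}$ is bounded and holomorphic on $\{|\mathrm{Im}\,\xi|<1/2\}$, and the rescaling $\xi\mapsto\xi/c$ handles any strip width. Yet, by the classical Hirschman--Wainger results on $e^{\mathrm{i}|\xi|^{a}}$, it is not an $L^p(\mathbb{R})$-multiplier for $p\neq2$.

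Consequences:
\begin{itemize}
\item Already for the translation group on $L^p(\mathbb{R})$, $p\neq2$, the generator has no bounded $H^\infty$-calculus on any strip.
\item Correspondingly, a resolvent bound plus bounded imaginary powers does not imply a bounded $H^\infty$-calculus on $L^p$-spaces. The multiplier with symbol $e^{\xi}$ on $L^p(\mathbb{R})$, $p\neq 2$, is sectorial with isometric imaginary powers (namely translations), but has no bounded $H^\infty$-calculus.
\item Your claim that the calculus constants depend only on the UMD constant of $E_h$, $\gamma_0$, and the two uniform bounds is therefore false.
\end{itemize}
The uniform $H^\infty$-calculus is a genuinely additional input. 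That is precisely why the paper imports it from \cite{LiZhouLp2026} rather than deriving it from \eqref{eq:resolvent-bound} and the imaginary-power bound. With that citation in place of your transference argument, your proof coincides with the paper's.
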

\begin{remark}
  Recently, in three dimensions, Li and Zhou~\cite[Theorem~3.1]{LiZhouLp2026} proved that the boundedness
  constant of the $H^\infty$-calculus for the negative discrete Laplacian $-\Delta_h$ is independent of the spatial mesh size $h$. By the same arguments, this $h$-independence extends to the $H^\infty$-calculus for $-A_h$.
  Consequently, \cref{lem:S1h} follows directly from \cite[Theorem~3.5]{Neerven2012b}; see \cite[Theorem~3.2]{LiZhouLp2026} for further details.
\end{remark}

\subsubsection{The spatially semi-discrete stochastic convolution \texorpdfstring{$G_h$}{Gh}}
\label{subsubsec:G-Gh}

For the stochastic convolution $G$ defined in \cref{eq:G-def}, its spatially semi-discrete counterpart $G_h$ is defined by
\begin{equation}
  \label{eq:Gh-def}
  G_h := S_h \diamond (P_hQ).
\end{equation}
In what follows, $G_h$ will always denote its continuous modification.

\begin{lemma}
  Let $G_h$ be defined by \cref{eq:Gh-def}. Then, for any $p, q \in [2,\infty)$ and $\theta \in (0,1)$,
  \begin{align}
    \sup_{0 < h \leqslant 1} \|G_h\|_{L^p([0,T] \times \Omega; H_{D}^{1,q})} &< \infty,
    \label{eq:Gh-Lp-H1q} \\
    \sup_{0 < h \leqslant 1} \|G_h\|_{L^p(\Omega;C([0,T];H_{D}^{\theta,q}))} &< \infty.
    \label{eq:Gh-Lp-C-Hthetaq}
  \end{align}
\end{lemma}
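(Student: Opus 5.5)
The plan is to apply the discrete stochastic maximal $L^p$-regularity of \cref{lem:S1h} with $g_h := P_hQ$, which is deterministic and constant in time. First fix $p\in(2,\infty)$ and $q\in[2,\infty)$. By \cref{eq:PhQ-bound} we have
\[
\|P_hQ\|_{L^p([0,T]\times\Omega;\gamma(\ell^2,H_{D,h}^{0,q}))} = T^{1/p}\|P_hQ\|_{\gamma(\ell^2,H_{D,h}^{0,q})} \lesssim 1
\]
uniformly in $h\in(0,1]$. Moreover, $P_hQ$ is trivially progressively measurable. Hence \cref{lem:S1h} yields two $h$-uniform bounds:
\[
\|G_h\|_{L^p([0,T]\times\Omega;H_{D,h}^{1,q})}\lesssim 1,
\qquad
\|G_h\|_{L^p(\Omega;C([0,T];(H_{D,h}^{0,q},H_{D,h}^{2,q})_{1/2-1/p,p}))}\lesssim 1 .
\]

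For \cref{eq:Gh-Lp-H1q}, convert the discrete norm into the continuous one via \cref{lem:dotHh-equiv} with $\theta=1$, which gives $\|v_h\|_{H_{D}^{1,q}}\sim\|v_h\|_{H_{D,h}^{1,q}}$ uniformly on $X_h$. This proves the claim for $p>2$. The endpoint $p=2$ then follows from Hölder's inequality in $[0,T]\times\Omega$, since the product measure is finite: $\|\cdot\|_{L^2}\lesssim\|\cdot\|_{L^{p}}$ for any $p>2$. This mirrors the argument used for \cref{eq:G-LpH1q}.

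For \cref{eq:Gh-Lp-C-Hthetaq}, fix $p\in[2,\infty)$, $q\in[2,\infty)$ and $\theta\in(0,1)$. Choose $p_*>\max\{p,\,2/(1-\theta)\}$, so that $\theta<2(1/2-1/p_*)=1-2/p_*$. Apply the second bound above with $p_*$ in place of $p$. Then \cref{lem:dotHh}(iii), with interpolation parameter $1/2-1/p_*\in(0,1)$ and $\alpha=\theta$, gives the $h$-uniform embedding
\[
(H_{D,h}^{0,q},H_{D,h}^{2,q})_{1/2-1/p_*,p_*}\hookrightarrow H_{D,h}^{\theta,q}.
\]
Since $\theta\in(0,1)$, \cref{lem:dotHh-equiv} converts this into the $H_{D}^{\theta,q}$-norm. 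Finally, $L^{p_*}(\Omega)\hookrightarrow L^p(\Omega)$ because $\mathbb{P}$ is a probability measure.

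No step here is really difficult. The one point needing care is that every constant is independent of $h$. This uniformity is already built into \cref{lem:S1h}, through the $h$-uniform $H^\infty$-calculus, and into \cref{lem:dotHh}(iii) and \cref{lem:dotHh-equiv}. A second minor point is that the continuous modification of $G_h$ is the one to which these pathwise $C([0,T];\cdot)$ bounds refer; this holds by the convention adopted after \cref{eq:S1h-def}.
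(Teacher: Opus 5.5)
Your proposal is correct and follows essentially the same route as the paper's proof. The paper also applies \cref{lem:S1h} with $g_h=P_hQ$ and \cref{eq:PhQ-bound}, then \cref{lem:dotHh-equiv} and \cref{lem:dotHh}(iii), and finally raises to a larger exponent $p_*$ with $1-2/p_*>\theta$ to cover all $p\in[2,\infty)$ and $\theta\in(0,1)$.
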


\begin{proof}
  By the discrete stochastic maximal $L^p$-regularity estimate (\cref{lem:S1h}) and the bound \cref{eq:PhQ-bound}, we have
  \[
    \sup_{0 < h \leqslant 1} \Big( \|G_h\|_{L^p([0,T] \times \Omega; H_{D,h}^{1,q})}
    + \|G_h\|_{L^p(\Omega;C([0,T];(H_{D,h}^{0,q}, H_{D,h}^{2,q})_{1/2-1/p,p}))} \Big) < \infty
  \]
  for $p \in (2,\infty)$ and $q \in [2,\infty)$.
  Using the norm equivalence on $X_h$ (\cref{lem:dotHh-equiv}) and the discrete Sobolev embedding (\cref{lem:dotHh}\textup{(iii)}), we deduce that for $p \in (2,\infty)$, $q \in [2,\infty)$, and $\theta \in (0, 1-2/p)$,
  \[
    \sup_{0 < h \leqslant 1} \Big( \|G_h\|_{L^p([0,T] \times \Omega; H_{D}^{1,q})}
    + \|G_h\|_{L^p(\Omega;C([0,T];H_{D}^{\theta,q}))} \Big) < \infty.
  \]
  For arbitrary $p, q \in [2,\infty)$ and $\theta \in (0,1)$, we choose $p_* > p$ sufficiently large so that $1 - 2/p_* > \theta$.
  Applying the preceding estimate with $(p, q, \theta)$ replaced by $(p_*, q, \theta)$,
  and using the embeddings $L^{p_*}([0,T]\times\Omega; H_{D}^{1,q}) \hookrightarrow L^{p}([0,T]\times\Omega; H_{D}^{1,q})$
  and $L^{p_*}(\Omega;C([0,T];H^{\theta,q})) \hookrightarrow L^{p}(\Omega;C([0,T];H^{\theta,q}))$,
  yields \cref{eq:Gh-Lp-H1q,eq:Gh-Lp-C-Hthetaq}.
\end{proof}

\begin{lemma} \label{lem:G-Gh}
  Let \(G\) and \(G_h\) be given by \cref{eq:G-def} and \cref{eq:Gh-def}, respectively.
  Then, for any \(\alpha \in [-1,0]\), \(p,q \in [2,\infty)\), and \(\varepsilon \in (0,1)\), the following estimates hold:
  \begin{align}
    \|G-G_h\|_{L^p([0,T] \times \Omega;H_{D}^{\alpha,q})} &\lesssim h^{1-\alpha}, \label{eq:G-Gh-LpLq} \\
    \|G-G_h\|_{L^p(\Omega;C([0,T];H_{D}^{\alpha,q}))} &\lesssim h^{1-\alpha-\varepsilon}, \label{eq:G-Gh-LpCLq} \\
    \|G-G_h\|_{L^p(\Omega;C([0,T];L^\infty(\mathcal{O})))} &\lesssim h^{1-\varepsilon}. \label{eq:G-Gh-LpCLinf}
  \end{align}
\end{lemma}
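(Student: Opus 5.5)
We prove the three estimates by splitting the error into a spatial approximation term and a semi-discrete stochastic convolution. Write
\[
  G - G_h = (I - P_h) G + \bigl(P_h G - G_h\bigr),
\]
and treat the two pieces separately.

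\textbf{The projection term.} For the first piece, combine \cref{eq:Ph-conv} with the regularity of $G$.

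For \cref{eq:G-Gh-LpLq}, apply \cref{eq:Ph-conv} with $(\theta_1,\theta_2)=(1,\alpha)$ and use \cref{eq:G-LpH1q}. This gives $h^{1-\alpha}$.

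For \cref{eq:G-Gh-LpCLq}, take $\theta_1 = 1-\varepsilon$ and use \cref{eq:G-LpCHq}. This gives $h^{1-\varepsilon-\alpha}$.

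For \cref{eq:G-Gh-LpCLinf}, use \cref{eq:Ph-conv-Linfty} with $\theta = 1-\varepsilon/2$ and $q$ large enough that $1/q \le \varepsilon/2$, together with \cref{eq:G-LpCHq}. This gives $h^{1-\varepsilon}$.

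\textbf{The discrete term.} Since $G$ solves $dG = AG\,dt + Q\,dW$, the function $P_hG$ satisfies a discrete equation. Set $e_h := P_h G - G_h$. Then
\[
  de_h = \bigl(A_h e_h + (P_h A - A_h P_h) G\bigr)\,dt,
\]
with no noise term, because $P_hQ$ cancels. Hence
\[
  e_h = S_h \ast \bigl(A_h (A_h^{-1}P_hA - P_h) G\bigr),
\]
and this is a deterministic convolution, pathwise. The identity is justified rigorously because $G \in L^p(0,T;H^{1,q}_D)$ makes $AG \in H_D^{-1,q}$, which is within the range where $P_h$ is defined.

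Set $\rho := (A_h^{-1}P_hA - P_h)G$. By \cref{eq:Ph-AhinvPhA} with $\theta_1 = 1$, we get
\[
  \|\rho\|_{H_D^{\alpha,q}} \lesssim h^{1-\alpha}\|G\|_{H_D^{1,q}} \quad \text{for } \alpha \in [-1,1].
\]
Passing to discrete norms via \cref{lem:dotHh-equiv} gives the same bound in $H_{D,h}^{\alpha,q}$.

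\textbf{Estimating $e_h$.}
\begin{itemize}
  \item[(a)] \emph{$L^p$ in time.} Write $e_h = A_h S_h \ast \rho$ and apply \cref{lem:S0h} with $\theta = \alpha - 2$. This gives $\|e_h\|_{L^p(H^{\alpha}_{D,h})} \lesssim \|\rho\|_{L^p(H^{\alpha}_{D,h})} \lesssim h^{1-\alpha}\|G\|_{L^p(H^{1,q}_D)}$. Taking $L^p(\Omega)$ norms then yields \cref{eq:G-Gh-LpLq}. We use \cref{lem:S0h} here for the endpoint $p = 2$ as well, since \cref{lem:S0h} covers $p \in (1,\infty)$.
  \item[(b)] \emph{Sup in time.} Use \cref{lem:Sh-ast-bound}(i) on $A_h\rho \in L^{p_*}(0,T;H_{D,h}^{\alpha-2,q})$ with $\theta_2 = \alpha - \varepsilon/2$ and $p_*$ large. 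This needs $\|\rho\|_{L^{p_*}(H^{\alpha-\varepsilon/2}_{D,h})} \lesssim h^{1-\alpha+\varepsilon/2}$, again from \cref{eq:Ph-AhinvPhA}, but only when $\alpha - \varepsilon/2 \ge -1$. For $\alpha$ near $-1$, instead take $\theta_1 = 1$, $\theta_2 = -1$ for $\rho$ and lose $\varepsilon$ in the smoothing step. Either way we obtain $h^{1-\alpha-\varepsilon}$ in $C([0,T];H^{\alpha,q}_D)$, using $G \in L^{p}(\Omega;L^{p_*}(0,T;H^{1,q}_D))$, which follows from \cref{eq:G-LpH1q} since $p_*$ may exceed $p$ after Hölder in $\Omega$.
  \item[(c)] \emph{$L^\infty$ in space.} Take $\alpha = -\varepsilon/2$ in (b) with large $q$. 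Then the inverse-free embedding $H^{\varepsilon/2,q}_{D,h} \hookrightarrow L^\infty$ (for $q\varepsilon/2 > 1$) does not apply directly at negative order. Instead we will use \cref{lem:Sh-ast-bound}(ii) with $\theta = -1+\varepsilon'$, which requires $\rho$ bounded in $H^{-1+\varepsilon',q}_{D,h}$ at size $h^{2-\varepsilon'}$ in $L^{p_*}$ time. The rate then comes out as $h^{1-\varepsilon}$ after choosing indices, with smoothing from order $-1$ into $L^\infty$ costing $1 + 1/q + 2/p_*$.
\end{itemize}

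\textbf{Main obstacle.} The delicate part is the index bookkeeping in (b) and (c), so that the $\varepsilon$ losses combine correctly within the admissible ranges of \cref{eq:Ph-AhinvPhA} ($\theta_2 \ge -1$). It also requires justifying that $P_hG$ satisfies the discrete equation in the weak (variational) sense, since $G$ is only a mild solution.
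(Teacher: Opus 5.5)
Your decomposition $G-G_h=(I-P_h)G+(P_hG-G_h)$, the pathwise representation $e_h=S_h\ast(A_h\rho)$ with $\rho=(A_h^{-1}P_hA-P_h)G$, step (a), and the three projection-error bounds match the paper's proof and are correct. The problem is in (b) and (c): there you shift the Sobolev index of $\rho$ downward, and two of the resulting steps fail as written.

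In (c), \cref{lem:Sh-ast-bound}(ii) with $\theta=-1+\varepsilon'$, applied to $g_h=A_h\rho$, requires control of $\rho$ in $H_{D,h}^{1+\varepsilon',q}$, not in $H_{D,h}^{-1+\varepsilon',q}$. At order $1+\varepsilon'$ no convergence rate is available, since \cref{eq:Ph-AhinvPhA} stops at $\theta_2=1$. If instead $\rho$ is only bounded in $H_{D,h}^{-1+\varepsilon',q}$, then $A_h\rho\in H_{D,h}^{-3+\varepsilon',q}$, which lies outside the admissible range $\theta\in(-2,0]$. The convolution then reaches only the negative order $-1+\varepsilon'-2/p_*$, which does not embed into $L^\infty(\mathcal{O})$. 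The ``cost $1+1/q+2/p_*$'' you mention can only be paid through the inverse estimate \cref{eq:inverse}, which you never invoke.

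Likewise, your fallback in (b) for $\alpha$ near $-1$ (bounding $\rho$ in $H^{-1}$) gives only $e_h\in C([0,T];H_{D,h}^{\theta_2,q})$ with $\theta_2<-1-2/p_*$. That order is below the range of \cref{lem:dotHh-equiv}, so you cannot conclude in $C([0,T];H_D^{-1,q})$ without an inverse estimate. Your first sentence in (b), with $(\theta_1,\theta_2)=(\alpha-2,\alpha-\varepsilon/2)$, is actually fine once you relabel the target index. The follow-up requirement on $\rho$ in $H^{\alpha-\varepsilon/2}$ and the endpoint worry both come from this direction confusion.

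The missing observation is that the loss from taking the supremum in time should be absorbed by measuring $\rho$ in a slightly stronger norm. This is always admissible because \cref{eq:Ph-AhinvPhA} holds for every $\theta_2\in[-1,1]$. This is exactly how the paper closes the argument.

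\textbf{Fix for \cref{eq:G-Gh-LpCLq}.} Choose $p'\geqslant p$ with $2/p'<\varepsilon$ and apply \cref{lem:Sh-ast-bound}(i) with exponent $p'$ and $(\theta_1,\theta_2)=(\alpha-2+\varepsilon,\alpha)$. This needs $\rho$ in $H_D^{\alpha+\varepsilon,q}$, where \cref{eq:Ph-AhinvPhA} gives $h^{1-\alpha-\varepsilon}$. Since $\alpha+\varepsilon\in(-1,1)$ for every $\alpha\in[-1,0]$, there is no endpoint issue at $\alpha=-1$.

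\textbf{Fix for \cref{eq:G-Gh-LpCLinf}.} Choose $p'\geqslant p$ with $4/p'<\varepsilon$ and apply \cref{lem:Sh-ast-bound}(ii) with $(\theta,p,q)=(-2+4/p',p',p')$, so that $(\theta+2-2/p')p'=2>1$. This needs $\rho$ in $H_D^{4/p',p'}$, a small positive order, where \cref{eq:Ph-AhinvPhA} gives $h^{1-4/p'}\leqslant h^{1-\varepsilon}$ directly, with no inverse estimate. Finally use $L^{p'}(\Omega)\hookrightarrow L^p(\Omega)$.
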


\begin{proof}
  By standard arguments (see, e.g., \cite[Proposition~4.4]{Neerven2012b}),
  there exists a $\mathbb{P}$-null set $N \subset \Omega$ such that,
  for every $\omega \in \Omega \setminus N$ and all $t \in [0,T]$,
  the following integral identity holds in $H_{D}^{-1,2}$:
  \[
    G(t) = \int_0^t A G(s) \, \mathrm{d}s + \int_0^t Q \, \mathrm{d}W(s).
  \]
  Here $A$ denotes the operator $A_{2,-1}$, whose negative $-A$ is sectorial on $H_{D}^{-1,2}$
  with domain $H_{D}^{1,2}$ (cf.~Subsection~\ref{subsec:functional-analytic-framework}).
  Applying the projection $P_h$ to both sides, we obtain, for every $\omega \in \Omega \setminus N$ and all $t \in [0,T]$,
  \[
    P_h G(t) = \int_0^t P_h A G(s) \, \mathrm{d}s + \int_0^t P_h Q \, \mathrm{d}W(s).
  \]
  Meanwhile, the discrete counterpart $G_h$ satisfies, for every $\omega \in \Omega \setminus N$ and all $t \in [0,T]$,
  \[
    G_h(t) = \int_0^t A_h G_h(s) \, \mathrm{d}s + \int_0^t P_h Q \, \mathrm{d}W(s),
  \]
  where $N$ has been enlarged by another $\mathbb{P}$-null set, still denoted by the same symbol.
  Let $z_h := G_h - P_h G$. Subtracting the identity for $P_h G$ from that for $G_h$,
  we deduce that, for every $\omega \in \Omega \setminus N$ and all $t \in [0,T]$,
  \[
    z_h(t) = \int_0^t \bigl( A_h z_h(s) + A_h P_h G(s) - P_h A G(s) \bigr) \, \mathrm{d}s.
  \]
  Hence, for every $\omega \in \Omega \setminus N$, $z_h$ is the strong solution of
  the deterministic evolution equation
  \[
    \frac{\mathrm{d}}{\mathrm{d}t} z_h(t) = A_h z_h(t) + A_h P_h G(t) - P_h A G(t),
    \quad \text{for a.e. } t \in [0,T], \quad z_h(0) = 0.
  \]
  Consequently, by the variation-of-constants formula, $z_h$ admits the following mild representation $\mathbb{P}$-a.s.:
  \begin{equation}
    \label{eq:zh-mild}
    z_h = S_h \ast (A_h P_h G - P_h A G).
  \end{equation}
  Finally, decomposing the error according to $G - G_h = (I - P_h)G - z_h$,
  it remains to estimate the two terms on the right-hand side separately.

\medskip
\textbf{Proof of \eqref{eq:G-Gh-LpLq}.}
In light of \cref{eq:zh-mild}, applying \cref{lem:S0h} with $\theta = \alpha-2$,
we deduce that
\begin{align*}
  \|z_h\|_{L^p([0,T] \times \Omega;\,H_{D,h}^{\alpha,q})}
  &\lesssim \|(A_h P_h - P_h A) G\|_{L^p([0,T] \times \Omega;\,H_{D,h}^{\alpha-2,q})} \\
  &= \|(P_h - A_h^{-1}P_h A) G\|_{L^p([0,T] \times \Omega;\,H_{D,h}^{\alpha,q})}.
\end{align*}
The norm equivalence $\|\cdot\|_{H_{D,h}^{\alpha,q}} \sim \|\cdot\|_{H_{D}^{\alpha,q}}$
on $X_h$ (\cref{lem:dotHh-equiv}) implies 
\[
  \|z_h\|_{L^p([0,T] \times \Omega;\,H_{D}^{\alpha,q})}
  \lesssim \|(P_h - A_h^{-1}P_h A) G\|_{L^p([0,T] \times \Omega;\,H_{D}^{\alpha,q})}.
\]
Applying the operator approximation estimate \eqref{eq:Ph-AhinvPhA} with $(\theta_1, \theta_2) = (1, \alpha)$ then gives
\begin{equation}
  \|z_h\|_{L^p([0,T] \times \Omega;\,H_{D}^{\alpha,q})}
  \lesssim h^{1-\alpha}\,\|G\|_{L^p([0,T] \times \Omega;\,H_{D}^{1,q})}.
  \label{eq:zh-bound}
\end{equation}
For the projection error, \eqref{eq:Ph-conv} with $(\theta_1, \theta_2) = (1, \alpha)$ yields
\begin{equation}
  \|(I-P_h)G\|_{L^p([0,T] \times \Omega;\,H_{D}^{\alpha,q})}
  \lesssim h^{1-\alpha}\,\|G\|_{L^p([0,T] \times \Omega;\,H_{D}^{1,q})}.
  \label{eq:I-Ph-bound}
\end{equation}
Combining \eqref{eq:zh-bound} and \eqref{eq:I-Ph-bound} via the triangle inequality,
and invoking the regularity of $G$ from \cref{eq:G-LpH1q}, establishes \eqref{eq:G-Gh-LpLq}.

\medskip
\textbf{Proof of \eqref{eq:G-Gh-LpCLq}.}
Let $\alpha \in [-1,0]$, $p,q \in [2,\infty)$, and $\varepsilon \in (0,1)$ be fixed, and choose
$p' \in [p,\infty)$ such that $2/p' < \varepsilon$.
An application of \cref{lem:Sh-ast-bound}(i) with $p=p'$ and
$(\theta_1,\theta_2)=(\alpha-2+\varepsilon,\alpha)$ yields
\begin{align*}
\|z_h\|_{L^{p'}(\Omega;C([0,T];H_{D,h}^{\alpha,q}))}
&\lesssim \|(A_hP_h-P_hA)G\|_{L^{p'}([0,T]\times\Omega;H_{D,h}^{\alpha-2+\varepsilon,q})} \\
&= \|(P_h-A_h^{-1}P_hA)G\|_{L^{p'}([0,T]\times\Omega;H_{D,h}^{\alpha+\varepsilon,q})}.
\end{align*}
By the norm equivalences $\|\cdot\|_{H_{D,h}^{\alpha,q}} \sim \|\cdot\|_{H_{D}^{\alpha,q}}$
and $\|\cdot\|_{H_{D,h}^{\alpha+\varepsilon,q}} \sim \|\cdot\|_{H_{D}^{\alpha+\varepsilon,q}}$
on $X_h$ (cf.~\cref{lem:dotHh-equiv}), we infer that
\begin{align*}
\|z_h\|_{L^{p'}(\Omega;C([0,T];H_{D}^{\alpha,q}))}
&\lesssim \|(P_h-A_h^{-1}P_hA)G\|_{L^{p'}([0,T]\times\Omega;H_{D}^{\alpha+\varepsilon,q})} \\
&\lesssim h^{1-\alpha-\varepsilon}\,\|G\|_{L^{p'}([0,T]\times\Omega;H_{D}^{1,q})},
\end{align*}
where the last inequality follows from \cref{eq:Ph-AhinvPhA} applied with
$(\theta_1,\theta_2)=(1,\alpha+\varepsilon)$.
Moreover, applying \cref{eq:Ph-conv} with $(\theta_1,\theta_2) = (1-\varepsilon,\alpha)$ gives
\[
\|(I-P_h)G\|_{L^{p'}(\Omega;C([0,T];H_{D}^{\alpha,q}))}
\lesssim h^{1-\alpha-\varepsilon}\,\|G\|_{L^{p'}(\Omega;C([0,T];H_{D}^{1-\varepsilon,q}))}.
\]
Combining the preceding two estimates via the triangle inequality and invoking the
regularity of $G$ provided by \cref{eq:G-LpCHq} (applied with $p=p'$ and
$\theta=1-\varepsilon$) and by \cref{eq:G-LpH1q} (applied with $p=p'$), we arrive at
\[
\|G-G_h\|_{L^{p'}(\Omega;C([0,T];H_{D}^{\alpha,q}))} \lesssim h^{1-\alpha-\varepsilon}.
\]
Since $p' \geqslant p$,
we have the embedding $L^{p'}(\Omega) \hookrightarrow L^p(\Omega)$, and so
\eqref{eq:G-Gh-LpCLq} follows.

\medskip
\textbf{Proof of \eqref{eq:G-Gh-LpCLinf}.}
Let $p\in[2,\infty)$ and $\varepsilon\in(0,1)$ be given, and choose
$p'\in[p,\infty)$ such that $4/p'<\varepsilon$.
Applying \cref{lem:Sh-ast-bound}(ii) with $(\theta,p,q)=(-2+4/p',p',p')$,
we obtain
\[
\begin{aligned}
\|z_h\|_{L^{p'}(\Omega;C([0,T];L^\infty(\mathcal{O})))}
&\lesssim \|(A_hP_h - P_hA)G\|_{L^{p'}([0,T]\times\Omega;H_{D,h}^{-2+4/p',p'})} \\
&= \|(P_h - A_h^{-1}P_hA)G\|_{L^{p'}([0,T]\times\Omega;H_{D,h}^{4/p',p'})} \\
&\stackrel{(a)}{\sim} \|(P_h - A_h^{-1}P_hA)G\|_{L^{p'}([0,T]\times\Omega;H_{D}^{4/p',p'})} \\
&\stackrel{(b)}{\lesssim} h^{1-4/p'}\,\|G\|_{L^{p'}([0,T]\times\Omega;H_{D}^{1,p'})} \\
&\stackrel{(c)}{\lesssim} h^{1-\varepsilon}\,\|G\|_{L^{p'}([0,T]\times\Omega;H_{D}^{1,p'})} \\
&\stackrel{(d)}{\lesssim} h^{1-\varepsilon},
\end{aligned}
\]
where step~(a) uses the norm equivalence
$\|\cdot\|_{H_{D,h}^{4/p',p'}} \sim \|\cdot\|_{H_{D}^{4/p',p'}}$ on $X_h$ (\cref{lem:dotHh-equiv});
step~(b) is a consequence of \cref{eq:Ph-AhinvPhA} applied with
$(\theta_1,\theta_2)=(1,4/p')$; step~(c) exploits the elementary inequality
$h^{1-4/p'}\leqslant h^{1-\varepsilon}$, valid for $h\in(0,1]$ since
$4/p'<\varepsilon$; and step~(d) follows from the regularity bound for $G$
provided by \cref{eq:G-LpH1q} with $(p,q)=(p',p')$.
Since $p'\geqslant p$, the continuous embedding
$L^{p'}(\Omega)\hookrightarrow L^p(\Omega)$ yields
\[
\|z_h\|_{L^{p}(\Omega;C([0,T];L^\infty(\mathcal{O})))}
\lesssim h^{1-\varepsilon}.
\]
Furthermore, by \cref{eq:Ph-conv-Linfty} with
$(\theta,q)=(1-\varepsilon/2,2/\varepsilon)$
and the regularity estimate \cref{eq:G-LpCHq} with $(\theta,q)=(1-\varepsilon/2,2/\varepsilon)$,
we deduce that
\[
\|(I-P_h)G\|_{L^{p}(\Omega;C([0,T];L^\infty(\mathcal{O})))}
\lesssim h^{1-\varepsilon}\,
\|G\|_{L^{p}(\Omega;C([0,T];H_{D}^{1-\varepsilon/2,2/\varepsilon}))}
\lesssim h^{1-\varepsilon}.
\]
Combining the preceding two estimates via the triangle inequality establishes
\cref{eq:G-Gh-LpCLinf} and thus completes the proof.
\end{proof}



\begin{remark}
  To place our results in perspective, we recall that for the $P_1$ finite element method,
  Yan \cite[Theorems~1.1 and 1.2]{Yan2004semidiscrete} established the following convergence
  rates under the present setting:
  \begin{align*}
    \sup_{t \in [0,T]} \|G(t) - G_h(t)\|_{L^2(\Omega;L^2(\mathcal{O}))} \lesssim h,
    \quad \sup_{t \in [0,T]} \|G(t) - G_h(t)\|_{L^2(\Omega;H_{D}^{-1,2})} \lesssim \log(1+1/h) \, h^2.
  \end{align*}
  Although the present article employs the $P_2$ finite element method,
  the estimates \cref{eq:G-Gh-LpLq,eq:G-Gh-LpCLq} for $\alpha=0$ and \cref{eq:G-Gh-LpCLinf} remain valid
  for the $P_1$ finite element method.
\end{remark}

\subsubsection{Two stability estimates}

This subsection presents two stability estimates that will be used in the proofs of
\cref{thm:uh-strong} in this section and \cref{thm:uh-U} in \cref{sec:full-discr}.

The first estimate concerns the spatial derivative of a product and is instrumental
in controlling the nonlinear convection term in the subsequent error analysis.

\begin{lemma}
  \label{lem:Ph-partialx-vw-H2}
  Let $q \in (1,\infty)$ and let $q'$ denote its Hölder conjugate exponent.
  Then, for any $v \in H_{D}^{1,q'}$ and $w \in L^q(\mathcal{O})$, the following estimate holds:
  \[
    \bigl\|P_h \partial_x(vw)\bigr\|_{H_{D,h}^{-2,q}}
    \;\lesssim\; \|v\|_{H_{D}^{1,q'}}
    \bigl(\|w\|_{H_{D}^{-1,q}} + h\, \|w\|_{L^q(\mathcal{O})}\bigr).
  \]
\end{lemma}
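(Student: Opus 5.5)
The plan is a duality argument in the discrete spaces. Since $(-A_h)$ is self-adjoint with respect to the $L^2(\mathcal{O})$ inner product on $X_h$, the discrete negative norm can be written as
\[
  \|P_h\partial_x(vw)\|_{H_{D,h}^{-2,q}}
  = \sup_{\substack{\varphi_h\in X_h\\ \|\varphi_h\|_{H_{D,h}^{2,q'}}\leqslant 1}}
  \Bigl|\int_{\mathcal{O}} \bigl(P_h\partial_x(vw)\bigr)\varphi_h\,\mathrm{d}x\Bigr|.
\]
The condition $\|\varphi_h\|_{H_{D,h}^{2,q'}}\leqslant1$ means $\|A_h\varphi_h\|_{L^{q'}(\mathcal{O})}\leqslant 1$.

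In dimension one, $H_D^{1,q'}=W_0^{1,q'}(\mathcal{O})\hookrightarrow L^\infty(\mathcal{O})$, so $vw\in L^q(\mathcal{O})$. Then \cref{eq:Ph-extend} and integration by parts (with $\varphi_h\in W_0^{1,\infty}$) give
\[
  \int_{\mathcal{O}} \bigl(P_h\partial_x(vw)\bigr)\varphi_h\,\mathrm{d}x
  = -\int_{\mathcal{O}} v\,w\,\partial_x\varphi_h\,\mathrm{d}x .
\]

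The key step is to split $\varphi_h=\psi+(\varphi_h-\psi)$ with the continuous lift $\psi:=A^{-1}A_h\varphi_h\in H_D^{2,q'}$. Because $A^{-1}\colon L^{q'}\to H_D^{2,q'}$ is an isometry, $\|\psi\|_{H_D^{2,q'}}=\|A_h\varphi_h\|_{L^{q'}}\leqslant1$. Moreover $\varphi_h=A_h^{-1}P_hA_h\varphi_h$, so
\[
  \varphi_h-\psi=(A_h^{-1}P_h-A^{-1})A_h\varphi_h .
\]
\cref{lem:Ah}(i) with $\theta=1$ then yields $\|\varphi_h-\psi\|_{H_D^{1,q'}}\lesssim h$.

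The error part is handled directly by H\"older's inequality:
\[
  \Bigl|\int_{\mathcal{O}} v w\,\partial_x(\varphi_h-\psi)\,\mathrm{d}x\Bigr|
  \leqslant \|v\|_{L^\infty}\|w\|_{L^q}\|\varphi_h-\psi\|_{W^{1,q'}}
  \lesssim h\,\|v\|_{H_D^{1,q'}}\|w\|_{L^q(\mathcal{O})} .
\]

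The main point to check is the smooth part, which should give the $H_D^{-1,q}$ norm of $w$. We write $\int w\,(v\,\partial_x\psi)$ as the duality pairing between $H_D^{-1,q}$ and $H_D^{1,q'}$, which requires showing that $v\,\partial_x\psi\in W_0^{1,q'}(\mathcal{O})$ with norm $\lesssim\|v\|_{H_D^{1,q'}}\|\psi\|_{H_D^{2,q'}}$.
\begin{itemize}
\item The zero trace comes from $v$, since $v(0)=v(1)=0$.
\item The derivative is $\partial_x(v\,\partial_x\psi)=\partial_xv\,\partial_x\psi+v\,\partial_x^2\psi$.
\item The first term is bounded by $\|\partial_xv\|_{L^{q'}}\|\partial_x\psi\|_{L^\infty}$, using $W^{1,q'}\hookrightarrow L^\infty$ for $\partial_x\psi$.
\item The second term is bounded by $\|v\|_{L^\infty}\|\partial_x^2\psi\|_{L^{q'}}$.
\end{itemize}
Here we use the norm equivalences $H_D^{2,q'}\simeq W^{2,q'}\cap W_0^{1,q'}$ and $H_D^{1,q'}\simeq W_0^{1,q'}$ recalled in Subsection~\ref{subsec:functional-analytic-framework}. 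This gives
\[
  \Bigl|\int_{\mathcal{O}} w\,v\,\partial_x\psi\,\mathrm{d}x\Bigr|\lesssim \|w\|_{H_D^{-1,q}}\|v\|_{H_D^{1,q'}} .
\]
Adding the two parts and taking the supremum over $\varphi_h$ completes the proof.
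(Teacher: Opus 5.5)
Your proof is correct and is essentially the paper's argument: if you write $\varphi_h=A_h^{-1}P_hg$, your lift $\psi=A^{-1}A_h\varphi_h$ is exactly the paper's $A^{-1}P_hg$, and both pieces are bounded the same way (the Leibniz rule in $H_D^{1,q'}$ for the smooth part, \cref{lem:Ah}(i) with $\theta=1$ for the remainder). One slip: for $q\neq 2$ your opening duality formula holds only up to $h$-uniform constants, not as an equality, because testing an element of $X_h$ only against $X_h$ need not recover its full $L^q$-norm. The inequality you need follows from the $L^{q'}$-stability \cref{eq:Ph-stab0} of $P_h$, which is exactly where the paper invokes it.
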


\begin{proof}
  Let $\langle \cdot, \cdot \rangle$ denote the duality pairing between $L^q(\mathcal{O})$ and $L^{q'}(\mathcal{O})$.
  For an arbitrary $g \in L^{q'}(\mathcal{O})$, since both $A_h^{-1} P_h \partial_x(vw)$ and $A_h^{-1} P_h g$ belong to the finite element space $X_h$, we have
  \[
    \langle A_h^{-1}P_h\partial_x(vw), \, g \rangle =
    \langle A_h^{-1}P_h\partial_x(vw), \, P_hg \rangle =
    \langle P_h\partial_x(vw), \, A_h^{-1} P_hg \rangle =
    \langle \partial_x(vw), \, A_h^{-1} P_hg \rangle.
  \]
  Here, the first and third equalities follow from the definition (or orthogonality property) of the projection $P_h$ extended by duality, and the second equality follows from the self-adjointness of $A_h^{-1}$ on $X_h$.
  Integration by parts yields
  \[
   \langle A_h^{-1}P_h\partial_x(vw), \, g \rangle = 
  -\langle vw, \, \partial_x A_h^{-1} P_hg \rangle.
  \]
  Decomposing $A_h^{-1} = A^{-1} + (A_h^{-1} - A^{-1})$ and using the identity $ P_h^2 = P_h $,
  we decompose this expression into 
  \begin{align*}
    \bigl\langle A_h^{-1} P_h\, \partial_x(v w),\, g \bigr\rangle
    = -\bigl\langle w,\, v\,\partial_x A^{-1} P_h g \bigr\rangle
       -\bigl\langle v w,\, \partial_x(A_h^{-1} P_h - A^{-1}) P_h g \bigr\rangle.
  \end{align*}
  Consequently, by the duality between $H_{D}^{-1,q}$ and $H_{D}^{1,q'}$ along with H\"older's inequality, we obtain 
  \begin{equation}
    \label{eq:731}
    \bigl| \bigl\langle A_h^{-1} P_h\, \partial_x(v w),\, g \bigr\rangle \bigr|
    \leqslant \|w\|_{H_{D}^{-1,q}}\,
               \|v\,\partial_x A^{-1} P_h g\|_{H_{D}^{1,q'}}
             + \|v w\|_{L^q(\mathcal{O})}\,
               \|\partial_x(A_h^{-1}P_h - A^{-1}) P_h g\|_{L^{q'}(\mathcal{O})}.
  \end{equation}
  To bound the first term, we apply the Leibniz rule, followed by the Sobolev embeddings $H_{D}^{1,q'} \hookrightarrow L^{\infty}(\mathcal{O})$
  and $H_{D}^{2,q'} \hookrightarrow W^{1,\infty}(\mathcal{O})$, together with the
  isometric isomorphism $A^{-1} \colon L^{q'}(\mathcal{O}) \to H_{D}^{2,q'}$.
  This yields
  \begin{align*}
    \|v\,\partial_x A^{-1} P_h g\|_{H_{D}^{1,q'}}
    &\lesssim \|v\|_{L^{\infty}(\mathcal{O})}\,\|A^{-1} P_h g\|_{H_{D}^{2,q'}}
             + \|v\|_{H_{D}^{1,q'}}\,\|\partial_x A^{-1} P_h g\|_{L^{\infty}(\mathcal{O})} \\
    &\lesssim \|v\|_{H_{D}^{1,q'}}\,\|P_h g\|_{L^{q'}(\mathcal{O})}.
  \end{align*}
  For the second term, H\"older's inequality and the embedding $H_{D}^{1,q'} \hookrightarrow L^{\infty}(\mathcal{O})$
  imply $\|v w\|_{L^q(\mathcal{O})} \leqslant \|v\|_{L^{\infty}}\,\|w\|_{L^q(\mathcal{O})} \lesssim \|v\|_{H_{D}^{1,q'}}\,\|w\|_{L^q(\mathcal{O})}$.
  Applying \cref{lem:Ah}(i) with $(\theta,q) = (1,q')$ to control the remaining factor, we deduce that
  \begin{align*}
    \|v w\|_{L^q(\mathcal{O})}\,
    \|\partial_x(A_h^{-1}P_h - A^{-1}) P_h g\|_{L^{q'}(\mathcal{O})}
    \;\lesssim\; h\,\|v\|_{H_{D}^{1,q'}}\,\|w\|_{L^q(\mathcal{O})}\,\|P_h g\|_{L^{q'}(\mathcal{O})}.
  \end{align*}
  Finally, inserting these bounds into \eqref{eq:731} and invoking the $L^{q'}(\mathcal{O})$-stability of $P_h$ (see \eqref{eq:Ph-stab0}, which gives $\|P_h g\|_{L^{q'}(\mathcal{O})} \lesssim \|g\|_{L^{q'}(\mathcal{O})}$), we conclude that
  \[
    \bigl|\bigl\langle A_h^{-1} P_h\, \partial_x(v w),\, g \bigr\rangle\bigr|
    \;\lesssim\; \|v\|_{H_{D}^{1,q'}}
    \bigl(\|w\|_{H_{D}^{-1,q}} + h\,\|w\|_{L^q(\mathcal{O})}\bigr)\,\|g\|_{L^{q'}(\mathcal{O})}.
  \]
  Since $g \in L^{q'}(\mathcal{O})$ was arbitrary, it follows that 
  \[
   \|A_h^{-1}P_h\partial_x(vw)\|_{L^q(\mathcal{O})}
    \;\lesssim\; \|v\|_{H_{D}^{1,q'}}
    \bigl(\|w\|_{H_{D}^{-1,q}} + h\,\|w\|_{L^q(\mathcal{O})}\bigr).
  \]
 The identity $ \|A_h^{-1}P_h\partial_x(vw)\|_{L^q(\mathcal{O})} = \|P_h\partial_x(vw)\|_{H_{D,h}^{-2,q}} $
 completes the proof.
\end{proof}

The second estimate addresses the stability of solutions to a random integral equation.

\begin{lemma}
  \label{lem:core-stability}
  Let $p \in (1,\infty)$ and $p_1 \in (4,\infty)$. 
  Assume that $g \in L^{8p}\bigl(\Omega; L^\infty(0,T; L^\infty(\mathcal{O}))\bigr)$ 
  is such that $g \in L^2(0,T; H_D^{1,2})$ $\mathbb{P}$-a.s., 
  and that there exists a constant $\kappa_0 > 0$ for which
  \[
    \mathbb{E}\!\left[ \exp\!\left( \kappa_0 \| g \|_{L^2(0,T;H_D^{1,2})}^2 \right) \right] < \infty.
  \]
  Furthermore, let $\sigma_h \in L^{2p}(\Omega; L^{p_1}(0,T; H_{D,h}^{0,2})) \cap L^{8p}(\Omega;L^{2}(0,T; H_{D,h}^{0,2}))$. 
  Suppose that $z_h \colon \Omega \to C([0,T];X_h)$ is strongly $\mathcal{F}$-measurable
  and satisfies, $\mathbb{P}$-a.s., for all $t \in [0,T]$,
  \begin{equation}
    \label{eq:zh-equ}
    z_h(t) = \int_0^t \left( A_h z_h(s) - \frac{1}{2} P_h \partial_x \bigl( g(s)(z_h(s) + \sigma_h(s)) \bigr) \right)
    \, \mathrm{d}s.
  \end{equation}
  Then, the following estimate holds:
  \begin{equation}
    \label{eq:core-estimate}
    \begin{aligned}
      \|z_h\|_{L^{p}(\Omega;\, C([0,T];\, L^\infty(\mathcal{O})))}
      &\lesssim \|g\|_{L^{2p}(\Omega;\, L^{\infty}(0,T;\, L^\infty(\mathcal{O})))}
      \Bigl[ \|\sigma_h\|_{L^{2p}(\Omega;L^{p_1}(0,T;L^2(\mathcal{O})))} \\
      &\qquad + \exp\!\Bigl(\frac{p^2T}{128\kappa_0^2}\Bigr)
        \Bigl( \mathbb{E}\Bigl[ \exp\!\bigl(\kappa_0\|g\|_{L^2(0,T;H_{D}^{1,2})}^2\bigr) \Bigr] \Bigr)^{\!\frac{1}{4p}} \\
      &\qquad \times \|g\|_{L^{8p}(\Omega;L^\infty(0,T;L^\infty(\mathcal{O})))}
        \|\sigma_h\|_{L^{8p}(\Omega;L^2(0,T;L^2(\mathcal{O})))} \Bigr].
    \end{aligned}
  \end{equation}
\end{lemma}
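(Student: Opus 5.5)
The plan is a pathwise energy estimate in $L^2(\mathcal{O})$, closed by Gronwall, then an upgrade to the $L^\infty$-norm via the mild formulation and \cref{lem:Sh-ast-bound}(iii). Moments are taken only at the end, using Hölder's inequality. Since \cref{eq:zh-equ} contains no stochastic integral, everything is deterministic for a fixed $\omega$, and $z_h$ is a $C^1$ function with values in $X_h$.

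\textbf{Step 1 (mild form and $L^\infty$ bound).} Pathwise, $z_h = -\tfrac12 S_h \ast \bigl(P_h\partial_x(g(z_h+\sigma_h))\bigr)$. By \cref{lem:Sh-ast-bound}(iii) with $p=p_1>4$,
\[
\|z_h\|_{C([0,T];L^\infty(\mathcal{O}))} \lesssim \|g\|_{L^\infty(0,T;L^\infty(\mathcal{O}))}\bigl(T^{1/p_1}\|z_h\|_{C([0,T];L^2(\mathcal{O}))} + \|\sigma_h\|_{L^{p_1}(0,T;L^2(\mathcal{O}))}\bigr).
\]
It therefore remains to bound $\|z_h\|_{C([0,T];L^2(\mathcal{O}))}$ pathwise.

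\textbf{Step 2 (energy estimate, the main obstacle).} Testing \cref{eq:zh-equ} with $z_h(t)\in X_h$ lets the projection $P_h$ drop. Integrating by parts (using $z_h(0)=z_h(1)=0$) gives
\[
\tfrac12\tfrac{\mathrm d}{\mathrm dt}\|z_h\|_{L^2}^2 = -\|\partial_x z_h\|_{L^2}^2 + \tfrac12\int_{\mathcal{O}} g z_h\,\partial_x z_h\,\mathrm dx + \tfrac12\int_{\mathcal{O}} g\sigma_h\,\partial_x z_h\,\mathrm dx .
\]
The dangerous term is the first integral, since it is quadratic in $z_h$ with a random, unbounded coefficient. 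I will write it as $\tfrac14\int g\,\partial_x(z_h^2) = -\tfrac14\int (\partial_x g) z_h^2$, which uses $g(t)\in H_D^{1,2}$ for a.e.\ $t$. The one-dimensional Gagliardo--Nirenberg inequality $\|z\|_{L^4}^2\lesssim \|z\|_{L^2}^{3/2}\|\partial_x z\|_{L^2}^{1/2}$ then bounds it by
\[
\|g\|_{H_D^{1,2}}\|z_h\|_{L^2}^{3/2}\|\partial_x z_h\|_{L^2}^{1/2} \leqslant \tfrac14\|\partial_x z_h\|_{L^2}^2 + C\|g\|_{H_D^{1,2}}^{4/3}\|z_h\|_{L^2}^2 .
\]
The $\sigma_h$-term is bounded by $\tfrac14\|\partial_x z_h\|_{L^2}^2 + C\|g\|_{L^\infty}^2\|\sigma_h\|_{L^2}^2$. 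Gronwall's inequality then yields, pathwise,
\[
\|z_h\|_{C([0,T];L^2)}^2 \leqslant C\exp\Bigl(C\int_0^T\|g\|_{H_D^{1,2}}^{4/3}\,\mathrm dt\Bigr)\|g\|_{L^\infty(0,T;L^\infty)}^2\|\sigma_h\|_{L^2(0,T;L^2)}^2 .
\]

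\textbf{Step 3 (exponential moments and Hölder).} By Young's inequality, $c\,a^{4/3}\leqslant \tfrac{\kappa_0}{2p}a^2 + C'(p,\kappa_0)$ with $C'$ of order $p^2/\kappa_0^2$. Integrating in time gives
\[
\exp\Bigl(\tfrac{C}{2}\int_0^T\|g\|_{H_D^{1,2}}^{4/3}\,\mathrm dt\Bigr) \leqslant e^{c p^2 T/\kappa_0^2}\exp\Bigl(\tfrac{\kappa_0}{4p}\|g\|_{L^2(0,T;H_D^{1,2})}^2\Bigr).
\]
After tracking constants, this should produce the factor $\exp(p^2T/(128\kappa_0^2))$. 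Inserting Step 2 into Step 1 and taking the $L^p(\Omega)$-norm, I split the products. The term $\|g\|_{L^\infty L^\infty}\|\sigma_h\|_{L^{p_1}L^2}$ is handled by Cauchy--Schwarz with exponents $2p,2p$. The term $\|g\|_{L^\infty L^\infty}\cdot \exp(\cdots)\cdot\|g\|_{L^\infty L^\infty}\|\sigma_h\|_{L^2L^2}$ is handled by Cauchy--Schwarz to put the first $g$-factor in $L^{2p}(\Omega)$, followed by a three-fold Hölder inequality in $L^{4p}$, $L^{8p}$, $L^{8p}$. The exponential factor then appears as $\bigl(\mathbb{E}[\exp(\kappa_0\|g\|_{L^2(0,T;H_D^{1,2})}^2)]\bigr)^{1/(4p)}$, which gives \cref{eq:core-estimate}.

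The delicate points are choosing the Young constant so that the exponent after Hölder is exactly $\kappa_0$, and justifying the energy identity pathwise. The latter should be unproblematic since $z_h$ is finite-dimensional and $g(z_h+\sigma_h)\in L^2(0,T;L^2)$ almost surely.
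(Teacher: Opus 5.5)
Your proposal is correct and follows the paper's proof essentially step for step. The steps are the same: the mild form plus \cref{lem:Sh-ast-bound}(iii), a pathwise $L^2$ energy estimate via $-\tfrac14\langle\partial_x g,z_h^2\rangle$, the one-dimensional Gagliardo--Nirenberg and Young inequalities, Gronwall, and H\"older in $\Omega$ with exponents $(4p,8p,8p)$. The paper simply merges your two Young steps into the single inequality $abc\leqslant a^2/2+b^4/4+c^4/4$ and spends $\tfrac12\|\partial_x z_h\|_{L^2(\mathcal{O})}^2$ on each of the two terms.

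On constants, with $\|v\|_{L^\infty(\mathcal{O})}\leqslant\sqrt2\|v\|_{L^2(\mathcal{O})}^{1/2}\|v\|_{H_D^{1,2}}^{1/2}$, that half--half split yields exactly $\exp(p^2T/(128\kappa_0^2))$. Your $\tfrac14$--$\tfrac14$ split with the same constant gives only $\exp(p^2T/(64\kappa_0^2))$. Separately, $z_h$ is merely absolutely continuous rather than $C^1$, so the energy identity holds for a.e.\ $t$, which is enough for Gronwall.
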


\begin{proof}
Applying the variation of constants formula, we obtain the representation $\mathbb{P}$-a.s.
\[
z_h = -\frac12 S_h \ast \bigl(P_h \partial_x (g z_h)\bigr) -\frac12 S_h \ast \bigl(P_h \partial_x (g \sigma_h)\bigr).
\]
We first estimate the term involving $z_h$.  Lemma~\ref{lem:Sh-ast-bound}(iii) with $p=\infty$ and $z=z_h$
(noting the embedding $C([0,T];L^2(\mathcal{O})) \hookrightarrow L^\infty(0,T;L^2(\mathcal{O}))$) yields $\mathbb{P}$-a.s.
\[
\frac12 \bigl\| S_h \ast \bigl(P_h \partial_x (g z_h)\bigr) \bigr\|_{C([0,T];L^\infty(\mathcal{O}))}
\lesssim \|g\|_{L^{\infty}(0,T;L^\infty(\mathcal{O}))} \|z_h\|_{C([0,T];L^2(\mathcal{O}))}.
\]
Taking the $L^{p}(\Omega)$-norm and applying Hölder's inequality, we deduce
\[
\frac12 \bigl\| S_h \ast \bigl(P_h \partial_x (g z_h)\bigr) \bigr\|_{L^{p}(\Omega;C([0,T];L^\infty(\mathcal{O})))}
\lesssim \|g\|_{L^{2p}(\Omega;L^\infty(0,T;L^\infty(\mathcal{O})))}
\|z_h\|_{L^{2p}(\Omega;C([0,T];L^2(\mathcal{O})))}.
\]
For the term containing $\sigma_h$, we invoke Lemma~\ref{lem:Sh-ast-bound}(iii) with $p=p_1$ and $z=\sigma_h$.
This gives, $\mathbb{P}$-a.s.,
\[
\bigl\| S_h \ast (P_h\partial_x(g \sigma_h)) \bigr\|_{C([0,T];L^\infty(\mathcal{O}))}
\lesssim \|g\|_{L^\infty(0,T;L^\infty(\mathcal{O}))}
\|\sigma_h\|_{L^{p_1}(0,T;L^2(\mathcal{O}))}.
\]
Applying the $L^{p}(\Omega)$-norm and Hölder's inequality once more, we obtain
\[
\bigl\| S_h \ast (P_h\partial_x(g \sigma_h)) \bigr\|_{L^p(\Omega;C([0,T];L^\infty(\mathcal{O})))}
\lesssim \|g\|_{L^{2p}(\Omega;L^\infty(0,T;L^\infty(\mathcal{O})))}
\|\sigma_h\|_{L^{2p}(\Omega;L^{p_1}(0,T;L^2(\mathcal{O})))}.
\]
Combining the two estimates and using the triangle inequality in the expression for $z_h$, we arrive at
\begin{align*}
& \|z_h\|_{L^{p}(\Omega;C([0,T];L^\infty(\mathcal{O})))} \\
\lesssim{} & \|g\|_{L^{2p}(\Omega;L^{\infty}(0,T;L^\infty(\mathcal{O})))}
\bigl(
\|z_h\|_{L^{2p}(\Omega;C([0,T];L^2(\mathcal{O})))}
+ \|\sigma_h\|_{L^{2p}(\Omega;L^{p_1}(0,T;L^2(\mathcal{O})))}
\bigr).
\end{align*}
Consequently, the proof is reduced to establishing the estimate
\begin{equation}
\label{eq:zh-bound0}
\begin{aligned}
\|z_h\|_{L^{2p}(\Omega;C([0,T];L^2(\mathcal{O})))}
&\leqslant \frac{1}{2} \exp\!\Bigl(\frac{p^2T}{128\kappa_0^2}\Bigr) 
\Bigl( \mathbb{E}\Bigl[ \exp\!\bigl(\kappa_0\|g\|_{L^2(0,T;H_{D}^{1,2})}^2\bigr) \Bigr] \Bigr)^{\!\frac{1}{4p}} \\
& \qquad \times \|g\|_{L^{8p}(\Omega;L^\infty(0,T;L^\infty(\mathcal{O})))}
\|\sigma_h\|_{L^{8p}(\Omega;L^2(0,T;L^2(\mathcal{O})))}.
\end{aligned}
\end{equation}

To this end, we perform an energy estimate.
Equation \cref{eq:zh-equ} implies that there exists a $\mathbb{P}$-null set $N$ such that, 
for all $\omega \in \Omega \setminus N$, $z_h(0) = 0$ and
\[
  \frac{\mathrm{d}}{\mathrm{d}t} z_h(t) = A_h z_h(t) - \frac{1}{2} P_h \partial_x \bigl( g(t)(z_h(t) + \sigma_h(t)) \bigr),
  \quad \text{a.e.~} t \in [0,T].
\]
Testing this equation against $z_h$ in $L^2(\mathcal{O})$, and employing the identity
$\langle A_h v_h, v_h\rangle = -\|v_h\|_{H_{D}^{1,2}}^2$ together with the $L^2$-orthogonality
of $P_h$, we obtain for all $\omega \in \Omega \setminus N$ that, for a.e.~$t \in [0,T]$,
\begin{equation}
  \label{eq:zh-differential}
  \frac{1}{2}\frac{\mathrm{d}}{\mathrm{d}t} \|z_h(t)\|_{L^2(\mathcal{O})}^2
  = -\|z_h(t)\|_{H_{D}^{1,2}}^2
  - \frac{1}{2} \bigl\langle \partial_x(g(t)z_h(t)),\, z_h(t)\bigr\rangle
  - \frac{1}{2} \bigl\langle \partial_x(g(t)\sigma_h(t)),\, z_h(t)\bigr\rangle,
\end{equation}
where $\langle\cdot,\cdot\rangle$ denotes the $L^2(\mathcal{O})$-inner product.
To estimate the second term on the right-hand side, we invoke the identity
$\langle\partial_x(gz_h), z_h\rangle = \frac{1}{2}\langle\partial_xg, z_h^2\rangle$ in conjunction with Hölder's inequality and the one-dimensional Gagliardo-Nirenberg inequality
$\|v\|_{L^\infty(\mathcal{O})}\leqslant\sqrt{2}\|v\|_{L^2(\mathcal{O})}^{1/2}\|v\|_{H_{D}^{1,2}}^{1/2}$
for $v \in H_{D}^{1,2}$.
This yields
\begin{align*}
-\frac{1}{2}\langle\partial_x(gz_h), z_h\rangle
&= -\frac{1}{4}\langle\partial_x g, z_h^2\rangle \\
& \leqslant \frac{1}{4} \|\partial_xg\|_{L^2(\mathcal{O})}
\|z_h\|_{L^2(\mathcal{O})} \|z_h\|_{L^\infty(\mathcal{O})} \\
& \leqslant \frac{\sqrt{2}}{4} \|\partial_xg\|_{L^2(\mathcal{O})}
\|z_h\|_{L^2(\mathcal{O})}^{3/2} \|z_h\|_{H_{D}^{1,2}}^{1/2} \\
&= \Big( \sqrt{\frac{\kappa_0}{2p}} \|g\|_{H_{D}^{1,2}} \|z_h\|_{L^2(\mathcal{O})} \Big)
\times \Big( \frac{1}{ 2^{\frac{5}{4}} \sqrt{\frac{\kappa_0}{p}}}
\|z_h\|_{L^2(\mathcal{O})}^{1/2} \Big)
\times \Big( 2^{\frac{1}{4}} \|z_h\|_{H_{D}^{1,2}}^{1/2} \Big),
\end{align*}
where we have used the equality $\|g\|_{H_{D}^{1,2}} = \|\partial_xg\|_{L^2(\mathcal{O})}$.
Applying the generalized Young's inequality, $abc \leqslant a^2/2 + b^4/4 + c^4/4$, we arrive at
\[
  -\frac{1}{2}\bigl\langle \partial_x(gz_h), z_h\bigr\rangle
  \leqslant \Bigl( \frac{\kappa_0}{4p} \|g\|_{H_{D}^{1,2}}^2 + \frac{p^2}{128\kappa_0^2} \Bigr) \|z_h\|_{L^2(\mathcal{O})}^2
  + \frac{1}{2}\|z_h\|_{H_{D}^{1,2}}^2 .
\]
For the third term, integration by parts followed by Hölder's and Young's inequalities implies
\begin{align*}
  -\frac{1}{2}\bigl\langle \partial_x(g\sigma_h),\, z_h \bigr\rangle
  &=\frac{1}{2}\bigl\langle g\sigma_h,\, \partial_x z_h \bigr\rangle \\
  &\leqslant \frac{1}{2} \|g\|_{L^\infty(\mathcal{O})} \|\sigma_h\|_{L^2(\mathcal{O})} \|z_h\|_{H_{D}^{1,2}} \\
  &\leqslant \frac{1}{8} \|g\|_{L^\infty(\mathcal{O})}^2 \|\sigma_h\|_{L^2(\mathcal{O})}^2
  + \frac{1}{2} \|z_h\|_{H_{D}^{1,2}}^2 .
\end{align*}
Substituting these estimates into \cref{eq:zh-differential} and observing that the $H_{D}^{1,2}$-norm terms on
the right-hand side cancel out, we deduce for all $\omega \in \Omega \setminus N$ that,
for a.e.~$t \in [0,T]$,
\begin{align*}
  \frac{1}{2} \frac{\mathrm{d}}{\mathrm{d}t} \|z_h(t)\|_{L^2(\mathcal{O})}^2
  \leqslant \frac{1}{8} \|g(t)\|_{L^\infty(\mathcal{O})}^2 \|\sigma_h(t)\|_{L^2(\mathcal{O})}^2
  + \Bigl( \frac{\kappa_0}{4p} \|g(t)\|_{H_{D}^{1,2}}^2 + \frac{p^2}{128\kappa_0^2} \Bigr)
  \|z_h(t)\|_{L^2(\mathcal{O})}^2.
\end{align*}
Multiplying both sides by $2$, integrating over $[0,t]$, and using the initial condition $z_h(0)=0$,
we obtain for all $\omega \in \Omega \setminus N$ and $t \in [0,T]$ that
\[
  \|z_h(t)\|_{L^2(\mathcal{O})}^2
  \leqslant \int_0^t \biggl[ \frac{1}{4}\|g(s)\|_{L^\infty(\mathcal{O})}^2 \|\sigma_h(s)\|_{L^2(\mathcal{O})}^2
    + \Bigl( \frac{\kappa_0}{2p} \|g(s)\|_{H_{D}^{1,2}}^2 + \frac{p^2}{64\kappa_0^2} \Bigr)
  \|z_h(s)\|_{L^2(\mathcal{O})}^2 \biggr] \mathrm{d}s.
\]
An application of Gronwall's inequality implies $\mathbb{P}$-a.s.~that
\[
  \|z_h\|_{C([0,T];L^2(\mathcal{O}))}^2
  \leqslant \frac{1}{4} \exp\!\biggl(\frac{p^2T}{64\kappa_0^2} + \frac{\kappa_0}{2p} \|g\|_{L^2(0,T;H_{D}^{1,2})}^2\biggr)
  \|g\|_{L^\infty(0,T;L^\infty(\mathcal{O}))}^2 \|\sigma_h\|_{L^2(0,T;L^2(\mathcal{O}))}^2 .
\]
Taking the $p$-th power and the expectation, and applying Hölder's inequality, we find
\begin{align*}
  \|z_h\|_{L^{2p}(\Omega;C([0,T];L^2(\mathcal{O})))}^{2p}
  & \leqslant \frac{1}{4^p} \exp\!\biggl(\frac{p^3T}{64\kappa_0^2}\biggr)
  \left( \mathbb{E} \left[ \exp\!\left(\kappa_0 \|g\|_{L^2(0,T;H_{D}^{1,2})}^2\right) \right] \right)^{\!\frac{1}{2}} \\
  & \quad \times \|g\|_{L^{8p}(\Omega;L^\infty(0,T;L^\infty(\mathcal{O})))}^{2p}
  \|\sigma_h\|_{L^{8p}(\Omega;L^2(0,T;L^2(\mathcal{O})))}^{2p}.
\end{align*}
Finally, taking the $2p$-th root yields \cref{eq:zh-bound0}, which completes the proof of the lemma.
\end{proof}

\subsection{Proof of \texorpdfstring{\cref{prop:uh-regu}}{Proposition~\ref{prop:uh-regu}}}
\label{ssec:proof-uh-well-posed}
The spatial semi-discretization \eqref{eq:uh} is a finite-dimensional stochastic differential equation (SDE) on $X_h$ driven by additive noise. Since $X_h$ is finite-dimensional, the drift operator $v_h \mapsto A_h v_h - \frac12 P_h\partial_x(v_h^2)$ is a polynomial mapping with respect to the degrees of freedom, and hence locally Lipschitz continuous. Furthermore, the operator satisfies the following dissipativity identity:
\begin{equation}
  \label{eq:key-identity}
  \langle A_h v_h - \tfrac12 P_h\partial_x(v_h^2), v_h \rangle = -\|v_h\|_{H_{D,h}^{1,2}}^2
  \quad \text{for all } v_h \in X_h,
\end{equation}
where $\langle\cdot, \cdot \rangle$ denotes the $L^2(\mathcal{O})$ inner product. To establish \eqref{eq:key-identity}, observe that $\langle A_h v_h, v_h \rangle = -\|v_h\|_{H_{D,h}^{1,2}}^2$ by definition, whereas
\[
\langle P_h\partial_x(v_h^2), v_h \rangle = \langle \partial_x(v_h^2), v_h \rangle = \frac{2}{3} \int_{\mathcal{O}} \partial_x(v_h^3)\,\mathrm{d}x = 0,
\]
where the first equality uses that $P_h$ is the $L^2$-orthogonal projection onto $X_h$ and $v_h \in X_h$,
and the last follows from the fundamental theorem of calculus together with the homogeneous Dirichlet boundary conditions $v_h(0)=v_h(1)=0$.
In addition, the bound \eqref{eq:PhQ-bound} implies that $P_h Q \in \gamma(\ell^2,H_{D,h}^{0,2})$.
Consequently, \cite[Theorem~3.27]{Pardoux2014} ensures the existence of a unique solution $u_h$ to \eqref{eq:uh}. Finally, the exponential moment estimate \eqref{eq:uh-exp-moment} follows by adapting the arguments established in \cite[Lemma~3.1]{BrehierCoxMillet2026} for semi-discrete spectral Galerkin approximations.

It remains to establish \cref{eq:uh-LpH1q,eq:uh-LpCHq}. The proof parallels that of \cref{prop:u-regu}
and proceeds in three steps.

\textbf{Step 1: Preliminary estimates.}
Using the deterministic initial datum $u_0 \in W_0^{1,\infty}(\mathcal{O})$ 
and the identity \cref{eq:key-identity},
a routine application of It\^o's formula and the Burkholder--Davis--Gundy inequality yields
(cf.~\cite[Lemma~3.1]{BrehierCoxMillet2026} for the spectral Galerkin approximations)
\begin{equation} \label{eq:uh-LpCL2}
  \|u_h\|_{L^p(\Omega;C([0,T]; L^2(\mathcal{O})))} \lesssim 1
  \quad \text{for all } p \in [2,\infty).
\end{equation}
Furthermore, since $u_0 \in W_0^{1,\infty}(\mathcal{O}) \hookrightarrow H_{D}^{1,q}$ for all $q \in [2,\infty)$,
we deduce from the $h$-uniform boundedness of $P_h$ in $\mathcal{L}(H_{D}^{1,q}, H_{D,h}^{1,q})$ (\cref{lem:Ph-stab}),
the estimate \cref{eq:Sh-smoothing} with $\theta_1=\theta_2=1$,
and the analyticity of $S_h$ on $H_{D,h}^{1,q}$ that
\[
 \|S_h(\cdot)P_hu_0 \|_{C([0,T];H_{D,h}^{1,q})} \lesssim 1 \quad \text{for all } q \in [2,\infty).
\]
In conjunction with the norm equivalence $\|\cdot\|_{H_{D,h}^{1,q}} \sim \|\cdot\|_{H_{D}^{1,q}}$ on $X_h$ (\cref{lem:dotHh-equiv}), this yields
\begin{equation}
 \label{eq:ShPhu0}
 \|S_h(\cdot)P_hu_0 \|_{C([0,T];H_{D}^{1,q})} \lesssim 1 \quad \text{for all } q \in [2,\infty).
\end{equation}

\textbf{Step 2: Bootstrap argument.}
It holds that, $\mathbb{P}$-a.s.,
\begin{equation} \label{eq:uh-mild}
u_h(t) = S_h(t)P_hu_0 - \frac{1}{2} (S_h \ast (P_h\partial_x(u_h^2)))(t) + G_h(t)
\quad\text{for all } t \in [0,T],
\end{equation}
where $G_h$ is defined in \cref{eq:Gh-def}.
By \cref{lem:dotHh}(i), the embedding $H_{D,h}^{3/2+\delta,2} \hookrightarrow W^{1,\infty}(\mathcal{O})$ holds uniformly
in $h$; a duality argument therefore yields
\[
\|P_h \partial_x (v_h^2)\|_{H_{D,h}^{-3/2-\delta,2}} \lesssim \|v_h\|_{L^2(\mathcal{O})}^2
\quad\text{for all } v_h \in X_h \text{ and } \delta \in (0,1/2).
\]
Since the mapping $X_h \ni v_h \mapsto P_h\partial_x(v_h^2) \in X_h$ is locally Lipschitz continuous, this bound combined with \cref{eq:uh-LpCL2} implies
\begin{equation}
  \label{eq:uh2-bound0}
\|P_h\partial_x(u_h^2)\|_{L^p(\Omega;C([0,T];H_{D,h}^{-3/2-\delta,2}))} \lesssim 1
\quad \text{for all } p \in [2,\infty) \text{ and } \delta \in (0,1/2).
\end{equation}
Moreover, by \cref{lem:Sh-ast-bound}(i) (applied with $p=\infty$ and $(\theta_1,\theta_2)=(-3/2-\delta,1/2-\varepsilon)$)
and the norm equivalence $H_{D,h}^{1/2-\varepsilon,2} \sim H_{D}^{1/2-\varepsilon,2}$
on $X_h$ (\cref{lem:dotHh-equiv}), we have, $\mathbb{P}$-a.s.,
\[
\|S_h \ast P_h\partial_x(u_h^2)\|_{C([0,T];H_{D}^{1/2-\varepsilon,2})}
\lesssim 
\|P_h\partial_x(u_h^2)\|_{C([0,T];H_{D,h}^{-3/2-\delta,2})}
\quad\text{for all } 0 < \delta < \varepsilon < 1/2.
\]
Choosing $\delta \in (0,\varepsilon)$ and invoking \cref{eq:uh2-bound0}, we deduce that
\begin{equation}
  \label{eq:Sh-ast-uh2-0}
  \|S_h \ast P_h\partial_x(u_h^2)\|_{L^p(\Omega;C([0,T];H_{D}^{1/2-\varepsilon,2}))}
  \lesssim 1
  \quad \text{for all } p \in [2,\infty) \text{ and } \varepsilon \in (0,1/2).
\end{equation}
In view of the mild formulation \cref{eq:uh-mild}, this estimate together with \cref{eq:Gh-Lp-C-Hthetaq,eq:ShPhu0} yields
\[
\|u_h\|_{L^p(\Omega;C([0,T];H_{D}^{1/2-\varepsilon,2}))} \lesssim 1
\quad \text{for all } p \in [2,\infty) \text{ and } \varepsilon \in (0,1/2),
\]
and hence, by the Sobolev embedding theorem,
\[
\|u_h\|_{L^p(\Omega;C([0,T];L^q(\mathcal{O})))} \lesssim 1
\quad \text{for all } p,q \in [2,\infty).
\]
Arguing as in the derivation of \cref{eq:u2-bound}, we obtain
\[
\|\partial_x(u_h^2)\|_{L^p(\Omega;C([0,T];H_{D}^{-1,q}))} \lesssim 1
\quad \text{for all } p,q \in [2,\infty),
\]
and the $h$-uniform stability bound $\|P_h\|_{\mathcal{L}(H_{D}^{-1,q},H_{D,h}^{-1,q})} \lesssim 1$ from \cref{lem:Ph-stab-negative} then gives
\[
\|P_h\partial_x(u_h^2)\|_{L^p(\Omega;C([0,T];H_{D,h}^{-1,q}))} \lesssim 1
\quad \text{for all } p,q \in [2,\infty).
\]
Repeating the argument leading to \cref{eq:Sh-ast-uh2-0}, but now applying \cref{lem:Sh-ast-bound}(i) with $p=\infty$ and $(\theta_1,\theta_2) = (-1,4/5)$ and using the norm equivalence $\|\cdot\|_{H_{D,h}^{4/5,q}} \sim \|\cdot\|_{H_{D}^{4/5,q}}$ on $X_h$ from \cref{lem:dotHh-equiv}, we conclude that
\[
\|S_h \ast P_h\partial_x(u_h^2)\|_{L^p(\Omega;C([0,T];H_{D}^{4/5,q}))}
\lesssim 1 \quad \text{for all } p,q \in [2,\infty).
\]
Combining this estimate with \cref{eq:Gh-Lp-C-Hthetaq,eq:ShPhu0}
and invoking \cref{eq:uh-mild}, we obtain
\[
\|u_h\|_{L^p(\Omega;C([0,T];H_{D}^{4/5,q}))} \lesssim 1
\quad \text{for all } p,q \in [2,\infty).
\]
Arguing as in the derivation of \cref{eq:partialxu2}, we deduce that
\[
\|\partial_x(u_h^2)\|_{L^p(\Omega;C([0,T];H_{D}^{-2/5,q}))} \lesssim 1
\quad \text{for all } p,q \in [2,\infty),
\]
and the $h$-uniform stability bound
$\|P_h\|_{\mathcal{L}(H_{D}^{-2/5,q},H_{D,h}^{-2/5,q})} \lesssim 1$
from \cref{lem:Ph-stab-negative} then gives
\[
\|P_h\partial_x(u_h^2)\|_{L^p(\Omega;C([0,T];H_{D,h}^{-2/5,q}))} \lesssim 1
\quad \text{for all } p,q \in [2,\infty).
\]
Finally, repeating the argument leading to \cref{eq:Sh-ast-uh2-0}, but
applying \cref{lem:Sh-ast-bound}(i) with $p=\infty$ and
$(\theta_1,\theta_2) = (-2/5,1)$ and using the norm equivalence
$\|\cdot\|_{H_{D,h}^{1,q}} \sim \|\cdot\|_{H_{D}^{1,q}}$ on $X_h$ from
\cref{lem:dotHh-equiv}, we conclude that
\begin{equation}
\label{eq:Sh-ast-uh2}
\|S_h \ast (P_h\partial_x(u_h^2))\|_{L^p(\Omega;C([0,T];H_{D}^{1,q}))}
\lesssim 1 \quad \text{for all } p,q \in [2,\infty).
\end{equation}

\textbf{Step 3: Conclusion.}
In view of the mild formulation \cref{eq:uh-mild}, the estimate \cref{eq:uh-LpH1q}
for all $p,q \in [2,\infty)$ follows from the triangle inequality,
the bounds \cref{eq:Gh-Lp-H1q,eq:ShPhu0,eq:Sh-ast-uh2},
and the embedding $L^p(\Omega;C([0,T];H_{D}^{1,q})) \hookrightarrow L^p([0,T] \times \Omega; H_{D}^{1,q})$. 
Analogously, for all $p,q \in [2,\infty)$ and $\theta \in (0,1)$,
the estimate \cref{eq:uh-LpCHq} follows from \cref{eq:Gh-Lp-C-Hthetaq,eq:ShPhu0,eq:Sh-ast-uh2}
combined with the embedding $H_{D}^{1,q} \hookrightarrow H_{D}^{\theta,q}$.

\qed

\subsection{Proof of Theorem~\ref{thm:uh-strong}}
\label{ssec:proof-uh-strong}
The error $u - u_h$ is decomposed into singular and regular components as follows:
\[
  u - u_h = \underbrace{(G - G_h)}_{\text{singular}} + \underbrace{(I - P_h)\xi - (\xi_h - P_h\xi)}_{\text{regular}},
\]
where $G$ and $G_h$ are defined in \cref{eq:G-def} and \cref{eq:Gh-def}, respectively,
and we set $\xi := u - G$ as in \cref{prop:u-regu} and $\xi_h := u_h - G_h$.
Combining the regularity of $\xi$ from \cref{eq:xi-LpCH1q} with the approximation properties of $P_h$ in \eqref{eq:Ph-conv}
and \eqref{eq:Ph-conv-Linfty}, we derive the following bounds for any $\alpha \in [-1,0]$, $p, q \in [2,\infty)$,
and $\varepsilon \in (0,1)$:
\begin{align}
  \|(I - P_h)\xi\|_{L^p(\Omega; C([0,T]; H_{D}^{\alpha,q}))} & \lesssim h^{1-\alpha}, \label{eq:xi-Phxi-LpC} \\
  \|(I - P_h)\xi\|_{L^p(\Omega; C([0,T]; L^\infty(\mathcal{O})))} & \lesssim h^{1-\varepsilon}. \notag
\end{align}
In view of the estimates for $G - G_h$ in \cref{lem:G-Gh}
and the embedding $L^p(\Omega;C([0,T];H_{D}^{\alpha,q})) \hookrightarrow L^p([0,T] \times \Omega;H_{D}^{\alpha,q})$,
the triangle inequality reduces the proof of \cref{thm:uh-strong} to establishing the following bounds
for all $\alpha \in [-1,0]$, $ p,q \in [2,\infty) $, and $ \varepsilon \in (0,1) $:
\begin{align}
  \|\xi_h - P_h\xi\|_{L^p([0,T] \times \Omega;H_{D}^{\alpha,q})} & \lesssim h^{1-\alpha}, \label{eq:xih-Phxi-bound1} \\
  \|\xi_h - P_h\xi\|_{L^p(\Omega; C([0,T]; H_{D}^{\alpha,q}))} & \lesssim h^{1-\alpha-\varepsilon}, \label{eq:xih-Phxi-LpCHq} \\
  \|\xi_h - P_h\xi\|_{L^p(\Omega;C([0,T];L^\infty(\mathcal{O})))} & \lesssim h^{1-\varepsilon}. \label{eq:xih-Phxi-bound2}
\end{align}
The remainder of the proof is organized into four steps to establish these estimates.

\medskip
\noindent\textbf{Step 1: Decomposition of the error \(\xi_h - P_h\xi\).}
We define the error process $\eta_h := \xi_h - P_h\xi$.
In view of \cref{eq:u-LpCHq} and the Sobolev embedding theorem, we have
$u \in C([0,T];L^\infty(\mathcal{O}))$ $\mathbb{P}$-a.s.
Consequently, $u^2 \in C([0,T];L^2(\mathcal{O}))$ and, since
$\partial_x \in \mathcal{L}(L^2(\mathcal{O}), H_{D}^{-1,2})$,
$\partial_x(u^2) \in C([0,T];H_{D}^{-1,2})$ $\mathbb{P}$-a.s.
Furthermore, the initial datum satisfies
$u_0 \in W_0^{1,\infty}(\mathcal{O}) \hookrightarrow H_{D}^{1,2}$.
In addition, \cref{eq:xi-LpCH1q} implies that
$\xi \in C([0,T];H_{D}^{1,2})$ $\mathbb{P}$-a.s.; since
$A \in \mathcal{L}(H_{D}^{1,2}, H_{D}^{-1,2})$, it follows that
$A\xi \in C([0,T];H_{D}^{-1,2})$ $\mathbb{P}$-a.s.
Combining these regularity properties with \cref{eq:xi-def},
we deduce that there exists a $\mathbb{P}$-null set $N$ such that,
for every $\omega \in \Omega \setminus N$, the path $t \mapsto \xi(t)$ is
continuously differentiable on $[0,T]$ with values in $H_{D}^{-1,2}$ and
satisfies $\xi(0) = u_0$ together with
\[
\frac{\mathrm{d}}{\mathrm{d}t}\,\xi(t) = A\xi(t) - \tfrac{1}{2}\,\partial_x\!\bigl(u^2(t)\bigr)
\quad \text{in } H_{D}^{-1,2} \text{ for all } t \in [0,T].
\]
(Here and in what follows, the $\mathbb{P}$-null set $N$ is enlarged as necessary,
independent of $t$, and we retain the same notation.)
Applying the projection $P_h$ to the preceding identity yields, for every
$\omega \in \Omega \setminus N$,
\[
\frac{\mathrm{d}}{\mathrm{d}t}\,P_h\xi(t) = P_hA\xi(t) - \tfrac{1}{2}\,P_h\partial_x\!\bigl(u^2(t)\bigr)
\quad \text{for all } t \in [0,T],
\qquad P_h\xi(0) = P_hu_0,
\]
and hence, upon integrating over $(0,t)$,
\begin{equation}
    \label{eq:Phxi}
P_h\xi(t) = P_hu_0 + \int_0^t \Bigl( P_hA\xi(s) - \tfrac{1}{2}\,P_h\partial_x\!\bigl(u^2(s)\bigr) \Bigr)\,\mathrm{d}s
\quad \text{for all } t \in [0,T].
\end{equation}
Next, recalling the definition of $G_h$ (see \cref{eq:Gh-def}), we have, for
every $\omega \in \Omega \setminus N$ and all $t \in [0,T]$,
\[
G_h(t) = \int_0^t A_h G_h(s)\,\mathrm{d}s + \int_0^t P_hQ\,\mathrm{d}W(s).
\]
Subtracting this equality from \cref{eq:uh} and using the 
relation $\xi_h = u_h - G_h$, we obtain, for every
$\omega \in \Omega \setminus N$ and all $t \in [0,T]$,
\[
\xi_h(t) = P_hu_0 + \int_0^t \Bigl( A_h\xi_h(s) - \tfrac{1}{2}\,P_h\partial_x\!\bigl(u_h^2(s)\bigr) \Bigr)\,\mathrm{d}s.
\]
Subtracting \cref{eq:Phxi} from the preceding identity, we conclude that,
for every $\omega \in \Omega \setminus N$ and all $t \in [0,T]$, 
\begin{equation}
  \label{eq:etah}
  \eta_h(t) = \int_0^t \Bigl( A_h\eta_h(s) + \bigl(A_hP_h\xi(s) - P_hA\xi(s)\bigr)
  + \tfrac{1}{2}\,P_h\partial_x\!\bigl[(u(s)+u_h(s))(u(s)-u_h(s))\bigr] \Bigr)\,\mathrm{d}s.
\end{equation}

To isolate the distinct sources of error, we substitute the decomposition 
$u(t) - u_h(t) = (G(t) - G_h(t)) + (I - P_h)\xi(t) - \eta_h(t)$ 
into the nonlinear term of \cref{eq:etah}. This motivates the introduction of three auxiliary
processes, $\chi_h^{(1)}$, $\chi_h^{(2)}$, and $\chi_h^{(3)}$,
which satisfy the following integral equations for every $\omega \in \Omega \setminus N$
and all $t \in [0,T]$:
\begin{align}
  \chi_h^{(1)}(t) &= \int_0^t \Bigl( A_h\chi_h^{(1)}(s) + A_h P_h\xi(s) - P_hA\xi(s) \Bigr) \, \mathrm{d}s, 
  \label{eq:chih1-def} \\
  \chi_h^{(2)}(t) &= \int_0^t \Bigl( A_h\chi_h^{(2)}(s) + \frac12 P_h\partial_x\!\Bigl[(u(s)+u_h(s))(\xi(s) - P_h\xi(s))\Bigr] \Bigr) \, \mathrm{d}s,  
  \label{eq:chih2-def} \\
  \chi_h^{(3)}(t) &= \int_0^t \Bigl( A_h\chi_h^{(3)}(s) + \frac12 P_h\partial_x\!\Bigl[(u(s)+u_h(s))(G(s) - G_h(s))\Bigr] \Bigr) \, \mathrm{d}s. 
  \label{eq:chih3-def}
\end{align}
Defining $\sigma_h := \chi_h^{(1)} + \chi_h^{(2)} + \chi_h^{(3)}$ and $z_h := \eta_h - \sigma_h$,
we subtract the sum of the equations for $\chi_h^{(i)}$ from \cref{eq:etah}.
Thus, $z_h$ satisfies the following integral equation 
for every $\omega \in \Omega \setminus N$ and all $t \in [0,T]$:
\begin{equation}\label{eq:Xih}
    z_h(t) = \int_0^t \Bigl( A_hz_h(s) - \frac12 P_h\partial_x\!\Bigl[(u(s)+u_h(s))\bigl(z_h(s) + \sigma_h(s)\bigr)\Bigr] \Bigr) \, \mathrm{d}s.
\end{equation}

\medskip
\noindent\textbf{Step 2: Estimates for \(\sigma_h\).}
We establish the following bounds:
\begin{align}
  \|\sigma_h\|_{L^p(\Omega;L^{p^*}(0,T;L^2(\mathcal{O})))} &\lesssim h^{2}
  && \text{for all } p\in [2,\infty), \label{eq:Sigmah-LpLp*L2} \\
  \|\sigma_h\|_{L^p([0,T] \times \Omega;H_{D}^{\alpha,q})} &\lesssim h^{1-\alpha}
  && \text{for all } p,q\in [2,\infty) \text{ and } \alpha \in [-1,0], \label{eq:Sigmah-LpLq} \\
  \|\sigma_h\|_{L^p(\Omega;C([0,T];H_{D}^{\alpha,q}))} &\lesssim h^{1-\alpha-\varepsilon}
  && \text{for all } p,q\in [2,\infty),\, \alpha \in [-1,0], \text{ and } \varepsilon \in (0,1), \label{eq:Sigmah-LpCHq} \\
  \|\sigma_h\|_{L^p(\Omega;C([0,T];L^\infty(\mathcal{O})))} &\lesssim h^{1-\varepsilon}
  && \text{for all } p\in [2,\infty) \text{ and } \varepsilon \in (0,1). \label{eq:Sigmah-Lp-C-Linfty}
\end{align}

\smallskip
\noindent\textit{Estimate for \(\chi_h^{(1)}\).}
Since \cref{eq:chih1-def} implies \(\chi_h^{(1)} = S_h \ast (A_hP_h\xi - P_hA\xi)\) $\mathbb{P}$-a.s., Lemma~\ref{lem:S0h} and the norm equivalence \(\|\cdot\|_{H_{D}^{\alpha,q}} \sim \|\cdot\|_{H_{D,h}^{\alpha,q}}\) on \(X_h\) (see \cref{lem:dotHh-equiv}) imply that, for any \(p,q \in [2,\infty)\) and \(\alpha \in [-1,0]\),
\begin{align*}
  \|\chi_h^{(1)}\|_{L^p([0,T] \times \Omega;H_{D}^{\alpha,q})} 
  & \sim \|\chi_h^{(1)}\|_{L^p([0,T] \times \Omega;H_{D,h}^{\alpha,q})} \\
  & \lesssim \|(A_hP_h - P_hA)\xi\|_{L^p([0,T] \times \Omega;H_{D,h}^{\alpha-2,q})} \\
  & = \|(P_h - A_h^{-1}P_hA)\xi\|_{L^p([0,T] \times \Omega;H_{D,h}^{\alpha,q})} \\
  & \lesssim \|(P_h - A_h^{-1}P_hA)\xi\|_{L^p([0,T] \times \Omega;H_{D}^{\alpha,q})}.
\end{align*}
Combining \eqref{eq:Ph-AhinvPhA} with \((\theta_1,\theta_2) = (1,\alpha)\) and the regularity of \(\xi\) from \cref{eq:xi-LpCH1q}, we obtain
\begin{equation}
  \label{eq:chi-h1-LpLq}
  \|\chi_h^{(1)}\|_{L^p([0,T] \times \Omega;H_{D}^{\alpha,q})} \lesssim h^{1-\alpha}
   \qquad \text{for all } p,q \in [2,\infty) \text{ and } \alpha \in [-1,0].
\end{equation}
Similarly, invoking the regularity result \cref{eq:xi-LpLp*H2} yields
\begin{equation}
  \label{eq:chi-h1-LpLp*L2}
  \|\chi_h^{(1)}\|_{L^p(\Omega;L^{p^*}(0,T;L^2(\mathcal{O})))} \lesssim h^{2}
   \qquad \text{for all } p \in [2,\infty).
\end{equation}

For any \(p,q \in [2,\infty)\), \(\alpha \in [-1,0]\), and \(\varepsilon \in (0,1)\),
the norm equivalences in \cref{lem:dotHh-equiv} and the convolution estimate in
\cref{lem:Sh-ast-bound}(i) (with $p=\infty$ and $(\theta_1,\theta_2)=(\alpha-2+\varepsilon, \alpha)$) yield
\begin{align*}
\|\chi_h^{(1)}\|_{L^{p}(\Omega;C([0,T];H_{D}^{\alpha,q}))}
& \sim \|\chi_h^{(1)}\|_{L^{p}(\Omega;C([0,T];H_{D,h}^{\alpha,q}))} \\
& \lesssim \|(A_hP_h - P_hA)\xi\|_{L^{p}(\Omega;L^\infty(0,T;H_{D,h}^{\alpha-2+\varepsilon,q}))} \\
& = \|(P_h - A_h^{-1}P_hA)\xi\|_{L^{p}(\Omega;L^\infty(0,T;H_{D,h}^{\alpha+\varepsilon,q}))} \\
& \lesssim \|(P_h - A_h^{-1}P_hA)\xi\|_{L^{p}(\Omega;L^\infty(0,T;H_{D}^{\alpha+\varepsilon,q}))}.
\end{align*}
Applying \eqref{eq:Ph-AhinvPhA} with \((\theta_1,\theta_2)=(1,\alpha+\varepsilon)\) in conjunction with the regularity from \cref{eq:xi-LpCH1q}, we infer that
\begin{equation}
  \label{eq:chi-h1-LpCHq}
\|\chi_h^{(1)}\|_{L^{p}(\Omega;C([0,T];H_{D}^{\alpha,q}))}
\lesssim h^{1-\alpha-\varepsilon} \quad \text{for all } p,q \in [2,\infty),
\, \alpha \in [-1,0], \text{ and } \varepsilon \in (0,1).
\end{equation}

For any \(p \in [2,\infty)\) and \(\varepsilon \in (0,1)\), by choosing \(\max\{1/\varepsilon,2\} < q < \infty\)
and applying \cref{lem:Sh-ast-bound}(ii) with $p=\infty$, we obtain
\begin{align*}
\|\chi_h^{(1)}\|_{L^{p}(\Omega;C([0,T];L^\infty(\mathcal{O})))}
& \lesssim \|(A_hP_h - P_hA)\xi\|_{L^{p}(\Omega;L^\infty(0,T;H_{D,h}^{-2+\varepsilon,q}))} \\
& = \|(P_h - A_h^{-1}P_hA)\xi\|_{L^{p}(\Omega;L^\infty(0,T;H_{D,h}^{\varepsilon,q}))} \\
& \lesssim \|(P_h - A_h^{-1}P_hA)\xi\|_{L^{p}(\Omega;L^\infty(0,T;H_{D}^{\varepsilon,q}))},
\end{align*}
where the last inequality uses the norm equivalence
$\|\cdot\|_{H_{D,h}^{\varepsilon,q}} \sim \|\cdot\|_{H_{D}^{\varepsilon,q}}$ on $X_h$ (\cref{lem:dotHh-equiv}).
An application of \eqref{eq:Ph-AhinvPhA} with \((\theta_1,\theta_2)=(1,\varepsilon)\), together with the regularity from \cref{eq:xi-LpCH1q}, yields
\begin{equation}
  \label{eq:chi-h1-LpLinf}
  \|\chi_h^{(1)}\|_{L^{p}(\Omega;C([0,T];L^\infty(\mathcal{O})))} \lesssim h^{1-\varepsilon}
   \qquad \text{for all } p \in [2,\infty) \text{ and } \varepsilon \in (0,1).
\end{equation}

\smallskip
\noindent\textit{Estimate for \(\chi_h^{(2)}\).}\quad
For any \(p,q \in [2,\infty)\), Lemma~\ref{lem:Ph-partialx-vw-H2} and H\"older's inequality imply,
$\mathbb{P}$-a.s.,
\begin{align*}
& \|P_h\partial_x[(u+u_h)(\xi-P_h\xi)]\|_{L^p([0,T] \times \Omega;H_{D,h}^{-2,q})} \\
\lesssim{} & 
\|u+u_h\|_{L^{2p}([0,T] \times \Omega;H_{D}^{1,q'})} \Bigl( \|\xi-P_h\xi\|_{L^{2p}([0,T] \times \Omega;H_{D}^{-1,q})}
      + h\|\xi-P_h\xi\|_{L^{2p}([0,T] \times \Omega;L^{q}(\mathcal{O}))} \Bigr) \\
\lesssim{} & 
\|u+u_h\|_{L^{2p}([0,T] \times \Omega;H_{D}^{1,2})} \Bigl( \|\xi-P_h\xi\|_{L^{2p}([0,T] \times \Omega;H_{D}^{-1,q})}
      + h\|\xi-P_h\xi\|_{L^{2p}([0,T] \times \Omega;L^{q}(\mathcal{O}))} \Bigr),
\end{align*}
where \(q'\) denotes the Hölder conjugate exponent of \(q\) and the last inequality exploits the continuous
embedding \(H_{D}^{1,2}\hookrightarrow H_{D}^{1,q'}\) (which holds because \(q'\leqslant 2\) as \(q\geqslant 2\)).  
By the triangle inequality, the regularity of $u$ established in \cref{eq:u-LpH1q}
and the $h$-uniform bound \cref{eq:uh-LpH1q} for $u_h$ (both applied with exponents $2p$ and $2$)
imply that \(\|u+u_h\|_{L^{2p}([0,T] \times \Omega;H_{D}^{1,2})}\) is bounded uniformly in \(h\).
Consequently, $\mathbb{P}$-a.s.,
\begin{align*}
  & \|P_h\partial_x[(u+u_h)(\xi-P_h\xi)]\|_{L^p([0,T] \times \Omega;H_{D,h}^{-2,q})} \\
  \lesssim{} &
   \|\xi-P_h\xi\|_{L^{2p}([0,T] \times \Omega;H_{D}^{-1,q})}
  + h\|\xi-P_h\xi\|_{L^{2p}([0,T] \times \Omega;L^{q}(\mathcal{O}))}.
\end{align*}
Applying the error estimates for \(\xi-P_h\xi\) from \eqref{eq:xi-Phxi-LpC}
(with \(p\) replaced by \(2p\) and \(\alpha\in\{-1,0\}\)) and employing the embeddings
\(L^{2p}(\Omega;C([0,T];H_{D}^{\alpha,q})) \hookrightarrow L^{2p}([0,T] \times \Omega;H_{D}^{\alpha,q})\)
for \(\alpha\in\{-1,0\}\), we deduce
\begin{equation}
\label{eq:100}
\|P_h\partial_x[(u+u_h)(\xi-P_h\xi)]\|_{L^p([0,T] \times \Omega;H_{D,h}^{-2,q})} \lesssim h^2
\qquad\text{for all } p,q\in[2,\infty).
\end{equation}
By \cref{eq:chih2-def} we have \(\chi_h^{(2)} = \tfrac{1}{2} S_h \ast \bigl( P_h\partial_x[(u+u_h)(\xi-P_h\xi)] \bigr)\)
\(\mathbb{P}\)-a.s.; therefore, Lemma~\ref{lem:S0h} and \eqref{eq:100} yield
\begin{equation}
\label{eq:chi-h2-LpLq}
\|\chi_h^{(2)}\|_{L^p([0,T] \times \Omega;L^q(\mathcal{O}))} \lesssim h^2
 \quad\text{for all } p,q \in [2,\infty).
\end{equation}
To establish the supremum norm bound, fix \(p \in [2,\infty)\) and \(\varepsilon \in (0,1)\),
and select \(p' \in [p, \infty)\) sufficiently large such that \(4/p' < \varepsilon\).
Using \cref{lem:Sh-ast-bound}(ii) with $(p,q)=(p',p')$,
the inverse estimate \cref{eq:inverse} with $(\theta_1,\theta_2,q) =(-2,-2+4/p',p')$,
and \eqref{eq:100} with \((p,q) = (p',p')\), we obtain
\begin{align*}
\|\chi_h^{(2)}\|_{L^{p'}(\Omega;C([0,T];L^\infty))}
&\lesssim \|P_h\partial_x[(u+u_h)(\xi-P_h\xi)]\|_{L^{p'}([0,T] \times \Omega;H_{D,h}^{-2+4/p',p'})} \\
&\lesssim h^{-4/p'} \|P_h\partial_x[(u+u_h)(\xi-P_h\xi)]\|_{L^{p'}([0,T] \times \Omega;H_{D,h}^{-2,p'})} \\
&\lesssim h^{2-4/p'} \lesssim h^{2-\varepsilon}.
\end{align*}
The embedding \(L^{p'}(\Omega) \hookrightarrow L^p(\Omega)\) then implies
\begin{equation}
\label{eq:chi-h2-LpLinf}
\|\chi_h^{(2)}\|_{L^p(\Omega;C([0,T];L^\infty(\mathcal{O})))} \lesssim h^{2-\varepsilon}
 \quad \text{for all } p \in[2,\infty) \text{ and } \varepsilon \in (0,1).
\end{equation}

\smallskip
\noindent\textit{Estimate for \(\chi_h^{(3)}\).}\quad
By \cref{eq:chih3-def}, \(\chi_h^{(3)} = \tfrac{1}{2} S_h \ast \bigl( P_h\partial_x[(u+u_h)(G-G_h)] \bigr)\)
holds \(\mathbb{P}\)-a.s. The same argument as for \(\chi_h^{(2)}\), with the error bounds for \(\xi-P_h\xi\)
replaced by the corresponding estimates for \(G-G_h\) from Lemma~\ref{lem:G-Gh}
(specifically, \cref{eq:G-Gh-LpLq} with \(\alpha \in \{-1, 0\}\)), yields
\begin{align}
  \lVert \chi_h^{(3)} \rVert_{L^p([0,T] \times \Omega; L^q(\mathcal{O}))}
    &\lesssim h^{2}
    && \text{for all } p,q \in [2,\infty),
    \label{eq:chi-h3-LpLq} \\
  \lVert \chi_h^{(3)} \rVert_{L^{p}(\Omega; C([0,T]; L^\infty(\mathcal{O})))}
    &\lesssim h^{2-\varepsilon}
    && \text{for all } p \in [2,\infty) \text{ and } \varepsilon \in (0,1).
    \label{eq:chi-h3-LpLinf}
\end{align}

\smallskip
\noindent\textit{Conclusion.}
Recall the decomposition $\sigma_h = \chi_h^{(1)} + \chi_h^{(2)} + \chi_h^{(3)}$.
To prove \cref{eq:Sigmah-LpLp*L2}, we first note that for $p \in [p^*,\infty)$ and $q \in [2,\infty)$, the embedding 
$L^p([0,T] \times \Omega;L^q(\mathcal{O})) \hookrightarrow L^p(\Omega;L^{p^*}(0,T;L^2(\mathcal{O})))$ holds. 
Combining this with \cref{eq:chi-h1-LpLp*L2,eq:chi-h2-LpLq,eq:chi-h3-LpLq} yields 
$\|\sigma_h\|_{L^p(\Omega;L^{p^*}(0,T;L^2(\mathcal{O})))} \lesssim h^2$ for $p \geqslant p^*$; the monotonicity of the $L^p(\Omega)$-norms then extends this estimate to all $p \in [2,\infty)$.
Next, \cref{eq:Sigmah-LpLq} follows from the triangle inequality applied to 
\cref{eq:chi-h1-LpLq,eq:chi-h2-LpLq,eq:chi-h3-LpLq} together with the embedding 
$L^q(\mathcal{O}) \hookrightarrow H_{D}^{\alpha,q}$ for $\alpha \in [-1,0]$.
Similarly, applying the triangle inequality to 
\cref{eq:chi-h1-LpCHq,eq:chi-h2-LpLinf,eq:chi-h3-LpLinf} and utilizing 
$L^\infty(\mathcal{O}) \hookrightarrow H_{D}^{\alpha,q}$ for $\alpha \in [-1,0]$ and $q \in [2,\infty)$ yields \cref{eq:Sigmah-LpCHq}.
Finally, \cref{eq:Sigmah-Lp-C-Linfty} is a direct consequence of 
\cref{eq:chi-h1-LpLinf,eq:chi-h2-LpLinf,eq:chi-h3-LpLinf}.

\medskip 
\textbf{Step 3: Estimate for $z_h$ in $L^p(\Omega;C([0,T];L^\infty(\mathcal{O})))$.}
We apply \cref{lem:core-stability} to obtain the desired estimate for $z_h$.
Combining the exponential moment estimates from Propositions~\ref{prop:u-regu} and~\ref{prop:uh-regu} with the triangle inequality and the elementary bound $(a+b)^2 \leqslant 2(a^2+b^2)$, we infer the existence of a constant $\kappa_0 > 0$ such that
\[
  \sup_{0<h\leqslant 1} \mathbb{E}\Bigl[ \exp\Bigl( \kappa_0 
  \|u+u_h\|_{L^2(0,T;H_{D}^{1,2})}^2 \Bigr) \Bigr] < \infty .
\]
Furthermore, invoking the Sobolev embedding $H_{D}^{\theta,q} \hookrightarrow L^\infty(\mathcal{O})$ (valid for $\theta q > 1$) together with the regularity estimates \cref{eq:u-LpCHq} and \cref{eq:uh-LpCHq}, we obtain the uniform bound
\[
  \sup_{0<h\leqslant 1}\|u+u_h\|_{L^{p}(\Omega;C([0,T];L^\infty(\mathcal{O})))} < \infty
   \quad \text{for all } p \in [2,\infty).
\]
We now apply \cref{lem:core-stability} to equation \eqref{eq:Xih} with $p_1 = p^*$ and $g = u+u_h$. For any $p \in [2,\infty)$, this yields
\[
  \|z_h\|_{L^p(\Omega;C([0,T];L^\infty(\mathcal{O})))} \lesssim
  \|\sigma_h\|_{L^{2p}(\Omega; L^{p^*}(0,T;L^2(\mathcal{O})))}
  + \|\sigma_h\|_{L^{8p}(\Omega; L^2(0,T;L^2(\mathcal{O})))}.
\]
Noting that the space $L^{8p}(\Omega;L^{p^*}(0,T;L^2(\mathcal{O})))$ continuously embeds into the intersection
$L^{2p}(\Omega;L^{p^*}(0,T;L^2(\mathcal{O}))) \cap L^{8p}(\Omega;L^2(0,T;L^2(\mathcal{O})))$, the above inequality simplifies to
\[
  \|z_h\|_{L^p(\Omega;C([0,T];L^\infty(\mathcal{O})))} \lesssim \|\sigma_h\|_{L^{8p}(\Omega;L^{p^*}(0,T;L^2(\mathcal{O})))}
   \quad \text{for all } p \in [2,\infty).
 \]
Finally, substituting the estimate for $\sigma_h$ from \cref{eq:Sigmah-LpLp*L2} (with $p$ replaced by $8p$), we conclude that
\begin{equation}
  \label{eq:zh-LpCLinf}
  \|z_h\|_{L^p(\Omega;C([0,T];L^\infty(\mathcal{O})))} \lesssim h^2 \quad \text{for all } p \in [2,\infty).
\end{equation}

\medskip
\noindent\textbf{Step 4: Estimate of $\xi_h - P_h\xi$.}
We employ the decomposition $\xi_h - P_h\xi = \sigma_h + z_h$. 
For $p,q \in [2,\infty)$ and $\alpha \in[-1,0]$, the triangle inequality and the embedding
$L^p(\Omega;C([0,T];L^\infty)) \hookrightarrow L^p([0,T] \times \Omega;H_{D}^{\alpha,q})$ yield
\begin{align*}
  \|\xi_h - P_h\xi\|_{L^p([0,T] \times \Omega;H_{D}^{\alpha,q})}
  & \lesssim \|\sigma_h\|_{L^p([0,T] \times \Omega;H_{D}^{\alpha,q})}
  + \|z_h\|_{L^p(\Omega;C([0,T];L^\infty(\mathcal{O})))} \\
  & \lesssim h^{1-\alpha} + h^2 \lesssim h^{1-\alpha},
\end{align*}
where we used \cref{eq:zh-LpCLinf,eq:Sigmah-LpLq}. This establishes \eqref{eq:xih-Phxi-bound1}.
Similarly, for $p,q \in [2,\infty)$, $\alpha \in[-1,0]$, and $\varepsilon \in (0,1)$,
\begin{align*}
  \|\xi_h - P_h\xi\|_{L^p(\Omega;C([0,T];H_{D}^{\alpha,q}))}
  & \lesssim \|\sigma_h\|_{L^p(\Omega;C([0,T];H_{D}^{\alpha,q}))}
  + \|z_h\|_{L^p(\Omega;C([0,T];L^\infty(\mathcal{O})))} \\
  & \lesssim  h^{1-\alpha-\varepsilon} + h^2 \lesssim h^{1-\alpha-\varepsilon},
\end{align*}
where we invoked \cref{eq:zh-LpCLinf,eq:Sigmah-LpCHq}, yielding \eqref{eq:xih-Phxi-LpCHq}.
Finally, for any $p \in [2,\infty)$ and $\varepsilon \in (0,1)$, \cref{eq:zh-LpCLinf,eq:Sigmah-Lp-C-Linfty} give
\begin{align*}
  \|\xi_h - P_h\xi\|_{L^p(\Omega;C([0,T];L^\infty))}
  & \lesssim \|\sigma_h\|_{L^p(\Omega;C([0,T];L^\infty))}
  + \|z_h\|_{L^p(\Omega;C([0,T];L^\infty))} \\
  & \lesssim h^{1-\varepsilon} + h^2 \lesssim h^{1-\varepsilon}.
\end{align*}
This verifies \eqref{eq:xih-Phxi-bound2} and concludes the proof of Theorem~\ref{thm:uh-strong}.

\subsection{Proof of \texorpdfstring{\cref{thm:uh-weak}}{}}
\label{ssec:proof-uh-weak}
We begin by establishing several boundedness results for the Nemytskii operator $v \mapsto |v|^{q-2}v$ for $q \in [2,\infty)$.
\begin{lemma}
  \label{lem:composition-embeddings}
  Let $q \in [2, \infty)$ and let $q' := q/(q-1)$ be its H\"older conjugate exponent. The following estimates hold:
  \begin{enumerate}
    \item[\textup{(i)}] Let $1/2 < \theta_1 < \theta_2 < 1$. For any $v \in H_{D}^{\theta_2,q}$, we have
    $\bigl\||v|^{q-2}v\bigr\|_{H_{D}^{\theta_1,q'}} \lesssim \|v\|_{H_{D}^{\theta_2,q}}^{q-1}$.
    \item[\textup{(ii)}] For any $v \in H_{D}^{1,q}$,
    $\bigl\||v|^{q-2}v\bigr\|_{H_{D}^{1,q'}} \lesssim \|v\|_{H_{D}^{1,q}}^{q-1} $.
  \end{enumerate}
\end{lemma}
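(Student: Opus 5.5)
Set $F(v) := |v|^{q-2}v$ and use the identification $H_{D}^{\theta,r}$ with Dirichlet-type Bessel potential/Sobolev spaces on $\mathcal{O}=(0,1)$. For $\theta r<1$ there is no trace condition, and for $\theta r>1$ it is the subspace with vanishing boundary values. Since $q\geqslant 2$, the map $F$ is $C^1$ with $F'(v)=(q-1)|v|^{q-2}$ and $F(0)=0$. Consequently, whenever $v$ vanishes at the boundary, so does $F(v)$. This reduces both assertions to estimates in the standard spaces $W^{1,q'}(\mathcal{O})$ and $W^{\theta_1,q'}(\mathcal{O})$ on the bounded interval.

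\textbf{Part (ii).} The chain rule gives $\partial_x F(v) = (q-1)|v|^{q-2}\partial_x v$. By Hölder's inequality on the bounded domain,
\[
  \|\partial_x F(v)\|_{L^{q'}(\mathcal{O})} \lesssim \|v\|_{L^\infty(\mathcal{O})}^{q-2}\|\partial_x v\|_{L^{q}(\mathcal{O})}.
\]
Here $L^q\hookrightarrow L^{q'}$ because $q'\leqslant 2\leqslant q$. In one dimension, $H_{D}^{1,q}\hookrightarrow L^\infty(\mathcal{O})$, so the right-hand side is $\lesssim\|v\|_{H_{D}^{1,q}}^{q-1}$. Together with $F(v)(0)=F(v)(1)=0$, this gives $F(v)\in W_0^{1,q'}(\mathcal{O})=H_{D}^{1,q'}$ with the claimed bound. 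A density argument, using smooth $v$ and the continuity of $F$, justifies the chain rule.

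\textbf{Part (i).} This is the main difficulty, since fractional smoothness is only available through interpolation or difference quotients. I would work with the Sobolev--Slobodeckij space $W^{\theta,q}=B^{\theta}_{q,q}$ via the Gagliardo seminorm. Since $\theta_2>\theta_1$, the embeddings $H_D^{\theta_2,q}\hookrightarrow B^{\theta_2-\delta}_{q,q}$ and $B^{\theta_1+\delta}_{q',q'}\hookrightarrow H^{\theta_1,q'}$ hold on the bounded interval for small $\delta>0$ (cf.\ \cite[Chapter~4]{Runst1996sobolev}). This slack lets me move between the Bessel and Besov scales. The pointwise inequality
\[
  |F(a)-F(b)|\leqslant (q-1)\max\{|a|,|b|\}^{q-2}|a-b|
\]
then gives
\[
  [F(v)]_{W^{s,q'}}\lesssim \|v\|_{L^\infty}^{q-2}\Bigl(\iint |v(x)-v(y)|^{q'}|x-y|^{-1-sq'}\,\mathrm{d}x\,\mathrm{d}y\Bigr)^{1/q'} .
\]
Hölder's inequality in $(x,y)$ with exponents $q/q'$ and its conjugate bounds the double integral by $[v]_{W^{s',q}}$, provided $s'>s$. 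The leftover factor $|x-y|^{-1+(s'-s)q'\cdot(\dots)}$ is integrable on $\mathcal{O}\times\mathcal{O}$ exactly when $s'>s$, and this is where the strict gap $\theta_1<\theta_2$ is used. Because $\theta_2>1/2\geqslant 1/q$, the embedding $H_{D}^{\theta_2,q}\hookrightarrow L^\infty$ controls $\|v\|_{L^\infty}^{q-2}$, and the $L^{q'}$ part of the norm is trivial.

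\textbf{Boundary conditions and final step.} The remaining step is the boundary condition. Since $\theta_1>1/2\geqslant 1/q'$ fails in general ($q'\leqslant2$ gives $1/q'\geqslant 1/2$), I would check case by case. If $\theta_1 q'>1$, traces exist and vanish because $v$ vanishes at the endpoints. If $\theta_1q'<1$, no condition is needed. If $\theta_1 q'=1$, I would avoid the critical case by slightly shifting the intermediate exponents, using the $\delta$ slack. Combining these cases yields $\|F(v)\|_{H_D^{\theta_1,q'}}\lesssim\|v\|_{H_D^{\theta_2,q}}^{q-1}$.
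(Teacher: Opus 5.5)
Your proof is correct. Part (ii) is essentially the paper's argument: the chain rule, H\"older's inequality, $H_{D}^{1,q}\hookrightarrow L^\infty(\mathcal{O})$, and zero boundary values.

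For part (i) you take a genuinely different route. You work directly at integrability $q'$. You estimate the Gagliardo seminorm in $W^{s,q'}(\mathcal{O})$, where the gap $\theta_1<\theta_2$ pays for the H\"older step in $(x,y)$ that lowers integrability from $q$ to $q'$; your exponent count is right, and the leftover weight is $|x-y|^{-1+(s'-s)qq'/(q-q')}$. You then invoke Seeley's trace characterization of $H_{D}^{\theta_1,q'}$.

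The paper never leaves integrability $q$ until the last step. It proves the chain $H_{D}^{\theta_2,q}\hookrightarrow (L^q(\mathcal{O}),H_{D}^{1,q})_{\theta_1,q'}\hookrightarrow H_{D}^{\theta_1,q}\hookrightarrow H_{D}^{\theta_1,q'}$. There the gap lowers the fine index from $q$ to $q'$, and Fourier type $q'$ gives the passage to complex interpolation. It then estimates $|v|^{q-2}v$ in the middle space through the $L^q$-modulus of continuity of the zero extension. Since $F(0)=0$ commutes with zero extension, the boundary conditions are automatic. Moreover $\theta_1 q>1$ always holds, so no trace case distinction arises, and in particular no critical case $\theta_1q'=1$. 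The price is heavier interpolation machinery (Guidetti's characterization, Fourier type). Your route is more hands-on but must pay with the case analysis in $L^{q'}$.

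That case analysis needs one precise sentence in the critical case, rather than ``shifting the intermediate exponents''. The exponent $\theta_1$ is fixed by the statement, so what you actually do is this. Pick $\tilde\theta_1\in(\theta_1,\theta_2)$, so that $\tilde\theta_1q'>1$. Prove $\bigl\||v|^{q-2}v\bigr\|_{H_{D}^{\tilde\theta_1,q'}}\lesssim\|v\|_{H_{D}^{\theta_2,q}}^{q-1}$ by your vanishing-trace case. Then use the bounded inclusion $H_{D}^{\tilde\theta_1,q'}\hookrightarrow H_{D}^{\theta_1,q'}$ of fractional-power domains, which holds because $-A_{q'}$ has a bounded inverse.

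As a side remark, the $\delta$-slack between the Bessel and Besov scales is not even needed here. For $q\geqslant 2$ one has $H^{\theta,q}\hookrightarrow B^{\theta}_{q,q}$, and for $q'\leqslant 2$ one has $B^{\theta}_{q',q'}\hookrightarrow H^{\theta,q'}$. The slack is harmless, though.
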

\begin{proof}
These estimates are standard (see, e.g., \cite[Chapter~5]{Runst1996sobolev}); we include a brief proof for completeness.

\textit{Proof of (i).}
Recall that $H_{D}^{\theta_j,q} = \big[L^q(\mathcal{O}), H_{D}^{1,q}\big]_{\theta_j}$ for $j \in \{1,2\}$,
with equivalent norms (cf.~\cite{Seeley1971norms} and \cite[Theorem~4.17]{Lunardi2018}).
By \cite[Theorem~C.4.1]{HytonenWeis2016} and \cite[Proposition~1.4]{Lunardi2018}, we have the embeddings
\[
H_{D}^{\theta_2,q} \hookrightarrow \big(L^q(\mathcal{O}), H_{D}^{1,q}\big)_{\theta_2,q}
\hookrightarrow \big(L^q(\mathcal{O}), H_{D}^{1,q}\big)_{\theta_1,q'} \hookrightarrow H_{D}^{\theta_1,q},
\]
where we used the fact that both $L^q(\mathcal{O})$ and $H_{D}^{1,q}$
have Fourier type $q'$ (see \cite[Example~2.4.14]{HytonenWeis2016}).
Since $q \geqslant 2$ implies $q \geqslant q'$,
it follows that $H_{D}^{\theta_1,q} \hookrightarrow H_{D}^{\theta_1,q'}$.
Consequently, we obtain the chain
\begin{equation}
  \label{eq:continuous-embeddings}
  H_{D}^{\theta_2,q}
  \hookrightarrow \big(L^q(\mathcal{O}), H_{D}^{1,q}\big)_{\theta_1,q'}
  \hookrightarrow H_{D}^{\theta_1,q'}.
\end{equation}
The real interpolation space $\big(L^q(\mathcal{O}), H_{D}^{1,q}\big)_{\theta_1,q'}$ admits the equivalent norm
(see \cite[Definitions~1.2 and 1.12 and Proposition~1.29]{Guidetti1991interpolation})
\[
\|v\|_{L^q(\mathcal{O})} + \left( \int_0^1 r^{-\theta_1q'} \left( \int_{\mathbb{R}} |v(r+s) - v(s)|^q \, \mathrm{d}s \right)^{q'/q}
\, \frac{\mathrm{d}r}{r} \right)^{1/q'}, \quad v \in \big(L^q(\mathcal{O}), H_{D}^{1,q}\big)_{\theta_1,q'},
\]
where $v$ is extended by zero outside $\mathcal{O}$.
A direct calculation, using the pointwise estimate
$ \bigl| |a|^{q-2}a - |b|^{q-2}b \bigr| \lesssim \bigl(|a|^{q-2} + |b|^{q-2}\bigr)|a-b| $, yields
\[
\bigl\||v|^{q-2}v\bigr\|_{(L^q,H_{D}^{1,q})_{\theta_1,q'}} \lesssim \|v\|_{L^\infty(\mathcal{O})}^{q-2}
\|v\|_{\big(L^q(\mathcal{O}), H_{D}^{1,q}\big)_{\theta_1,q'}}
\]
for all $v \in L^\infty(\mathcal{O}) \cap \big(L^q(\mathcal{O}), H_{D}^{1,q}\big)_{\theta_1,q'}$.
Finally, combining the embeddings in \eqref{eq:continuous-embeddings} with the Sobolev embedding $H_{D}^{\theta_2,q} \hookrightarrow L^\infty(\mathcal{O})$ yields the desired estimate. 

\textit{Proof of (ii).} The case $q=2$ is immediate. For $q>2$, using the embedding $H_{D}^{1,q} \hookrightarrow H_{D}^{1,q'}$,
the chain rule $\partial_x(|v|^{q-2}v)=(q-1)|v|^{q-2}\partial_x v$,
H\"older's inequality, and the embedding $H_{D}^{1,q} \hookrightarrow L^\infty(\mathcal{O})$, we obtain
\[
\||v|^{q-2}v\|_{H_{D}^{1,q'}} \lesssim
\||v|^{q-2}v\|_{H_{D}^{1,q}} \lesssim \||v|^{q-2}\partial_x v\|_{L^{q}(\mathcal{O})}
 \leqslant \|v\|_{L^\infty(\mathcal{O})}^{q-2} \|\partial_x v\|_{L^{q}(\mathcal{O})} \lesssim \|v\|_{H_{D}^{1,q}}^{q-1}.
\]
\end{proof}

Now, let us prove \cref{thm:uh-weak}.

\textbf{Proof of \cref{eq:weak-error-1}.}
Fix arbitrary exponents \(2 \leqslant q \leqslant p < \infty\) and set \(p' := p/(p-1)\), \(q':= q/(q-1)\).
Applying \cref{eq:u-LpCHq,eq:uh-LpCHq} with \(\theta\) replaced by \(\vartheta\),
we infer that \(u(T)\) and \(u_h(T)\) belong to \(L^{p}(\Omega;H_{D}^{\vartheta,q})\) with norms
bounded uniformly in \(h\), for each $\vartheta \in (1/2,1)$.
Define \(\Phi(s) := u_h(T)+s(u(T)-u_h(T))\) for \(s\in[0,1]\). By the triangle inequality,
\[
\sup_{0<h\leqslant 1}\sup_{s\in[0,1]} \|\Phi(s)\|_{L^{p}(\Omega;H_{D}^{\vartheta,q})} < \infty
\quad\text{for all } \vartheta \in (1/2,1).
\]
Applying \cref{lem:composition-embeddings}(i) together with the embedding
\(H_{D}^{\vartheta,q}\hookrightarrow L^q(\mathcal{O})\), we obtain the uniform estimate,
for any $1/2 < \theta < \vartheta < 1$,
\[
\begin{aligned}
&\sup_{0<h\leqslant 1}\sup_{s\in[0,1]} \,
\Bigl\| \|\Phi(s)\|_{L^q(\mathcal{O})}^{p-q} \, \bigl\| |\Phi(s)|^{q-2} \Phi(s) \bigr\|_{H_{D}^{\theta,q'}} \Bigr\|_{L^{p'}(\Omega)} \\
\lesssim{} & \sup_{0<h\leqslant 1}\sup_{s\in[0,1]} \,
\Bigl\|  \Phi(s) \Bigr\|_{L^{p}(\Omega; H_{D}^{\vartheta,q})}^{p-1} < \infty.
\end{aligned}
\] 
Using the mean value theorem and the duality pairing between \(H_{D}^{\theta,q'}\) and \(H_{D}^{-\theta,q}\),
we derive for any $\theta \in (1/2,1)$,
\[
\begin{aligned}
& \Bigl|\mathbb{E}\|u(T)\|_{L^q(\mathcal{O})}^{p} - \mathbb{E}\|u_h(T)\|_{L^q(\mathcal{O})}^{p}\Bigr| \\
={} & \biggl|\mathbb{E} \Bigl[ \int_0^1  p \|\Phi(s)\|_{L^q(\mathcal{O})}^{p-q} \int_{\mathcal{O}} |\Phi(s)|^{q-2}\Phi(s) (u(T) - u_h(T))\, \mathrm{d}x \, \mathrm{d}s \Bigr]  \biggr| \\
\leqslant{} & p \int_{0}^{1} \mathbb{E}\Bigl[ \|\Phi(s)\|_{L^q(\mathcal{O})}^{p-q} \bigl\| |\Phi(s)|^{q-2}\Phi(s) \bigr\|_{H_{D}^{\theta,q'}} \bigl\| u(T)-u_h(T) \bigr\|_{H_{D}^{-\theta,q}} \Bigr] \,\mathrm{d}s \\
\lesssim{} & \|u(T)-u_h(T)\|_{L^{p}(\Omega;H_{D}^{-\theta,q})},
\end{aligned}
\]
where the final inequality follows from H\"older's inequality in \(\Omega\) and the preceding uniform estimate.
Finally, let \(\varepsilon\in(0,1)\) be given and choose \(\theta = 1-\varepsilon/2\), so that \(\theta\in(1/2,1)\). Applying \cref{eq:u-uh-LpCLq} with \(\alpha = -\theta\) and small parameter \(\varepsilon/2\), we obtain
\[
\|u(T)-u_h(T)\|_{L^{p}(\Omega;H_{D}^{-\theta,q})}
\lesssim h^{1+\theta-\varepsilon/2} = h^{2-\varepsilon}.
\]
Inserting this bound into the previous inequality completes the proof of \cref{eq:weak-error-1}.

\textbf{Proof of \cref{eq:weak-error-2}.}
We apply an analogous duality argument over $[0,T] \times \Omega$.
Fix arbitrary exponents \(2 \leqslant q \leqslant p < \infty\) and set \(p' := p/(p-1)\), \(q':= q/(q-1)\).
With $\varTheta(s) := u_h + s(u-u_h)$, \cref{eq:u-LpH1q,eq:uh-LpH1q} imply
\[
\sup_{0<h\leqslant 1} \sup_{s \in [0,1]}
\, \bigl\| \varTheta(s) \bigr\|_{L^{p}([0,T] \times \Omega; H_{D}^{1,q})} < \infty.
\]
By \cref{lem:composition-embeddings}(ii) and
the embedding $H_{D}^{1,q} \hookrightarrow L^q(\mathcal{O})$,
we obtain the uniform bound
\begin{align*}
& \sup_{0<h\leqslant 1} \sup_{s \in [0,1]} \, \Bigl\| \|\varTheta(s)\|_{L^q(\mathcal{O})}^{p-q} \bigl\| |\varTheta(s)|^{q-2}\varTheta(s) \bigr\|_{H_{D}^{1,q'}} \Bigr\|_{L^{p'}([0,T] \times \Omega)} \\
\lesssim{} &
\sup_{0<h\leqslant 1} \sup_{s \in [0,1]}
\, \Bigl\| \varTheta(s) \Bigr\|_{L^{p}([0,T] \times \Omega;H_{D}^{1,q})}^{p-1} < \infty.
\end{align*}
The mean value theorem and the $H_{D}^{1,q'}$--$H_{D}^{-1,q}$ duality pairing yield
\[
\begin{aligned}
  & \Bigl|\mathbb{E}\|u\|_{L^p(0,T;L^{q}(\mathcal{O}))}^{p} - \mathbb{E}\|u_h\|_{L^p(0,T;L^{q}(\mathcal{O}))}^{p}\Bigr| \\
  \leqslant{} & p \int_0^1 \mathbb{E} \Bigl[ \int_0^T \|\varTheta(s)\|_{L^q(\mathcal{O})}^{p-q} \bigl\| |\varTheta(s)|^{q-2}\varTheta(s) \bigr\|_{H_{D}^{1,q'}} \bigl\| u-u_h \bigr\|_{H_{D}^{-1,q}} \, \mathrm{d}t \Bigr] \, \mathrm{d}s \\
  \lesssim{} & \|u-u_h\|_{L^{p}([0,T] \times \Omega;H_{D}^{-1,q})},
\end{aligned}
\]
where the last step follows from H\"older's inequality on $[0,T] \times \Omega$ and the uniform bound above.
Finally, applying \cref{eq:u-uh-LpLq} with $\alpha = -1$ concludes the proof.

\qed

\section{Fully discrete scheme}
\label{sec:full-discr}
In this section, we analyze a fully discrete approximation of \cref{eq:burgers}. 
Let $J \in \mathbb{N}$ denote the total number of time steps, and set $\tau := T/J$ as the uniform time step size.
For $j = 0, 1, \dots, J$, define $t_j := j\tau$. This section considers the following fully discrete scheme:
\begin{equation}
  \label{eq:U}
  \begin{cases}
    U_{j+1} - U_j = \tau A_h U_{j+1} - \frac{\tau}{2} P_h\partial_x(U_{j+1}^2)
    + \int_{t_j}^{t_{j+1}} P_hQ \, \mathrm{d}W(t), \quad j = 0, 1, \dots, J-1, \\[4pt]
    U_0 = P_h u_0 .
  \end{cases}
\end{equation}

\begin{definition}[Fully discrete solution]
  \label{def:U}
  Let $u_0 \colon \Omega \to L^2(\mathcal{O})$ be strongly $\mathcal{F}_0$-measurable.
  An $(\mathcal{F}_{t_j})_{j=0}^J$-adapted sequence of $X_h$-valued random variables $(U_j)_{j=0}^J$ is
  called a \emph{solution} to \cref{eq:U} if \cref{eq:U} holds in $X_h$ $\mathbb{P}$-a.s.~for all $j = 0, 1, \dots, J-1$.
  Two solutions $(U_j)_{j=0}^J$ and $(V_j)_{j=0}^J$ are said to be identical if
  $U_j = V_j$ $\mathbb{P}$-a.s.~for all $j = 0, 1, \dots, J$.
\end{definition}

\begin{proposition}
  \label{prop:U}
  Assume that the initial datum $u_0$ is deterministic and belongs to $W_0^{1,\infty}(\mathcal{O})$.
  Then, there exists at least one solution to the fully discrete scheme \cref{eq:U}.
  Furthermore, every solution of \cref{eq:U} satisfies the following properties:
  \begin{enumerate}
    \item[\textup{(i)}]
      For any $p,q \in [2,\infty)$ and $ \theta \in (0,1) $, the solution $(U_j)_{j=0}^J$ satisfies the following uniform bounds:
      \begin{align}
        & \sup_{0<h, \tau\leqslant 1} \mathbb{E} \Bigl[ \sum_{j=0}^{J} \tau \|U_j\|_{H_{D}^{1,q}}^p \Bigr] < \infty, \label{eq:U-LpH1q} \\
        & \sup_{0<h,\tau \leqslant 1} \mathbb{E} \Bigl[ \max_{0 \leqslant j \leqslant J}
        \|U_j\|_{H_{D}^{\theta,q}}^p \Bigr] < \infty. \label{eq:U-LpCHq}
      \end{align}
    \item[\textup{(ii)}]
      There exist a constant $\kappa > 0 $ and $ \tau_0 \in (0,1) $, independent of $ h$ and $ \tau $, such that
      \begin{equation}
        \label{eq:U-exp-moment}
        \sup_{0<h\leqslant 1,\, 0 <\tau \leqslant \tau_0}\mathbb{E} \Bigl[
          \exp\Bigl( \kappa\tau \sum_{j=1}^J \|U_j\|_{H_{D}^{1,2}}^2\Bigr) \Bigr] < \infty.
      \end{equation}
  \end{enumerate}
\end{proposition}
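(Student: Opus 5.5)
The plan has three parts, handled separately: existence of a solution, the energy and exponential moment bounds in (ii), and the higher regularity in (i), which is obtained by a discrete version of the bootstrap used in the proof of \cref{prop:uh-regu}.

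\emph{Existence.} Write one step of \cref{eq:U} as $\Phi_\tau(V)=F$, where $\Phi_\tau(V):=V-\tau A_hV+\frac{\tau}{2}P_h\partial_x(V^2)$ and $F:=U_j+\int_{t_j}^{t_{j+1}}P_hQ\,\mathrm{d}W$. By the identity \cref{eq:key-identity},
\[
\langle \Phi_\tau(V),V\rangle=\|V\|_{L^2(\mathcal{O})}^2+\tau\|V\|_{H_{D,h}^{1,2}}^2\geqslant\|V\|_{L^2(\mathcal{O})}^2 .
\]
Since $\Phi_\tau$ is continuous on the finite-dimensional space $X_h$, the acute-angle (Brouwer) lemma gives a solution $V$ with $\|V\|_{L^2(\mathcal{O})}\leqslant\|F\|_{L^2(\mathcal{O})}$. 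The solution set depends upper-semicontinuously on $F$ and is closed. A measurable selection theorem (Kuratowski--Ryll-Nardzewski) therefore yields an $\mathcal{F}_{t_{j+1}}$-measurable choice of $U_{j+1}$. Induction on $j$ gives an adapted solution.

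\emph{Energy and exponential moments (part (ii)).} Let $(U_j)$ be any solution and set $\Delta_j:=\int_{t_j}^{t_{j+1}}P_hQ\,\mathrm{d}W$. Testing \cref{eq:U} with $2U_{j+1}$ and using \cref{eq:key-identity} gives
\[
\|U_{j+1}\|^2-\|U_j\|^2+\|U_{j+1}-U_j\|^2+2\tau\|U_{j+1}\|_{H_{D}^{1,2}}^2=2\langle\Delta_j,U_j\rangle+2\langle\Delta_j,U_{j+1}-U_j\rangle .
\]
Young's inequality absorbs the last term into $\|U_{j+1}-U_j\|^2$, leaving $\|\Delta_j\|^2$. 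Summing over $j$ gives
\[
\|U_n\|^2+2\tau\sum_{j\leqslant n}\|U_j\|_{H_D^{1,2}}^2\leqslant\|U_0\|^2+M_n+\sum_j\|\Delta_j\|^2,
\]
where $M_n:=2\sum_{j<n}\langle\Delta_j,U_j\rangle$ is a discrete martingale. Its conditional increments are Gaussian with variance at most $C\tau\|U_j\|^2\leqslant C'\tau\|U_j\|_{H_D^{1,2}}^2$, by \cref{rem:uQ} and the Poincaré inequality.

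\begin{itemize}
\item The discrete BDG inequality then gives the moments of $\max_j\|U_j\|^2$ and of $\tau\sum_j\|U_j\|_{H^{1,2}_D}^2$, uniformly in $h$ and $\tau$.
\item For \cref{eq:U-exp-moment}, use that $\exp\bigl(\lambda M_n-\tfrac{\lambda^2}{2}\langle M\rangle_n\bigr)$ is a supermartingale. Choose $\lambda$ so that $\lambda^2\langle M\rangle$ is dominated by $\tau\sum\|U_j\|_{H^{1,2}_D}^2$. The index shift is harmless because $U_0=P_hu_0$ is deterministic.
\item The term $\sum_j\|\Delta_j\|^2$ is a sum of independent Gaussian quadratic forms. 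It has an $h$-uniform exponential moment for $\kappa$ small, and a restriction $\tau\leqslant\tau_0$ covers the remaining constants.
\item Combining these via Cauchy--Schwarz yields \cref{eq:U-exp-moment}.
\end{itemize}

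\emph{Higher regularity (part (i)).} Let $R:=(I-\tau A_h)^{-1}$. The scheme has the discrete mild form
\[
U_j=R^jP_hu_0-\frac{\tau}{2}\sum_{k=1}^{j}R^{j-k+1}P_h\partial_x(U_k^2)+G_{h,j},\qquad G_{h,j}:=\sum_{k<j}R^{j-k}\Delta_k .
\]
I will use the $(h,\tau)$-uniform discrete deterministic and stochastic maximal $L^p$-regularity estimates from \cite{li2025stability,LiZhouLp2026}, together with the discrete analogues of the smoothing bound \cref{eq:Sh-smoothing} and of \cref{lem:Sh-ast-bound}(i). These give $G_{h,j}$ the discrete counterparts of \cref{eq:Gh-Lp-H1q,eq:Gh-Lp-C-Hthetaq}, and they bound $R^jP_hu_0$ in $H_D^{1,q}$ via \cref{lem:Ph-stab}. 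I then repeat the bootstrap of Step~2 of the proof of \cref{prop:uh-regu}:
\begin{itemize}
\item start from the $L^2$ bound above and $P_h\partial_x(U^2)\in H_{D,h}^{-3/2-\delta,2}$, which gives $U\in H^{1/2-\varepsilon,2}_D$ and hence $U\in L^q(\mathcal{O})$;
\item then $P_h\partial_x(U^2)\in H_{D,h}^{-1,q}$, which gives $U\in H^{4/5,q}_D$;
\item then $P_h\partial_x(U^2)\in H_{D,h}^{-2/5,q}$, which gives the discrete convolution in $H^{1,q}_D$.
\end{itemize}
At each stage the norm equivalences of \cref{lem:dotHh-equiv} and the stability bounds of \cref{lem:Ph-stab-negative} are used exactly as before, and the result is \cref{eq:U-LpH1q,eq:U-LpCHq}.

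The main obstacle is the time-discrete convolution estimates in the maximum norm over $j$, uniform in both $h$ and $\tau$. These are the discrete analogues of \cref{lem:Sh-ast-bound}(i) and of the $\max_j$ bound for $G_{h,j}$ in $H^{\theta,q}_D$ with $\theta$ close to $1$. I would first try to derive them from the $h$-uniform bounded $H^\infty$-calculus of $-A_h$, which gives $\|(-A_h)^{\gamma}R^m\|\lesssim(m\tau)^{-\gamma}$ for $\gamma\in[0,1]$, combined with the discrete stochastic maximal regularity of \cite{li2025stability}. The $\varepsilon$-loss in $\theta<1$ is exactly what allows summable discrete kernels.
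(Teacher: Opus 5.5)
Your proposal is correct and follows essentially the paper's route. You use a finite-dimensional fixed-point argument with Kuratowski--Ryll-Nardzewski selection for existence. For (ii) you use the discrete energy inequality, an exponential supermartingale bound and the independent Gaussian terms $\|\Delta_j\|^2$ (with $\tau\leqslant\tau_0$). For (i) you use the discrete mild formula, discrete deterministic and stochastic maximal regularity, and the same three-stage bootstrap.

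The step you flag as the main obstacle is the $h$- and $\tau$-uniform bound on $\max_j\|G_{h,j}\|_{H_D^{\theta,q}}$. The paper isolates this as \eqref{eq:discrete_stoch_maximal2} in \cref{lem:discrete_stoch_maximal}. It proves it by comparing $Z_j$ with the continuous-time convolution $S_h\diamond g_h$, trading the $\tau^{1/p}$ gained by interpolation against the $\tau^{-1/p}$ lost when passing from time averages to a maximum. Kernel summability alone only controls each fixed $j$, so you need a genuine maximal argument of this kind, or a discrete factorization, at that point.
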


We now state the main result of this section.

\begin{theorem}
  \label{thm:uh-U}
  Let $u_0 \in W_0^{1,\infty}(\mathcal{O})$ be a deterministic initial datum.
  Let $u_h$ denote the solution to the spatial semi-discretization \cref{eq:uh},
  and let $(U_j)_{j=0}^J$ be a solution to the fully discrete scheme \cref{eq:U}.
  Assume that $\tau \leqslant h^2 < 1$.
  Then, for all $p \in [2, \infty)$ and $\varepsilon \in (0, 1/2)$,
  \begin{equation} \label{eq:uh-U-Linfty}
    \left( \mathbb{E} \left[ \max_{1 \leqslant j \leqslant J} \| u_h(t_j) - U_j \|_{L^\infty(\mathcal{O})}^p \right] \right)^{1/p}
    \leqslant C \tau^{1/2-\varepsilon},
  \end{equation}
  where the constant $C>0$ is independent of the discretization parameters $h$ and $\tau$.
\end{theorem}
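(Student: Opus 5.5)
We follow the strategy announced in the introduction: lift the discrete solution to continuous time and compare it with $u_h$ by a Gronwall argument. The comparison is modelled on the proof of \cref{thm:uh-strong}.

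\textbf{Lifting.} Define the piecewise constant process $\bar U(t) := U_{j+1}$ for $t\in(t_j,t_{j+1}]$. Define the lifted process $\widetilde U$ as the solution of
\[
\widetilde U(t) = P_hu_0 + \int_0^t \Bigl( A_h \bar U(s) - \tfrac12 P_h\partial_x(\bar U^2(s))\Bigr)\mathrm{d}s + \int_0^t P_hQ\,\mathrm{d}W(s).
\]
By construction $\widetilde U(t_j)=U_j$, so it suffices to bound $\max_j\|u_h(t_j)-\widetilde U(t_j)\|_{L^\infty}$. As in the spatial analysis, we subtract the noise. Set $\xi_h := u_h - G_h$. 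Let $\widehat G_h(t)$ be the implicit-Euler discrete stochastic convolution, that is, the solution of the scheme \cref{eq:U} with the nonlinearity removed and $U_0=0$, and set $V_j := U_j - \widehat G_h(t_j)$.

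\textbf{Linear part.} The error $G_h(t_j)-\widehat G_h(t_j)$ is the error of the linear implicit Euler scheme applied to $S_h\diamond(P_hQ)$. I would estimate it using the discrete stochastic maximal $L^p$-regularity of \cite{li2025stability,LiZhouLp2026}. This should give $\tau^{1/2-\varepsilon}$ in the discrete $L^p(\Omega;\ell^\infty(H_{D,h}^{\alpha,q}))$ norm with $\alpha q>1$. Combining it with \cref{lem:dotHh-equiv} and the Sobolev embedding gives an $L^\infty$ bound, and the condition $\tau\leqslant h^2$ lets inverse estimates \cref{eq:inverse} trade $h^{-\delta}$ for $\tau^{-\delta/2}$ wherever needed.

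\textbf{Nonlinear part.} The nonlinear error $e_j := \xi_h(t_j) - V_j$ satisfies a backward-Euler-type recursion. Its drift consists of three pieces. The first is $A_h e$. The second is $-\tfrac12 P_h\partial_x[(u_h+\bar U)e]$. The third is a consistency remainder, which involves the time increments $u_h(t)-u_h(t_{j+1})$ and $G_h-\widehat G_h$ inside $P_h\partial_x[(u_h+\bar U)(\cdot)]$. Exactly as in Step~1 of the proof of \cref{thm:uh-strong}, I would split $e = z + \sigma$. Here $\sigma$ is the discrete (or lifted) convolution of the consistency terms, and $z$ solves the homogeneous-type equation driven by $\sigma$. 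For $\sigma$, I would show
\[
\|\sigma\|_{L^p(\Omega;\ell^\infty(L^\infty))} + \|\sigma\|_{L^p(\Omega;L^{p_1}(0,T;L^2))} \lesssim \tau^{1/2-\varepsilon},
\]
using H\"older regularity in time of $u_h$: $u_h\in C^{1/2-\varepsilon}([0,T];L^q)$ uniformly in $h$. This follows from the mild form \cref{eq:uh-mild}, \cref{lem:S1h} and the bootstrapped bounds of \cref{prop:uh-regu}. I would also use the moment bounds \cref{eq:U-LpH1q,eq:U-LpCHq} for $\bar U$ and a discrete analogue of \cref{lem:Sh-ast-bound}(ii),(iii).

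\textbf{Main obstacle: $z$.} For $z$, I would repeat the energy/Gronwall argument of \cref{lem:core-stability} with $g = u_h+\bar U$. The discrete energy identity for implicit Euler has the favourable sign, since $\langle z_{j+1}-z_j,z_{j+1}\rangle \geqslant \tfrac12(\|z_{j+1}\|^2-\|z_j\|^2)$. The discrete Gronwall lemma, however, requires the coefficient $\tau(\kappa_0/2p)\|g_{j+1}\|_{H^1}^2$ to be less than $1/2$. This step is the main obstacle. I would handle it by using $\tau\|g\|_{H^1}^2 \lesssim \tau h^{-1}\|g\|_{L^\infty}^2\lesssim h\|g\|_{L^\infty}^2$ under $\tau\leqslant h^2$ and localizing on the event where this is small, with the complement controlled by polynomial moments. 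Alternatively, I would absorb the term via the continuous-time lifting, so that the Gronwall argument is genuinely continuous. The exponential moments are then supplied by \cref{eq:uh-exp-moment} and \cref{eq:U-exp-moment} (for $\tau\leqslant\tau_0$; for $\tau>\tau_0$ the estimate is trivial by \cref{eq:U-LpCHq} and \cref{eq:uh-LpCHq}). This yields $\|z\|_{L^{2p}(\Omega;\ell^\infty(L^2))}\lesssim\tau^{1/2-\varepsilon}$. The $L^\infty$ bound then follows from the convolution smoothing as in the first part of \cref{lem:core-stability}, and summing all pieces gives \cref{eq:uh-U-Linfty}.
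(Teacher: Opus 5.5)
Your proposal breaks down at the step you yourself call the main obstacle. In your backward-Euler recursion, the discrete Gronwall step needs $\tau\frac{\kappa_0}{2p}\|g_{j+1}\|_{H_D^{1,2}}^2\leqslant\frac12$ for every $j$, pathwise, and your first fix does not supply this. The inequality $\tau\|g\|_{H_D^{1,2}}^2\lesssim\tau h^{-1}\|g\|_{L^\infty(\mathcal{O})}^2$ is false on $X_h$. The correct inverse estimate is $\|v_h\|_{H_D^{1,2}}\lesssim h^{-1}\|v_h\|_{L^2(\mathcal{O})}$, and it is sharp: take a sum of $P_2$ element bubbles, for which $\|\partial_x v_h\|_{L^2}^2\sim h^{-2}\|v_h\|_{L^\infty}^2$. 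Under $\tau\leqslant h^2$ you therefore only get $\tau\|g\|_{H_D^{1,2}}^2\lesssim\|g\|_{L^\infty(\mathcal{O})}^2=O(1)$, so controlling $\|g\|_{L^\infty}$ does not make the Gronwall coefficient small.

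A localization can be rescued. For example, use $\tau\|U_j\|_{H_D^{1,2}}^2\lesssim\tau h^{-2(1-\theta)}\|U_j\|_{H_D^{\theta,2}}^2\leqslant\tau^{\theta}\max_k\|U_k\|_{H_D^{\theta,2}}^2$ with $\theta<1$ close to $1$, together with \cref{eq:U-LpCHq}, the analogous bound for $u_h$ via \cref{eq:uh-LpCHq}, and Markov's inequality. That is not the argument you wrote, however. Your second option (``absorb the term via the continuous-time lifting'') is only a pointer; carried out, it is exactly the paper's proof.

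The idea that removes the obstacle is already in your first paragraph. Compare $u_h$ with the lift $\widetilde U$ directly, without subtracting any stochastic convolution. Since $u_h$ and $\widetilde U$ carry the identical term $\int_0^tP_hQ\,\mathrm{d}W(s)$, the noise cancels exactly. Writing $u_h^2-\bar U^2=(u_h+\bar U)(e_h+\widetilde U-\bar U)$, the error $e_h:=u_h-\widetilde U$ solves, pathwise, the continuous-time equation $\frac{\mathrm{d}}{\mathrm{d}t}e_h=A_he_h-\frac12P_h\partial_x[(u_h+\bar U)e_h]+w_h$ with $e_h(0)=0$. Here $w_h=A_h(\widetilde U-\bar U)-\frac12P_h\partial_x[(u_h+\bar U)(\widetilde U-\bar U)]$.

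With $\sigma_h:=S_h\ast w_h$ and $z_h:=e_h-\sigma_h$, the equation for $z_h$ is exactly \cref{eq:zh-equ} with $g=u_h+\bar U$. So \cref{lem:core-stability}, a continuous Gronwall argument with no step-size condition, applies verbatim. Its exponential moment comes from \cref{eq:uh-exp-moment,eq:U-exp-moment}, because $\|\bar U\|_{L^2(0,T;H_D^{1,2})}^2=\tau\sum_{j=1}^J\|U_j\|_{H_D^{1,2}}^2$.

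The only consistency quantity is then $\widetilde U-\bar U$. In $L^p([0,T]\times\Omega;L^q(\mathcal{O}))$ its drift part is $O(\tau h^{-1})$ by \cref{eq:U-LpH1q} and \cref{eq:inverse}, and its noise increment is $O(\tau^{1/2})$; under $\tau\leqslant h^2$ the total is $O(\tau^{1/2})$. Then \cref{lem:S0h,lem:Ph-partialx-vw-H2} bound $\sigma_h$ in $L^p([0,T]\times\Omega;L^q(\mathcal{O}))$. \cref{lem:Sh-ast-bound}(ii), with an inverse estimate costing $h^{-4/p}\leqslant\tau^{-2/p}$, bounds it by $\tau^{1/2-\varepsilon}$ in $L^p(\Omega;C([0,T];L^\infty(\mathcal{O})))$.

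On this route, three ingredients you only sketch are never needed: the separate error $G_h-\widehat G_h$ of the linear implicit Euler scheme, the uniform time-H\"older regularity of $u_h$, and a discrete analogue of \cref{lem:Sh-ast-bound}(iii).
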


\begin{remark}
The constraint $\tau \leqslant h^2$ is primarily a technical artifact of the current proof
rather than an essential limitation of the method. 
It is imposed to balance the temporal discretization error with the spatial one:
under this scaling, $\tau^{1/2-\varepsilon} \leqslant h^{1-2\varepsilon}$,
so the temporal rate in \eqref{eq:uh-U-Linfty} is compatible with
the spatial rate $O(h^{1-\varepsilon})$ from \eqref{eq:u-uh-LpCLinf}.
The proof relies on this condition solely for certain inverse estimates in
Subsection~\cref{ssec:proof-thm-uh-U}.
We expect that this nonessential restriction can be relaxed in future investigations.
\end{remark}

In the remainder of this section, we prove \cref{prop:U} and \cref{thm:uh-U}.
Following the notation from \cref{sec:spatial-semi-discretization}, we write \(a \lesssim b\) to denote \(a \leqslant C b\) for a generic constant \(C > 0\) independent of the spatial mesh size \(h\), the time step \(\tau\), and the time index \(j\).
The constant \(C\) may depend on the domain \(\mathcal{O}\), the terminal time \(T\), 
the initial datum \(u_{0}\), the parameters \(p, q, \varepsilon\), and the orders \(\theta\) defining the spaces \(H_{D}^{\theta,q}\) and \(H_{D,h}^{\theta,q}\).
The rest of this section is organized as follows: \cref{ssec:prelim2} provides the necessary preliminaries, \cref{ssec:proof-U} contains the proof of \cref{prop:U}, and \cref{ssec:proof-thm-uh-U} presents the proof of \cref{thm:uh-U}.

\subsection{Preliminary results}
\label{ssec:prelim2}

We begin by recalling standard estimates for the implicit Euler approximation operator $(I - \tau A_h)^{-m}$,
$m \geqslant 1$, and the semigroup $S_h(t)$ generated by $A_h$, as introduced in \cref{subsubsec:Sh}.

\begin{lemma}
  \label{lem:resolvent-bound}
  Let $ q \in (1,\infty) $ and $ -\infty < \theta_1 \leqslant \theta_2 \leqslant \theta_1 + 2 < \infty$. Then, the following estimates hold:
  \begin{enumerate}
    \item[\textup{(i)}] $ \|(I - \tau A_h)^{-m}\|_{\mathcal{L}(H_{D,h}^{\theta_1,q},H_{D,h}^{\theta_2,q})}
    \lesssim (m\tau)^{(\theta_1-\theta_2)/2} $ for all $ m \in \mathbb{N}_{>0} $.
    \item[\textup{(ii)}] $ \|I - S_h(t)\|_{\mathcal{L}(H_{D,h}^{\theta_2,q},H_{D,h}^{\theta_1,q})}
    \lesssim t^{(\theta_2-\theta_1)/2} $ for all $ 0 < t \leqslant T $.
  \end{enumerate}
\end{lemma}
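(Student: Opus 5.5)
The plan is to reduce both estimates to operator-norm statements on $H_{D,h}^{0,q}$ for functions of $-A_h$, and then transport them to general $(\theta_1,\theta_2)$ with the definition $\|v_h\|_{H_{D,h}^{\theta,q}}=\|(-A_h)^{\theta/2}v_h\|_{L^q(\mathcal{O})}$. The key observation is that $(-A_h)^{s}$ commutes with $(I-\tau A_h)^{-m}$ and with $S_h(t)$. Set $\delta:=\theta_2-\theta_1\in[0,2]$. Then
\[
\|(I-\tau A_h)^{-m}\|_{\mathcal{L}(H_{D,h}^{\theta_1,q},H_{D,h}^{\theta_2,q})}=\|(-A_h)^{\delta/2}(I-\tau A_h)^{-m}\|_{\mathcal{L}(H_{D,h}^{0,q},H_{D,h}^{0,q})},
\]
and similarly
\[
\|I-S_h(t)\|_{\mathcal{L}(H_{D,h}^{\theta_2,q},H_{D,h}^{\theta_1,q})}=\|(I-S_h(t))(-A_h)^{-\delta/2}\|_{\mathcal{L}(H_{D,h}^{0,q},H_{D,h}^{0,q})}.
\]
So it suffices to treat $\theta_1=0$, $\theta_2=\delta$.

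For (i), the first step is to prove the endpoint cases $\delta=0$ and $\delta=2$, uniformly in $h$, from the resolvent bound \eqref{eq:resolvent-bound}. Write $(I-\tau A_h)^{-1}=\tau^{-1}(\tau^{-1}-A_h)^{-1}$; then \eqref{eq:resolvent-bound} with $\vartheta=0$ gives $\|(I-\tau A_h)^{-1}\|\lesssim 1$, and hence $\|(I-\tau A_h)^{-m}\|\lesssim C^m$. This bound is not uniform in $m$, so I will not iterate it. Instead I will use the Dunford integral
\[
(I-\tau A_h)^{-m}=\frac{1}{2\pi i}\int_\Gamma (1-\tau\lambda)^{-m}(\lambda-A_h)^{-1}\,\mathrm{d}\lambda,
\]
and likewise for $A_h(I-\tau A_h)^{-m}$, with $\Gamma$ the boundary of the sector of angle $\omega_0$ (plus a small arc near the origin, harmless since $-A_h$ has a uniformly bounded inverse). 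The resolvent bound $\|(\lambda-A_h)^{-1}\|\lesssim |\lambda|^{-1}$ together with $|1-\tau\lambda|\geqslant c(1+\tau|\lambda|)$ on $\Gamma$ (because $\omega_0>\pi/2$ keeps $\tau\lambda$ away from $1$) gives
\[
\|(-A_h)(I-\tau A_h)^{-m}\|\lesssim \int_0^\infty (1+\tau r)^{-m}\,\mathrm{d}r\lesssim (m\tau)^{-1}, \qquad m\geqslant 2.
\]
For $m=1$ the bound follows directly from $A_h(I-\tau A_h)^{-1}=\tau^{-1}\bigl((I-\tau A_h)^{-1}-I\bigr)$. The $\delta=0$ case needs more care, because $\int (1+\tau r)^{-m} r^{-1}\,\mathrm{d}r$ diverges logarithmically at small $r$. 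I will handle it with the standard trick of subtracting $(1-\tau\lambda)^{-m}\lambda^{-1}$, or by rewriting the operator as $I$ plus an integral of the derivative. This is the classical Crouzeix–Larsson–Thomée argument, and I expect it to be the main technical obstacle. An alternative that avoids it is to combine the bounded $H^\infty$-calculus of $-A_h$, whose constant is uniform in $h$ as noted after \cref{lem:S1h}, with the fact that $z\mapsto(1+\tau z)^{-m}$ is uniformly bounded on the sector. That gives $\delta=0$ at once, and for $\delta=2$ it gives $z(1+\tau z)^{-m}$ bounded by $(m\tau)^{-1}$ uniformly. I would present this $H^\infty$ route as the primary argument. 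Intermediate $\delta\in(0,2)$ then follow from the moment inequality for fractional powers, $\|(-A_h)^{\delta/2}x\|\lesssim\|x\|^{1-\delta/2}\|A_hx\|^{\delta/2}$, which interpolates the two endpoint bounds to give $(m\tau)^{-\delta/2}$.

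For (ii) I will use the same $H^\infty$-calculus approach. On the sector the function $(1-e^{-tz})(tz)^{-\delta/2}$ is bounded uniformly in $t>0$ for $\delta\in[0,2]$: near $0$ it behaves like $(tz)^{1-\delta/2}$, and at infinity like $(tz)^{-\delta/2}$. This gives $\|(I-S_h(t))(-A_h)^{-\delta/2}\|\lesssim t^{\delta/2}$. If I want to avoid relying on the $H^\infty$ calculus, I can argue directly instead: the identity $I-S_h(t)=-\int_0^t A_hS_h(s)\,\mathrm{d}s$ together with \eqref{eq:Sh-smoothing} gives $\delta=2$, the uniform boundedness of $S_h$ gives $\delta=0$, and the moment inequality gives the intermediate values.
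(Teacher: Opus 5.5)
Your argument is correct, but your primary route differs from the paper's. The paper takes the endpoint cases $\theta_2=\theta_1$ and $\theta_2=\theta_1+2$ of (i) from classical results in Pazy on powers of the resolvent of the generator of a bounded analytic semigroup. These need only the $h$-uniform sectorial bound \eqref{eq:resolvent-bound}. It then gets the intermediate cases by complex interpolation via \eqref{eq:dotHh-complex-interp}. For (ii) it quotes Pazy's fractional-power estimate $\|(I-S(t))x\|\lesssim t^{\alpha}\|(-A)^{\alpha}x\|$, whose proof is essentially your ``direct'' alternative.

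You instead invoke the much stronger $h$-uniform bounded $H^\infty$-calculus. This is legitimate, since the paper relies on it anyway for \cref{lem:S1h,lem:discrete_stoch_maximal}. It reduces (i) to the scalar bound $\sup_{z\in\Sigma_\sigma}|z|^{\delta/2}|1+\tau z|^{-m}\lesssim(m\tau)^{-\delta/2}$, which covers all $\delta\in[0,2]$ at once. You should record that the calculus angle satisfies $\sigma\leqslant\pi/2$: only then does $|1+\tau z|\geqslant 1+\cos\sigma\,\tau|z|$ hold, while for $\sigma>\pi/2$ the supremum of $|1+\tau z|^{-m}$ over the sector grows exponentially in $m$.

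Your moment-inequality step is a genuine simplification over complex interpolation, because its constant depends only on the sectoriality constant. Combined with the paper's endpoint argument, it gives a proof that uses nothing beyond \eqref{eq:resolvent-bound}. In (ii) the exponent varies on the source side, so apply the inequality to $x=(I-S_h(t))(-A_h)^{-1}w$ with exponent $1-\delta/2$, using $(-A_h)x=(I-S_h(t))w$.

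On the Dunford detour you set aside, two corrections. First, on $\Gamma$ you need $|1-\tau\lambda|\geqslant 1+|\cos\omega_0|\,\tau|\lambda|$, not $c(1+\tau|\lambda|)$, which would cost a factor $c^{-m}$. Second, subtracting $\lambda^{-1}$ does not remove the logarithm. That logarithm is $\log(1+(m\tau)^{-1})$ and comes from the range $1\lesssim|\lambda|\lesssim(m\tau)^{-1}$, where $\|(\lambda-A_h)^{-1}-\lambda^{-1}\|$ is still of order $|\lambda|^{-1}$. Comparing with $e^{m\tau\lambda}$, i.e.\ with $S_h(m\tau)$, does work.

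The simplest elementary substitute is the Laplace representation $(I-\tau A_h)^{-m}=\frac{1}{(m-1)!}\int_0^\infty s^{m-1}e^{-s}S_h(\tau s)\,\mathrm{d}s$. Together with \eqref{eq:Sh-smoothing} it gives $\|(I-\tau A_h)^{-m}\|\lesssim 1$ and $\|A_h(I-\tau A_h)^{-m}\|\lesssim((m-1)\tau)^{-1}$ for $m\geqslant 2$ immediately; this is essentially what the paper's citations encode.
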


\begin{proof}
  Since these results are classical, we provide only a brief outline of the arguments.
  The property \cref{eq:resolvent-bound} allows for the application of standard analytic semigroup theory.
  In particular, estimate \textup{(i)} for the endpoint cases $ \theta_2 = \theta_1 $ and $ \theta_2 = \theta_1 + 2 $ follows
  directly from \cite[Theorem~6.3, Chapter~1]{Pazy1983} and \cite[Theorem~5.5, Chapter~2]{Pazy1983}, respectively.
  The general case is then obtained by complex interpolation.
  Assertion \textup{(ii)} is a direct consequence of \cite[Theorem~6.13, Chapter~2]{Pazy1983}.
\end{proof}

We now establish a discrete maximal regularity estimate for the implicit Euler scheme,
which follows from a direct computation based on \cref{lem:resolvent-bound}(i) and H\"older's inequality.

\begin{lemma}
\label{lem:discrete-maximal}
Let $q \in (1,\infty)$ and $-\infty < \theta_1 < \theta_2 < \theta_1 + 2 < \infty$.
For any $g_h \in L^\infty(0,T;H_{D,h}^{\theta_1,q})$, the sequence $(Z_j)_{j=1}^J$, defined by
\[
Z_j := \sum_{k=0}^{j-1} \int_{t_k}^{t_{k+1}} (I-\tau A_h)^{-(j-k)} g_h(t)\,\mathrm{d}t,
\qquad 1 \leqslant j \leqslant J,
\]
satisfies
\[
\max_{1 \leqslant j \leqslant J} \|Z_j\|_{H_{D,h}^{\theta_2,q}} \lesssim \|g_h\|_{L^\infty(0,T;H_{D,h}^{\theta_1,q})}.
\]
\end{lemma}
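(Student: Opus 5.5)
The estimate will follow from a direct termwise bound, with no interpolation or duality needed. Fix $1 \leqslant j \leqslant J$ and use the triangle inequality for the Bochner integral together with \cref{lem:resolvent-bound}(i), applied with $m = j-k \geqslant 1$ and the pair $(\theta_1,\theta_2)$; this is admissible because $\theta_1 < \theta_2 < \theta_1+2$. It gives
\[
\|Z_j\|_{H_{D,h}^{\theta_2,q}}
\leqslant \sum_{k=0}^{j-1} \int_{t_k}^{t_{k+1}} \bigl\|(I-\tau A_h)^{-(j-k)}\bigr\|_{\mathcal{L}(H_{D,h}^{\theta_1,q},H_{D,h}^{\theta_2,q})}\,\|g_h(t)\|_{H_{D,h}^{\theta_1,q}}\,\mathrm{d}t
\lesssim \sum_{k=0}^{j-1} \tau\,\bigl((j-k)\tau\bigr)^{-(\theta_2-\theta_1)/2}\,\|g_h\|_{L^\infty(0,T;H_{D,h}^{\theta_1,q})} .
\]
Hölder's inequality in time, with the $L^\infty$ norm on $g_h$ and the $L^1$ norm on the operator-norm factor, is what produces the last step.

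It remains to bound the discrete sum uniformly in $j$, $\tau$ and $h$. Set $\beta := (\theta_2-\theta_1)/2$, so that $\beta \in (0,1)$, and substitute $m = j-k$. The sum becomes
\[
\tau^{1-\beta}\sum_{m=1}^{j} m^{-\beta}.
\]
Comparing the sum with $\int_0^j s^{-\beta}\,\mathrm{d}s$, which is valid since $s\mapsto s^{-\beta}$ is decreasing, gives the bound $j^{1-\beta}/(1-\beta)$. Hence the sum is at most
\[
(j\tau)^{1-\beta}/(1-\beta) \leqslant T^{1-\beta}/(1-\beta).
\]
This is a Riemann-sum version of the integrability of $t^{-\beta}$ near $0$.

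The constant obtained depends only on $q$, $\theta_2-\theta_1$ and $T$. The $h$-independence is inherited from \cref{lem:resolvent-bound}(i), and there is no dependence on $j$ or $\tau$. Taking the maximum over $1\leqslant j\leqslant J$ then gives the claim.

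The only point needing care is that the strict inequality $\theta_2 < \theta_1+2$ gives $\beta<1$, which makes the singular sum summable uniformly in $j$. At $\beta = 1$ the sum would produce a $\log J$ factor, which is exactly why the endpoint is excluded. There is no real obstacle beyond this.
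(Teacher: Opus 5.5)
Your proposal is correct and follows essentially the same route as the paper, which obtains the lemma by a direct computation from \cref{lem:resolvent-bound}(i) and H\"older's inequality in time. Your comparison of $\tau^{1-\beta}\sum_{m=1}^{j} m^{-\beta}$ with $\int_0^j s^{-\beta}\,\mathrm{d}s$ supplies exactly the uniform bound, using $\beta=(\theta_2-\theta_1)/2<1$, that the paper leaves implicit.
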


Finally, we present the discrete stochastic maximal $L^p$-regularity estimate and a discrete
maximal inequality for the Euler--Maruyama scheme.
\begin{lemma}
  \label{lem:discrete_stoch_maximal}
  Let \( p \in (2,\infty) \), \( q \in [2,\infty) \), and 
  \( g_h \in L_{\mathbb{F}}^p([0,T] \times \Omega; \gamma(\ell^2,H_{D,h}^{0,q})) \).
  Suppose that the sequence \( (Z_j)_{j=0}^J \) satisfies \( Z_0 = 0 \) and, \( \mathbb{P} \)-a.s.,
  \[
    Z_{j+1} - Z_j - \tau A_h Z_{j+1} = \int_{t_j}^{t_{j+1}} g_h(t) \, \mathrm{d}W(t), \quad 0 \leqslant j < J.
  \]
  Then,
  \begin{align}
    \bigg(\mathbb{E} \bigg[ \sum_{j=1}^J \tau \|Z_j\|_{H_{D,h}^{1,q}}^p \bigg] \bigg)^{1/p}
    &\lesssim \|g_h\|_{L^p([0,T] \times \Omega;\gamma(\ell^2,H_{D,h}^{0,q}))}, 
    \label{eq:discrete_stoch_maximal} \\
    \bigg(\mathbb{E} \bigg[ \max_{1 \leqslant j \leqslant J} \|Z_j\|_{(H_{D,h}^{0,q},H_{D,h}^{2,q})_{1/2-1/p,p}}^p \bigg] \bigg)^{1/p}
    &\lesssim \|g_h\|_{L^p([0,T] \times \Omega;\gamma(\ell^2,H_{D,h}^{0,q}))}.
    \label{eq:discrete_stoch_maximal2}
  \end{align}
\end{lemma}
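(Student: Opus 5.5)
We plan to deduce \cref{lem:discrete_stoch_maximal} from the $h$-uniform bounded $H^\infty$-calculus of $-A_h$ on $H_{D,h}^{0,q}$, together with the abstract discrete stochastic maximal $L^p$-regularity theory for the implicit Euler scheme developed in \cite{li2025stability,LiZhouLp2026}. This is the discrete-in-time counterpart of how \cref{lem:S1h} follows from \cite[Theorem~3.5]{Neerven2012b}.

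First, we solve the recursion explicitly. Since $Z_0=0$ and $I-\tau A_h$ is invertible on $X_h$, we obtain
\[
Z_j=\sum_{k=0}^{j-1}\int_{t_k}^{t_{k+1}}(I-\tau A_h)^{-(j-k)}g_h(t)\,\mathrm{d}W(t),\qquad 1\leqslant j\leqslant J .
\]
This is a discrete stochastic convolution with the piecewise-constant-in-time operator kernel $t\mapsto (I-\tau A_h)^{-(j-k)}$ for $t\in[t_k,t_{k+1})$. Next, we recall that, as noted in the remark after \cref{lem:S1h}, $-A_h$ has a bounded $H^\infty$-calculus on $H_{D,h}^{0,q}$ of some angle $<\pi/2$, with constant independent of $h$. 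The space $H_{D,h}^{0,q}$ is $X_h$ with the $L^q$-norm. It is therefore isomorphic to a closed subspace of $L^q(\mathcal{O})$, so it is UMD and of type $2$, again with $h$-uniform constants.

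We then invoke the abstract result of \cite{li2025stability} (see also \cite[Theorem~3.2]{LiZhouLp2026}). For an operator $-B$ with a bounded $H^\infty$-calculus of angle $<\pi/2$ on a closed subspace of $L^q$, $q\in[2,\infty)$, the implicit Euler stochastic convolution satisfies two estimates, with constants depending only on $p$, $q$, $T$ and the $H^\infty$-constants, and not on $\tau$:
\begin{itemize}
\item the discrete $\ell^p$ bound on $\|(-B)^{1/2}Z_j\|$, which is \cref{eq:discrete_stoch_maximal};
\item the discrete maximal bound in the trace space $(E,D(B))_{1/2-1/p,p}$, which is \cref{eq:discrete_stoch_maximal2}.
\end{itemize}
Taking $B=A_h$ and $E=H_{D,h}^{0,q}$, we have $\|(-A_h)^{1/2}Z_j\|_{L^q}=\|Z_j\|_{H_{D,h}^{1,q}}$ and $D(A_h)=H_{D,h}^{2,q}$, so both estimates hold with $h$-independent constants.

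If a self-contained argument is preferred, we would prove \cref{eq:discrete_stoch_maximal} as follows.
\begin{itemize}
\item Apply \cref{prop:stoch-int} for fixed $j$, combined with the square-function estimates available from the $H^\infty$-calculus: $\gamma$-boundedness of $\{(m\tau)^{1/2}(-A_h)^{1/2}(I-\tau A_h)^{-m}\}_{m\geqslant1}$, which follows from $R$-sectoriality in $L^q$.
\item Use the $\gamma$-Fubini isomorphism $L^p(\Omega;L^p(0,T;\gamma(L^2(0,T;\ell^2),L^q)))$, as in van Neerven--Veraar--Weis, to reduce to a deterministic discrete square-function estimate for the implicit Euler resolvents.
\end{itemize}
For \cref{eq:discrete_stoch_maximal2}, the maximum over $j$ is the delicate point, since $(Z_j)$ is not a martingale. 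We would handle it with the factorization method. Write $(I-\tau A_h)^{-(j-k)}$ as a discrete convolution of two resolvent sequences weighted by discrete fractional powers $(j-k)^{\alpha-1}$. This expresses $Z_j$ as a deterministic discrete convolution applied to an auxiliary stochastic sequence $Y_k$, which is controlled in $\ell^p$ by the first estimate. A discrete Hölder argument together with the resolvent bounds of \cref{lem:resolvent-bound}(i), interpolated into the real interpolation space, then gives the $\max_j$ bound.

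We expect this last step to be the main obstacle. Obtaining exactly the trace-space index $1/2-1/p$, with no $\varepsilon$-loss and uniformly in $\tau$ and $h$, is only possible through the sharp abstract theorem. This is why we would rely primarily on \cite{li2025stability,LiZhouLp2026} for \cref{eq:discrete_stoch_maximal2}.
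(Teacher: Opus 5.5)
Your treatment of \eqref{eq:discrete_stoch_maximal} is the paper's. It combines the $h$-uniform bounded $H^\infty$-calculus of $-A_h$ (obtained as in \cite[Theorem~3.1]{LiZhouLp2026}) with the proof of \cite[Theorem~3.2]{li2025stability}, with $L^q(\mathcal{O})$ replaced by $H_{D,h}^{0,q}$. For \eqref{eq:discrete_stoch_maximal2} the routes differ genuinely. The paper does say this bound could be inferred by following the argument of \cite[Remark~4.2]{li2025stability}. However, that is an argument to be adapted, not a ready-made abstract theorem of the form you state, so your primary route essentially defers the proof. Your self-contained fallback, discrete factorization, would, as you suspect, only control $\max_j\|Z_j\|$ in $H_{D,h}^{\theta,q}$ for every $\theta<1-2/p$. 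That is an $\varepsilon$-loss relative to the trace space. It would suffice for the later use \eqref{eq:Ghj-LpCHq}, where $p$ may be taken large, but it does not prove the lemma as stated.

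The paper obtains the sharp bound by transferring the continuous-time result to the scheme. Write $\mathscr{G}$ for the right-hand side, $B_{p,q}:=(H_{D,h}^{0,q},H_{D,h}^{2,q})_{1/2-1/p,p}$, $E_p:=(H_{D,h}^{0,q},H_{D,h}^{1,q})_{1-2/p,p}$, and $z_h:=S_h\diamond g_h$.

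\begin{itemize}
\item By \cref{lem:S1h}, $z_h$ is bounded by $\mathscr{G}$ in $L^p(\Omega;C([0,T];B_{p,q}))$ and in $L^p([0,T]\times\Omega;H_{D,h}^{1,q})$.
\item Together with \eqref{eq:discrete_stoch_maximal}, this bounds $z_h(t)-Z_j$, for $t\in[t_j,t_{j+1})$, by $\mathscr{G}$ in $L^p(\Omega\times(0,T);H_{D,h}^{1,q})$.
\item A strong error estimate adapted from \cite[Theorem~4.1]{li2025stability} bounds the same difference by $\tau^{1/2}\mathscr{G}$ in $H_{D,h}^{0,q}$.
\item The interpolation inequality $\|v_h\|_{E_p}\lesssim\|v_h\|_{H_{D,h}^{0,q}}^{2/p}\|v_h\|_{H_{D,h}^{1,q}}^{1-2/p}$ and H\"older then give an error of order $\tau^{1/p}\mathscr{G}$ in $L^p(\Omega\times(0,T);E_p)$.
\item From $\|Z_j\|_{E_p}\leqslant\|z_h\|_{C([0,T];E_p)}+\tau^{-1}\int_{t_j}^{t_{j+1}}\|z_h(t)-Z_j\|_{E_p}\,\mathrm{d}t$, bounding the maximum over $j$ by the $\ell^p$-sum costs exactly a factor $\tau^{-1/p}$, which the rate $\tau^{1/p}$ cancels.
\item The $h$-uniform reiteration $E_p\simeq B_{p,q}$ (\cref{rem:dotHh-reiteration}) returns to the trace space.
\item $Z_J$ is handled by one step of the recursion, using $\|(I-\tau A_h)^{-1}\|_{\mathcal{L}(H_{D,h}^{0,q},B_{p,q})}\lesssim\tau^{-(1/2-1/p)}$ and \cref{prop:stoch-int}.
\end{itemize}

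This buys a sharp, $\varepsilon$-free estimate built only from ingredients already available: continuous stochastic maximal regularity, the $\ell^p$ bound \eqref{eq:discrete_stoch_maximal}, and an $O(\tau^{1/2})$ strong error bound, with no discrete trace or factorization theory. Your citation route would be shorter only if the abstract discrete trace theorem existed in exactly the form you state, which you would still need to verify.
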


\begin{proof}
  First, observe that $A_h$ satisfies the resolvent bound \cref{eq:resolvent-bound}.
  Furthermore, by following the argument in \cite[Theorem~3.1]{LiZhouLp2026},
  one can show that the boundedness constant of the $ H^\infty $-calculus for $ -A_h $ is independent of $ h $.
  Consequently, by adapting the proof of \cite[Theorem~3.2]{li2025stability}—specifically, by replacing $ L^q(\mathcal{O}) $ with $ H_{D,h}^{0,q}$—we establish \eqref{eq:discrete_stoch_maximal}.
  It remains to prove \eqref{eq:discrete_stoch_maximal2}.
  While this inequality can be inferred by following the argument of
  \cite[Remark~4.2]{li2025stability} (or by arguments similar to those in
  \cite[Theorem~6.1]{Evangelopoulos-Ntemiris2026discrete}),
  we provide a complete proof below.

  We divide the proof into two steps, where we set
  \[
    \mathscr{G} := \|g_h\|_{L^p([0,T] \times \Omega;\gamma(\ell^2,H_{D,h}^{0,q}))}
    \text{ and } B_{p,q} := (H_{D,h}^{0,q},H_{D,h}^{2,q})_{1/2-1/p,p}.
  \]

  \textbf{Step 1.}
  Let $z_h := S_h \diamond g_h$ denote the spatially semi-discrete stochastic convolution defined in
  \cref{eq:S1h-def}. From \cref{lem:S1h}, we immediately obtain the estimates
  \begin{align}
    \|z_h\|_{L^p([0,T] \times \Omega;H_{D,h}^{1,q})}
    &\lesssim \mathscr{G}, \label{eq:zh-LpH1q} \\
    \|z_h\|_{L^p(\Omega;C([0,T];B_{p,q}))}
    &\lesssim \mathscr{G}. \label{eq:zh-LpCHq}
  \end{align}
  Combining \cref{eq:discrete_stoch_maximal} with estimate \eqref{eq:zh-LpH1q} via the triangle inequality yields
  \[
    \biggl( \mathbb{E} \bigg[\sum_{j=0}^{J-1} \int_{t_j}^{t_{j+1}} \|z_h(t) - Z_j\|_{H_{D,h}^{1,q}}^p \, \mathrm{d}t\bigg]\biggr)^{1/p}
    \lesssim \mathscr{G}.
  \]
  Furthermore, adapting the proof of \cite[Theorem~4.1]{li2025stability} to the
  spatially discrete setting, we find that
  \[
    \biggl( \mathbb{E} \bigg[\sum_{j=0}^{J-1} \int_{t_j}^{t_{j+1}} \|z_h(t) - Z_j\|_{H_{D,h}^{0,q}}^p \, \mathrm{d}t\bigg]\biggr)^{1/p}
    \lesssim \tau^{1/2} \mathscr{G}.
  \]
  Using these estimates and the real interpolation inequality (\cite[Corollary~1.7]{Lunardi2018})
  \[
    \|v_h\|_{(H_{D,h}^{0,q},H_{D,h}^{1,q})_{1-2/p,p}}
    \lesssim \|v_h\|_{H_{D,h}^{0,q}}^{2/p} \|v_h\|_{H_{D,h}^{1,q}}^{1-2/p},
    \quad v_h \in X_h,
  \]
  a simple calculation using H\"older's inequality yields
  \begin{equation} \label{eq:interpolation-error}
    \biggl(
      \mathbb{E} \bigg[
        \sum_{j=0}^{J-1} \int_{t_j}^{t_{j+1}} \|z_h(t) - Z_j\|_{(H_{D,h}^{0,q},H_{D,h}^{1,q})_{1-2/p,p}}^p \, \mathrm{d}t
      \bigg]
    \biggr)^{1/p} \lesssim \tau^{1/p} \mathscr{G}.
  \end{equation}
  Next, observe that the inequality
  \begin{align*}
    \max_{1\leqslant j < J} \|Z_j\|_{(H_{D,h}^{0,q},H_{D,h}^{1,q})_{1-2/p,p}}
    &\leqslant \|z_h\|_{C([0,T];(H_{D,h}^{0,q},H_{D,h}^{1,q})_{1-2/p,p})} \\
    & \quad {} + \max_{1 \leqslant j < J} \frac{1}{\tau} \int_{t_j}^{t_{j+1}}
    \|z_h(t) - Z_j\|_{(H_{D,h}^{0,q},H_{D,h}^{1,q})_{1-2/p,p}} \, \mathrm{d}t
  \end{align*}
  holds $\mathbb{P}$-a.s. Taking the $L^p(\Omega)$-norm on both sides and applying the triangle inequality, we obtain
  \begin{align*}
    &\bigg(\mathbb{E} \bigg[ \max_{1\leqslant j < J} \|Z_j\|_{(H_{D,h}^{0,q},H_{D,h}^{1,q})_{1-2/p,p}}^p \bigg]\bigg)^{1/p}\\
    \leqslant{}&
    \|z_h\|_{L^p(\Omega;C([0,T];(H_{D,h}^{0,q},H_{D,h}^{1,q})_{1-2/p,p}))} \\
    &\quad +
    \biggl(
      \mathbb{E}\biggl[
        \biggl(\max_{1 \leqslant j < J} \frac{1}{\tau} \int_{t_j}^{t_{j+1}} \|z_h(t) - Z_j\|_{(H_{D,h}^{0,q},H_{D,h}^{1,q})_{1-2/p,p}} \, \mathrm{d}t\biggr)^p
      \biggr]
    \biggr)^{1/p}.
  \end{align*}
  By the $h$-uniform embedding
  $B_{p,q} \hookrightarrow (H_{D,h}^{0,q},H_{D,h}^{1,q})_{1-2/p,p}$
  (see \cref{rem:dotHh-reiteration}),
  estimate \eqref{eq:zh-LpCHq} implies that the first term is bounded by $\mathscr{G}$.
  To estimate the second term, we employ Hölder's inequality, which yields
  \begin{align*}
    &\biggl(
      \mathbb{E}\biggl[
        \biggl(\max_{1 \leqslant j < J} \frac{1}{\tau} \int_{t_j}^{t_{j+1}} \|z_h(t) - Z_j\|_{(H_{D,h}^{0,q},H_{D,h}^{1,q})_{1-2/p,p}} \, \mathrm{d}t\biggr)^p
      \biggr]
    \biggr)^{1/p} \\
    \leqslant{}& \tau^{-1/p}
    \biggl(
      \mathbb{E}\biggl[
        \max_{1 \leqslant j < J} \int_{t_j}^{t_{j+1}} \|z_h(t) - Z_j\|_{(H_{D,h}^{0,q},H_{D,h}^{1,q})_{1-2/p,p}}^p \, \mathrm{d}t
      \biggr]
    \biggr)^{1/p} \\
    \leqslant{}& \tau^{-1/p}
    \biggl(
      \mathbb{E}\biggl[
        \sum_{j=0}^{J-1} \int_{t_j}^{t_{j+1}} \|z_h(t) - Z_j\|_{(H_{D,h}^{0,q},H_{D,h}^{1,q})_{1-2/p,p}}^p \, \mathrm{d}t
      \biggr]
    \biggr)^{1/p}.
  \end{align*}
  Invoking \eqref{eq:interpolation-error} shows that this term is also controlled by $\mathscr{G}$. Consequently, we conclude that
  \begin{align*}
    \bigg(\mathbb{E} \bigg[ \max_{1\leqslant j < J} \|Z_j\|_{(H_{D,h}^{0,q},H_{D,h}^{1,q})_{1-2/p,p}}^p \bigg]\bigg)^{1/p}
    \lesssim \mathscr{G}.
  \end{align*}
  Finally, by the $h$-uniform embedding
  $(H_{D,h}^{0,q},H_{D,h}^{1,q})_{1-2/p,p} \hookrightarrow B_{p,q}$
  (see \cref{rem:dotHh-reiteration}), we obtain
  \begin{equation}
    \label{eq:lxy1}
    \bigg(\mathbb{E} \bigg[ \max_{1\leqslant j < J} \|Z_j\|_{B_{p,q}}^p \bigg]\bigg)^{1/p}
    \lesssim \mathscr{G}.
  \end{equation}

  \textbf{Step 2.} By definition, we have
  $Z_J = (I - \tau A_h)^{-1} \big( Z_{J-1} + \int_{t_{J-1}}^{t_J} g_h(t) \, \mathrm{d}W(t) \big)$.
  Lemma \ref{lem:resolvent-bound}(i) and real interpolation yield the bound
  $\|(I - \tau A_h)^{-1}\|_{\mathcal{L}(B_{p,q},B_{p,q})} \lesssim 1$. Consequently,
  \[
    \|(I - \tau A_h)^{-1} Z_{J-1}\|_{L^p(\Omega;B_{p,q})}
    \lesssim \|Z_{J-1}\|_{L^p(\Omega;B_{p,q})}.
  \]
  Similarly, we obtain the operator norm estimate
  $ \|(I - \tau A_h)^{-1}\|_{\mathcal{L}(H_{D,h}^{0,q}, B_{p,q})} \lesssim \tau^{-(1/2-1/p)}$.
  Applying Proposition \ref{prop:stoch-int}, the ideal property \eqref{eq:ideal}, and Hölder's inequality, we deduce that
  \begin{align*}
    & \bigg( \mathbb{E}\bigg[
        \Big\| (I-\tau A_h)^{-1} \int_{t_{J-1}}^{t_J} g_h(t) \, \mathrm{d}W(t) \Big\|_{B_{p,q}}^p
    \bigg] \bigg)^{1/p} \\
    \lesssim{} & \bigg( \mathbb{E} \bigg[
        \Big(
          \int_{t_{J-1}}^{t_J} \tau^{-(1-2/p)} \|g_h(t)\|_{\gamma(\ell^2,H_{D,h}^{0,q})}^2 \, \mathrm{d}t
        \Big)^{p/2}
    \bigg] \bigg)^{1/p} \\
    \lesssim{} & \bigg( \mathbb{E} \bigg[ \int_{t_{J-1}}^{t_J} \|g_h(t)\|_{\gamma(\ell^2,H_{D,h}^{0,q})}^p \, \mathrm{d}t \bigg] \bigg)^{1/p}
    \leqslant \mathscr{G}.
  \end{align*}
  Combining the preceding estimates via the triangle inequality gives
  $ \|Z_J\|_{L^p(\Omega;B_{p,q})}
  \lesssim \|Z_{J-1}\|_{L^p(\Omega;B_{p,q})}
  + \mathscr{G}$.
  Since \eqref{eq:lxy1} implies $ \|Z_{J-1}\|_{L^p(\Omega;B_{p,q})} \lesssim \mathscr{G}$, we conclude that
  $\|Z_J\|_{L^p(\Omega;B_{p,q})} \lesssim \mathscr{G}$.
  Combining this estimate with \cref{eq:lxy1} and using the triangle inequality yields the desired estimate \eqref{eq:discrete_stoch_maximal2}, since
  $\max_{1 \leqslant j \leqslant J} \|Z_j\|_{B_{p,q}}
  \leqslant \max_{1 \leqslant j < J} \|Z_j\|_{B_{p,q}}
  + \|Z_J\|_{B_{p,q}}$ holds $\mathbb{P}$-a.s.
  This completes the proof.
\end{proof}

\begin{remark}
  \label{rem:dotHh-reiteration}
  Let $p \in (2,\infty)$ and $q \in [2,\infty)$ be fixed. 
  First, observe that \cref{lem:dotHh}(ii) ensures the 
  $h$-uniform embedding $H_{D,h}^{2,q} \hookrightarrow H_{D,h}^{0,q}$, and
  \cref{eq:dotHh-complex-interp} yields the norm equivalence 
  $\|\cdot\|_{H_{D,h}^{1,q}} \sim \|\cdot\|_{[H_{D,h}^{0,q},H_{D,h}^{2,q}]_{1/2}}$ on $X_h$. 
  Let $K(t, \cdot)$ denote the $K$-functional associated with the real interpolation couple $(H_{D,h}^{0,q}, H_{D,h}^{2,q})$, as defined in \cite[Definition~1.1]{Lunardi2018}. 
  By definition, the basic estimates
  \begin{align*}
    K(t,v_h) \leqslant \|v_h\|_{H_{D,h}^{0,q}}, \qquad K(t,v_h) \leqslant t\|v_h\|_{H_{D,h}^{2,q}}
  \end{align*}
  hold for all $t \in (0,\infty)$ and $v_h \in X_h$.
  By complex interpolation \cite[Theorem~2.7]{Lunardi2018},
  we obtain the estimate $K(t,v_h) \leqslant C t^{1/2} \|v_h\|_{H_{D,h}^{1,q}}$
  for all $t \in (0,\infty)$ and $v_h \in X_h$,
  where the constant $C$ is independent of $t$, $h$, and $v_h$. 
  Furthermore, \cite[Corollary~2.8]{Lunardi2018} yields the interpolation inequality
  \[
  \|v_h\|_{H_{D,h}^{1,q}} \leqslant C \|v_h\|_{H_{D,h}^{0,q}}^{1/2} \|v_h\|_{H_{D,h}^{2,q}}^{1/2},
  \quad v_h \in X_h,
  \]
  where $C>0$ is independent of $h$ and $v_h$.
  Thus, applying \cite[Theorem~1.23]{Lunardi2018} (with $(\theta_0,\theta_1,\theta,E_0,E_1,X,Y) = (0,1/2,1-2/p,H_{D,h}^{0,q},H_{D,h}^{1,q},H_{D,h}^{0,q},H_{D,h}^{2,q})$)
  yields the following $h$-uniform embeddings:
  \[
    (H_{D,h}^{0,q},H_{D,h}^{1,q})_{1-2/p,p} \hookrightarrow (H_{D,h}^{0,q},H_{D,h}^{2,q})_{1/2-1/p,p} \hookrightarrow (H_{D,h}^{0,q},H_{D,h}^{1,q})_{1-2/p,p}.
  \]
\end{remark}

\subsection{Proof of \texorpdfstring{\cref{prop:U}}{}}
\label{ssec:proof-U}
\textbf{Part (a): Existence of solutions.}
The existence of solutions to \cref{eq:U} can be established via the Leray--Schauder fixed point theorem
and the Kuratowski--Ryll-Nardzewski measurable selection theorem; see \cite[Lemma~4.1]{Banas2014convergent}
for analogous arguments. We include the proof here for completeness. 
We first note that $X_h$ is a finite-dimensional space, and let $\mathcal{B}(X_h)$ denote its Borel $\sigma$-algebra.
Fix $w_h \in X_h$ and define a continuous nonlinear operator $\mathcal{T}_h^{w_h} \colon X_h \to X_h$ by  
\[
\mathcal{T}_h^{w_h} v_h := \tau A_h v_h - \frac{\tau}{2} P_h\partial_x(v_h^2) + w_h, \quad v_h \in X_h.
\]
Since $X_h$ is finite-dimensional, $\mathcal{T}_h^{w_h}$ is compact.
Moreover, for $\sigma \in [0,1]$ and $v_h \in X_h$ satisfying $v_h = \sigma \mathcal{T}_h^{w_h} v_h$,
taking the $L^2(\mathcal{O})$-inner product of this equation with $v_h$ and using the identity 
\cref{eq:key-identity}, together with the Cauchy--Schwarz inequality, we obtain  
\[
\|v_h\|_{L^2(\mathcal{O})}^2 \leqslant -\sigma\tau \|v_h\|_{H_{D,h}^{1,2}}^2 + \sigma \|v_h\|_{L^2(\mathcal{O})} \|w_h\|_{L^2(\mathcal{O})}.
\]
Consequently, $\|v_h\|_{L^2(\mathcal{O})} \leqslant \sigma\|w_h\|_{L^2(\mathcal{O})}$,
and $v_h$ is therefore bounded uniformly in $\sigma \in [0,1]$.
By the Leray--Schauder fixed-point theorem (see, e.g., \cite[Theorem~11.3]{Gilbarg2001}),
there exists at least one $v_h \in X_h$ satisfying $v_h = \mathcal{T}_h^{w_h} v_h$. 
Equivalently, for any $w_h \in X_h$, the equation $F(v_h, w_h) = 0$ admits at least one solution $v_h \in X_h$,
where the continuous nonlinear operator $F \colon X_h \times X_h \to X_h$ is defined by
\[
  F(v_h,w_h) := (I - \tau A_h)v_h + \frac{\tau}{2} P_h\partial_x (v_h^2) - w_h.
\]
Consequently, the set-valued mapping $\Gamma \colon \Omega \to 2^{X_h}$, defined for $\mathbb{P}$-a.s.~$\omega$ by
\[
  \Gamma(\omega) = \left\{ v_h \in X_h : F\biggl(v_h, \, U_0 + \int_{t_0}^{t_1} P_hQ \, \mathrm{d}W(s)\biggr) = 0 \right\},
\]
is such that $\Gamma(\omega)$ is a nonempty, closed subset of $X_h$ $\mathbb{P}$-a.s.,
where the closedness follows from the continuity of $F$.

Furthermore, from the strongly $\mathcal{F}_{t_1}$-measurability of $U_0 + \int_{t_0}^{t_1} P_hQ \,\mathrm{d}W(s)$, along with the continuity of $F$, we infer that the zero set
\[
  \mathcal{Z} := \biggl\{ (\omega, v_h) \in \Omega \times X_h : F\biggl(v_h, U_0 + \int_{t_0}^{t_1} P_hQ\,\mathrm{d}W(s)\biggr) = 0 \biggr\}
\]
belongs to $\mathcal{F}_{t_1} \otimes \mathcal{B}(X_h)$. 
For any open set $\mathcal{U} \subset X_h$, the event $\{\omega \in \Omega : \Gamma(\omega) \cap \mathcal{U} \neq \emptyset\}$
coincides with the projection of $\mathcal{Z} \cap (\Omega \times \mathcal{U})$ onto $\Omega$. 
Since $(\Omega, \mathcal{F}_{t_1}, \mathbb{P})$ is a complete probability space and $X_h$ is a Polish space,
the measurable projection theorem \cite[Theorem 18.25]{aliprantis2006infinite} guarantees that this event is
$\mathcal{F}_{t_1}$-measurable, thereby establishing that $\Gamma$ is weakly measurable (see \cite[Definition~18.1]{aliprantis2006infinite}). 
By the Kuratowski--Ryll-Nardzewski measurable selection theorem \cite[Theorem~18.13]{aliprantis2006infinite},
there exists an $\mathcal{F}_{t_1}$-measurable random variable $ U_1 \colon \Omega \to X_h $
such that $U_1(\omega) \in \Gamma(\omega)$ $\mathbb{P}$-a.s. By the definition of $\Gamma$,
\[
  U_1 - U_0 = \tau A_h U_1 - \frac{\tau}{2} P_h\partial_x(U_1^2)
  + \int_{t_0}^{t_1} P_hQ \, \mathrm{d}W(s),
  \quad \mathbb{P}\text{-a.s.}
\]

Proceeding inductively, we construct a sequence $(U_j)_{j=0}^J$ such that, for each $j$, $U_j \colon \Omega \to X_h$ is an $\mathcal{F}_{t_j}$-measurable random variable that satisfies \cref{eq:U} $\mathbb{P}$-a.s. This yields a solution to \cref{eq:U}.

\textbf{Part (b): Proof of \cref{eq:U-LpH1q} and \cref{eq:U-LpCHq}.}
The proof closely follows the arguments used to establish \cref{eq:uh-LpH1q,eq:uh-LpCHq}.
By exploiting the identity \(\int_{\mathcal{O}} P_h\partial_x(v_h^2) \, v_h \, \mathrm{d}x = 0\) for all \(v_h \in X_h\) and the fact that the initial datum \(u_0 \in W_0^{1,\infty}(\mathcal{O})\) is deterministic, standard arguments (cf.\ \cite[Theorem~2.6]{gyongy2007rate}) yield
\begin{equation}
  \label{eq:U-LpCL2}
  \mathbb{E} \bigg[ \max_{0\leqslant j \leqslant J} \|U_j\|_{L^2(\mathcal{O})}^p \bigg]
  \lesssim 1 \quad \text{for all } p \in [2,\infty).
\end{equation}
Define the sequence \((G_{h,j})_{j=0}^J\) \(\mathbb{P}\)-a.s.~by
\begin{equation}
  \label{eq:Ghj-def}
  \begin{cases}
    G_{h,0} = 0, \\
    G_{h,j+1} - G_{h,j} - \tau A_h G_{h,j+1} = \displaystyle\int_{t_j}^{t_{j+1}} P_hQ \, \mathrm{d}W(t), \qquad 0 \leqslant j < J.
  \end{cases}
\end{equation}
Applying \cref{lem:discrete_stoch_maximal} with \(g_h \equiv P_h Q\), together with \eqref{eq:PhQ-bound} and \cref{lem:dotHh-equiv}, yields \(\mathbb{E} \bigl[ \sum_{j=1}^J \tau \|G_{h,j}\|_{H_{D}^{1,q}}^p \bigr] \lesssim 1\) for \(p \in (2,\infty)\) and \(q \in [2,\infty)\). Extending this to \(p=2\) via the embedding \(L^p(\Omega) \hookrightarrow L^2(\Omega)\), we conclude that
\begin{equation}
    \label{eq:Ghj-LpH1q} 
   \mathbb{E} \biggl[ \sum_{j=1}^J \tau \|G_{h,j}\|_{H_{D}^{1,q}}^p \biggr] \lesssim 1 
   \quad\text{for all } p,q \in [2,\infty).
\end{equation}
Moreover, by the same argument leading to \eqref{eq:G-LpCHq}—combining the discrete maximal inequality \eqref{eq:discrete_stoch_maximal2} with \cref{lem:dotHh}\textup{(iii)} and \cref{lem:dotHh-equiv}—we obtain its discrete analogue:
\begin{equation}
    \label{eq:Ghj-LpCHq}
   \mathbb{E} \biggl[ \max_{1\leqslant j\leqslant J} \|G_{h,j}\|_{H_{D}^{\theta,q}}^p \biggr] \lesssim 1
   \quad\text{for all } p,q \in [2,\infty) \text{ and } \theta \in (0,1).
\end{equation}
Observing that \(u_0 \in W_0^{1,\infty}(\mathcal{O}) \hookrightarrow H_{D}^{1,q}\) for all \(q \in [2,\infty)\),
we infer from the \(h\)-uniform stability of \(P_h\) in \(\mathcal{L}(H_{D}^{1,q},H_{D,h}^{1,q})\) (\cref{lem:Ph-stab}),
\cref{lem:resolvent-bound}(i) with \(\theta_1 = \theta_2 = 1\),
and the norm equivalence \(\|\cdot\|_{H_{D}^{1,q}} \sim \|\cdot\|_{H_{D,h}^{1,q}}\)
on \(X_h\) (\cref{lem:dotHh-equiv}) that
\begin{equation}
  \label{eq:discrete-ShPhu0}
   \mathbb{E} \biggl[ \max_{0 \leqslant j \leqslant J} \|(I - \tau A_h)^{-j}P_hu_0\|_{H_{D}^{1,q}}^p \biggr]
   \lesssim 1 \quad\text{for all } p,q \in [2,\infty).
\end{equation}
Combining \cref{eq:U} with the definition of $G_{h,j}$ in \cref{eq:Ghj-def}, we arrive at,
$\mathbb{P}$-a.s.,
\begin{equation} \label{eq:U-discrete-mild}
  U_j = (I - \tau A_h)^{-j}P_hu_0 - \frac{\tau}{2} \sum_{k=0}^{j-1} (I - \tau A_h)^{-(j-k)}P_h\partial_x (U_{k+1}^2) + G_{h,j},
  \quad 1 \leqslant j \leqslant J.
\end{equation}
In analogy with \cref{eq:uh2-bound0}, the estimate \cref{eq:U-LpCL2} implies that,
for all $p \in [2,\infty)$ and $\delta \in (0,1/2)$,
\[
\mathbb{E} \bigg[ \max_{0\leqslant j \leqslant J} \|P_h\partial_x(U_j^2)\|_{H_{D,h}^{-3/2-\delta,2}}^p \bigg] \lesssim 1.
\]
Applying \cref{lem:discrete-maximal} and the norm equivalence from \cref{lem:dotHh-equiv} yields,
for all $p \in [2,\infty)$ and $\varepsilon \in (0,1/2)$,
\[
  \mathbb{E}\bigg[ \max_{1 \leqslant j \leqslant J}
    \Big\| \tau \sum_{k=0}^{j-1} (I - \tau A_h)^{-(j-k)}P_h\partial_x (U_{k+1}^2) \Big\|_{H_{D}^{1/2-\varepsilon,2}}^p
  \bigg] \lesssim 1.
\]
In conjunction with \cref{eq:discrete-ShPhu0,eq:Ghj-LpCHq},
this bound and the representation \cref{eq:U-discrete-mild} allow us to deduce that,
for all $p \in [2,\infty)$ and $\varepsilon \in (0,1/2)$,
\[
\mathbb{E} \bigg[ \max_{0\leqslant j \leqslant J} \|U_j\|_{H_{D}^{1/2-\varepsilon,2}}^p \bigg] \lesssim 1
\]
 A similar bootstrap argument to that used for \cref{eq:Sh-ast-uh2}—relying on \cref{lem:discrete-maximal}
 instead of \cref{lem:Sh-ast-bound}(i)—leads to the higher spatial regularity estimate,
 for all $p,q \in [2,\infty)$,
\begin{equation}
  \label{eq:lbj}
  \mathbb{E}\bigg[ \max_{1 \leqslant j \leqslant J}
    \Big\| \tau \sum_{k=0}^{j-1} (I - \tau A_h)^{-(j-k)}P_h\partial_x (U_{k+1}^2) \Big\|_{H_{D}^{1,q}}^p
  \bigg] \lesssim 1.
\end{equation}
 Finally, \cref{eq:U-LpH1q} follows directly from \cref{eq:U-discrete-mild} combined with the estimates
 \cref{eq:lbj,eq:Ghj-LpH1q,eq:discrete-ShPhu0}. Likewise, the estimate \cref{eq:U-LpCHq} is obtained by
 combining \cref{eq:U-discrete-mild} with \cref{eq:lbj,eq:Ghj-LpCHq,eq:discrete-ShPhu0}.

\textbf{Part (c): Proof of \cref{eq:U-exp-moment}.}
The exponential moment bound \cref{eq:U-exp-moment}
is the discrete counterpart to the exponential moment bound \cref{eq:u-exp-moment}
for the continuous setting. The proof is similar to that of
\cite[Theorem~15]{Bessaih2022spacetime} for a numerical scheme for the 2D stochastic Navier--Stokes equations
with periodic boundary conditions.
We provide the proof in \cref{sec:proof_U_exp_moment} for completeness.

\qed

\subsection{Proof of Theorem~\ref{thm:uh-U}}
\label{ssec:proof-thm-uh-U}
Let $\bar{U}$ denote the piecewise constant-in-time process given by $\bar{U}(0) = U_0$ and
\begin{equation}
  \label{eq:barU-def}
  \bar{U}(t) := U_{j+1} \quad \text{for all } t \in (t_j,t_{j+1}] \text{ with } 0 \leqslant j < J.
\end{equation}
The stability estimate \cref{eq:U-LpH1q} implies
\begin{equation} \label{eq:barU-LpLq}
  \sup_{0 < h,\tau \leqslant 1} \|\bar{U}\|_{L^p([0,T] \times \Omega;H_{D}^{1,q})} < \infty
  \quad \text{for all } p,q \in [2,\infty).
\end{equation}
Next, the stability estimate \cref{eq:U-LpCHq} implies 
\[
\sup_{0 < h,\tau \leqslant 1} \|\bar{U}\|_{L^p(\Omega;L^\infty(0,T;H_{D}^{\theta,q}))} < \infty
\quad \text{for all } p,q \in [2,\infty) \text{ and } \theta \in (0,1),
\]
which, together with \cref{eq:uh-LpCHq},
the triangle inequality, and the embedding $H_{D}^{\theta,q} \hookrightarrow L^\infty(\mathcal{O})$
(valid for $\theta q > 1$), implies that
\begin{equation}
  \label{eq:uh+barU-LpLinf}
  \sup_{0 < h,\tau \leqslant 1} \|u_h+\bar{U}\|_{L^p(\Omega;L^\infty(0,T;L^\infty(\mathcal{O})))} < \infty
  \quad \text{for all } p \in [2,\infty).
\end{equation}
Furthermore, since $\|\bar{U}\|_{L^2(0,T;H_{D}^{1,2})}^2 = \tau\sum_{j=1}^{J}\|U_j\|_{H_{D}^{1,2}}^2$
$\mathbb{P}$-a.s.,
the exponential moment bounds in \cref{prop:uh-regu} and Proposition~\ref{prop:U}(ii),
together with the elementary inequality $(a+b)^2 \leqslant 2a^2 + 2b^2$ and H\"{o}lder's inequality,
imply the existence of constants $\kappa_0 > 0$ and $\tau_0 \in (0,1)$ such that
\begin{equation}
  \label{eq:uh+barU-exp-moment}
  \sup_{\substack{0 < h \leqslant 1\\ 0 < \tau \leqslant \tau_0}}
  \mathbb{E}\Bigl[\exp\!\bigl(\kappa_0\|u_h+\bar{U}\|_{L^2(0,T;H_{D}^{1,2})}^2\bigr)\Bigr] < \infty .
\end{equation}
We may assume without loss of generality that $\tau \leqslant \tau_0$. Indeed,
for $\tau > \tau_0$, the estimates \cref{eq:uh-LpCHq,eq:U-LpCHq}, together with the triangle inequality and
the Sobolev embedding theorem, imply
\[
\sup_{0<h,\tau<1} \mathbb{E}\Bigl[ \|u_h(t_j) - U_j\|_{L^\infty(\mathcal{O})}^p \Bigr] < \infty
\quad \text{for all } p \in [2,\infty),
\]
so the desired error estimate \cref{eq:uh-U-Linfty} holds 
immediately for a sufficiently large constant $C$.

The remainder of the proof is divided into two parts:
Part~(a) proves the error estimate \cref{eq:uh-U-Linfty},
while Part~(b) contains the proofs of the necessary auxiliary estimates.

\textbf{Part (a).}
Let $\widetilde{U}: [0,T] \times \Omega \to X_h$ be a process with 
$\mathbb{P}$-a.s.~continuous paths satisfying $\mathbb{P}$-a.s. for all $t \in [0,T]$,
\begin{equation}
  \label{eq:widetildeU-def}
  \widetilde{U}(t) = P_h u_0 + \int_0^t \Big( A_h\bar{U}(s) - \frac{1}{2}P_h\partial_x\big(\bar{U}^2(s)\big) \Big) \, \mathrm{d}s + \int_0^t P_h Q \, \mathrm{d}W(s).
\end{equation}
By the definitions of $\bar{U}$ and the fully discrete scheme \cref{eq:U}, induction shows that $\widetilde{U}$ interpolates the numerical solution at the temporal nodes; that is,
\begin{equation}
  \label{eq:widetildeU-U}
  \widetilde{U}(t_{j}) = U_{j} \quad \text{$\mathbb{P}$-a.s.\ for all } 0 \leqslant j \leqslant J.
\end{equation}
Define the error process $e_h := u_h - \widetilde{U}$.
Subtracting \cref{eq:widetildeU-def} from \cref{eq:uh-def} gives, $\mathbb{P}$-a.s.,
\begin{equation}
  \label{eq:full-eh}
  e_h(t) = \int_0^t \left( A_h e_h(s) + A_h\big(\widetilde{U}(s) - \bar{U}(s)\big)
  - \dfrac{1}{2} P_h \partial_x \big(u_h^2(s) - \bar{U}^2(s)\big) \right) \, \mathrm{d}s,
  \quad t \in [0,T],
\end{equation}
Using the identity $u_h^2 - \bar{U}^2 = (u_h + \bar{U})e_h + (u_h+\bar{U})(\widetilde{U} - \bar{U})$,
we rewrite \cref{eq:full-eh} as, $\mathbb{P}$-a.s.,
\begin{equation}
  \label{eq:eh-ODE}
  e_h(t) = \int_0^t \Big( A_h e_h(s) - \dfrac{1}{2} P_h \partial_x \big[(u_h(s) + \bar{U}(s)) e_h(s) \big] 
  + w_h(s) \Big) \, \mathrm{d}s, \quad t \in [0,T],
\end{equation}
where 
\begin{equation}
  \label{eq:R-def}
w_h := A_h(\widetilde{U} - \bar{U}) - \frac{1}{2} P_h\partial_x[(u_h + \bar{U})(\widetilde{U} - \bar{U})].
\end{equation}
We decompose the error by introducing an auxiliary continuous process $\sigma_h: [0,T] \times \Omega \to X_h$
satisfying, $\mathbb{P}$-a.s.,
\begin{equation}
  \label{eq:full-Sigmah}
  \sigma_h(t) = \int_0^t \big( A_h\sigma_h(s) + w_h(s) \big) \, \mathrm{d}s,
   \quad t \in [0,T].
\end{equation}
Setting $z_h := e_h - \sigma_h$ and subtracting \cref{eq:full-Sigmah} from \cref{eq:eh-ODE} yields
that, $\mathbb{P}$-a.s., $z_h$ satisfies 
\begin{equation} \label{eq:Xih-ODE}
  z_h(t) = \int_0^t A_hz_h(s) - \frac{1}{2} P_h\partial_x\Big[\big(u_h(s) + \bar{U}(s)\big)(z_h(s) + \sigma_h(s))\Big] \, \mathrm{d}s,
  \quad t \in [0,T].
\end{equation}
In light of the bounds \cref{eq:uh+barU-LpLinf,eq:uh+barU-exp-moment},
we apply \cref{lem:core-stability} with $p_1 = 8p$ and $g = u_h+\bar{U}$ to obtain, for any $p \in [2,\infty)$,
\[
  \|z_h\|_{L^p(\Omega;C([0,T];L^\infty(\mathcal{O})))} \lesssim
  \|\sigma_h\|_{L^{2p}(\Omega; L^{8p}(0,T;L^2(\mathcal{O})))}
  + \|\sigma_h\|_{L^{8p}(\Omega; L^2(0,T;L^2(\mathcal{O})))}.
\]
By the embedding 
\[ 
L^{8p}([0,T] \times \Omega;L^2(\mathcal{O})) \hookrightarrow L^{2p}(\Omega;L^{8p}(0,T;L^{2}(\mathcal{O})))
\cap L^{8p}(\Omega;L^2(0,T;L^2(\mathcal{O}))),
\]
it follows that
\[
\|z_h\|_{L^p(\Omega;C([0,T];L^\infty(\mathcal{O})))}
\lesssim \|\sigma_h\|_{L^{8p}([0,T] \times \Omega;L^2(\mathcal{O}))}.
\]
Now applying \cref{eq:Sigmah-bound1} with exponents \((8p,2)\) from Part (b) yields
\[
\|z_h\|_{L^p(\Omega;C([0,T];L^\infty(\mathcal{O})))} \lesssim \tau^{1/2}.
\]
Since \(e_h=z_h+\sigma_h\), combining this estimate with 
\cref{eq:Sigmah-bound2} from Part (b) via the triangle inequality gives,
for any $p \in [2,\infty)$ and $\varepsilon \in (0,1/2)$,
\[
\|e_h\|_{L^p(\Omega;C([0,T];L^\infty(\mathcal{O})))} \lesssim \tau^{1/2} + \tau^{1/2-\varepsilon}
\lesssim \tau^{1/2-\varepsilon}.
\]
Finally, noting that \(e_h(t_j)=u_h(t_j)-U_j\) \(\mathbb{P}\)-a.s.\ by \cref{eq:widetildeU-U},
we obtain the desired error estimate \cref{eq:uh-U-Linfty}.

\textbf{Part (b).} We establish the following bounds. For any $p, q \in [2, \infty)$, we have
\begin{align}
  & \|\bar{U} - \widetilde{U}\|_{L^p([0,T] \times \Omega; L^q(\mathcal{O}))} \lesssim \tau^{1/2}, \label{eq:key-bound0} \\
  & \|P_h\partial_x[(u_h+\bar{U})(\widetilde{U} - \bar{U})]\|_{L^p([0,T] \times \Omega; H_{D,h}^{-2,q})} \lesssim \tau^{1/2}, \label{eq:key-bound1} \\
  & \|\sigma_h\|_{L^p([0,T] \times \Omega; L^q(\mathcal{O}))} \lesssim \tau^{1/2}, \label{eq:Sigmah-bound1}
\end{align}
and for any $p \in [2, \infty)$ and $\varepsilon \in (0, 1/2)$,
\begin{equation} \label{eq:Sigmah-bound2}
  \|\sigma_h\|_{L^p(\Omega; C([0,T]; L^\infty(\mathcal{O})))} \lesssim \tau^{1/2-\varepsilon}.
\end{equation}

\textit{Proof of \cref{eq:key-bound0}.}
Let $p,q \in [2,\infty)$. For any $t \in (t_j, t_{j+1}]$ with $0 \leqslant j < J $,
\cref{eq:barU-def,eq:widetildeU-U} give $\bar{U}(t) = \widetilde{U}(t_{j+1})$,
which, together with \cref{eq:widetildeU-def}, yields
\[
  \bar{U}(t) - \widetilde{U}(t) =
  \int_t^{t_{j+1}} \Bigl( A_h \bar{U}(s) - \tfrac12 P_h\partial_x\bigl(\bar{U}^2(s)\bigr) \Bigr) \, \mathrm{d}s
  + \int_t^{t_{j+1}} P_h Q \, \mathrm{d}W(s).
\]
By the triangle inequality,
\[
\begin{aligned}
 \|\bar{U} - \widetilde{U}\|_{L^p([0,T] \times \Omega;L^q(\mathcal{O}))}
&\lesssim \bigg( \mathbb{E} \sum_{j=0}^{J-1} \int_{t_j}^{t_{j+1}} \bigg\| \int_t^{t_{j+1}} \big( A_h\bar{U} - \tfrac12 P_h\partial_x(\bar{U}^2) \big)(s) \, \mathrm{d}s \bigg\|_{L^q(\mathcal{O})}^p \, \mathrm{d}t \bigg)^{\!1/p} \\
& \quad + \bigg( \mathbb{E} \sum_{j=0}^{J-1} \int_{t_j}^{t_{j+1}} \bigg\| \int_t^{t_{j+1}} P_h Q \, \mathrm{d}W(s) \bigg\|_{L^q(\mathcal{O})}^p \, \mathrm{d}t \bigg)^{\!1/p}.
\end{aligned}
\]
Since $\bar{U}$ is piecewise constant in time, the first term is bounded by
\[
  \tau \Big( \|A_h\bar{U}\|_{L^p([0,T] \times \Omega;L^q(\mathcal{O}))}
  + \|P_h\partial_x(\bar{U}^2)\|_{L^p([0,T] \times \Omega;L^q(\mathcal{O}))} \Big).
\]
For the linear term, the inverse estimate \cref{eq:inverse} with $(\theta_1,\theta_2) = (1,2)$ and \cref{lem:dotHh-equiv} imply
\[
  \|A_h\bar{U}\|_{L^p([0,T] \times \Omega;L^q(\mathcal{O}))} \lesssim h^{-1} \|\bar{U}\|_{L^p([0,T] \times \Omega;H_{D}^{1,q})}.
\]
For the nonlinear term, $L^q$-stability of $P_h$ (\cref{eq:Ph-stab0}), H\"older's inequality, and $H_{D}^{1,2q} \hookrightarrow L^{2q}(\mathcal{O})$ give
\begin{align*}
  \|P_h\partial_x(\bar{U}^2)\|_{L^p([0,T] \times \Omega;L^q(\mathcal{O}))}
  &\lesssim \|\bar{U}\partial_x\bar{U}\|_{L^p([0,T] \times \Omega;L^q(\mathcal{O}))} \\
  &\leqslant \|\bar{U}\|_{L^{2p}([0,T] \times \Omega;L^{2q}(\mathcal{O}))} \|\partial_x\bar{U}\|_{L^{2p}([0,T] \times \Omega;L^{2q}(\mathcal{O}))} \\
  &\lesssim \|\bar{U}\|_{L^{2p}([0,T] \times \Omega;H_{D}^{1,2q})}^{2}.
\end{align*}
Combining these with the uniform bound \cref{eq:barU-LpLq} for $(p,q)$ and $(2p,2q)$ yields a bound $\lesssim \tau/h$ for the first term.
For the stochastic integral, \cref{prop:stoch-int} and \cref{eq:PhQ-bound} yield a bound $\lesssim \tau^{1/2}$.
Finally, using the mesh condition $\tau \leqslant h^2$ (so $\tau/h \leqslant \tau^{1/2}$),
we obtain the desired estimate \cref{eq:key-bound0}.

\textit{Proof of \cref{eq:key-bound1}.}
Let $p, q \in [2, \infty)$ and $q' = q/(q-1)$. By \cref{lem:Ph-partialx-vw-H2}, the inverse estimate \cref{eq:inverse}, and Hölder's inequality,
\begin{align*}
&\| P_h\partial_x[ (u_h + \bar{U})(\widetilde{U} - \bar{U}) ] \|_{L^p([0,T] \times \Omega; H_{D,h}^{-2,q})} \\
\lesssim{} &
 \|u_h+\bar{U}\|_{L^{2p}([0,T] \times \Omega;H_{D}^{1,q'})}
\|\widetilde{U} - \bar{U}\|_{L^{2p}([0,T] \times \Omega;H_{D,h}^{-1,q})} \\
\lesssim{} &
 \|u_h+\bar{U}\|_{L^{2p}([0,T] \times \Omega;H_{D}^{1,2})}
\|\widetilde{U} - \bar{U}\|_{L^{2p}([0,T] \times \Omega;H_{D,h}^{-1,q})},
\end{align*}
where the second inequality uses the embedding
 $H_{D}^{1,2} \hookrightarrow H_{D}^{1,q'}$ since $q' \leqslant 2$.
 The estimates \cref{eq:uh-LpH1q} (with $p$ replaced by $2p$ and $q$ replaced by $2$)
 and \eqref{eq:barU-LpLq} (with $p$ replaced by $2p$ and $q$ replaced by $2$)
 ensure that the first factor is uniformly bounded
with respect to $h$ and $\tau$. For the second factor,
the $h$-uniform embedding $H_{D,h}^{0,q} \hookrightarrow H_{D,h}^{-1,q}$
(a consequence of \cref{lem:dotHh}(ii)) allows application of \eqref{eq:key-bound0} with exponents $(2p, q)$, yielding a bound of order $\tau^{1/2}$.

\textit{Proof of \cref{eq:Sigmah-bound1}.}
Applying the triangle inequality to the definition of $w_h$ in \cref{eq:R-def} and using the bounds \cref{eq:key-bound0,eq:key-bound1}, we obtain
\begin{equation}
  \|w_h\|_{L^p([0,T] \times \Omega;H_{D,h}^{-2,q})} \lesssim \tau^{1/2}. \label{eq:R-bound} 
\end{equation}
Since $\sigma_h = S_h \ast w_h$ by \cref{eq:full-Sigmah}, the discrete maximal $L^p$-regularity estimate in \cref{lem:S0h}
yields
\[
\|\sigma_h\|_{L^p([0,T] \times \Omega;L^q(\mathcal{O}))} \lesssim \|w_h\|_{L^p([0,T] \times \Omega;H_{D,h}^{-2,q})}
\lesssim \tau^{1/2} \quad \text{for all } p,q \in [2,\infty),
\]
which completes the proof.

\textit{Proof of \cref{eq:Sigmah-bound2}.}
We fix $\varepsilon \in (0,1/2)$ and initially assume $p \in [2/\varepsilon,\infty)$. 
Applying \cref{lem:Sh-ast-bound}(ii) with $(\theta,q) = (4/p-2,p)$, in conjunction with the inverse estimate \cref{eq:inverse} for $(\theta_1,\theta_2) = (-2,4/p-2)$, yields
\[
  \|\sigma_h\|_{L^p(\Omega;C([0,T];L^\infty(\mathcal{O})))}
  \lesssim h^{-4/p} \|w_h\|_{L^p([0,T] \times \Omega;H_{D,h}^{-2,p})}
  \lesssim h^{-4/p} \tau^{1/2},
\]
where in the last step we invoked \cref{eq:R-bound} with $q=p$. Recalling the mesh condition $\tau \leqslant h^2$,
we have $h^{-4/p} = (h^2)^{-2/p} \leqslant \tau^{-2/p}$. Moreover,
since $p \geqslant 2/\varepsilon$ implies $2/p \leqslant \varepsilon$, and noting that $0 < \tau \leqslant \tau_0 < 1$, we deduce
\[
  h^{-4/p} \tau^{1/2} \leqslant \tau^{1/2-2/p} \leqslant \tau^{1/2-\varepsilon}.
\]
This establishes \cref{eq:Sigmah-bound2} for all $\varepsilon \in (0,1/2)$ and $p \in [2/\varepsilon,\infty)$. 
Finally, the inclusion $L^{p_2}(\Omega) \subset L^{p_1}(\Omega)$ for $p_1 \leqslant p_2$
allows us to extend this estimate to all $p \in [2, 2/\varepsilon]$,
thereby completing the proof of \cref{eq:Sigmah-bound2} for all $\varepsilon \in (0,1/2)$ and $p \in [2,\infty)$.
\qed

\section{Numerical results}
\label{sec:numerical}
This section presents numerical experiments to illustrate our theoretical findings. 
For the model problem \cref{eq:burgers}, we set the initial condition $u_0(x) = \sin(\pi x)$ for $x \in \mathcal{O}$. 
The coefficients defining the operator $Q$ in \cref{eq:Q-def} are chosen as
$\lambda_n = n^{-1}$ for $1 \leqslant n \leqslant 600$ and $\lambda_n = 0$ for $n > 600$.

The implementation of the fully discrete scheme \cref{eq:U} requires, at each time step and for every sample path, solving a nonlinear algebraic system of the form
\[
v_h = \tau A_h v_h - \frac{\tau}{2}P_h \partial_x(v_h^2) + w_h,
\]  
where $v_h \in X_h$ denotes the unknown and $w_h \in X_h$ is a prescribed right-hand side. The construction of efficient solvers for these nonlinear systems lies beyond the scope of the present work; instead, we employ a standard fixed-point iteration scheme to approximate $v_h$:
\[
v_h^{(k+1)} = (I - \tau A_h)^{-1} \Bigl( -\frac{\tau}{2}  P_h\partial_x \bigl((v_h^{(k)})^2\bigr) +  w_h \Bigr), \quad k = 0, 1, \dots, 39.
\]
Here, the initial iterate $v_h^{(0)}$ is chosen as the numerical solution from the preceding time step,
and the iteration is performed for a fixed total of $40$ steps.

We first verify the spatial error estimates established in \cref{sec:spatial-semi-discretization}. 
To this end, we fix $T = 0.05$ and $\tau = 1.25 \times 10^{-5}$, set $J=T/\tau$, and define the following error metrics:
\begin{align*}
\mathrm{Err}_{1,h} &:= \left(\mathbb{E}\Bigl[ \max_{1\leqslant j \leqslant J} \|U_j - U_j^{*}\|_{L^2(\mathcal{O})}^2 \Bigr]\right)^{1/2}, \\
\mathrm{Err}_{2,h} &:= \left(\mathbb{E}\Bigl[ \max_{1\leqslant j \leqslant J} \|U_j - U_j^{*}\|_{L^\infty(\mathcal{O})}^2 \Bigr]\right)^{1/2}, \\
\mathrm{Err}_{3,h} &:= \Bigl|\mathbb{E}\Bigl[ \|U_J\|_{L^2(\mathcal{O})}^2 - \|U_J^{*}\|_{L^2(\mathcal{O})}^2 \Bigr] \Bigr|.
\end{align*}
Here, $(U_j)_{j=0}^J$ denotes the numerical solution computed with spatial mesh size $h$ and time step $\tau$, while $(U_j^{*})_{j=0}^J$ represents the reference solution, generated on a finer spatial mesh of size $h_{\mathrm{ref}} = 2^{-9}$ using the same time step $\tau$. 
These error metrics are estimated via Monte Carlo simulation using $500$ independent sample paths.
\Cref{fig:spatial_strong_sub} displays the strong errors $\mathrm{Err}_{1,h}$ and $\mathrm{Err}_{2,h}$ versus $h$ on a log-log scale. 
Both metrics exhibit first-order convergence, in agreement with the theoretical predictions of \cref{thm:uh-strong}. 
Furthermore, \Cref{fig:spatial_weak_sub} plots the weak error $\mathrm{Err}_{3,h}$ versus $h$ on a log-log scale. 
The observed convergence rate is nearly quadratic, which validates the weak error estimate \cref{eq:weak-error-1} (with $p=q=2$) from \cref{thm:uh-weak}.

\begin{figure}[htbp]
    \centering
    \begin{subfigure}[b]{0.48\textwidth}
        \centering
        \begin{tikzpicture}
            \begin{axis}[
                xmode=log,
                ymode=log,
                xlabel={\footnotesize Spatial mesh size $h$},
                ylabel={\footnotesize Error},
                yticklabel style={font=\tiny},
                xticklabel style={font=\tiny},
                legend pos=south east,
                legend style={font=\tiny, cells={align=left}, draw=none, fill=none, inner sep=2pt, row sep=-1pt},
                width=\linewidth,
                height=0.65\linewidth,
                grid=major,
                grid style={dotted, gray!50}
              ]

              \addplot[only marks, mark=square*, color=blue, mark size=2pt] coordinates {
                (0.125,     0.407320862168835)
                (0.0625,    0.215556858708228)
                (0.03125,   0.122047652272250)
                (0.015625,  0.061962756885662)
                (0.0078125, 0.028914820729291)
              };
              \addlegendentry{$\mathrm{Err}_{1,h}$}

              \addplot[only marks, mark=*, color=red, mark size=2pt] coordinates {
                (0.125,     0.824083845798806)
                (0.0625,    0.424440686574544)
                (0.03125,   0.239231372061147)
                (0.015625,  0.121910033370202)
                (0.0078125, 0.056636271791737)
              };
              \addlegendentry{$\mathrm{Err}_{2,h}$}

              \addplot[domain=0.008:0.15, thick, dashed, color=black] {3 * x};
              \addlegendentry{$O(h)$}
            \end{axis}
        \end{tikzpicture}
        \caption{Strong spatial errors}
        \label{fig:spatial_strong_sub}
    \end{subfigure}
    \hfill
    \begin{subfigure}[b]{0.48\textwidth}
        \centering
        \begin{tikzpicture}
            \begin{axis}[
                xmode=log,
                ymode=log,
                xlabel={\footnotesize Spatial mesh size $h$},
                ylabel={\footnotesize Error},
                yticklabel style={font=\tiny},
                xticklabel style={font=\tiny},
                legend pos=south east,
                legend style={font=\tiny, cells={align=left}, draw=none, fill=none, inner sep=2pt, row sep=-1pt},
                width=\linewidth,
                height=0.65\linewidth,
                grid=major,
                grid style={dotted, gray!50}
              ]
              
              \addplot[only marks, mark=triangle*, color=blue, mark size=3pt] coordinates {
                (0.125,     1.079709210601505)
                (0.0625,    0.254843346256177)
                (0.03125,   0.072512657380643)
                (0.015625,  0.016901013778658)
                (0.0078125, 0.006838641685798)
              };
              \addlegendentry{$\mathrm{Err}_{3,h}$}

              \addplot[domain=0.008:0.15, thick, dashed, color=black] {80 * x^2};
              \addlegendentry{$O(h^2)$}
            \end{axis}
        \end{tikzpicture}
        \caption{Weak spatial error}
        \label{fig:spatial_weak_sub}
    \end{subfigure}
    \caption{Spatial errors versus mesh size $h$ on a log-log scale: (a) strong errors $\mathrm{Err}_{1,h}$ and $\mathrm{Err}_{2,h}$; (b) weak error $\mathrm{Err}_{3,h}$.}
    \label{fig:spatial_errors_combined}
\end{figure}

We next examine the temporal convergence rate. To this end, we fix $T = 0.0625$ and define the error metric:
\[
  \mathrm{Err}_{\tau} := \left(\mathbb{E}\Bigl[
      \max_{1\leqslant j \leqslant J} \|U_{j}^{\tau} - U_{\lfloor j\tau/\tau^* \rfloor}^{\tau^*}\|_{L^\infty(\mathcal{O})}^2
  \Bigr]\right)^{1/2},
\]
where $J = T/\tau$. Here, $(U_j^{\tau})_{j=0}^J$ denotes the numerical solution computed with time step $\tau$ and spatial mesh size $h = \sqrt{\tau}$,
a choice that balances the spatial and temporal errors. The reference solution $(U_j^{\tau^*})_{j=0}^{J^*}$ is generated using a finer time step $\tau^* = T/4^8$ and the corresponding spatial mesh size $h^* = \sqrt{\tau^*}$. 
The error metric $\mathrm{Err}_{\tau}$ is estimated via Monte Carlo simulation using $500$ independent sample paths.
\Cref{fig:time} displays the strong error $\mathrm{Err}_{\tau}$ versus $\tau$ on a log-log scale. 
The results indicate that the error decays at a rate of approximately $O(\tau^{1/2})$, which aligns with the theoretical predictions.

\begin{figure}[htbp]
  \centering
  \begin{tikzpicture}
      \begin{axis}[
          xmode=log,
          ymode=log,
          xlabel={\footnotesize Time step size $\tau$},
          ylabel={\footnotesize Error},
          yticklabel style={font=\tiny},
          xticklabel style={font=\tiny},
          legend pos=south east,
          legend style={font=\tiny, cells={align=left}, draw=none, fill=none, inner sep=2pt, row sep=-1pt},
          width=0.5\linewidth,
          height=0.65*0.5\linewidth,
          grid=major,
          grid style={dotted, gray!50}
        ]

      \addplot[only marks, mark=square*, color=blue, mark size=2pt] coordinates {
        (0.00390625,     2.7734)
        (0.0009765625,   1.2906)
        (0.000244140625, 0.5693)
        (0.00006103515625, 0.2289)
        (0.0000152587890625, 0.0877)
      };
      \addlegendentry{$\mathrm{Err}_{\tau}$}

      \addplot[domain=1e-5:.004, dashed, color=black] {18*x^(.5)};
      \addlegendentry{$O(\tau^{1/2})$}
    \end{axis}
  \end{tikzpicture}
  \caption{Strong error $\mathrm{Err}_{\tau}$ versus time step size $\tau$ with $h=\sqrt{\tau}$ on a log-log scale.}
  \label{fig:time}
\end{figure}

\section{Conclusions}
\label{sec:concluding}

In this paper, we have conducted a rigorous error analysis for the finite element approximation of
the one-dimensional stochastic Burgers equation driven by additive trace-class noise. 
For the spatial semi-discretization based on $P_2$ finite elements, we established regularity-optimal strong convergence rates in $L^p([0,T] \times \Omega; H_{D}^{\alpha,q})$ and almost regularity-optimal rates in $L^p(\Omega; C([0,T]; H_{D}^{\alpha,q}))$. 
Notably, we also obtained an almost regularity-optimal convergence rate in $L^p(\Omega; C([0,T]; L^\infty(\mathcal{O})))$. 
Some weak error estimates in the $L^q$-setting were derived, 
 with weak convergence rates (nearly) twice the corresponding strong ones.
For the fully discrete scheme, obtained by coupling the $P_2$ finite element method with
a drift-implicit Euler-Maruyama scheme in time,
we proved a temporal strong convergence rate of order $\tau^{1/2-\varepsilon}$ in the
discrete space-time maximum norm, subject to the condition $\tau \leqslant h^2$.
The numerical experiments corroborate the theoretically predicted convergence rates.

The present work combines the classical $L^2$-based techniques for
the numerical analysis of stochastic partial differential equations with the discrete stochastic maximal $L^p$-regularity
theory, thereby providing a numerical analysis of the stochastic Burgers equation in
general spatial $L^q$-spaces.
The proposed framework is expected to extend to other nonlinear
stochastic partial differential equations.
In particular, for the fully discrete finite element approximation of the
one-dimensional stochastic Allen--Cahn equation driven by space-time white noise,
it would be of considerable interest to establish strong error estimates in the
$L^p(\Omega; C([0,T]; L^\infty(\mathcal{O})))$-norm, as well as weak error estimates for the
terminal-time functionals $\|\cdot\|_{L^q(\mathcal{O})}^p$; both problems
will be addressed in forthcoming work.

\section*{Acknowledgements}
This work was partially supported by National Natural Science Foundation of China (12301525,12571434)
and by   Natural Science Foundation of Sichuan Province (2026YFTX0024).

\appendix

\section{Proof of Proposition~\ref{prop:U}(ii)}
\label{sec:proof_U_exp_moment}

When $Q \equiv 0$, the bound \cref{eq:u-exp-moment} holds trivially by
\cref{eq:U-LpH1q} (applied with $p=q=2$) given that $(U_j)_{j=0}^{J}$
is deterministic. We thus assume $Q \neq 0$ in the sequel.

For any fixed $v \in L^2(\mathcal{O})$, the mapping $w \mapsto \langle w, v\rangle$ defines a bounded linear functional
on $L^2(\mathcal{O})$, where $\langle\cdot,\cdot\rangle$ denotes the $L^2(\mathcal{O})$-inner product.
Consequently, in view of the ideal property \cref{eq:ideal}
and the property $Q \in \gamma(\ell^2, L^2(\mathcal{O}))$ (see \cref{eq:Q-property}),
the map $v \mapsto \langle Q, v\rangle$ is a bounded linear operator from $L^2(\mathcal{O})$ into $\gamma(\ell^2,\mathbb{R})$,
satisfying the norm estimate
\[
\|\langle Q, v\rangle\|_{\gamma(\ell^2,\mathbb{R})} \leqslant \|Q\|_{\gamma(\ell^2,L^2(\mathcal{O}))} \|v\|_{L^2(\mathcal{O})}, \quad v \in L^2(\mathcal{O}).
\]
Let $U$ denote the piecewise constant process defined by $U(t) = U_j$ for $t \in [t_j, t_{j+1})$, where $0 \leqslant j < J$, and $U(T) = U_J$. Since \cref{eq:U-LpH1q} holds and each $U_j$ is $\mathcal{F}_{t_j}$-measurable, it follows that $U \in L_{\mathbb{F}}^2([0,T] \times \Omega;L^2(\mathcal{O}))$. Hence, $\langle Q, U \rangle \in L_{\mathbb{F}}^2([0,T] \times \Omega;\gamma(\ell^2,\mathbb{R}))$, and the inequality
\begin{equation}
  \label{eq:QU}
\|\langle Q, U \rangle\|_{\gamma(\ell^2,\mathbb{R})} \leqslant \|Q\|_{\gamma(\ell^2,L^2(\mathcal{O}))} \|U\|_{L^2(\mathcal{O})}
\end{equation}
holds $(\mathbb{P}\otimes \mathrm{d}t)$-a.e.~on $[0,T] \times \Omega$.

Let \(M\) be the continuous \(\mathbb{R}\)-valued martingale defined by
$ M(t)=\int_0^t \langle Q,U(s)\rangle\,\mathrm{d}W(s)$, $t\in[0,T]$.
By the identity \cref{eq:key-identity} and the norm equivalence $\|\cdot\|_{H_{D,h}^{1,2}} = \|\cdot\|_{H_{D}^{1,2}}$
on $X_h$, standard arguments (see \cite[Equation~(2.22)]{gyongy2007rate})
show that, for each \(0\leqslant j<J\), \(\mathbb{P}\)-a.s.,
\begin{align*}
& \|U_{j+1}\|_{L^2(\mathcal{O})}^2 + 2\tau \|U_{j+1}\|_{H_{D}^{1,2}}^2 \\
\leqslant{} & \|U_j\|_{L^2(\mathcal{O})}^2
 + 2\bigl(M(t_{j+1})-M(t_j)\bigr) 
 + \Bigl\|\int_{t_j}^{t_{j+1}} P_h Q\,\mathrm{d}W(s)\Bigr\|_{L^2(\mathcal{O})}^2.
\end{align*}
Summing over \(j=0,\dots,J-1\) and dropping the nonnegative term
\(\|U_J\|_{L^2(\mathcal{O})}^2\) from the left hand side, we obtain
\begin{equation}
\label{eq:78}
2\tau \sum_{j=1}^J \|U_j\|_{H_{D}^{1,2}}^2
\leqslant \|U_0\|_{L^2(\mathcal{O})}^2
 + 2M(T) + \sum_{j=0}^{J-1} \Bigl\|\int_{t_j}^{t_{j+1}} P_h Q\,\mathrm{d}W(s)\Bigr\|_{L^2(\mathcal{O})}^2.
\end{equation}
The quadratic variation process of $M$ is given by
\[
\langle M \rangle_t = \int_0^t \| \langle Q, U(s) \rangle \|_{\gamma(\ell^2,\mathbb{R})}^2 \, \mathrm{d}s, \quad t \in [0,T].
\]
Using \cref{eq:QU} and the Poincaré inequality $\|v\|_{L^2(\mathcal{O})} \leqslant \pi^{-1} \|v\|_{H_{D}^{1,2}}$
($v \in H_{D}^{1,2}$), we estimate
\begin{align*}
\langle M \rangle_T 
& \leqslant \|Q\|_{\gamma(\ell^2,L^2(\mathcal{O}))}^2 \int_0^{T} \|U(s)\|_{L^2(\mathcal{O})}^2 \, \mathrm{d}s \\
& = \|Q\|_{\gamma(\ell^2,L^2(\mathcal{O}))}^2 
\Big( \tau\|U_0\|_{L^2(\mathcal{O})}^2 + \tau\sum_{j=1}^{J-1} \|U_j\|_{L^2(\mathcal{O})}^2 \Big) \\
& \leqslant \|Q\|_{\gamma(\ell^2,L^2(\mathcal{O}))}^2
\Big(
  \tau\|U_0\|_{L^2(\mathcal{O})}^2 + \frac{\tau}{\pi^2}\sum_{j=1}^J \|U_j\|_{H_{D}^{1,2}}^2
  \Big).
\end{align*}
For arbitrary $\kappa, \delta > 0$, the decomposition $\kappa M(T) = (\kappa M(T) - \frac{\delta}{2} \kappa^2\langle  M \rangle_T) + \frac{\delta}{2} \kappa^2\langle M \rangle_T$ implies
\[
\kappa M(T) \leqslant \Big(\kappa M(T) - \frac{\delta}{2}\kappa^2 \langle M \rangle_T \Big)
+ \frac{\delta}{2} \kappa^2 \|Q\|_{\gamma(\ell^2,L^2(\mathcal{O}))}^2 
\Big(
  \tau\|U_0\|_{L^2(\mathcal{O})}^2 + \frac{\tau}{\pi^2} \sum_{j=1}^J \|U_j\|_{H_{D}^{1,2}}^2
  \Big).
\]
Multiplying \cref{eq:78} by $\kappa$ and substituting the bound above yields, $\mathbb{P}$-a.s.,
\begin{equation}
  \label{eq:zb1}
  \begin{aligned}
    2\kappa \tau \sum_{j=1}^J \|U_j\|_{H_{D}^{1,2}}^2
    &\leqslant \kappa(1 + \kappa\tau \delta\|Q\|_{\gamma(\ell^2,L^2(\mathcal{O}))}^2) \|U_0\|_{L^2(\mathcal{O})}^2
    + 2 \biggl(\kappa M(T) - \frac{\delta}{2} \kappa^2 \langle M \rangle_T \biggr) \\
    & \quad + \frac{\delta}{\pi^2} \kappa^2 \|Q\|_{\gamma(\ell^2,L^2(\mathcal{O}))}^2
    \Big(\tau\sum_{j=1}^{J} \|U_j\|_{H_{D}^{1,2}}^2\Big) 
     + \sum_{j=0}^{J-1} \kappa \biggl\| \int_{t_j}^{t_{j+1}} P_h Q \, \mathrm{d}W(s) \biggr\|_{L^2(\mathcal{O})}^2.
  \end{aligned}
\end{equation}

Set $\delta := \pi^2 \kappa^{-1} \|Q\|_{\gamma(\ell^2,L^2(\mathcal{O}))}^{-2}$, so that $\frac{\delta}{\pi^2} \kappa^2 \|Q\|_{\gamma(\ell^2,L^2(\mathcal{O}))}^2 = \kappa$.
Substituting this identity into \cref{eq:zb1} leads to the estimate
\begin{align*}
\kappa\tau \sum_{j=1}^J \|U_j\|_{H_{D}^{1,2}}^2
&\leqslant \kappa(1 + \tau\pi^2) \|U_0\|_{L^2(\mathcal{O})}^2
+ 2 \Bigl(\kappa M(T) - \frac{\delta}{2}\kappa^2 \langle M \rangle_T \Bigr) \\
& \quad {} + \sum_{j=0}^{J-1} \kappa \biggl\| \int_{t_j}^{t_{j+1}} P_h Q \, \mathrm{d}W(s) \biggr\|_{L^2(\mathcal{O})}^2.
\end{align*}
Applying the elementary inequality $e^{x+y} \leqslant \frac{1}{2}e^{2x} + \frac{1}{2}e^{2y}$ to the exponential of both sides gives
\begin{align*}
\exp\biggl( \kappa\tau \sum_{j=1}^J \|U_j\|_{H_{D}^{1,2}}^2 \biggr)
&\leqslant \frac{1}{2}\exp\bigl(2\kappa(1+\tau\pi^2)\|U_0\|_{L^2(\mathcal{O})}^2\bigr)
\exp\biggl( 4\kappa M(T) - 2\delta\kappa^2 \langle M \rangle_T \biggr) \\
 & \qquad {} + \frac{1}{2}\exp\biggl( 2\kappa\sum_{j=0}^{J-1} \biggl\| \int_{t_j}^{t_{j+1}} P_h Q \, \mathrm{d}W(s) \biggr\|_{L^2(\mathcal{O})}^2 \biggr).
\end{align*}
Taking expectations and recalling that $U_0$ is deterministic
and $\|U_0\|_{L^2(\mathcal{O})} = \|P_hu_0\|_{L^2(\mathcal{O})} \leqslant \|u_0\|_{L^2(\mathcal{O})}$, we conclude
\begin{equation}
\label{eq:exp_bound}
\begin{aligned}
& \mathbb{E}\biggl[\exp\biggl( \kappa\tau\sum_{j=1}^J \|U_j\|_{H_{D}^{1,2}}^2 \biggr)\biggr] \\
\leqslant{} & \frac{1}{2}\exp\bigl(2\kappa(1+\tau\pi^2)\|u_0\|_{L^2(\mathcal{O})}^2\bigr)
 \, \mathbb{E}\biggl[\exp\biggl( 4\kappa M(T) - 2\delta\kappa^2 \langle M \rangle_T \biggr) \biggr]  \\
& \quad {} + \frac{1}{2}\mathbb{E}\biggl[\exp\biggl( 2\kappa\sum_{j=0}^{J-1} \biggl\| \int_{t_j}^{t_{j+1}} P_h Q \, \mathrm{d}W(s) \biggr\|_{L^2(\mathcal{O})}^2 \biggr)\biggr].
\end{aligned}
\end{equation}

Let $\kappa := \frac{\pi^2}{4\|Q\|_{\gamma(\ell^2,L^2(\mathcal{O}))}^2}$, which implies $\delta = 4$.
With this choice, the process $\exp\bigl( 4\kappa M - 2\delta\kappa^2 \langle M \rangle \bigr)$ is precisely
the Doléans-Dade exponential of $4\kappa M$.
As a positive local martingale, it is a supermartingale,
and $M(0) = 0$ $\mathbb{P}$-a.s.; consequently,
\begin{equation}
  \label{eq:exp_bound_1}
\mathbb{E}\biggl[\exp\biggl( 4\kappa M(T) - 2\delta\kappa^2 \langle M \rangle_T \biggr) \biggr]
\leqslant 1.
\end{equation}
Turning to the stochastic integral term, the random variables $\bigl\| \int_{t_j}^{t_{j+1}} P_hQ \,\mathrm{d}W(s) \bigr\|_{L^2(\mathcal{O})}^2$, $0 \leqslant j < J$, are mutually independent.
By \cite[Proposition~2.17]{Pardoux2014} and the estimate
$\|P_hQ\|_{\gamma(\ell^2,L^2(\mathcal{O}))} \leqslant \|Q\|_{\gamma(\ell^2,L^2(\mathcal{O}))}$—which follows
from the ideal property \eqref{eq:ideal} and $\|P_h\|_{\mathcal{L}(L^2(\mathcal{O}),L^2(\mathcal{O}))}=1$—we have,
for $0 < \tau < \frac{1}{\pi^2}$,
\begin{align}
  & \mathbb{E} \biggl[ \exp\biggl( 2\kappa\sum_{j=0}^{J-1}
  \biggl\| \int_{t_j}^{t_{j+1}} P_h Q \, \mathrm{d}W(s) \biggr\|_{L^2(\mathcal{O})}^2 \biggr) \biggr] \notag \\
  ={} & \prod_{j=0}^{J-1} \mathbb{E} \biggl[ \exp\biggl( 2\kappa \biggl\| \int_{t_j}^{t_{j+1}} P_h Q \, \mathrm{d}W(s) \biggr\|_{L^2(\mathcal{O})}^2 \biggr) \biggr] \notag \\
  \leqslant{} & \bigl(1 - 4 \kappa \tau \|P_h Q\|_{\gamma(\ell^2,L^2(\mathcal{O}))}^2\bigr)^{-J/2} \notag \\
  \leqslant{} & \exp\biggl(\frac{2\kappa T\|Q\|_{\gamma(\ell^2,L^2(\mathcal{O}))}^2}{1-4\kappa \tau \|Q\|_{\gamma(\ell^2,L^2(\mathcal{O}))}^2} \biggr)
  = \exp\biggl(\frac{\pi^2 T}{2(1 - \tau \pi^2)}  \biggr),
  \label{eq:exp_bound_2}
\end{align}
where we used the elementary inequality $(1-x)^{-J/2} = (1 + x/(1-x))^{J/2} < \exp(\tfrac{Jx}{2(1-x)})$ for $0 < x < 1$ and the equality $J\tau=T$.
Substituting \cref{eq:exp_bound_1} and \cref{eq:exp_bound_2} into \cref{eq:exp_bound} yields
\[
\mathbb{E} \biggl[ \exp\biggl( \kappa \tau \sum_{j=1}^J \|U_j\|_{H_{D}^{1,2}}^2 \biggr) \biggr]
\leqslant \frac{1}{2} \exp\bigl(2\kappa(1+\tau\pi^2)\|u_0\|_{L^2(\mathcal{O})}^2\bigr)
+ \frac{1}{2} \exp\biggl( \frac{\pi^2 T}{2(1-\tau\pi^2)} \biggr).
\]
Thus, the desired exponential moment bound \cref{eq:U-exp-moment} holds for
every $0 < \tau_0 < \frac{1}{\pi^2}$.

\qed


\end{document}